\documentclass[11pt,twoside]{article}

\usepackage[english]{babel}

\usepackage{amsmath}
\usepackage{amssymb}
\usepackage{mathrsfs}
\usepackage{amsthm}

\usepackage{indentfirst}
\usepackage{color}
\usepackage{txfonts}

\usepackage{anysize}

\usepackage[colorlinks=true,
  linkcolor=blue,
  citecolor=red,
  urlcolor=magenta,
  backref=page
  ]{hyperref}

\allowdisplaybreaks

\newtheorem{theorem}{Theorem}[section]
\newtheorem{lemma}[theorem]{Lemma}
\newtheorem{corollary}[theorem]{Corollary}
\newtheorem{proposition}[theorem]{Proposition}

\theoremstyle{definition}
\newtheorem{remark}[theorem]{Remark}
\newtheorem{definition}[theorem]{Definition}

\numberwithin{equation}{section}

\begin{document}

\title{\bf\Large  Variable Muckenhoupt $A_\infty$ Weights
\footnotetext{\hspace{-0.35cm} 2020 {\it
Mathematics Subject Classification}. Primary 46E30; Secondary 47A56, 15A15,
46E40, 42B99. \endgraf
{\it Key words and phrases}. Muckenhoupt weight, matrix weight,
variable Lebesgue space, reverse H\"older's inequality, minimal operator,
weight dimension.
}}
\date{}
\author{}
\author{Dachun Yang\footnote{Corresponding author,
E-mail: \texttt{dcyang@bnu.edu.cn}/{\color{red}\today}/Final version.},\ \
Wen Yuan and Zongze Zeng}
\maketitle

\vspace{-0.7cm}

\begin{center}
\begin{minipage}{13cm}
{\small {\bf Abstract:}\quad
In this article, with introducing concepts of
variable scalar $\mathcal{A}_{p(\cdot),\infty}$ weights
and variable matrix $\mathscr{A}_{p(\cdot),\infty}$ weights,
we seek a comprehensive theory of $A_\infty$ weights
within the framework of variable exponent spaces.
We first show that a weight belongs to $\mathcal{A}_{p(\cdot),\infty}$
if and only if its $p(\cdot)$-th power is an $A_\infty$ weight.
Using this, we characterize the $\mathcal{A}_{p(\cdot),\infty}$ condition
by the minimal operator. Then we establish the reverse H\"older's
inequality for $\mathcal{A}_{p(\cdot),\infty}$ weights
in variable Lebesgue spaces with explicit constants
and, combining this with the previously established relationship
between $\mathcal{A}_{p(\cdot),\infty}$ weights and $A_\infty$ weights,
we prove that, for any weight $w$,
the reverse H\"older's inequality holds in variable Lebesgue spaces
if and only if $w$ is an $\mathcal{A}_{p(\cdot),\infty}$ weight.
For the matrix $\mathscr{A}_{p(\cdot),\infty}$ weights,
we first show the existence of the reducing operators
for matrix $\mathscr{A}_{p(\cdot),\infty}$ weights
and then, combining the matrix $\mathscr{A}_{p(\cdot),\infty}$ weights
with the scalar $\mathcal{A}_{p(\cdot),\infty}$ weights,
we establish the reverse H\"older's inequality for $\mathscr{A}_{p(\cdot),\infty}$ weights
in variable Lebesgue spaces.
Finally, for further applications to
variable matrix-weighted function spaces,
we introduce the upper and the lower dimensions for $\mathscr{A}_{p(\cdot),\infty}$ weights
and use these concepts to establish the sharp estimate involving reducing operators.
}
\end{minipage}
\end{center}

\vspace{0.2cm}

\tableofcontents

\section{Introduction}

Throughout this article, we work in $\mathbb{R}^n$,
which serves as our default underlying space
unless otherwise specified.

The Muckenhoupt weights were introduced by Muckenhoupt \cite{m72} to characterize the
boundedness of the Hardy--Littlewood maximal operator on weighted Lebesgue spaces.
From then on, the theory of Muckenhoupt weights has found wide applications in harmonic analysis (see, for example, \cite{cf74,hmw73,h12,hp13}) and
partial differential equations (see, for example, \cite{bhs11,dk18,gl86}).
On the other hand, the study of variable function spaces can be traced
to Orlicz \cite{o31} in 1931
and, driven by   studies of  partial differential equations
with   non-standard growth, it  has achieved  substantial advancements from 1990s;
see, for example, \cite{am02,am02 1,am05,r00,r04,cf13}. In 2011, to study
the boundedness of the Hardy--Littlewood maximal operator on variable Lebesgue spaces,
Cruz-Uribe et al. \cite{cdh11} introduced the concept of variable
Muckenhoupt $\mathcal{A}_{p(\cdot)}$ weights
(see Definition \ref{def Ap 1}) and proved  that the boundedness
of the Hardy--Littlewood maximal operator
on weighted variable Lebesgue spaces $L^{p(\cdot)}_w$
(see Definition \ref{def weight Leb} for its definition)
is equivalent to the condition that $w\in \mathcal{A}_{p(\cdot)}$; see also
  \cite{cfn12} for a related result on the weak-type norm inequality.
We also refer to \cite{cw17,cc22,cp24,cr24,cs23,wx22,wgx24} for more studies about variable weights
and variable exponent weighted function spaces.

The study of matrix weights can be tracked back to the work of
Wiener and Masani \cite{wm58} on
the prediction theory for multivariate stochastic processes,
in which they introduced the matrix-weighted Lebesgue space
$L^2_W$, where $W$ is a matrix weight.
In 1990s, on $\mathbb{R}$
Nazarov and Treil \cite{nt96}, Treil and Volberg \cite{tv97},
and Volberg  \cite{v97}
generalized the scalar Muckenhoupt $A_p$ weights
to matrix $A_p$ weights acting on vector-valued functions
mapping $\mathbb{R}$ to $\mathbb{C}^m$ with $m\in\mathbb{N}$.
From then on, a lot of attention have been paid to the theory
of matrix weights and we refer to \cite{b01,bc22,dhl20,
g03,dptv24,kn24,llor23,llor24,nptv17,nr18}
for their applications to the boundedness
of operators,  to \cite{n12,n25-2} for their applications to
multipliers, to \cite{bx24a,bx24b,bhyy23,bhyy23 2,bhyy23 3, fr21,
r03,lyy24a,rou04,wyy23} for their applications to matrix-weighted
Besov--Triebel--Lizorkin spaces, and to
\cite{bgx25,bx24c,bcyy24,cyy25,dly21,lyy24b,n25,n24,wgx25a,wgx25b} for
their applications to other matrix-weighted function spaces.
We also refer to survey articles \cite{byyyz25,c25}
for more recent results related to matrix weights.
In particular, recently Cruz-Uribe and Penrod in \cite{cp23}
introduced the concept of variable matrix $\mathscr{A}_{p(\cdot)}$ weights
with the variable exponent $p(\cdot)$
and established the reverse H\"older's inequality for
matrix $\mathscr{A}_{p(\cdot)}$ weights later in \cite{cp24}.
Very recently, Nieraeth and Penrod \cite{np25} proved
the boundedness of Calder\'on--Zygmund operators and
Christ--Goldberg maximal operators from \cite{cg01}
on the variable matrix $\mathscr{A}_{p(\cdot)}$ weighted Lebesgue spaces.
We refer to \cite{cs25} for more studies about variable matrix weights.

Compared with the relationship between matrix $A_p$ weights and scalar $A_p$
weights, the difference  between matrix  $A_\infty$ weights and scalar $A_\infty$
weights is more distinct.
By splitting the matrix $A_\infty$ condition into a family of conditions
depending on $p$, Volberg  \cite{v97} introduced the concept
of matrix $A_{p,\infty}$ weights with $p\in (0,\infty)$
as an analogue of the scalar $A_\infty$ weights,
which, in the scalar-valued case, for any $p\in (0,\infty)$, $A_{p,\infty}=A_\infty$, while
this no longer holds true for matrix-valued weights.
Bu et. al. \cite{bhyy-a} later found several equivalent and
simpler characterizations of matrix $A_{p,\infty}$ weights;
see also \cite{bcyy24,bhyy24,byyz25,yymz25} and the survey
article \cite{byyyz25} for more studies about matrix $A_{p,\infty}$
weights and their applications.

Based on the aforementioned results, it is natural to ask
a comprehensive theory of
$A_\infty$ weights within the framework of variable exponent spaces,
encompassing both scalar-valued and matrix-valued weights.
This article gives an affirmative answer to this question.
Inspired by the definition of matrix $A_{p,\infty}$ weights,
for any variable exponent $p(\cdot) \in \mathcal{P}_0$
(see Section \ref{sec weight} for the definition of $\mathcal{P}_0$),
we introduce the following concepts of scalar $\mathcal{A}_{p(\cdot),\infty}$ weights
and matrix $\mathscr{A}_{p(\cdot),\infty}$ weights.

Recall that a non-negative measurable function on $\mathbb{R}^n$
that is positive almost everywhere is called a \emph{scalar weight},
and see also Definition \ref{def matrix} for the definition
of matrix weights mapping $\mathbb{R}^n$
to $\mathbb{C}^m$.
\begin{definition}
\begin{itemize}
\item[(i)] A scalar weight $w$ on $\mathbb{R}^n$ is called
an \emph{$\mathcal{A}_{p(\cdot),\infty}$ weight} if
\begin{align}\label{def Apinfty}
[w]_{\mathcal{A}_{p(\cdot),\infty}} := \sup_Q \frac{1}{\|\mathbf{1}_Q\|_{L^{p(\cdot)}}} \left\|w\mathbf{1}_Q\right\|_{L^{p(\cdot)}}
\exp\left( \fint_Q \log\left( w^{-1}(y) \right)\,dy \right) < \infty,
\end{align}
where the supremum is taken over all cubes $Q$ in $\mathbb{R}^n$.

\item[(ii)] A matrix weight $W$ mapping $\mathbb{R}^n$
to $\mathbb{C}^m$ is called a
\emph{matrix $\mathscr{A}_{p(\cdot),\infty}$ weight} if
\begin{align}\label{def matrix Apinfty}
[W]_{\mathscr{A}_{p(\cdot),\infty}} := \sup_{Q} \exp\left( \fint_Q \log\left( \frac{1}{\|\mathbf{1}_Q\|_{L^{p(\cdot)}}}
 \left\|\,\left\| W(\cdot)W^{-1}(y) \right\|\mathbf{1}_Q(\cdot)\right\|_{L^{p(\cdot)}}
 \right)\,dy \right) < \infty,
\end{align}
where the supremum is taken over all cubes $Q$ in $\mathbb{R}^n$ and
$\|\cdot\|$ denotes the operator norm of
matrices mapping $\mathbb{R}^n$ to $\mathbb{C}^m$ [see \eqref{def norm matrix}].
\end{itemize}
\end{definition}

It is obvious that \eqref{def matrix Apinfty}
in the case $m=1$ reduces to \eqref{def Apinfty}.
Thus, the $\mathcal{A}_{p(\cdot),\infty}$ weight is a special case of
the matrix $\mathscr{A}_{p(\cdot),\infty}$ weight when $m=1$.

In this article, we establish some fundamental properties of
both $\mathcal{A}_{p(\cdot),\infty}$ weights and $\mathscr{A}_{p(\cdot),\infty}$ weights.
Observe that the $p(\cdot)$-th power of $\mathcal{A}_{p(\cdot),\infty}$ weights
naturally appears in the definition of weighted variable Lebesgue spaces
(see Definition \ref{def weight Leb}) and hence plays a vital role
in the studies related to $\mathcal{A}_{p(\cdot),\infty}$ weights.
Indeed, we prove that a scalar weight belongs to $\mathcal{A}_{p(\cdot),\infty}$
if and only if its $p(\cdot)$-th power
is a scalar $A_\infty$ weight.
Using this, we characterize the $\mathcal{A}_{p(\cdot),\infty}$ condition
by the minimal operator [see \eqref{def minimal} for the definition
of the minimal operator]. Based on these, we further establish the
reverse H\"older's inequality for $\mathcal{A}_{p(\cdot),\infty}$ weights
in variable Lebesgue spaces with explicit constants
and, combining this with the aforementioned relationship between
$\mathcal{A}_{p(\cdot),\infty}$ weights and $A_\infty$ weights,
we prove that, for any weight $w$,
the reverse H\"older's inequality holds in variable Lebesgue spaces
if and only if $w$ is an $\mathcal{A}_{p(\cdot),\infty}$ weight.
For the matrix weight case, we first introduce the reducing operators related to matrix $\mathscr{A}_{p(\cdot),\infty}$ weights.
Then, by connecting the matrix $\mathscr{A}_{p(\cdot),\infty}$
with the scalar $\mathcal{A}_{p(\cdot),\infty}$ weights,
we establish the reverse H\"older's inequality for $\mathscr{A}_{p(\cdot),\infty}$ weights,
which can further be used to characterize the $\mathscr{A}_{p(\cdot),\infty}$ weights.
Finally, for further applications to variable matrix-weighted function spaces,
we introduce the upper and the lower dimensions for $\mathscr{A}_{p(\cdot),\infty}$ weights
and use these concepts to establish a sharp estimate involving reducing operators.

The reason for  introducing  $\mathcal{A}_{p(\cdot),\infty}$ weights lies in that
the constants in their related reverse H\"older's inequality depend only on the weight constant, not on the involved  weight itself. This is not the case for the $A_\infty$ weights.
Indeed, since we show that any $A_\infty$ weight must be the $p(\cdot)$-th power of
one $\mathcal{A}_{p(\cdot),\infty}$ weight,
we can also establish a reverse H\"older's inequality for any $A_\infty$ weight $\mathbb{W}$
in $\widetilde{L}^{p(\cdot)}_{\mathbb{W}}$ [see \eqref{reverse Holder WW} for more details].
However, the constants in such a reverse H\"older's inequality depend
on the weight $\mathbb{W}$ itself, not only on $[\mathbb{W}]_{A_\infty}$;
see Remark \ref{rem reverse Holder Apinfty} for more details.
Also, although $w\in \mathcal{A}_{p(\cdot),\infty}$ if and only if $w^{p(\cdot)}\in A_\infty$,
in the proof of the reverse H\"older's inequality for $\mathcal{A}_{p(\cdot),\infty}$ weights
we can not  directly use the known reverse H\"older's inequality for $A_\infty$ weights
(see, for example, \cite{hp13}) because, when $p(\cdot)$ is not a constant exponent,
we can not compare $[w]_{\mathcal{A}_{p(\cdot),\infty}}$ with $[w^{p(\cdot)}]_{A_\infty}$.
To overcome this difficulty,
we make full use of the equivalent expressions of $A_\infty$ weights
and  the properties of variable Lebesgue spaces.

We point out that these fundamental properties of $\mathscr{A}_{p(\cdot),\infty}$
obtained in this article play a key role in developing a theory of variable matrix-weighted
Besov spaces in \cite{yyz25} and will also surely be crucial tools in
establishing a corresponding theory of variable matrix-weighted
Triebel--Lizorkin spaces in a forthcoming article.

The organization of the reminder of this article is as follows.

In Section \ref{sec weight},
we study the properties of $\mathcal{A}_{p(\cdot),\infty}$ weights.
We first recall some basic properties of variable Lebesgue spaces
and give an equivalent characterization of $\mathcal{A}_{p(\cdot),\infty}$ when $p_- > 1$
(see Theorem \ref{w reverse}).
Then, in Subsection \ref{sec Apinfty A},
we establish the equivalent relationship between
$\mathcal{A}_{p(\cdot),\infty}$ weights and $A_\infty$ weights
in the sense of ignoring the $p(\cdot)$-th power
(see Theorem \ref{Apinfty A}).
To achieve this goal, through leveraging the equivalent characterizations
of $A_\infty$ weights,
we give an explicit quantitative estimate about the $p(\cdot)$-th power
$w^{p(\cdot)}$ of $w\in \mathcal{A}_{p(\cdot),\infty}$, which is proved to be
an $A_\infty$ weight (see Proposition \ref{reverse Holder w1})
and, moreover, we show that the $\frac{1}{p(\cdot)}$-th power
of $A_\infty$ weights belongs to $\mathcal{A}_{p(\cdot),\infty}$
(see Proposition \ref{Apinfty Ainfty}).
In Subsection \ref{sec Apinfty 1}, applying Theorem \ref{Apinfty A},
we establish the proper inclusion relations among $\mathcal{A}_{p(\cdot)}$, $\mathcal{A}_{p(\cdot),\infty}$,
and $A_\infty$ (see Proposition \ref{Ap Apinfty 2})
and characterize the $\mathcal{A}_{p(\cdot),\infty}$ condition by the minimal operator
(see Theorem \ref{inf Apinfty}).
In Subsection \ref{sec Apinfty},
we first establish the reverse H\"older's inequality
for $\mathcal{A}_{p(\cdot),\infty}$ weights
in variable Lebesgue spaces
and determine   the dependence property of
the constants of the reverse H\"older's inequality
(see Theorem \ref{reverse Holder Apinfty}).
Finally,  applying Theorem \ref{Apinfty A},
we further find that any weight $w$ satisfying the reverse H\"older's inequality
in variable Lebesgue spaces belongs to  $\mathcal{A}_{p(\cdot),\infty}$
(see Theorem \ref{Apinfty Holder}), which implies the equivalence between
the   $\mathcal{A}_{p(\cdot),\infty}$ condition and its related reverse H\"older's inequality.

In Section \ref{sec matrix}, we focus on
matrix $\mathscr{A}_{p(\cdot),\infty}$ weights.
We first introduce the reducing operators for matrix $\mathscr{A}_{p(\cdot),\infty}$ weights
(see Proposition \ref{ext redu}).
Then, in Subsection \ref{sec proApinfty}, we extend some scalar results to
the matrix case.
As an extension of Proposition \ref{Ap Apinfty 2},
we give the proper inclusion relations among $\mathscr{A}_{p(\cdot)}$, $\mathscr{A}_{p(\cdot),\infty}$,
and $A_{1,\infty}$ (see Proposition \ref{Ap Apinfty}).
Then, as an application of Theorem \ref{reverse Holder Apinfty},
we obtain the reverse H\"older's inequality for $\mathscr{A}_{p(\cdot),\infty}$ weights
in variable Lebesgue spaces (see Theorem \ref{reverse Holder M}).
Additionally, as a  matrix-valued variant of Theorem \ref{w reverse},
we give an equivalent characterization of $\mathscr{A}_{p(\cdot),\infty}$ weights
when $p_- > 1$ (see Proposition \ref{WM reverse 2}).
Finally, in Subsection \ref{sec Apdimesnion},
for further applications of matrix weights to developing
the corresponding theory of function spaces,
we introduce and study the upper and the lower dimensions
of $\mathscr{A}_{p(\cdot),\infty}$ weights
(see Proposition \ref{dim ext}).

In the end, we make some conventions on symbols.
Let $\mathbb{Z}$ be the collection of all integers,
$\mathbb{N}:= \{1,2,\dots\},$
and $\mathbb{Z}_+:=\mathbb{N}\cup\{0\}$.
For any measurable set $E$ in $\mathbb{R}^n$,
denote by the \emph{symbol $\mathscr{M}(E)$} the set
of all measurable functions on $E$ and, when $E = \mathbb{R}^n$,
we simply write $\mathscr{M}(\mathbb{R}^n)$ as $\mathscr{M}$.
In addition, we use the symbol $L^p_{\rm loc}$ with $p\in (0,\infty)$ to denote
the set of all locally $p$-integrable functions on $\mathbb{R}^n$.
A \emph{cube} $Q$ in $\mathbb{R}^n$ always has finite edge length
and its edges are always assumed to be parallel to coordinate axes,
but $Q$ is not necessary to be open or closed.
The symbol $Q(x,l)$ with $x \in \mathbb{R}^n$ and $l\in (0,\infty)$
denotes the cube centered at the point $x$ with the edge length $l$.
If $E$ is a measurable set in $\mathbb{R}^n$,
then we denote by $\mathbf{1}_E$ its \emph{characteristic function}
and, for any bounded measurable set $E\subset \mathbb{R}^n$ with $|E| \neq 0$
and for any $f\in L^1_{\rm loc}$,
let
$$ \fint_E f(x)\,dx := \frac{1}{|E|} \int_E f(x)\,dx. $$
For any non-empty set $E$ in $\mathbb{R}^n$ and for any $x\in \mathbb{R}^n \setminus E$,
the symbol ${\mathop\mathrm{\,dist\,}}(x,E)$ is defined to be the \emph{distance} from $x$ to $E$,
that is, ${\mathop\mathrm{\,dist\,}}(x,E) := \inf_{y\in E} \left|x-y\right|$
and, moreover, for any sets $E$ and $F$ in $\mathbb{R}^n$,
\begin{align}\label{def d}
{\mathop\mathrm{\,dist\,}}(E,F) := \sup_{x\in E} {\mathop\mathrm{\,dist\,}}(x,F).
\end{align}
For any $y\in (0,\infty)$, let
\begin{align}\label{def log plus}
\log_+(y) := \max\left\{\log(y),0\right\}.
\end{align}
For any $p\in [1,\infty]$, let $p'$ be its conjugate index,
that is, $\frac1p+\frac{1}{p'} = 1$.
We always use $C$ to denote a positive constant independent of
the main parameters involved, but it may vary from line to line.
We also use $C_{\alpha,\beta,\dots}$ to denote a positive constant depending on
the indicated parameters $\alpha,\beta,\dots$.
The symbol $f\lesssim g$ means $f\leq  Cg$
and, if $f\lesssim g\lesssim f$, we then write $f\sim g$.
Finally, in all proofs we consistently retain the symbols
introduced in the original theorem (or related statement).

\section{Variable Scalar Weights}\label{sec weight}
In this section, we focus on the properties of $\mathcal{A}_{p(\cdot),\infty}$ weights.
We first give two equivalent expressions of $\mathcal{A}_{p(\cdot),\infty}$ weights.
After that, in Subsection \ref{sec Apinfty A},
we establish the equivalence between $\mathcal{A}_{p(\cdot),\infty}$ and $A_\infty$ weights
and then, in Subsection \ref{sec Apinfty 1},
as an application of the equivalence obtained in Subsection \ref{sec Apinfty A},
we characterize the $\mathcal{A}_{p(\cdot),\infty}$ weights by the minimal operator.
Finally, in Subsection \ref{sec Apinfty},
we discuss about the reverse H\"older's inequality for $\mathcal{A}_{p(\cdot),\infty}$ weights
in variable Lebesgue spaces.

We begin with some basic concepts of variable Lebesgue spaces.
A measurable  function $p:  \mathbb{R}^n\to (0,\infty]$ is called an \emph{exponent function}.
We use the \emph{symbol $\mathcal{P}$} to denote the set of all exponent functions $p:  \mathbb{R}^n \to [1,\infty]$,
and we use the \emph{symbol $\mathcal{P}_0$} to denote the set of all exponent functions $p:  \mathbb{R}^n \to (0,\infty]$
satisfying $\mathop{\rm{ess}\inf}_{x\in \mathbb{R}^n} p(x) > 0$.
For any $p(\cdot)\in \mathcal{P}_0$ and any set $E$ in $\mathbb{R}^n$,
let
$$ p_+(E) := \mathop{\rm{ess}\sup}_{x\in E} p(x)\ \ \text{and}\ \  p_-(E) := \mathop{\rm{ess}\inf}_{x\in E} p(x); $$
moreover, write $p_+:=p_+(\mathbb{R}^n)$  and $p_-:=p_-(\mathbb{R}^n)$.

We now  recall the definition of variable Lebesgue spaces
(see, for instance, \cite[Definition 2.16]{cf13}).
\begin{definition}
The \emph{variable Lebesgue space $L^{p(\cdot)}$} associated
with the exponent function $p:  \mathbb{R}^n \to (0,\infty)$
is defined to be the set of all $f \in \mathscr{M}$
such that
$$ \|f\|_{L^{p(\cdot)}} := \inf\left\{ \lambda\in (0,\infty):  \rho_{L^{p(\cdot)}}\left(\frac{f}{\lambda}\right) \leq 1 \right\}<\infty, $$
where $\rho_{L^{p(\cdot)}}$ is the \emph{variable exponent modular}
defined by setting, for any $f \in \mathscr{M}$,
$$\rho_{L^{p(\cdot)}} (f) :=  \int_{\mathbb{R}^n} \left|f(x)\right|^{p(x)}\,dx. $$
\end{definition}

The following log-H\"older continuous condition of variable exponents
is a widely used assumption for variable exponents (see, for instance, \cite[Definition 2.2]{cf13}).

\begin{definition}
A measurable real-valued  function $r$ on $\mathbb{R}^n$ is said to be
\emph{locally log-H\"older continuous},
denoted by $r(\cdot)  \in LH_0$,
if there exists a positive constant $C_0$
such that, for any $x,y\in\mathbb{R}^n$ with $|x-y| < \frac12$,
\begin{align}\label{clogp 1}
|r(x)-r(y)| \leq -\frac{C_0}{\log(|x-y|)}.
\end{align}
The function $r$ is said to be \emph{log-H\"older continuous at infinity},
denoted by $r(\cdot)  \in LH_\infty$,
if there exist positive constants $r_\infty$ and $C_{\infty}$
such that, for any $x\in \mathbb{R}^n$,
\begin{align}\label{clogp infty}
|r(x)-r_\infty| \leq \frac{C_\infty}{\log(e+|x|)}.
\end{align}
Furthermore, $r$ is said to be \emph{globally log-H\"older continuous},
denoted by $r(\cdot) \in LH$,
if $r$ is both locally log-H\"older continuous
and log-H\"older continuous at infinity.
\end{definition}
\begin{remark}\label{rem p}
From \cite[Proposition 2.3]{cf13},
it follows that, if $r(\cdot) \in \mathcal{P}_0$ with $r(\cdot)\in LH$,
then $\frac{1}{r(\cdot)}\in LH$.
\end{remark}

The following result is an equivalent expression of $\mathcal{A}_{p(\cdot),\infty}$ weights
(see, for instance, \cite[Proposition 7.3.2]{g14} for the related result on scalar $A_\infty$ weights).

\begin{proposition}\label{eq infty}
Let $p(\cdot)\in \mathcal{P}_0$ with $p(\cdot) \in LH$
and let $w$ be a weight function.
If $\log_+( w^{-1} ) \in L^1_{\rm loc}$,
where $\log_+$ is the same as in \eqref{def log plus},
then
\begin{align}\label{eq infty 1}
\left[w\right]_{\mathcal{A}_{p(\cdot),\infty}}
&\sim \sup_{Q}
\sup_{H\in \mathcal{F}_{Q,w}} \frac{\|w\mathbf{1}_Q \|_{L^{p(\cdot)}}}{\| wH\mathbf{1}_Q \|_{L^{p(\cdot)}}}  \exp\left( \fint_{Q} \log\left(H(y)\right)\,dy \right),
\end{align}
where the supremum is taken over all cubes $Q$ in $\mathbb{R}^n$,
the positive equivalence constants depend only on $p(\cdot)$,
and
\begin{align*}
\mathcal{F}_{Q,w} := &\left\{ H\in \mathscr{M}:
\left\|wH\mathbf{1}_Q \right\|_{L^{p(\cdot)}} \neq 0, \log H \in L^1(Q) \right\}. \nonumber
\end{align*}
\end{proposition}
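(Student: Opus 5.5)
One direction is immediate. Call the right-hand side of \eqref{eq infty 1} $S$. If $\exp(\fint_Q\log(w^{-1}))$ is finite, I would take $H:=w^{-1}$. This is legitimate because $\log_+(w^{-1})\in L^1_{\rm loc}$ and the exponential is finite; if the average of $\log(w^{-1})$ is $-\infty$, the term is $0$ and contributes nothing to the supremum. With this choice $wH\mathbf{1}_Q=\mathbf{1}_Q$, so $\|wH\mathbf{1}_Q\|_{L^{p(\cdot)}}=\|\mathbf{1}_Q\|_{L^{p(\cdot)}}\neq0$, and $H\in\mathcal{F}_{Q,w}$. The quotient then equals exactly the quantity inside \eqref{def Apinfty}. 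Hence $[w]_{\mathcal{A}_{p(\cdot),\infty}}\le S$ with constant $1$.

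For the reverse inequality, fix $Q$ and $H\in\mathcal{F}_{Q,w}$. Set $G:=wH$, so that $\log G=\log w+\log H$ on $Q$. Then
\[
\exp\Big(\fint_Q\log H\Big)=\exp\Big(\fint_Q\log w^{-1}\Big)\exp\Big(\fint_Q\log G\Big).
\]
Comparing with \eqref{def Apinfty}, it suffices to prove the variable Jensen-type inequality
\begin{align*}
\exp\Big(\fint_Q\log |G(y)|\,dy\Big)\le C\,\frac{\|G\mathbf{1}_Q\|_{L^{p(\cdot)}}}{\|\mathbf{1}_Q\|_{L^{p(\cdot)}}}
\end{align*}
for every measurable $G$, with $C$ depending only on $p(\cdot)$. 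If the left side is $0$, there is nothing to prove. Multiplying the quantity in \eqref{eq infty 1} by $\|G\mathbf{1}_Q\|/\|\mathbf{1}_Q\|$ and using this inequality then gives $S\le C[w]_{\mathcal{A}_{p(\cdot),\infty}}$.

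To prove the Jensen-type inequality, I would normalize by homogeneity so that $\|G\mathbf{1}_Q\|_{L^{p(\cdot)}}=1$, hence $\int_Q|G|^{p(x)}dx\le1$. Since $\exp\fint\log$ is monotone and $\exp(\fint_Q\log |G|)=\exp(\fint_Q \frac{1}{p(x)}\log|G|^{p(x)}dx)$, I would compare $\frac1{p(x)}$ with the constant $\frac1{p_Q}$, where $p_Q$ is the harmonic mean, $\frac{1}{p_Q}:=\fint_Q\frac1{p}$; alternatively one can use $p_-(Q)$ and $p_+(Q)$ and split the cube. The key step is the LH-based estimate $\|\mathbf{1}_Q\|_{L^{p(\cdot)}}\sim|Q|^{1/p_Q}$ (or $\sim|Q|^{1/p_\infty}$ for large cubes), together with the standard consequence of $LH_0$ that $|Q|^{-(1/p_-(Q)-1/p_+(Q))}\lesssim1$ for $|Q|\le1$. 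Both are known facts from \cite{cf13} and hold for $p(\cdot)\in\mathcal{P}_0$ via Remark \ref{rem p}. I would split $\log|G|^{p(x)}=\log_+ -\log_-$. On the positive part I would use the bounds $\frac1{p(x)}\le\frac1{p_-(Q)}$ and the ordinary Jensen inequality, giving $\exp(\frac1{p_-}\fint\log_+|G|^{p})\le(\fint_Q (1+|G|^{p}))^{1/p_-}\le(1+|Q|^{-1})^{1/p_-(Q)}$. On the negative part I would use $\frac1{p(x)}\ge\frac1{p_+(Q)}$, which only helps. This yields roughly $\exp(\fint\log|G|)\lesssim|Q|^{-1/p_-(Q)}$ for small cubes, and $LH_0$ turns this into $\lesssim|Q|^{-1/p_Q}\sim\|\mathbf{1}_Q\|^{-1}$.

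The main obstacle is large cubes, where $p_-(Q)$ and $p_+(Q)$ can differ substantially, so the crude splitting fails. There I would first try the $LH_\infty$ device of \cite{cf13}: compare $|G|^{p(x)}$ with $|G|^{p_\infty}$ up to an error term $\int_Q(e+|x|)^{-nN}dx$, which gives $\int_Q|G|^{p_\infty}\lesssim1$ and then $\exp(\fint\log|G|)\le(\fint|G|^{p_\infty})^{1/p_\infty}\lesssim|Q|^{-1/p_\infty}\sim\|\mathbf{1}_Q\|^{-1}$. The difficulty is carrying out that comparison when $\exp\fint\log$ replaces an $L^1$ average. I expect to handle it by the same positive/negative splitting of $\log|G|$ and by tracking the constants so that they depend only on $p(\cdot)$.
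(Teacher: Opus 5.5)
Your first direction and your reduction of the converse to the geometric-mean inequality
\[
\exp\Big(\fint_Q\log|G(y)|\,dy\Big)\le C\,\frac{\|G\mathbf{1}_Q\|_{L^{p(\cdot)}}}{\|\mathbf{1}_Q\|_{L^{p(\cdot)}}}
\]
are exactly what the paper does. One small correction: what $H=w^{-1}\in\mathcal{F}_{Q,w}$ needs is $\fint_Q\log w^{-1}>-\infty$, i.e.\ that the exponential is \emph{positive}; finiteness is automatic from $\log_+(w^{-1})\in L^1_{\rm loc}$. Your small-cube argument is also correct.

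The gap is the large-cube case. You leave it open, and the step you propose for it fails as written. Lemma \ref{rs f 1} needs $|f|\le1$, and without truncation $\int_Q|G|^{p(\cdot)}\le1$ does not imply $\int_Q|G|^{p_\infty}\lesssim1$. For a counterexample, suppose $p\le p_\infty-\eta$ on a small cube $B\subset Q$; this is possible for any $LH$ exponent that dips below $p_\infty$ somewhere. Take $G:=M\mathbf{1}_B$ with $|B|=M^{-(p_\infty-\eta)}$ and $M\ge1$. Then $\int_Q|G|^{p(x)}\,dx\le1$, but $\int_Q|G|^{p_\infty}\,dx=M^{\eta}\to\infty$.

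Your sign-splitting idea does repair this once it is done multiplicatively. Since $\log|G|=\log\max\{|G|,1\}+\log\min\{|G|,1\}$ pointwise, the geometric mean factors into two pieces.
\begin{itemize}
\item For $|Q|\ge1$, the first factor is at most $\big(\fint_Q(1+|G|^{p(x)})\,dx\big)^{1/p_-}\le 2^{1/p_-}$.
\item Lemma \ref{rs f 1}, now legitimately applied to $\min\{|G|,1\}$, gives $\int_Q\min\{|G|,1\}^{p_\infty}\lesssim 1$. By Jensen and Lemma \ref{weight bound}(ii), the second factor is then $\lesssim|Q|^{-1/p_\infty}\sim\|\mathbf{1}_Q\|_{L^{p(\cdot)}}^{-1}$.
\end{itemize}

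The paper avoids all case analysis. Jensen first gives $\exp(\fint_Q\log|G|)\le(\fint_Q|G|^r)^{1/r}$ with $r:=\min\{1,p_-\}$. Lemma \ref{est fQ} for the exponent $p(\cdot)/r\in\mathcal{P}$ (just H\"older's inequality plus $\|\mathbf{1}_Q\|_{L^{q(\cdot)}}\|\mathbf{1}_Q\|_{L^{q'(\cdot)}}\sim|Q|$ from Lemma \ref{est Q}), together with Lemma \ref{con f}, then yields the right-hand side directly, uniformly in the size of $Q$. The difficulty you mention, of $\exp\fint\log$ replacing an $L^1$ average, disappears once you apply Jensen before anything else.
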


If a positive constant $C$ depends on some parameters associated with $p(\cdot)$
or, more precisely, on some of  $\{p_-,p_+,p_\infty,C_0,C_\infty\}$,
then $C$ is said to \emph{depend on $p(\cdot)$},
denoted by $C_{p(\cdot)}$.
To prove Proposition \ref{eq infty},
we need some basic properties of variable Lebesgue spaces.
The first one is H\"older's inequality in
variable Lebesgue spaces as follows, which is exactly \cite[Theorem 2.26]{cf13}.

\begin{lemma}\label{Holder}
Let $p(\cdot) \in \mathcal{P}$.
If $f\in L^{p(\cdot)}$ and $g\in L^{p'(\cdot)}$,
where $p'(\cdot) \in \mathcal{P}$ is an exponent function
such that, for almost every $x\in\mathbb{R}^n$,
$ \frac{1}{p(x)} + \frac{1}{p'(x)} = 1 $,
then $fg\in L^1$ and
there exists a positive constant $C_{p(\cdot)}$, depending only on $p(\cdot)$,
such that
$$ \int_{\mathbb{R}^n} \left| f(x)g(x) \right|\,dx \leq C_{p(\cdot)} \|f\|_{L^{p(\cdot)}} \|g\|_{L^{p'(\cdot)}}. $$
\end{lemma}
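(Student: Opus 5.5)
This is Hölder's inequality in variable Lebesgue spaces, and I would prove it by reducing to the pointwise Young inequality through a normalization, handling the regions where the exponent equals $1$ or $\infty$ separately. First, the trivial case: if $\|f\|_{L^{p(\cdot)}}=0$ or $\|g\|_{L^{p'(\cdot)}}=0$, then $fg=0$ almost everywhere and there is nothing to prove. Otherwise, by homogeneity of the Luxemburg norm, I normalize so that $\|f\|_{L^{p(\cdot)}}=\|g\|_{L^{p'(\cdot)}}=1$. The claim then reduces to $\int|fg|\le C_{p(\cdot)}$; applying it to $f/\|f\|$ and $g/\|g\|$ and rescaling gives the stated inequality, and in particular $fg\in L^1$.

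Next, I split $\mathbb{R}^n$ into three sets: $\Omega_1:=\{p=1\}$ (where $p'=\infty$), $\Omega_\infty:=\{p=\infty\}$ (where $p'=1$), and $\Omega_*:=\{1<p<\infty\}$. For the modular on sets where the exponent is infinite, I use the standard convention of \cite{cf13}: the modular includes $\|f\|_{L^\infty(\Omega_\infty)}$. Norm one then gives $|f|\le 1$ a.e.\ on $\Omega_\infty$, $|g|\le1$ a.e.\ on $\Omega_1$, and
\[
\int_{\Omega_1}|f|+\int_{\Omega_*}|f|^{p(x)}\,dx\le 1,\qquad \int_{\Omega_\infty}|g|+\int_{\Omega_*}|g|^{p'(x)}\,dx\le 1,
\]
where the last two bounds use that the modular of a function of norm at most one is at most one (left continuity of the modular in $\lambda$). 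Hence $\int_{\Omega_1}|fg|\le\int_{\Omega_1}|f|\le1$ and $\int_{\Omega_\infty}|fg|\le\int_{\Omega_\infty}|g|\le1$.

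On $\Omega_*$ I apply Young's inequality pointwise:
\[
|f(x)g(x)|\le \frac{|f(x)|^{p(x)}}{p(x)}+\frac{|g(x)|^{p'(x)}}{p'(x)}\le |f(x)|^{p(x)}+|g(x)|^{p'(x)}.
\]
Integrating gives $\int_{\Omega_*}|fg|\le2$, so altogether $\int|fg|\le4$. With the sharper bookkeeping using the factors $1/p$ and $1/p'$, one can obtain the constant $1/p_-+1/p'_-+\|\mathbf 1_{\Omega_\infty}\|_\infty+\|\mathbf 1_{\Omega_1}\|_\infty$, which is bounded by $4$ and depends only on $p(\cdot)$.

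The main point to check is the normalization step, namely that $\|f\|=1$ implies $\rho(f)\le1$. This follows from monotone convergence: $\rho(f/\lambda)\le1$ for all $\lambda>1$, and letting $\lambda\downarrow1$ gives $\rho(f)\le1$. Once the $p=\infty$ convention is fixed, the rest is routine.
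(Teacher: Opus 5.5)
Your proof is correct and is essentially the standard argument for the cited result. The paper gives no proof of its own; it simply invokes \cite[Theorem 2.26]{cf13}, whose proof also normalizes both norms to one and uses $\rho(f),\rho(g)\le 1$. It then splits $\mathbb{R}^n$ into $\{p=1\}$, $\{p=\infty\}$ and $\{1<p<\infty\}$ and applies Young's inequality on the last set, giving a constant at most $4$. Your adoption of the \cite{cf13} convention for the modular on $\{p=\infty\}$ is the right choice, since the paper's own definition of $L^{p(\cdot)}$ only treats finite exponents.
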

In what follows, for any $p(\cdot) \in \mathcal{P}_0$ and any cube $Q$,
let $p_Q := [\fint_Q \frac{dx}{p(x)}]^{-1}$. Clearly,
when $p(\cdot)\equiv p$ is a constant, we have $p_Q = p$.
The following lemma is precisely \cite[Theorem 4.5.7]{dhr17}.

\begin{lemma}\label{est Q}
Let $p(\cdot) \in \mathcal{P}$ with $p(\cdot) \in LH$.
Then, for any cube $Q$ in $\mathbb{R}^n$,
$$ \left| Q \right|^{\frac{1}{p_Q}}
\sim \left\| \mathbf{1}_Q \right\|_{L^{p(\cdot)}}
\ \ \text{and}\ \
 \left| Q \right|^{\frac{1}{p'_Q}}
\sim \left\| \mathbf{1}_Q \right\|_{L^{p'(\cdot)}}, $$
where the positive equivalence constants depend only on $p(\cdot)$ and $n$.
\end{lemma}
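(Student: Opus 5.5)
The plan is to prove the first equivalence and then deduce the second from it. Set $\lambda_Q:=|Q|^{1/p_Q}$. Throughout I use Remark~\ref{rem p}, which gives $\frac1{p(\cdot)}\in LH$. I also use the fact that $LH_\infty$ forces $p_+<\infty$; in the general \cite{dhr17} setting one works directly with $1/p$. The dual statement follows from the same argument applied to $p'(\cdot)$. Indeed, $\frac{1}{p'(\cdot)}=1-\frac{1}{p(\cdot)}$ is again log-H\"older continuous, and directly from the definition of $p_Q$ one has $\frac{1}{(p')_Q}=1-\frac{1}{p_Q}=\frac{1}{p'_Q}$. Where $p'=\infty$, I would phrase everything in terms of $1/p'$ and use the $L^\infty$ part of the norm.

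\textbf{Small cubes, $|Q|\le 1$.} The key point is that $\frac1p\in LH_0$ gives the oscillation bound
$$\frac{1}{p_-(Q)}-\frac{1}{p_+(Q)}\le \frac{C}{\log(e+1/\ell(Q))},$$
and hence $|Q|^{-(1/p_-(Q)-1/p_+(Q))}\le C$. Since $\frac1{p_Q}$ lies between $\frac1{p_+(Q)}$ and $\frac1{p_-(Q)}$, this yields, uniformly for $x\in Q$,
$$|Q|^{1/p(x)}\sim |Q|^{1/p_Q}.$$
Consequently
$$\rho_{L^{p(\cdot)}}\bigl(c\,\mathbf 1_Q/\lambda_Q\bigr)=\int_Q c^{p(x)}\,|Q|^{-p(x)/p_Q}\,dx,$$
and each integrand equals $c^{p(x)}|Q|^{-1}$ up to a factor bounded by $C^{p_+}$. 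Choosing $c$ small gives a modular at most $1$, and choosing $c$ large gives a modular larger than $1$. This yields $\|\mathbf 1_Q\|_{L^{p(\cdot)}}\sim\lambda_Q$.

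\textbf{Large cubes, $|Q|\ge1$.} I would compare both sides with $|Q|^{1/p_\infty}$ in two steps.

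First, $|Q|^{1/p_Q}\sim|Q|^{1/p_\infty}$. This needs
$$\Bigl|\frac1{p_Q}-\frac1{p_\infty}\Bigr|\le \fint_Q \frac{C}{\log(e+|x|)}\,dx\lesssim \frac{1}{\log(e+\ell(Q))}.$$
The last inequality holds because the set $\{x\in Q:\ |x|<\varepsilon\ell(Q)\}$ occupies at most a fraction $C\varepsilon^n$ of $Q$, while $1/\log(e+|x|)$ is integrable near $0$ relative to $\ell(Q)^n$ (a direct polar-coordinates computation). Hence $|Q|^{|1/p_Q-1/p_\infty|}\le C$.

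Second, $\|\mathbf 1_Q\|_{L^{p(\cdot)}}\sim|Q|^{1/p_\infty}$. Here I use the standard $LH_\infty$ device: for $t\in[0,1]$ and $N>n$,
$$t^{p(x)}\le C\,t^{p_\infty}+(e+|x|)^{-Np_-},$$
together with the symmetric inequality with $p$ and $p_\infty$ swapped. Applied with $t=c|Q|^{-1/p_\infty}\le 1$, this shows that the modulars for exponents $p(\cdot)$ and $p_\infty$ differ by at most the integrable error $\int(e+|x|)^{-Np_-}\,dx<\infty$. Rescaling by constants then gives comparable norms.

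\textbf{Main obstacle.} I expect the large-cube case to be the main difficulty, specifically the averaged estimate for $1/p_Q-1/p_\infty$ when $Q$ contains or lies near the origin. The pointwise $LH_\infty$ bound is useless near $0$, and one must average $1/\log(e+|x|)$ over $Q$ to get a bound of order $1/\log\ell(Q)$. The modular comparison also needs some care: the constant in the inequality for $t^{p(x)}$ must depend only on $p(\cdot)$, and the additive error must be turned into a multiplicative constant, which uses the normalization $\lambda\ge1$ available since $|Q|\ge1$.
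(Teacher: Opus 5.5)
The paper gives no proof of this lemma; it quotes it from \cite[Theorem 4.5.7]{dhr17}. Your argument for the first equivalence is the standard direct one and is correct. Diening's condition handles $|Q|\le 1$. For $|Q|\ge 1$ the chain $|Q|^{1/p_Q}\sim|Q|^{1/p_\infty}\sim\|\mathbf{1}_Q\|_{L^{p(\cdot)}}$ works, and the additive error in the modular comparison is absorbed because $\rho_{L^{p(\cdot)}}(\mathbf{1}_Q)=|Q|\ge 1$ forces $\|\mathbf{1}_Q\|_{L^{p(\cdot)}}\ge 1$. One small point: splitting at $|x|<\varepsilon\ell(Q)$ with $\varepsilon$ fixed only yields $C\varepsilon^n+[\log(e+\varepsilon\ell(Q))]^{-1}$. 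You must let $\varepsilon$ shrink with $\ell(Q)$, or compare with the centered ball of equal volume and integrate radially, as the paper does around \eqref{ww BB eq 8}.

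The genuine gap is the second equivalence when $p_-=1$. The statement allows this case, and the paper relies on it: Lemma \ref{est fQ}, which needs the upper bound for $\|\mathbf{1}_Q\|_{L^{p'(\cdot)}}$, is applied to $p(\cdot)/\min\{1,p_-\}$, whose infimum can be $1$. Then $q:=p'$ is unbounded, equals $\infty$ on $\{p=1\}$, and possibly $q_\infty=\infty$; only $1/q$ is log-H\"older. Both of your tools break for $q$:
\begin{itemize}
\item the small-cube factor $C^{q_+}$ is infinite;
\item the inequality $t^{q(x)}\le Ct^{q_\infty}+(e+|x|)^{-Nq_-}$ (Lemma \ref{rs f 1}) needs $|q(x)-q_\infty|\lesssim[\log(e+|x|)]^{-1}$ for $q$ itself, which fails.
\end{itemize}
Your sentence about working with $1/p'$ and the $L^\infty$ part names the right fix but does not carry it out, and that is where the work lies.

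Here is one way to close it.
\begin{itemize}
\item \emph{Lower bound.} This is free: Lemma \ref{Holder} and the first part give $|Q|\lesssim\|\mathbf{1}_Q\|_{L^{p(\cdot)}}\|\mathbf{1}_Q\|_{L^{p'(\cdot)}}\lesssim|Q|^{1/p_Q}\|\mathbf{1}_Q\|_{L^{p'(\cdot)}}$.
\item \emph{Upper bound, $|Q|\le 1$.} Keep the comparison inside the power. We have $|Q|^{1/q(x)}\le C|Q|^{1/q_Q}$ for every $x\in Q$, including where $q(x)=\infty$ (the left side is then $1$). So $f:=(2C)^{-1}|Q|^{-1/q_Q}\mathbf{1}_Q$ satisfies $f^{q(x)}\le 2^{-q(x)}|Q|^{-1}$ where $q<\infty$, and $f\le\frac12$ where $q=\infty$. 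Its modular is therefore at most $1$, with no use of $q_+$.
\item \emph{Upper bound, $|Q|\ge 1$.} Your estimate $|Q|^{1/q_Q}\sim|Q|^{1/q_\infty}$ involves only $1/q$ and survives. The modular comparison must be replaced by its version with the constant inside the argument: $\varphi_{q(x)}(\beta t)\le\varphi_{q_\infty}(t)+(e+|x|)^{-m}$ for $t\in[0,1]$, where $\varphi_s(t):=t^s$ for $s<\infty$ and $\varphi_\infty(t):=\infty\cdot\mathbf{1}_{(1,\infty)}(t)$. This needs only $|1/q(x)-1/q_\infty|\lesssim[\log(e+|x|)]^{-1}$; it is the form used when exponents may be infinite, as in \cite{dhr17}. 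Taking $t:=|Q|^{-1/q_\infty}$ and $m>n$, it bounds the modular of $\beta|Q|^{-1/q_\infty}\mathbf{1}_Q$ by $1+\int_{\mathbb{R}^n}(e+|x|)^{-m}\,dx$. Convexity then gives $\|\mathbf{1}_Q\|_{L^{q(\cdot)}}\lesssim|Q|^{1/q_\infty}$.
\end{itemize}
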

\begin{remark}\label{rem est Q}
By \cite[Proposition 3.8]{cr24},
we find that, for any $p(\cdot) \in \mathcal{P}$ with $p(\cdot) \in LH$ and any cube $Q$ in $\mathbb{R}^n$,
$\frac{1}{6} |Q|^{\frac{1}{p_Q}} \leq \|\mathbf{1}_Q\|_{L^{p(\cdot)}}$.
\end{remark}
From Lemmas \ref{Holder} and \ref{est Q},
we immediately infer the following result;
we omit the details here.
\begin{lemma}\label{est fQ}
Let $p(\cdot) \in \mathcal{P}$ with $p(\cdot) \in LH$.
Then there exists a positive constant $C_{p(\cdot),n}$,
depending only on $n$ and $p(\cdot)$,
such that, for any $f\in \mathscr{M}$ and any cube $Q$ in $\mathbb{R}^n$,
\begin{align*}
\fint_Q \left|f(x)\right|\,dx \leq C_{p(\cdot),n}\frac{1}{\|\mathbf{1}_Q\|_{L^{p(\cdot)}}} \left\| f \mathbf{1}_Q \right\|_{L^{p(\cdot)}}.
\end{align*}
\end{lemma}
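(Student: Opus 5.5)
By Lemma \ref{Holder} applied to $f\mathbf{1}_Q$ and $\mathbf{1}_Q$, and then Lemma \ref{est Q}, we get the claim. Without loss of generality $\|f\mathbf{1}_Q\|_{L^{p(\cdot)}}<\infty$; otherwise there is nothing to prove. The argument has three steps.

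First, apply Lemma \ref{Holder} with $f\mathbf{1}_Q\in L^{p(\cdot)}$ and $g:=\mathbf{1}_Q\in L^{p'(\cdot)}$. This gives
\begin{align*}
\int_Q |f(x)|\,dx \leq C_{p(\cdot)} \left\|f\mathbf{1}_Q\right\|_{L^{p(\cdot)}} \left\|\mathbf{1}_Q\right\|_{L^{p'(\cdot)}}.
\end{align*}
Dividing by $|Q|$ reduces the claim to the bound
\begin{align*}
\left\|\mathbf{1}_Q\right\|_{L^{p'(\cdot)}}\,\left\|\mathbf{1}_Q\right\|_{L^{p(\cdot)}} \lesssim |Q|,
\end{align*}
with an implicit constant depending only on $p(\cdot)$ and $n$.

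Second, we prove this bound with Lemma \ref{est Q}. It gives $\|\mathbf{1}_Q\|_{L^{p(\cdot)}}\lesssim |Q|^{1/p_Q}$ and $\|\mathbf{1}_Q\|_{L^{p'(\cdot)}}\lesssim |Q|^{1/p'_Q}$. Here $p'_Q$ is the averaged exponent of $p'(\cdot)$, so
\begin{align*}
\frac{1}{p'_Q}=\fint_Q \frac{dx}{p'(x)}=\fint_Q \left(1-\frac{1}{p(x)}\right)dx = 1-\frac{1}{p_Q}.
\end{align*}
Hence $|Q|^{1/p_Q}|Q|^{1/p'_Q}=|Q|$, which is the required bound.

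Third, note where each hypothesis is used. Applying Lemma \ref{est Q} to $p'(\cdot)$ needs the log-H\"older continuity of the conjugate exponent. Lemma \ref{est Q} as stated already covers the $p'(\cdot)$ estimate under $p(\cdot)\in LH$, so nothing more is needed. The only point requiring care is the identity $1/p'_Q=1-1/p_Q$, which follows from linearity of the average. The case $p(x)=\infty$ is allowed because $1/p$ then equals $0$.
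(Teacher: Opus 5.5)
Your proof is correct and is essentially the paper's own route. The paper omits the details, saying only that the lemma follows immediately from Lemmas \ref{Holder} and \ref{est Q}. The same H\"older step combined with the bound $\|\mathbf{1}_Q\|_{L^{p(\cdot)}}\|\mathbf{1}_Q\|_{L^{p'(\cdot)}}\lesssim |Q|$ is written out explicitly in the proof of Lemma \ref{EB 1}.
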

The following result is the convexification of variable Lebesgue spaces
(see, for instance, \cite[Proposition 2.18]{cf13} and \cite[Lemma 3.2.6]{dhr17}).
\begin{lemma}\label{con f}
Let $p(\cdot) \in\mathcal{P}_0$ with $p_+ < \infty$.
Then, for any $r \in (0,\infty)$ and any $f\in \mathscr{M}$,
$ \|f\|_{L^{rp(\cdot)}} = \| |f|^r \|^\frac1r_{L^{p(\cdot)}} $.
\end{lemma}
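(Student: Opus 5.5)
The plan is to compare the two modulars directly. Since $p_+<\infty$, the exponent $rp(\cdot)$ also takes values in $(0,\infty)$ with $\mathop{\rm ess\,inf} rp>0$ and $(rp)_+<\infty$. Hence both norms are given by the modular definition above, with no $L^\infty$-type part to handle separately. The key observation is the pointwise identity, valid for every $\lambda\in(0,\infty)$ and almost every $x$:
\begin{align*}
\left|\frac{f(x)}{\lambda}\right|^{rp(x)}
=\left(\frac{|f(x)|^r}{\lambda^r}\right)^{p(x)}.
\end{align*}
Integrating this identity over $\mathbb{R}^n$ gives
\begin{align*}
\rho_{L^{rp(\cdot)}}\left(\frac{f}{\lambda}\right)
=\rho_{L^{p(\cdot)}}\left(\frac{|f|^r}{\lambda^r}\right).
\end{align*}

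Next I would compare the admissible sets of the two infima. Let
\begin{align*}
\Lambda:=\left\{\lambda\in(0,\infty):\ \rho_{L^{rp(\cdot)}}\left(\frac{f}{\lambda}\right)\le 1\right\},\qquad
M:=\left\{\mu\in(0,\infty):\ \rho_{L^{p(\cdot)}}\left(\frac{|f|^r}{\mu}\right)\le 1\right\}.
\end{align*}
By the modular identity, $\lambda\in\Lambda$ if and only if $\lambda^r\in M$. Since $t\mapsto t^r$ is an increasing bijection of $(0,\infty)$ onto itself, $M=\{\lambda^r:\lambda\in\Lambda\}$. This bijection is continuous and monotone, so it commutes with infima: $\inf M=(\inf\Lambda)^r$. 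Taking $r$-th roots gives $\|f\|_{L^{rp(\cdot)}}=\||f|^r\|_{L^{p(\cdot)}}^{1/r}$.

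It remains to check the degenerate cases, which is the only place requiring any care. If $\Lambda=\emptyset$, then $M=\emptyset$, both norms equal $+\infty$ under the usual convention $\inf\emptyset=\infty$, and the identity holds with $\infty^{1/r}=\infty$. If $\inf\Lambda=0$ (for example, when $f=0$ almost everywhere), then $\inf M=0$ as well. No property of $p(\cdot)$ beyond $p_+<\infty$ and measurability is used, so no log-H\"older regularity is needed here.
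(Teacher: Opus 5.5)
Your proof is correct and follows the standard argument. The paper gives no proof of its own and simply cites \cite[Proposition 2.18]{cf13} and \cite[Lemma 3.2.6]{dhr17}, which also rest on the pointwise modular identity $\rho_{L^{rp(\cdot)}}(f/\lambda)=\rho_{L^{p(\cdot)}}(|f|^r/\lambda^r)$ and the fact that $\lambda\mapsto\lambda^r$ matches up the two admissible sets in the infimum.
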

Next, we give the proof of Proposition \ref{eq infty}.

\begin{proof}[Proof of Proposition \ref{eq infty}]
Notice that, if we fix $H := w^{-1}$,
then, by the assumption $\log_+(w^{-1}) \in L^1_{\rm loc}$,
we conclude $H\in \mathcal{F}_{Q,w}$ for any cube $Q$
and the right-hand side, without both suprema, of \eqref{eq infty 1}
with this special $H$ coincides with the right-hand side, without the supremum, of \eqref{def Apinfty}.
From this and \eqref{def Apinfty}, we deduce that
\begin{align*}
\left[w\right]_{\mathcal{A}_{p(\cdot),\infty}}
\leq \sup_{Q}
\sup_{H\in \mathcal{F}_{Q,w}} \frac{\|w\mathbf{1}_Q \|_{L^{p(\cdot)}}}{\| wH\mathbf{1}_Q \|_{L^{p(\cdot)}}}  \exp\left( \fint_{Q} \log\left(H(y)\right)\,dy \right).
\end{align*}

Now, we show the converse inequality. Let $r := \min\{1,p_-\}$.
From the definition of $[w]_{\mathcal{A}_{p(\cdot),\infty}}$, Jensen's inequality,
and Lemmas \ref{con f} and \ref{est fQ} with $\frac{p(\cdot)}{r} \in \mathcal{P}$,
it follows that, for any cube $Q$ in $\mathbb{R}^n$ and any $H\in \mathcal{F}_{Q,w}$,
\begin{align*}
\exp\left( \fint_{Q} \log\left( H(y) \right)\,dy \right)
& \leq \left[\exp\left( \fint_{Q} \log\left( w(y)^rH(y)^r \right)\,dy \right) \right]^\frac1r
  \exp\left( \fint_{Q} \log\left( w^{-1}(y) \right)\,dy \right) \\
&\lesssim  \left[\frac{1}{\|\mathbf{1}_Q\|_{L^{\frac{p(\cdot)}{r}}}} \left\|w^r H^r \mathbf{1}_Q \right\|_{L^{\frac{p(\cdot)}{r}}}\right]^\frac1r \exp\left( \fint_{Q} \log\left( w^{-1}(y) \right)\,dy \right)\\
& = \frac{1}{\|\mathbf{1}_Q\|_{L^{p(\cdot)}}} \left\|wH\mathbf{1}_Q \right\|_{L^{p(\cdot)}} \exp\left( \fint_{Q} \log\left( w^{-1}(y) \right)\,dy \right),
\end{align*}
which, together with dividing both sides by $\frac{\|wH\mathbf{1}_Q \|_{L^{p(\cdot)}}}{\|w\mathbf{1}_Q\|_{L^{p(\cdot)}}} $
and taking the supremum over all cubes $Q$,
further implies that
\begin{align*}
\frac{\|w\mathbf{1}_Q \|_{L^{p(\cdot)}}}{\| wH\mathbf{1}_Q \|_{L^{p(\cdot)}}}  \exp\left( \fint_{Q} \log\left(H(y)\right)\,dy \right)\lesssim \left[w\right]_{\mathcal{A}_{p(\cdot),\infty}} .
\end{align*}
Hence, combining this with taking the supremum over all functions $H \in \mathcal{F}_{Q,w}$
and all cubes $Q$,
we conclude that
\begin{align*}
\sup_{Q} \sup_{H\in \mathcal{F}_{Q,w}} \frac{\|w\mathbf{1}_Q \|_{L^{p(\cdot)}}}{\| wH\mathbf{1}_Q \|_{L^{p(\cdot)}}}  \exp\left( \fint_{Q} \log\left(H(y)\right)\,dy \right)
\lesssim \left[w\right]_{\mathcal{A}_{p(\cdot),\infty}}.
\end{align*}
This finishes the proof of Proposition \ref{eq infty}.
\end{proof}

Observe that,
by the definition of $\mathcal{A}_{p(\cdot),\infty}$ and Jensen's inequality,
for any $w \in \mathcal{A}_{p(\cdot),\infty}$ and any cube $Q$ in $\mathbb{R}^n$,
we have
\begin{align}\label{wM 2}
\frac{1}{\|\mathbf{1}_Q\|_{L^{p(\cdot)}}} \left\| w\mathbf{1}_Q \right\|_{L^{p(\cdot)}} \leq [w]_{\mathcal{A}_{p(\cdot),\infty}} \fint_Q w(x)\,dx.
\end{align}
Inspired by this observation,
we introduce the following concept of $\mathcal{A}_{p(\cdot),\infty}^\ast$ weights,
which is proved to be equivalent with $\mathcal{A}_{p(\cdot),\infty}$ under some additional conditions.
\begin{definition}
Let $p(\cdot) \in \mathcal{P}$.
A scalar weight $w$ is called an \emph{$\mathcal{A}_{p(\cdot),\infty}^\ast$ weight}
if
\begin{align*}
[w]_{\mathcal{A}_{p(\cdot),\infty}^\ast} := \sup_Q \frac{1}{\|\mathbf{1}_Q\|_{L^{p(\cdot)}}} \left\|w\mathbf{1}_Q\right\|_{L^{p(\cdot)}} \left[ \fint_Q w(x)\,dx \right]^{-1} < \infty,
\end{align*}
where the supremum is taken over all cubes in $\mathbb{R}^n$.
\end{definition}
\begin{remark}
When $p(\cdot) \equiv p \in (1,\infty)$ is a constant exponent,
it is obvious that $\mathcal{A}_{p,\infty}^\ast$ reduces to
the reverse H\"older class $RH_{p}$.
\end{remark}
Next, we recall the definition of Muckenhoupt $A_\infty$ weights
(see, for example, \cite{g14}).
\begin{definition}
A scalar weight $w$ is called an \emph{$A_\infty$-weight} if
\begin{align*}
[w]_{A_\infty} := \sup_{Q}\fint_Q w(x)\,dx\, \exp\left( \fint_Q \log\left( w^{-1}(x) \right) \,dx\right) <\infty,
\end{align*}
where the supremum is taken over all cubes in $\mathbb{R}^n$.
\end{definition}
From \eqref{wM 2}, it follows immediately that
$\mathcal{A}_{p(\cdot),\infty}\subset \mathcal{A}^\ast_{p(\cdot),\infty}$
and $ [\cdot]_{\mathcal{A}_{p(\cdot),\infty}^\ast} \leq [\cdot]_{\mathcal{A}_{p(\cdot),\infty}} $,
but the reverse inclusion is not always true.
Indeed, if $p(\cdot):= 1$, then \eqref{wM 2} holds for any weight function,
which implies that $\mathcal{A}_{p(\cdot),\infty}^\ast$ is strictly
larger than $\mathcal{A}_{1,\infty} = A_\infty$.
The following result shows the equivalence between
$\mathcal{A}_{p(\cdot),\infty}$ and $\mathcal{A}_{p(\cdot),\infty}^\ast$
under some additional conditions.
\begin{theorem}\label{w reverse}
Let $p(\cdot) \in \mathcal{P}$ with $p(\cdot) \in LH$ and $p_- > 1$.
Then $\mathcal{A}_{p(\cdot),\infty} = \mathcal{A}^\ast_{p(\cdot),\infty}$.
\end{theorem}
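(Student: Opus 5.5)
One inclusion is already available: by \eqref{wM 2}, $\mathcal{A}_{p(\cdot),\infty}\subset\mathcal{A}^\ast_{p(\cdot),\infty}$ with $[w]_{\mathcal{A}^\ast_{p(\cdot),\infty}}\le[w]_{\mathcal{A}_{p(\cdot),\infty}}$. So we only need the reverse inclusion. Fix $w\in\mathcal{A}^\ast_{p(\cdot),\infty}$ and write
$$\langle w\rangle_{p(\cdot),Q}:=\|\mathbf{1}_Q\|_{L^{p(\cdot)}}^{-1}\|w\mathbf{1}_Q\|_{L^{p(\cdot)}}.$$
The hypothesis says $\langle w\rangle_{p(\cdot),Q}\le[w]_{\mathcal{A}^\ast}\fint_Qw$ for every cube $Q$. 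Hence it suffices to show $w\in A_\infty$, because then
$$\langle w\rangle_{p(\cdot),Q}\exp\Bigl(\fint_Q\log w^{-1}\Bigr)\le[w]_{\mathcal{A}^\ast}[w]_{A_\infty},$$
which gives $[w]_{\mathcal{A}_{p(\cdot),\infty}}\le[w]_{\mathcal{A}^\ast_{p(\cdot),\infty}}[w]_{A_\infty}$.

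To prove $w\in A_\infty$, the plan is to extract a scalar reverse Hölder inequality with some exponent $s>1$ from the $\mathcal{A}^\ast$ condition, using $p_->1$. Set $s:=p_->1$. Since $p(\cdot)/s\in\mathcal{P}$ and $p(\cdot)/s\in LH$, Lemma \ref{con f} and Lemma \ref{est fQ} (applied with exponent $p(\cdot)/s$ to $w^s$) give
$$\Bigl(\fint_Qw^s\Bigr)^{1/s}\lesssim\Bigl[\|\mathbf{1}_Q\|_{L^{p(\cdot)/s}}^{-1}\|w^s\mathbf{1}_Q\|_{L^{p(\cdot)/s}}\Bigr]^{1/s}=\langle w\rangle_{p(\cdot),Q}.$$
Here we also use $\|\mathbf{1}_Q\|_{L^{p(\cdot)/s}}^{1/s}=\|\mathbf{1}_Q\|_{L^{p(\cdot)}}$, again by Lemma \ref{con f}. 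Lemma \ref{con f} needs $p_+<\infty$. If $p_+=\infty$ is allowed, I would instead compare the two sides through Lemma \ref{est Q}, i.e.\ via $|Q|^{1/p_Q}$, together with Hölder's inequality (Lemma \ref{Holder}). Combining this with the $\mathcal{A}^\ast$ hypothesis yields
$$\Bigl(\fint_Qw^s\Bigr)^{1/s}\le C_{p(\cdot),n}[w]_{\mathcal{A}^\ast}\fint_Qw\quad\text{for all cubes }Q,$$
that is, $w\in RH_s$.

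The remaining step is the classical fact that $RH_s\subset A_\infty$ for $s>1$, and I expect this to be the main obstacle if it must be proved rather than quoted. Quoting it from the literature (for instance \cite{g14}, where the reverse Hölder classes are among the equivalent characterizations of $A_\infty$) closes the argument. For a self-contained route, I would first derive from $RH_s$ and Hölder's inequality that
$$\frac{w(E)}{w(Q)}\le C\Bigl(\frac{|E|}{|Q|}\Bigr)^{1/s'}\quad\text{for all measurable } E\subset Q.$$
This is one of the standard equivalent forms of $A_\infty$. I would then pass to the exponential (Jensen-type) constant used in the definition of $[w]_{A_\infty}$, using that the equivalent characterizations in \cite[Theorem 7.3.3]{g14} hold with constants depending only on those of the previous form. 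Note that the constant obtained for $[w]_{A_\infty}$ depends only on $[w]_{\mathcal{A}^\ast}$, $n$ and $p(\cdot)$, so the theorem even comes with a quantitative bound.
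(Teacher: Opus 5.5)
Your proposal is correct and follows essentially the same route as the paper: it derives $w\in RH_{p_-}$ from the $\mathcal{A}^\ast_{p(\cdot),\infty}$ condition via Lemmas \ref{con f} and \ref{est fQ}, invokes \cite[Theorem 7.3.3]{g14} to get $w\in A_\infty$, and then multiplies the $\mathcal{A}^\ast_{p(\cdot),\infty}$ bound by $\exp(\fint_Q\log w^{-1})$ to obtain $[w]_{\mathcal{A}_{p(\cdot),\infty}}\lesssim[w]_{A_\infty}$. Your hedge about the case $p_+=\infty$ is unnecessary, because $p(\cdot)\in LH$ with a finite $p_\infty$ already forces $p_+\le p_\infty+C_\infty<\infty$, so Lemma \ref{con f} applies directly.
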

\begin{proof}
By the just above discussion,
to prove this theorem,
it suffices to show $\mathcal{A}_{p(\cdot),\infty}^\ast \subset \mathcal{A}_{p(\cdot),\infty}$.
To this end, let $w \in \mathcal{A}_{p(\cdot),\infty}^\ast$.
Then, from Lemmas \ref{Holder}, \ref{est fQ},
and \ref{con f} and from the definition of $\mathcal{A}_{p(\cdot),\infty}^\ast$,
it follows that, for any cube $Q$ in $\mathbb{R}^n$,
\begin{align*}
\left[\fint_{Q} w^{p_-}(x)\,dx \right]^{\frac{1}{p_-}}
\lesssim \left[\frac{1}{\|\mathbf{1}_Q\|_{L^{\frac{p(\cdot)}{p_-}}}} \left\|w^{p_-}\mathbf{1}_Q\right\|_{L^{\frac{p(\cdot)}{p_-}}}\right]^{\frac{1}{p_-}}
= \frac{1}{\|\mathbf{1}_Q\|_{L^{p(\cdot)}}} \left\|w\mathbf{1}_Q\right\|_{L^{p(\cdot)}}
\lesssim \fint_Q w(x)\,dx.
\end{align*}
Using this and the fact $p_- > 1$,
we find that $ w \in RH_{p_-} $
and hence, by \cite[Theorem 7.3.3]{g14},
we obtain $w \in A_\infty$.
Combining this with the assumption $w \in \mathcal{A}_{p(\cdot),\infty}^\ast$,
we conclude that, for any cube $Q$ in $\mathbb{R}^n$,
\begin{align*}
\frac{1}{\|\mathbf{1}_Q\|_{L^{p(\cdot)}}} \left\|w\mathbf{1}_Q\right\|_{L^{p(\cdot)}} \exp\left( \fint_Q \log\left( w^{-1}(x) \right)\,dx \right)
\lesssim \fint_Q w(x)\,dx \exp\left( \fint_Q \log\left( w^{-1}(x) \right)\,dx \right)
\leq [w]_{A_\infty},
\end{align*}
which further implies that $w \in \mathcal{A}_{p(\cdot),\infty}$.
This finishes the proof of Theorem \ref{w reverse}.
\end{proof}

\subsection{Relationship Between $\mathcal{A}_{p(\cdot),\infty}$
and $A_\infty$}\label{sec Apinfty A}

In this subsection, we establish the following relationship
between $\mathcal{A}_{p(\cdot),\infty}$ and $A_\infty$.
\begin{theorem}\label{Apinfty A}
Let $p(\cdot)\in\mathcal{P}_0$ with $p(\cdot) \in LH$.
Then $w\in \mathcal{A}_{p(\cdot),\infty}$ if and only if
$\mathbb{W} := [w(\cdot)]^{p(\cdot)} \in A_\infty. $
\end{theorem}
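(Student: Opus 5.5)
The plan is to pass through a measure-theoretic characterization of $A_\infty$ rather than through the geometric-mean form. I will use: $\mathbb{W}\in A_\infty$ if and only if there exist $t,c\in(0,1)$ such that, for every cube $Q$ and every measurable $E\subset Q$ with $|E|\ge(1-t)|Q|$, one has $\mathbb{W}(E)\ge c\,\mathbb{W}(Q)$ (see \cite{g14}). On the variable side I will show that $w\in\mathcal{A}_{p(\cdot),\infty}$ is equivalent to the analogous norm condition
$$
|E|\ge(1-t)|Q|\ \Longrightarrow\ \|w\mathbf{1}_E\|_{L^{p(\cdot)}}\ge c\,\|w\mathbf{1}_Q\|_{L^{p(\cdot)}} .
$$
I will then translate this norm condition into the modular/measure condition for $\mathbb{W}=w^{p(\cdot)}$.

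\emph{Step 1 ($\mathcal{A}_{p(\cdot),\infty}\Rightarrow$ norm condition).} Apply Proposition \ref{eq infty} with the test function $H:=\mathbf{1}_E+\varepsilon\mathbf{1}_{Q\setminus E}$. Then $\exp(\fint_Q\log H)=\varepsilon^{|Q\setminus E|/|Q|}$, so
$$
\|w\mathbf{1}_Q\|_{L^{p(\cdot)}}\lesssim[w]_{\mathcal{A}_{p(\cdot),\infty}}\,\varepsilon^{-t}\bigl(\|w\mathbf{1}_E\|_{L^{p(\cdot)}}+\varepsilon\|w\mathbf{1}_Q\|_{L^{p(\cdot)}}\bigr),
$$
using the quasi-triangle inequality, whose constant depends on $p_-$. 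I will first choose $\varepsilon$ small and then $t$ small, so that $C\varepsilon^{1-t}\le\frac12$. Absorbing that term gives the norm condition with $c$ depending only on $[w]_{\mathcal{A}_{p(\cdot),\infty}}$ and $p(\cdot)$.

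\emph{Step 2 (norm condition $\Rightarrow\mathcal{A}_{p(\cdot),\infty}$).} Here I will use Jensen in the other direction. By the standard Calder\'on--Zygmund/level-set argument underlying the equivalence of the geometric-mean and the measure definitions of $A_\infty$ (\cite[Theorem 7.3.3]{g14}), I will choose $E:=\{x\in Q:\ w(x)\ge s\}$ with $s$ the level at which $|E|\ge(1-t)|Q|$. Iterating over the dyadic levels of $\log w^{-1}$ then bounds $\|w\mathbf{1}_Q\|_{L^{p(\cdot)}}\exp(\fint_Q\log w^{-1})$ by a constant times $\|\mathbf{1}_Q\|_{L^{p(\cdot)}}$.

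\emph{Step 3 (norm condition $\Leftrightarrow$ measure condition for $\mathbb{W}$).} Set $\lambda_Q:=\|w\mathbf{1}_Q\|_{L^{p(\cdot)}}$, so that $\int_Q\mathbb{W}(x)\lambda_Q^{-p(x)}\,dx=1$. The norm condition says exactly that the measure $d\mu_Q:=\mathbb{W}\lambda_Q^{-p(\cdot)}\,dx$ on $Q$ satisfies $\mu_Q(E)\ge c^{p_+}\mu_Q(Q)$ whenever $|E|\ge(1-t)|Q|$, and conversely. Passing from $\mu_Q$ to $\mathbb{W}$ therefore requires
$$
\sup_{x,y\in Q}\lambda_Q^{\,p(x)-p(y)}\lesssim1 .
$$
By $LH_0$ we have $p_+(Q)-p_-(Q)\lesssim1/\log(1/\ell(Q))$ for small cubes, and by $LH_\infty$ the oscillation of $p(\cdot)$ decays for large cubes far out. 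Hence the required bound reduces to $|\log\lambda_Q|\lesssim\log(e+\ell(Q)+\ell(Q)^{-1}+\operatorname{dist}(0,Q))$.

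\emph{Main obstacle.} The obstacle is this polynomial control of $\lambda_Q$. For small cubes I will obtain it from the doubling-type consequences of either condition (both $\mu_Q$-conditions and $A_\infty$ give doubling), comparing $Q$ with a fixed reference cube $Q_0$ through a chain of dyadic ancestors; this yields $\lambda_Q\in[C^{-1}|Q|^N,C|Q|^{-N}]$. The constant then depends on $w$ itself through $\lambda_{Q_0}$, which is consistent with Remark \ref{rem reverse Holder Apinfty}, since only a qualitative equivalence is claimed. For large cubes I will use $LH_\infty$ to replace $p(\cdot)$ by $p_\infty$ up to bounded factors, with the error handled as in the proof of Lemma \ref{est Q}. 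If the direct chain argument fails, I will fall back on splitting $Q$ into the region where $\mathbb{W}\lambda_Q^{-p}$ is at least $(e+|x|)^{-N}$ and its complement, as in the standard $LH_\infty$ modular estimates.
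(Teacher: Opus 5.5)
\textbf{Gap: Step 2 is not a valid argument.} Step 2 is the only place where your sufficiency direction, $\mathbb{W}\in A_\infty\Rightarrow$ measure condition $\Rightarrow$ norm condition $\Rightarrow w\in\mathcal{A}_{p(\cdot),\infty}$, returns to the geometric-mean quantity, and as written it does not establish anything.

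The level-set/Calder\'on--Zygmund iteration behind \cite[Theorem 7.3.3]{g14} relies on additivity. One selects stopping cubes $Q_j$ and applies the measure condition to $\bigcup_j Q_j$ through $w(\bigcup_j Q_j)=\sum_j w(Q_j)$. The set function $E\mapsto\|w\mathbf{1}_E\|_{L^{p(\cdot)}}$ is not additive. Bounds on the individual $\|w\mathbf{1}_{Q_j}\|_{L^{p(\cdot)}}$ do not pass to the union without a Diening-type localization inequality, which you neither state nor prove.

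Moreover, the set you actually name, $E=\{x\in Q: w(x)\ge s\}$ with $|E|\ge(1-t)|Q|$, only gives the lower bound $\|w\mathbf{1}_E\|_{L^{p(\cdot)}}\ge c\|w\mathbf{1}_Q\|_{L^{p(\cdot)}}$. It never gives an upper bound for $\|w\mathbf{1}_Q\|_{L^{p(\cdot)}}$ by $\|\mathbf{1}_Q\|_{L^{p(\cdot)}}\exp(\fint_Q\log w)$. So the implication $\mathbb{W}\in A_\infty\Rightarrow w\in\mathcal{A}_{p(\cdot),\infty}$ is currently unproved.

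\textbf{Two ways to close it.} The paper skips the norm condition for this direction (Proposition \ref{Apinfty Ainfty}).
\begin{itemize}
\item It shows $\|w\mathbf{1}_Q\|_{L^{p(\cdot)}}/\|\mathbf{1}_Q\|_{L^{p(\cdot)}}\sim(\mathbb{W}(Q)/|Q|)^{1/p_Q}$, using Lemmas \ref{ww BB} and \ref{est Q}.
\item It controls $\exp(\fint_Q\log w^{-1})=\exp(\fint_Q p(y)^{-1}\log\mathbb{W}^{-1}(y)\,dy)$ by $[\exp(\fint_Q\log\mathbb{W}^{-1})]^{1/p_Q}$, using $\log\mathbb{W}\in{\rm BMO}$ (Lemma \ref{ww BMO}).
\end{itemize}
This handling of the variable exponent inside the geometric mean is absent from your plan.

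Alternatively, you can stay inside your framework with a single level.
\begin{itemize}
\item Apply the norm condition contrapositively to the \emph{small-value} set $F:=\{x\in Q: w(x)<\beta a_Q\}$, where $a_Q:=\|w\mathbf{1}_Q\|_{L^{p(\cdot)}}/\|\mathbf{1}_Q\|_{L^{p(\cdot)}}$ and $\beta:=c/2$.
\item Since $\|w\mathbf{1}_F\|_{L^{p(\cdot)}}\le\beta a_Q\|\mathbf{1}_Q\|_{L^{p(\cdot)}}<c\|w\mathbf{1}_Q\|_{L^{p(\cdot)}}$, you get $|F|<(1-t)|Q|$.
\item Hence $\fint_Q w\ge t\beta a_Q$, that is, $w\in\mathcal{A}^\ast_{p(\cdot),\infty}$.
\item Run this for $w^r$ in $L^{p(\cdot)/r}$ with $r<p_-$; the norm condition transfers by Lemma \ref{con f}. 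Then Theorem \ref{w reverse} applies, and Lemma \ref{wr} returns you to $w$. No iteration is needed.
\end{itemize}

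\textbf{Step 1 is correct.} Testing Proposition \ref{eq infty} with $H=\mathbf{1}_E+\varepsilon\mathbf{1}_{Q\setminus E}$ gives the norm condition directly. The paper instead goes through $w\in A_\infty$, the reverse H\"older inequality for $w$, and $w(E)/w(Q)\lesssim[w]_{\mathcal{A}_{p(\cdot),\infty}}\|w\mathbf{1}_E\|_{L^{p(\cdot)}}/\|w\mathbf{1}_Q\|_{L^{p(\cdot)}}$. Your passage from the norm condition to the measure condition for $\mathbb{W}$ is what Lemmas \ref{Bp bound}, \ref{EB 2}, and \ref{EB 3} do, so your necessity half is viable.

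\textbf{A caution on Step 3.} Your reduction to $\sup_{x,y\in Q}\lambda_Q^{p(x)-p(y)}\lesssim1$, via polynomial control of $\lambda_Q$, is false for large cubes meeting the origin. Take $w\equiv1$, a non-constant $p(\cdot)$, and $Q=Q(\mathbf{0},R)$ with $R\to\infty$. Then $\lambda_Q\sim R^{n/p_\infty}$, while $p_+(Q)-p_-(Q)$ stays bounded below. There you genuinely need the $p_\infty$-modular comparison, together with a case split according to whether $\|w\mathbf{1}_E\|_{L^{p(\cdot)}}$ and $\|w\mathbf{1}_Q\|_{L^{p(\cdot)}}$ exceed $1$, as in Lemmas \ref{EB 2} and \ref{EB 3}.
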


To make the proof clear,
we first break the proof of Theorem \ref{Apinfty A}, under the additional assumption $p(\cdot) \in \mathcal{P}$,  into
the necessity part (see Proposition \ref{reverse Holder w1})
and the sufficiency part (see Proposition \ref{Apinfty Ainfty}).
Then, using the convexification property of $\mathcal{A}_{p(\cdot),\infty}$ weights (see Lemma \ref{wr}),
we give the proof of Theorem \ref{Apinfty A} for the general case $p(\cdot)\in\mathcal{P}_0$.

Here, and thereafter, we write
\begin{align}\label{def Q0}
Q_0 := Q(\mathbf{0},2e).
\end{align} We begin with the proof of  the necessity when $p(\cdot) \in \mathcal{P}$,
which is also vital for establishing the reverse H\"older's inequality for $\mathbb{W}$.

\begin{proposition}\label{reverse Holder w1}
Let $p(\cdot) \in \mathcal{P}$ with $p(\cdot) \in LH$ and
let $w \in \mathcal{A}_{p(\cdot),\infty}$.
Then $\mathbb{W} := [w(\cdot)]^{p(\cdot)} \in A_\infty$ and there exist positive constants $C$ and $A$,
depending only on $p(\cdot)$ and $n$, such that,
for any cube $Q$ in $\mathbb{R}^n$ and for any measurable set $ E\subset Q$
with $|E| > (1-\alpha)|Q|$,
one has
\begin{align*}
\mathbb{W}(Q) < C \left( 1 - 2\alpha^{1-\frac1\delta} \right)^{-p_+} [w]_{\mathcal{A}_{p(\cdot),\infty}}^{A}
L_w\mathbb{W}(E),
\end{align*}
where $\delta := 1 + \frac{1}{\tau_n C_{p(\cdot),n}[w]_{\mathcal{A}_{p(\cdot),\infty}}}$
with the constant $\tau_n$ depending only on $n$,
$\alpha \in (0,2^{-\frac{\delta}{\delta-1}})$ is any given constant,
and
\begin{align}\label{def Lw}
L_w :=  \max\left\{ \left\|w\mathbf{1}_{Q_0}\right\|^{p_--p_+}_{L^{p(\cdot)}}, \left\|w\mathbf{1}_{Q_0}\right\|^{p_+ - p_-}_{L^{p(\cdot)}} \right\}.
\end{align}
\end{proposition}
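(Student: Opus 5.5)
We prove the stated estimate directly; since it holds for every $\alpha$ in the admissible range, it is the measure-theoretic ($\varepsilon$--$\delta$) characterization of $A_\infty$ (see \cite[Theorem 7.3.3]{g14}), so $\mathbb{W}\in A_\infty$ follows at once. The plan has three steps: a reverse H\"older inequality for $w$ itself, a smallness estimate for the $L^{p(\cdot)}$ norm of $w$ on small subsets of $Q$, and a passage from norms back to the modular $\mathbb{W}(\cdot)=\int w^{p(\cdot)}$.

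\emph{Step 1: reverse H\"older for $w$.} By \eqref{wM 2} and Lemma \ref{est fQ}, for every cube $Q$,
\[
\fint_Q w\le C_{p(\cdot),n}\,\frac{\|w\mathbf{1}_Q\|_{L^{p(\cdot)}}}{\|\mathbf{1}_Q\|_{L^{p(\cdot)}}}\le C_{p(\cdot),n}[w]_{\mathcal{A}_{p(\cdot),\infty}}\fint_Q w .
\]
Moreover, \eqref{def Apinfty} combined with Lemma \ref{est fQ} gives
\[
\fint_Q w\,\exp\Bigl(\fint_Q\log w^{-1}\Bigr)\le C_{p(\cdot),n}[w]_{\mathcal{A}_{p(\cdot),\infty}},
\]
so $w\in A_\infty$ with $[w]_{A_\infty}\lesssim [w]_{\mathcal{A}_{p(\cdot),\infty}}$. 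The sharp reverse H\"older inequality for $A_\infty$ (Hyt\"onen--P\'erez \cite{hp13}) then yields
\[
\Bigl(\fint_Q w^\delta\Bigr)^{1/\delta}\le 2\fint_Q w
\qquad\text{with } \delta=1+\frac{1}{\tau_n C_{p(\cdot),n}[w]_{\mathcal{A}_{p(\cdot),\infty}}},
\]
which is precisely the $\delta$ appearing in the statement.

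\emph{Step 2: smallness of $w$ on $Q\setminus E$.} Let $F:=Q\setminus E$, so $|F|<\alpha|Q|$. H\"older's inequality with exponent $\delta$ and Step 1 give
\[
\int_F w\le |F|^{1-\frac1\delta}\Bigl(\int_Q w^\delta\Bigr)^{\frac1\delta}\le 2\alpha^{1-\frac1\delta}\int_Q w,
\]
and hence
\[
\int_E w\ge \bigl(1-2\alpha^{1-\frac1\delta}\bigr)\int_Q w .
\]
Chaining Lemma \ref{est fQ} on $E$ (viewed as $w\mathbf{1}_E\mathbf{1}_Q$) with \eqref{wM 2}, we obtain
\[
\|w\mathbf{1}_Q\|_{L^{p(\cdot)}}\lesssim [w]_{\mathcal{A}_{p(\cdot),\infty}}\bigl(1-2\alpha^{1-\frac1\delta}\bigr)^{-1}\|w\mathbf{1}_E\|_{L^{p(\cdot)}} .
\]

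\emph{Step 3: from norms to modulars.} This is the main obstacle, and it is where $L_w$ and the power $p_+$ enter. The modular and the norm are related by $\rho(f)\le\max\{\|f\|^{p_-},\|f\|^{p_+}\}$ and $\rho(f)\ge\min\{\|f\|^{p_-},\|f\|^{p_+}\}$. Applying this to $f=w\mathbf{1}_Q$ and $f=w\mathbf{1}_E$ and inserting the norm inequality from Step 2 gives
\[
\mathbb{W}(Q)\lesssim \Bigl([w]_{\mathcal{A}_{p(\cdot),\infty}}\bigl(1-2\alpha^{1-\frac1\delta}\bigr)^{-1}\Bigr)^{p_+}\,\|w\mathbf{1}_E\|_{L^{p(\cdot)}}^{\,p_\pm-p_\mp}\,\mathbb{W}(E),
\]
with a mismatch factor $\|w\mathbf{1}_E\|^{\pm(p_+-p_-)}$ left over. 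We have to bound this factor by $L_w$ times a power of $[w]_{\mathcal{A}_{p(\cdot),\infty}}$, uniformly over all cubes, which is the delicate part. The plan is to compare $\|w\mathbf{1}_Q\|_{L^{p(\cdot)}}$ with $\|w\mathbf{1}_{Q_0}\|_{L^{p(\cdot)}}$ and to bound the remaining size of $Q$ using $LH$. Concretely:
\begin{itemize}
\item Use Lemma \ref{est Q} to write $\|\mathbf{1}_Q\|_{L^{p(\cdot)}}\sim|Q|^{1/p_Q}$. Then $LH_0$ controls $|Q|^{1/p_Q-1/p_-(Q)}$ for small cubes, and $LH_\infty$ controls $|Q|^{1/p_Q-1/p_\infty}$ for cubes far away or large.
\item Use \eqref{wM 2} together with the doubling property of $w$ (which holds since $w\in A_\infty$) to compare the averages of $w$ over $Q$ and over the smallest cube containing both $Q$ and $Q_0$. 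The resulting growth factors are polynomial in $[w]_{\mathcal{A}_{p(\cdot),\infty}}$ and get absorbed into $[w]^{A}_{\mathcal{A}_{p(\cdot),\infty}}$.
\end{itemize}
If this uniform comparison fails, the fallback is to run the argument separately for cubes with $\|w\mathbf{1}_Q\|_{L^{p(\cdot)}}\le1$ and for those with norm at least $1$. In each case only one of the exponents $p_\pm$ is active, so the mismatch factor becomes a ratio of norms that can be routed through $Q_0$ in the same way.
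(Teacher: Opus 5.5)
Your Steps 1 and 2 match the paper's argument: Lemma \ref{Ap Apinfty 1}, the reverse H\"older inequality of Lemma \ref{reverse Holder}, H\"older's inequality, and then \eqref{wM 2} with Lemma \ref{est fQ}. Together they give $\|w\mathbf{1}_Q\|_{L^{p(\cdot)}}\le K\|w\mathbf{1}_E\|_{L^{p(\cdot)}}$ with $K\sim[w]_{\mathcal{A}_{p(\cdot),\infty}}(1-2\alpha^{1-\frac1\delta})^{-1}$.

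The gap is Step 3. This is where essentially all the work lies; in the paper it is Lemma \ref{EB 3}, built on Lemmas \ref{Bp bound}, \ref{EB 1} and \ref{EB 2}. Your plan for it cannot work as stated, because the factor you want to bound uniformly by $L_w[w]^A_{\mathcal{A}_{p(\cdot),\infty}}$ is unbounded in the large-norm regime. Take $w\equiv1$, so that $[w]_{\mathcal{A}_{p(\cdot),\infty}}=1$ and $L_w$ is a fixed number, and take $E=Q=Q(\mathbf{0},l)$ with $l\to\infty$. Lemma \ref{rhof 1} gives only $\mathbb{W}(Q)\le\|\mathbf{1}_Q\|^{p_+(Q)}_{L^{p(\cdot)}}$ and $\mathbb{W}(E)\ge\|\mathbf{1}_E\|^{p_-(Q)}_{L^{p(\cdot)}}$. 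The resulting loss is $\|\mathbf{1}_Q\|_{L^{p(\cdot)}}^{p_+(Q)-p_-(Q)}\sim|Q|^{[p_+(Q)-p_-(Q)]/p_\infty}$ by Lemma \ref{weight bound}(ii). Since $p_\pm(Q)\to p_\pm$, this tends to infinity for every nonconstant $p(\cdot)$, and no comparison with $Q_0$ repairs it.

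In the regime $\|w\mathbf{1}_E\|_{L^{p(\cdot)}}\ge1$ you have to abandon the $p_\pm$ conversion altogether. Instead, after normalizing $\|w\mathbf{1}_{Q_0}\|_{L^{p(\cdot)}}=1$, you must prove $\|w\mathbf{1}_E\|_{L^{p(\cdot)}}\sim\mathbb{W}(E)^{1/p_\infty}$ for arbitrary measurable $E$, with constants polynomial in $[w]_{\mathcal{A}_{p(\cdot),\infty}}$; this is Lemma \ref{EB 2}. The proof applies Lemma \ref{rs f 1} to the measure $\mathbb{W}(x)\,dx$ and the constant function $\|w\mathbf{1}_E\|^{-1}_{L^{p(\cdot)}}$. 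The error term $\int_E\mathbb{W}(x)(e+|x|)^{-ntp_-}\,dx$ is then summed over the cubes $Q(\mathbf{0},2e^{k+1})$, using the polynomial growth of $\|w\mathbf{1}_{Q(\mathbf{0},2e^{k+1})}\|_{L^{p(\cdot)}}$ from Lemma \ref{EB 1}, with $t$ chosen depending on $[w]_{\mathcal{A}_{p(\cdot),\infty}}$. Your bullet on $LH_\infty$ concerns the unweighted quantity $\|\mathbf{1}_Q\|_{L^{p(\cdot)}}$ and does not supply this.

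Your fallback rests on a false claim. When both norms are at most $1$, or both at least $1$, the upper bound for $\mathbb{W}(Q)$ and the lower bound for $\mathbb{W}(E)$ from Lemma \ref{rhof 1} carry different exponents. Only the mixed case $\|w\mathbf{1}_E\|_{L^{p(\cdot)}}\le1\le\|w\mathbf{1}_Q\|_{L^{p(\cdot)}}$ is mismatch-free, with both bounds using $p_+(Q)$.

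In the small case you must use the local exponents $p_\pm(Q)$. With the global ones the factor $\|w\mathbf{1}_Q\|^{p_--p_+}_{L^{p(\cdot)}}$ blows up as $|Q|\to0$. You must also write the loss through the cube, as $\|w\mathbf{1}_Q\|_{L^{p(\cdot)}}^{p_-(Q)-p_+(Q)}$, rather than through the arbitrary set $E$. Bounding that factor by $CL_w[w]^A_{\mathcal{A}_{p(\cdot),\infty}}$ is exactly Lemma \ref{Bp bound}. After normalization, one bounds $\|w\mathbf{1}_Q\|_{L^{p(\cdot)}}$ from below by $\|w\mathbf{1}_R\|_{L^{p(\cdot)}}$ for an enclosing cube $R$ via Lemma \ref{EB 1}; $R$ is $5Q_0$, $5Q$ or $Q(\mathbf{0},4d)$ according to the size and position of $Q$. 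The resulting powers of $l(R)/l(Q)$, raised to $p_+(Q)-p_-(Q)$, are then absorbed using the Diening condition of Lemma \ref{weight bound}(i) for small cubes and $d^{p_+(Q)-p_-(Q)}\le e^{2C_\infty}$ for cubes far from $Q_0$.

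The factor $L_w$ arises only when the normalization $v:=w/\|w\mathbf{1}_{Q_0}\|_{L^{p(\cdot)}}$ is undone, because the modular rescales by factors between $\|w\mathbf{1}_{Q_0}\|^{-p_+}_{L^{p(\cdot)}}$ and $\|w\mathbf{1}_{Q_0}\|^{-p_-}_{L^{p(\cdot)}}$. Only with both the $p_\infty$ comparison and the local mismatch bound do you reach $\|w\mathbf{1}_E\|_{L^{p(\cdot)}}/\|w\mathbf{1}_Q\|_{L^{p(\cdot)}}\lesssim[w]^A_{\mathcal{A}_{p(\cdot),\infty}}L_w^{1/p_+}[\mathbb{W}(E)/\mathbb{W}(Q)]^{1/p_+}$, and hence the stated inequality.
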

Before giving the proof of Proposition \ref{reverse Holder w1},
we first recall some necessary tools.
The following lemma is exactly \cite[Corollary 2.23]{cf13}.
\begin{lemma}\label{rhof 1}
Let $p(\cdot) \in \mathcal{P}$ with $p_+ < \infty$.
For any $f\in \mathscr{M}$,
if $\|f\|_{L^{p(\cdot)}} \in (1,\infty)$,
then
$$\left[\rho_{p(\cdot)}(f)\right]^\frac{1}{p_+} \leq \|f\|_{L^{p(\cdot)}} \leq \left[\rho_{p(\cdot)}(f)\right]^\frac{1}{p_-};$$
conversely, if $\|f\|_{L^{p(\cdot)}} \in [0,1]$,
then
$$\left[\rho_{p(\cdot)}(f)\right]^\frac{1}{p_-} \leq \|f\|_{L^{p(\cdot)}} \leq \left[\rho_{p(\cdot)}(f)\right]^\frac{1}{p_+}.$$
\end{lemma}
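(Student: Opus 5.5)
The plan is to reduce everything to the single identity $\rho_{p(\cdot)}(f/\lambda)=1$ for $\lambda:=\|f\|_{L^{p(\cdot)}}\in(0,\infty)$. Then I compare $\rho_{p(\cdot)}(f)$ with $\rho_{p(\cdot)}(f/\lambda)$ pointwise through the factor $\lambda^{p(x)}$. The degenerate case $\lambda=0$ is handled separately. There the modular inequality $\rho_{p(\cdot)}(f/\mu)\le 1$ for every $\mu>0$ forces $f=0$ almost everywhere, since otherwise $\rho_{p(\cdot)}(f/\mu)\ge \mu^{-p_-}\int_{\{|f|\ge 1\}\cap\{\ldots\}}\ldots\to\infty$ on a set of positive measure where $|f|$ is bounded below. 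Hence $\rho_{p(\cdot)}(f)=0$, and both inequalities in the second case are trivial.

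\textbf{Step 1: the modular equals one at the norm.} Fix $\lambda\in(0,\infty)$ and consider the function $\phi(\mu):=\rho_{p(\cdot)}(f/\mu)=\int_{\mathbb{R}^n}|f(x)|^{p(x)}\mu^{-p(x)}\,dx$ for $\mu>0$.
\begin{itemize}
\item For $\mu>\lambda$, the definition of the norm and the monotonicity of $\phi$ give $\phi(\mu)\le 1<\infty$.
\item Since $p_+<\infty$, $\phi$ is finite on $(\lambda/2,\infty)$. Indeed, for $\mu\in(\lambda/2,\infty)$ we have $\phi(\mu)\le (2\mu'/\mu)^{p_+}\phi(2\mu')$-type bounds; concretely, $\phi(\mu)\le (\mu_1/\mu)^{p_+}\phi(\mu_1)$ whenever $\mu<\mu_1$ and $\mu_1/\mu\ge1$, where $\mu_1>\lambda$.
\item On that interval $\phi$ is continuous, by dominated convergence.
\item $\phi$ is strictly decreasing wherever it is positive and finite.
\end{itemize}
Taking $\mu\downarrow\lambda$ gives $\phi(\lambda)\le1$. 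If $\phi(\lambda)<1$, continuity would give $\phi(\mu)<1$ for some $\mu<\lambda$, contradicting the infimum. Hence $\rho_{p(\cdot)}(f/\lambda)=1$. This is the only step needing care; the assumption $p_+<\infty$ is exactly what supplies finiteness and continuity of $\phi$ below $\lambda$.

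\textbf{Step 2: pointwise comparison.} Write
$$\rho_{p(\cdot)}(f)=\int_{\mathbb{R}^n}\left|\frac{f(x)}{\lambda}\right|^{p(x)}\lambda^{p(x)}\,dx .$$
If $\lambda>1$, then $\lambda^{p_-}\le\lambda^{p(x)}\le\lambda^{p_+}$ almost everywhere. Combined with Step 1 this gives $\lambda^{p_-}\le\rho_{p(\cdot)}(f)\le\lambda^{p_+}$, and taking the appropriate roots yields $[\rho_{p(\cdot)}(f)]^{1/p_+}\le\lambda\le[\rho_{p(\cdot)}(f)]^{1/p_-}$.

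If $\lambda\in(0,1]$, the inequalities for $\lambda^{p(x)}$ reverse: $\lambda^{p_+}\le\rho_{p(\cdot)}(f)\le\lambda^{p_-}$. This gives $[\rho_{p(\cdot)}(f)]^{1/p_-}\le\lambda\le[\rho_{p(\cdot)}(f)]^{1/p_+}$, which completes the lemma.
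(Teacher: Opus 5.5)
Your proof is correct and takes essentially the same route as the source. The paper gives no proof of its own and simply cites \cite[Corollary 2.23]{cf13}, which is proved the same way: the normalization $\rho_{p(\cdot)}(f/\|f\|_{L^{p(\cdot)}})=1$ (recorded in the paper as Lemma \ref{rho f norm}, valid here because $p_+<\infty$) is combined with the pointwise comparison of $\lambda^{p(x)}$ with $\lambda^{p_-}$ and $\lambda^{p_+}$. Your Step 1 rederives that normalization through continuity of $\mu\mapsto\rho_{p(\cdot)}(f/\mu)$, where the scaling bound $\rho_{p(\cdot)}(f/\mu)\le(\lambda/\mu)^{p_+}\rho_{p(\cdot)}(f/\lambda)$ would give it directly; only the garbled wording in the degenerate case and in the first finiteness bullet needs tidying.
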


The following lemma is precisely \cite[Lemma 3.24]{cf13}
and \cite[Lemma 4.1.6 and Corollary 4.5.9]{dhr17};
in particular,  Lemma \ref{weight bound}{\rm (i)} is  called the \emph{Diening condition}
(see \cite[Lemma 2.7]{cp24}).
\begin{lemma}\label{weight bound}
Let $p(\cdot) \in \mathcal{P}$ with $p(\cdot) \in LH$.
Then the following statements hold:
\begin{itemize}
\item[{\rm (i)}] There exists a positive constant $C_D$, depending only on $p(\cdot)$ and $n$,
such that, for any cube $Q$ in $\mathbb{R}^n$,
$|Q|^{p_-(Q)-p_+(Q)} \leq C_D$.
Moreover, if $|Q|\leq 1$,
then, for any $x \in Q$,
$$ C_D^{-\frac{1}{p_-}}|Q|^{-1} \leq |Q|^{-\frac{p(x)}{p_Q}} \leq C_D^{\frac{1}{p_-}}|Q|^{-1}. $$
\item[{\rm (ii)}] For any cube $Q$ with $|Q|\geq 1$,
$|Q|^{\frac{1}{p_\infty}} \sim \|\mathbf{1}_Q\|_{L^{p(\cdot)}},$
where the positive equivalence constants depend only on $p(\cdot)$ and $n$.
\end{itemize}
\end{lemma}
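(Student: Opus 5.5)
The statement just above is Lemma \ref{weight bound}. I would prove part (i) by cases on the size of $Q$ and part (ii) by a pointwise comparison of $t^{p(x)}$ with $t^{p_\infty}$, using only the two log-H\"older conditions \eqref{clogp 1} and \eqref{clogp infty}. First note that $p(\cdot)\in LH_\infty$ forces $p_+\le p_\infty+C_\infty<\infty$, so all exponents are bounded.

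\emph{Part (i).} If $|Q|\ge1$, then $|Q|^{p_-(Q)-p_+(Q)}\le 1$ trivially. If $|Q|\le1$, let $\ell$ be the edge length of $Q$. When $\sqrt n\,\ell<\frac12$, any two points $x,y\in Q$ satisfy $|x-y|\le\sqrt n\,\ell<\frac12$, so \eqref{clogp 1} gives
\[
p_+(Q)-p_-(Q)\le \frac{C_0}{\log(1/(\sqrt n\,\ell))}.
\]
Hence
\[
|Q|^{p_-(Q)-p_+(Q)}=\exp\bigl((p_+(Q)-p_-(Q))\,n\log(1/\ell)\bigr)\le \exp\biggl(\frac{C_0\,n\log(1/\ell)}{\log(1/(\sqrt n\,\ell))}\biggr).
\]
I would only apply this once $\ell$ is small enough, say $\sqrt n\,\ell\le \frac{1}{2\sqrt n}$, so that $\log(1/(\sqrt n\,\ell))\ge \frac12\log(1/\ell)$ and the exponent is at most $2C_0n$. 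For the remaining cubes, with $\ell$ between that threshold and $1$, $|Q|$ is bounded below by a constant depending only on $n$. Then $|Q|^{-(p_+-p_-)}\le |Q|^{-p_+}$ is bounded by a constant depending on $n$ and $p_+$, which gives $C_D$.

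For the second claim in (i), I would use that $p_Q$ is a harmonic mean of $p$ over $Q$, so $p_Q\in[p_-(Q),p_+(Q)]$. Consequently, for $x\in Q$,
\[
\biggl|\frac{p(x)}{p_Q}-1\biggr|\le \frac{p_+(Q)-p_-(Q)}{p_-}.
\]
Write $|Q|^{-p(x)/p_Q}=|Q|^{-1}\,|Q|^{-(p(x)/p_Q-1)}$. Since $|Q|\le1$, the second factor lies between $|Q|^{(p_+(Q)-p_-(Q))/p_-}$ and $|Q|^{-(p_+(Q)-p_-(Q))/p_-}$. By the first claim, these are bounded below by $C_D^{-1/p_-}$ and above by $C_D^{1/p_-}$ respectively.

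\emph{Part (ii).} The main tool, and the step I expect to be the real work, is a pointwise comparison lemma derived from \eqref{clogp infty}. Fix $N>n$ and set $R(x):=(e+|x|)^{-N}$, which is integrable. The claim is that, for all $t\in[0,1]$ and $x\in\mathbb{R}^n$,
\[
t^{p(x)}\le C\,t^{p_\infty}+R(x)^{p_-}\quad\text{and}\quad t^{p_\infty}\le C\,t^{p(x)}+R(x)^{p_-}.
\]
I would prove it by splitting on whether $t\le R(x)$. If $t\le R(x)$, both sides are bounded by the $R$ term. If $t>R(x)$, then
\[
t^{-|p(x)-p_\infty|}\le R(x)^{-C_\infty/\log(e+|x|)}=e^{NC_\infty},
\]
which gives the constant $C$.

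Now take $\lambda:=c\,|Q|^{1/p_\infty}$ with a large constant $c\ge1$. Since $|Q|\ge1$, we have $t:=\lambda^{-1}\le1$. The comparison yields
\[
\int_Q\lambda^{-p(x)}\,dx\le C\,c^{-p_\infty}+\|R\|_{L^1}^{p_-}\ \text{-type bound},
\]
more precisely at most $C\,c^{-p_\infty}+\int_{\mathbb{R}^n}R^{p_-}$. I would choose $N$ with $Np_->n$ so that $R^{p_-}$ is integrable. A first obstacle is that this bound does not become $\le1$ just by enlarging $c$, because the $R$ term is fixed. I would handle it by rescaling: apply the comparison to $t=\lambda^{-1}$ with a small factor $\epsilon$ inserted in front of $R$, which is legitimate since $\epsilon R$ is still admissible with a constant depending on $\epsilon$. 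This makes the modular at most $1$, so $\|\mathbf{1}_Q\|_{L^{p(\cdot)}}\lesssim|Q|^{1/p_\infty}$.

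For the reverse inequality, suppose $\lambda:=\|\mathbf{1}_Q\|_{L^{p(\cdot)}}$ were much smaller than $|Q|^{1/p_\infty}$. Note $\lambda\ge \frac16|Q|^{1/p_Q}\gtrsim1$ by Remark \ref{rem est Q} and $|Q|\ge1$; should $\lambda<1$ occur, $\lambda$ is still bounded below by a constant, which is harmless. Then applying the reverse comparison with $t=\lambda^{-1}$ (after normalizing so $t\le1$) gives
\[
|Q|\,\lambda^{-p_\infty}\le C\int_Q\lambda^{-p(x)}\,dx+C'\le C+C'.
\]
This forces $\lambda\gtrsim|Q|^{1/p_\infty}$. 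All constants depend only on $n$, $p_-$, $p_\infty$ and $C_\infty$.
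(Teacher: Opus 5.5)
Your proof is correct. The paper gives no argument of its own here: the lemma is quoted from \cite[Lemma 3.24]{cf13} and \cite[Lemma 4.1.6 and Corollary 4.5.9]{dhr17}. What you wrote is essentially the standard argument behind those citations:
\begin{itemize}
\item a small/large-cube split via \eqref{clogp 1} for the first claim of (i);
\item the harmonic-mean bound $p_Q\in[p_-(Q),p_+(Q)]$ for the second claim;
\item a pointwise comparison of $t^{p(x)}$ with $t^{p_\infty}$ for (ii).
\end{itemize}
In the second claim of (i) you tacitly use $p(x)\in[p_-(Q),p_+(Q)]$ for \emph{every} $x\in Q$, not just almost every $x$. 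This is fine, because functions in $LH_0$ are continuous.

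Within the paper, (ii) can be shortened in three places.
\begin{itemize}
\item Your comparison inequality is exactly Lemma \ref{rs f 1} with $\mu$ the Lebesgue measure, $E:=Q$, and $t>\frac{1}{p_-}$.
\item The $\epsilon$-rescaling of $R$ is unnecessary. A modular bound $\rho_{L^{p(\cdot)}}(\mathbf{1}_Q/\lambda)\le K$ already yields $\|\mathbf{1}_Q\|_{L^{p(\cdot)}}\le\max\{K^{1/p_-},K^{1/p_+}\}\lambda$ by Lemma \ref{rhof 1}.
\item For the lower bound you do not need Remark \ref{rem est Q}. Since $\rho_{L^{p(\cdot)}}(\mathbf{1}_Q)=|Q|\ge1$, you get $\|\mathbf{1}_Q\|_{L^{p(\cdot)}}\ge1$ directly, so $t=\lambda^{-1}\le1$ with no normalization.
\end{itemize}

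What your self-contained route buys is an explicit constant. Yours correctly carries a factor $n$ in the exponent of the small-cube term. With the threshold $\sqrt{n}\,\ell<\frac12$ one gets $\exp(nC_0[1+\log_2\sqrt{n}])$, and the factor $n$ cannot be dropped. To see this, take $p(x)=2+C_0/\log(1/|x|)$ near $\mathbf{0}$, which satisfies \eqref{clogp 1} with constant $C_0$ by subadditivity of $s\mapsto 1/\log(1/s)$ near $0$. For small cubes centred at $\mathbf{0}$ one then has $|Q|^{p_-(Q)-p_+(Q)}\to e^{nC_0}$. So the explicit value $\exp(C_0[1+\log_2\sqrt{n}])$ in the Remark following the lemma appears to be missing this factor. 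This does not affect the lemma as stated, which only asserts existence of $C_D$.
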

\begin{remark}
Indeed, in Lemma \ref{weight bound}{\rm (i)},
for any $p(\cdot) \in \mathcal{P}$ with $p(\cdot) \in LH$,
we can take
$$C_D := \max\left\{ \left(2\sqrt n\right)^{n(p_+ - p_-)},\exp\left(C_0\left[1+\log_2\sqrt n\right]\right) \right\}, $$
where $C_0$ is the same as in \eqref{clogp 1}
and, moreover, it is obvious that, when $p(\cdot)$ is a constant exponent,
we have $C_D = 1$.
\end{remark}

The following lemma shows $\mathcal{A}_{p(\cdot),\infty} \subset A_\infty$.
\begin{lemma}\label{Ap Apinfty 1}
If $p(\cdot) \in \mathcal{P}$ with $p(\cdot) \in LH$,
then, for any $w \in \mathcal{A}_{p(\cdot),\infty}$,
$1 \leq [w]_{A_\infty} \leq C_{p(\cdot),n} [w]_{\mathcal{A}_{p(\cdot),\infty}}$,
where $C_{p(\cdot),n}$ is as in Lemma \ref{est fQ}.
\end{lemma}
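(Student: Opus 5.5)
The lower bound $1\le [w]_{A_\infty}$ follows from Jensen's inequality. For any cube $Q$,
\[
\exp\Big(\fint_Q \log w\Big)\le \fint_Q w ,
\]
so
\[
\fint_Q w(x)\,dx\,\exp\Big(\fint_Q \log (w^{-1}(x))\,dx\Big)\ge 1 .
\]
This holds for every cube, hence also after taking the supremum over $Q$.

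For the upper bound, I would compare the two averages that appear in the definitions. Fix a cube $Q$ and apply Lemma \ref{est fQ} with $f:=w$. Since $p(\cdot)\in\mathcal{P}$ and $p(\cdot)\in LH$, this gives
\[
\fint_Q w(x)\,dx\le C_{p(\cdot),n}\frac{1}{\|\mathbf{1}_Q\|_{L^{p(\cdot)}}}\left\|w\mathbf{1}_Q\right\|_{L^{p(\cdot)}}.
\]
Multiplying both sides by $\exp(\fint_Q\log(w^{-1}(x))\,dx)$ turns the right-hand side into $C_{p(\cdot),n}$ times the quantity inside the supremum in \eqref{def Apinfty}. That quantity is at most $[w]_{\mathcal{A}_{p(\cdot),\infty}}$. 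Taking the supremum over all cubes $Q$ then yields
\[
[w]_{A_\infty}\le C_{p(\cdot),n}[w]_{\mathcal{A}_{p(\cdot),\infty}}<\infty ,
\]
so $w\in A_\infty$.

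The only point needing care is that the exponential factor be finite and positive, so that the multiplication is meaningful. If $\fint_Q\log(w^{-1})=-\infty$, the product is $0$ (or the inequality is trivial). If it is $+\infty$, then the $\mathcal{A}_{p(\cdot),\infty}$ bound forces $\|w\mathbf{1}_Q\|_{L^{p(\cdot)}}=0$, which is impossible because $w>0$ almost everywhere. Hence, for $w\in\mathcal{A}_{p(\cdot),\infty}$, the factor is finite on every cube. Apart from this, the argument is a direct combination of Lemma \ref{est fQ} with the two definitions, and I expect no real obstacle.
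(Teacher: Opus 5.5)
Your proposal is correct and is essentially the paper's proof. The paper also gets the upper bound by applying Lemma \ref{est fQ} to $f:=w$ on each cube and multiplying by $\exp(\fint_Q\log(w^{-1}(x))\,dx)$; the only difference is that it obtains $[w]_{A_\infty}\geq 1$ by citing \cite[Proposition 7.3.2(4)]{g14}, where you give the one-line Jensen argument directly.
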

\begin{proof}
For any $w \in \mathcal{A}_{p(\cdot),\infty}$,
using Lemma \ref{est fQ},
we obtain, for any cube $Q$ in $\mathbb{R}^n$,
\begin{align*}
\fint_Q w(x)\,dx \exp\left( \fint_Q \log\left(w^{-1}(x)\right)\,dx \right)
\leq C_{p(\cdot),n} \frac{1}{\|\mathbf{1}_Q\|_{L^{p(\cdot)}}} \left\|w\mathbf{1}_Q\right\|_{L^{p(\cdot)}} \exp\left( \fint_Q \log\left(w^{-1}(x)\right)\,dx \right),
\end{align*}
which, combined with the definition of $\mathcal{A}_{p(\cdot),\infty}$ and $A_\infty$,
further implies that
$$[w]_{A_\infty} \leq C_{p(\cdot),n}[w]_{\mathcal{A}_{p(\cdot),\infty}}.$$
Notice that, by \cite[Proposition 7.3.2(4)]{g14},
$[w]_{A_\infty} \geq 1$.
This finishes the proof of Lemma \ref{Ap Apinfty 1}.
\end{proof}

The following doubling property in \eqref{ww lambdaQ}
of $A_\infty$ weights  is  well known
(see, for instance, \cite[Exercise 7.3.1{\rm (ii)}]{g14}).
In what follows, for any cube $Q$ in $\mathbb{R}^n$,
we use $l(Q)$ to denote its \emph{edge length}.
\begin{lemma}\label{lambdaQ Q}
Let $w\in A_\infty$.
Then, for any cube $Q$ in $\mathbb{R}^n$ and any $\lambda \in (1,\infty)$,
\begin{align}\label{ww lambdaQ}
w(\lambda Q) \leq \left( 2\lambda \right)^{2^n(1+\log_2 [w]_{A_\infty})} w(Q).
\end{align}
Moreover, for any cubes $Q$ and $R$ in $\mathbb{R}^n$ with $Q \subset R$,
\begin{align}\label{ww RQ}
\left[ \frac{l(Q)}{l(R)} \right]^{2^n(1 + \log [w]_{A_\infty})}
\leq 2^{2^{n + 1}} [w]_{A_\infty}^{2^{n + 1}}
\frac{w(Q)}{w(R)}.
\end{align}
\end{lemma}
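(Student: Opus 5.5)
We prove the two estimates for $w\in A_\infty$ directly from the geometric-mean definition of $[w]_{A_\infty}$. The natural tool is the standard consequence of that definition: for any cube $Q$ and any measurable $E\subset Q$ with $|E|\ge\beta|Q|$, one has $w(E)\ge c(\beta,[w]_{A_\infty})\,w(Q)$. We then iterate this through the dyadic structure.

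\emph{Small-set estimate.} Let $E\subset Q$ and set $\beta:=|E|/|Q|$. Split
$$\fint_Q\log w=\beta\fint_E\log w+(1-\beta)\fint_{Q\setminus E}\log w.$$
Apply Jensen's inequality on each piece, and use the definition of $[w]_{A_\infty}$ in the form
$$\exp\left(\fint_Q\log w\right)\ge [w]_{A_\infty}^{-1}\fint_Q w.$$
This gives
$$\frac{w(Q)}{|Q|}\,[w]_{A_\infty}^{-1}\le \left(\frac{w(E)}{|E|}\right)^{\beta}\left(\frac{w(Q)}{(1-\beta)|Q|}\right)^{1-\beta},$$
and hence
$$\frac{w(E)}{w(Q)}\ge \beta\,(1-\beta)^{(1-\beta)/\beta}[w]_{A_\infty}^{-1/\beta}.$$
For $\beta=2^{-n}$, i.e.\ $E$ a dyadic child of $Q$, this yields
$$w(Q)\le 2^{n}\,e\,[w]_{A_\infty}^{2^n}\,w(Q') \qquad (\ast)$$
for every child $Q'$. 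A cruder bound is also available: $(1-\beta)^{(1-\beta)/\beta}\ge e^{-1}$.

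\emph{Proof of \eqref{ww lambdaQ}.} Given $\lambda>1$, take $k:=\lceil\log_2\lambda\rceil$, so that $\lambda Q\subset 2^kQ$, where $2^kQ$ is the concentric dilate. Rather than using the concentric dilate, it is cleaner to note that $Q\subset \lambda Q$ with $|Q|/|\lambda Q|=\lambda^{-n}$ and to apply the small-set estimate directly with $\beta=\lambda^{-n}$. That produces exponent $\lambda^n$ on $[w]_{A_\infty}$, which is not of the stated form. So instead we iterate doubling $k$ times: $Q$ and $2Q$ are related by $2Q$ containing $Q$ with ratio $2^{-n}$, so
$$w(2Q)\le 2^n e\,[w]_{A_\infty}^{2^n}w(Q).$$
Iterating gives
$$w(\lambda Q)\le\left(2^ne[w]_{A_\infty}^{2^n}\right)^{k}w(Q),\qquad 2^k\le2\lambda.$$
We then write
$$\left(2^n e [w]^{2^n}\right)^{k}=2^{k(n+\log_2 e+2^n\log_2[w])}\le (2\lambda)^{2^n(1+\log_2[w]_{A_\infty})},$$
using $n+\log_2 e\le 2^n$, which holds for $n\ge2$. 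For $n=1$ the bound $1+\log_2e>2$ fails, so there we refine $(\ast)$ using the better constant obtained by optimizing $\beta=1/2$: the factor is $\beta^{-1}(1-\beta)^{-(1-\beta)/\beta}=2\cdot2=4$. Then $\log_2 4=2=2^1$, which is fine. We also check generally that $\beta^{-1}(1-\beta)^{-(1-\beta)/\beta}\le 2^{2^n}$ at $\beta=2^{-n}$. This constant bookkeeping is the main obstacle; we expect it to close with these crude bounds, since $(1-\beta)^{-(1-\beta)/\beta}\le e\le 2^{2^n-n}$.

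\emph{Proof of \eqref{ww RQ}.} Given $Q\subset R$, choose $j\ge0$ with $2^{-j-1}<l(Q)/l(R)\le 2^{-j}$. The cube $R$ can be reached from $Q$ by at most $j+1$ expansions: set $\widetilde Q:=2^{j+1}Q$, so $R\subset 3\cdot2^{j+1}Q$ (or rather $R\subset \lambda Q$ with $\lambda\le 2\cdot 2^{j+1}$). Apply \eqref{ww lambdaQ} with $\lambda=2l(R)/l(Q)+1\le 3l(R)/l(Q)$:
$$w(R)\le\left(\frac{6\,l(R)}{l(Q)}\right)^{2^n(1+\log_2[w])}w(Q).$$
Rearranging, and using
$$6^{2^n(1+\log_2[w])}=6^{2^n}[w]^{2^n\log_2 6}$$
(which we bound up to the stated $2^{2^{n+1}}[w]^{2^{n+1}}$ after adjusting the base of the logarithm), gives \eqref{ww RQ}. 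We expect some slack to be needed here. If the exponents do not match exactly, we would redo the iteration via dyadic children directly: choose a chain $Q\subset Q_1\subset\cdots\subset R$ of comparable cubes with side ratio $2$, each step costing the factor from $(\ast)$.
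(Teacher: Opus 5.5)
\textbf{Part (i) is correct; part (ii), as written, does not give the stated constants.}

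\textbf{Estimate \eqref{ww lambdaQ}.} Your argument is correct. The Jensen splitting gives $w(E)/w(Q)\ge \beta(1-\beta)^{(1-\beta)/\beta}[w]_{A_\infty}^{-1/\beta}$. At $\beta=2^{-n}$ the constant $2^n(1-2^{-n})^{-(2^n-1)}$ is at most $2^{2^n}$ for every $n$. When $n=1$ it equals $4=2^2$, which is why your crude bound $2^ne$ had to be replaced there. Iterating $k=\lceil\log_2\lambda\rceil$ concentric doublings with $2^k<2\lambda$ then closes. This is the standard argument behind the Grafakos exercise the paper cites; the paper gives no proof of its own.

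\textbf{Estimate \eqref{ww RQ}.} Your argument here does not establish the stated inequality. With $\lambda\le 3l(R)/l(Q)$ you only get
\[
w(R)\le 6^{2^n}[w]_{A_\infty}^{2^n\log_2 6}\left[\frac{l(R)}{l(Q)}\right]^{2^n(1+\log_2[w]_{A_\infty})}w(Q).
\]
Since $6>4$ and $\log_2 6>2$, this is strictly worse than the claimed factor $2^{2^{n+1}}[w]_{A_\infty}^{2^{n+1}}=4^{2^n}[w]_{A_\infty}^{2^n\log_2 4}$, and there is no slack to absorb it.

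``Adjusting the base of the logarithm'' cannot help either. Since $l(Q)/l(R)\le 1$ and $\log x\le\log_2 x$ for $x\ge 1$, putting the natural logarithm in the exponent makes the left side of \eqref{ww RQ} larger, so it is a stronger claim. The $\log$ in \eqref{ww RQ} must be read as $\log_2$, which is how the paper uses it in Lemma \ref{EB 1}.

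\textbf{The missing observation} is the sharp containment. The center of $Q$ lies in $R$, and $R$ has $\ell^\infty$-diameter $l(R)$, so $R\subset\lambda Q$ with $\lambda:=2l(R)/l(Q)\ge 2$. Then $2\lambda=4l(R)/l(Q)$, and \eqref{ww lambdaQ} gives
\[
w(R)\le\left[\frac{4\,l(R)}{l(Q)}\right]^{2^n(1+\log_2[w]_{A_\infty})}w(Q)
=2^{2^{n+1}}[w]_{A_\infty}^{2^{n+1}}\left[\frac{l(R)}{l(Q)}\right]^{2^n(1+\log_2[w]_{A_\infty})}w(Q).
\]
This is exactly \eqref{ww RQ}, with $\log_2$ in the exponent.

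Your fallback chain argument also works, and even gives the better factor $2^{2^n}[w]_{A_\infty}^{2^n}$:
\begin{itemize}
\item Take nested cubes $Q\subset Q_1\subset\cdots\subset Q_j\subset R$ with $l(Q_i)=2^il(Q)$ and $2^j\le l(R)/l(Q)<2^{j+1}$.
\item Each of the $j+1$ steps costs at most $2^{2^n}[w]_{A_\infty}^{2^n}$. This includes the last step $Q_j\subset R$, where $\beta=|Q_j|/|R|\ge 2^{-n}$, because $\beta\mapsto\beta(1-\beta)^{(1-\beta)/\beta}[w]_{A_\infty}^{-1/\beta}$ is increasing on $(0,1)$. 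Its logarithmic derivative is $\beta^{-2}\bigl(\log[w]_{A_\infty}-\log(1-\beta)\bigr)>0$.
\item Finally use $2^{j+1}\le 2l(R)/l(Q)$.
\end{itemize}
As submitted, however, you only conjectured that one of these routes would close rather than carrying either one out.
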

Using Lemmas \ref{Ap Apinfty 1} and \ref{lambdaQ Q}
and also \eqref{wM 2},
we obtain the following result.
\begin{lemma}\label{EB 1}
Let $p(\cdot) \in \mathcal{P}$ with $p(\cdot) \in LH$.
Then, for any $w\in \mathcal{A}_{p(\cdot),\infty}$, any cube $Q$ in $\mathbb{R}^n$,
and any $\lambda \in (1,\infty)$,
\begin{align}\label{EB eq 4}
\lambda^{-2^n[1 + \log_2 (C_{p(\cdot),n}[w]_{\mathcal{A}_{p(\cdot),\infty}})]}
\leq 2^{2^n} C^{2^n +1}_{p(\cdot)} [w]^{2^n + 1}_{\mathcal{A}_{p(\cdot),\infty}} \frac{\|w\mathbf{1}_Q\|_{L^{p(\cdot)}}}{\|w\mathbf{1}_{\lambda Q}\|_{L^{p(\cdot)}}};
\end{align}
moreover, for any cube $Q$ in $\mathbb{R}^n$ and any cube $R\subset Q$,
\begin{align*}
\left[ \frac{l(Q)}{l(R)} \right]^{2^n[1 + \log_2 (C_{p(\cdot),n}[w]_{\mathcal{A}_{p(\cdot),\infty}})]}
\leq 2^{2^n + 1} C^{2^{n+1} + 2}_{p(\cdot)} [w]^{2^{n+1} + 2}_{\mathcal{A}_{p(\cdot),\infty}}
\frac{\|w\mathbf{1}_Q\|_{L^{p(\cdot)}}}{\|w\mathbf{1}_R\|_{L^{p(\cdot)}}}.
\end{align*}
\end{lemma}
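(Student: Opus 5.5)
The plan is to combine the doubling estimates of Lemma \ref{lambdaQ Q} for the $A_\infty$ weight $w$ with inequality \eqref{wM 2}, which compares $\|w\mathbf{1}_Q\|_{L^{p(\cdot)}}$ with the average of $w$. By Lemma \ref{Ap Apinfty 1}, $w\in A_\infty$ with $[w]_{A_\infty}\le C_{p(\cdot),n}[w]_{\mathcal{A}_{p(\cdot),\infty}}$. Both estimates then come from sandwiching the $L^{p(\cdot)}$ norms between averages: Lemma \ref{est fQ} gives the lower bound $\fint_Q w\lesssim \|\mathbf{1}_Q\|^{-1}_{L^{p(\cdot)}}\|w\mathbf{1}_Q\|_{L^{p(\cdot)}}$, and \eqref{wM 2} gives the upper bound with constant $[w]_{\mathcal{A}_{p(\cdot),\infty}}$.

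For the first inequality, fix $Q$ and $\lambda>1$. Apply \eqref{wM 2} on $\lambda Q$ and Lemma \ref{est fQ} on $Q$:
\begin{align*}
\frac{\|w\mathbf{1}_{\lambda Q}\|_{L^{p(\cdot)}}}{\|w\mathbf{1}_{Q}\|_{L^{p(\cdot)}}}
\le C_{p(\cdot),n}[w]_{\mathcal{A}_{p(\cdot),\infty}}\frac{\|\mathbf{1}_{\lambda Q}\|_{L^{p(\cdot)}}}{\|\mathbf{1}_{Q}\|_{L^{p(\cdot)}}}\,\frac{|Q|}{|\lambda Q|}\,\frac{w(\lambda Q)}{w(Q)}.
\end{align*}
Bound $w(\lambda Q)/w(Q)$ by \eqref{ww lambdaQ} with $[w]_{A_\infty}\le C_{p(\cdot),n}[w]_{\mathcal{A}_{p(\cdot),\infty}}$. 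The factor $\|\mathbf{1}_{\lambda Q}\|/\|\mathbf{1}_Q\|\cdot |Q|/|\lambda Q|$ is at most $1$ up to a constant depending on $p(\cdot)$. Indeed, since $p\ge1$, Lemma \ref{est Q} gives $|Q|^{1/p_Q}$, and $\|\mathbf{1}_{\lambda Q}\|_{L^{p(\cdot)}}\lesssim |\lambda Q|/\|\mathbf{1}_{\lambda Q}\|_{L^{p'(\cdot)}}$-type duality bounds give $\|\mathbf{1}_{\lambda Q}\|/|\lambda Q|\lesssim \|\mathbf{1}_Q\|/|Q|$, i.e.\ $|Q|^{-1/p'_Q}$ is essentially decreasing in $Q$. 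If this monotonicity is awkward, I would simply keep a factor $\lambda^{n}$ and absorb it into the exponent $2^n[1+\log_2(\cdot)]$, since that exponent is at least $2^n\ge n$. After rearranging, this gives the stated form, with the powers of $[w]$ and $C_{p(\cdot)}$ collected into the prefactor.

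For the second inequality, take $R\subset Q$. Lemma \ref{est fQ} on $Q$ and \eqref{wM 2} on $R$ give
\begin{align*}
\frac{\|w\mathbf{1}_R\|_{L^{p(\cdot)}}}{\|w\mathbf{1}_Q\|_{L^{p(\cdot)}}}
\le C_{p(\cdot),n}[w]_{\mathcal{A}_{p(\cdot),\infty}}\frac{\|\mathbf{1}_{R}\|_{L^{p(\cdot)}}}{\|\mathbf{1}_{Q}\|_{L^{p(\cdot)}}}\,\frac{|Q|}{|R|}\,\frac{w(R)}{w(Q)}.
\end{align*}
Here we need an upper bound for $w(R)/w(Q)$ in terms of $(l(R)/l(Q))^{\epsilon}$, the reverse direction of \eqref{ww RQ}. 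I expect this step to be the main obstacle, because the stated inequality reads $(l(Q)/l(R))^{\gamma}\lesssim \|w\mathbf{1}_Q\|/\|w\mathbf{1}_R\|$. That is a lower bound on the ratio of big to small, so we actually need the opposite combination: Lemma \ref{est fQ} on $R$ and \eqref{wM 2} on $Q$. This yields $\|w\mathbf{1}_Q\|/\|w\mathbf{1}_R\|\gtrsim [w]^{-1}\frac{\|\mathbf{1}_Q\|\,|R|}{\|\mathbf{1}_R\|\,|Q|}\frac{w(Q)}{w(R)}$. Then I apply \eqref{ww RQ} to bound $w(Q)/w(R)$ from below, and handle the geometric factor as in the first part, by the essential monotonicity of $\|\mathbf{1}_Q\|_{L^{p(\cdot)}}/|Q|$ from Lemmas \ref{est Q} and \ref{weight bound}. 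Finally, I track the constants, replacing $\log$ by $\log_2$ in the exponent, which only enlarges it since $(l(Q)/l(R))\ge1$. This gives the power $2^{n+1}+2$ of $C_{p(\cdot)}[w]_{\mathcal{A}_{p(\cdot),\infty}}$.
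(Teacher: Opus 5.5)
Your first inequality is correct and follows essentially the paper's route. The paper bounds $w(Q)\le C_{p(\cdot)}\|w\mathbf{1}_Q\|_{L^{p(\cdot)}}\|\mathbf{1}_{\lambda Q}\|_{L^{p'(\cdot)}}$ by Lemma \ref{Holder}, rewrites $\|\mathbf{1}_{\lambda Q}\|_{L^{p'(\cdot)}}\sim|\lambda Q|/\|\mathbf{1}_{\lambda Q}\|_{L^{p(\cdot)}}$ by Lemma \ref{est Q}, and then uses \eqref{wM 2} on $\lambda Q$ together with \eqref{ww lambdaQ}. Your geometric factor is controlled by exactly this monotonicity of $E\mapsto\|\mathbf{1}_E\|_{L^{p'(\cdot)}}$. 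Your fallback of ``keeping a factor $\lambda^n$'' is not needed, and it would change the exponent rather than be absorbed into it.

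The second part has a genuine gap, and it cannot be closed as you set it up, because with $R\subset Q$ the displayed inequality is false. Take $w\equiv 1$ and $p(\cdot)\equiv p\in[1,\infty)$. Then $[w]_{\mathcal{A}_{p(\cdot),\infty}}=1$ and $\|w\mathbf{1}_Q\|_{L^{p}}/\|w\mathbf{1}_R\|_{L^{p}}=(l(Q)/l(R))^{n/p}$. The left-hand side, however, is $(l(Q)/l(R))^{\gamma}$ with $\gamma=2^n[1+\log_2 C_{p(\cdot),n}]\ge 2^n>n/p$ (note $C_{p(\cdot),n}\ge1$ by Lemma \ref{Ap Apinfty 1}), and $l(Q)/l(R)$ is unbounded. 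The inclusion in the statement is a misprint. The paper's proof, like \eqref{ww RQ} on which it rests, takes $Q\subset R$, and that is how the lemma is used in Lemma \ref{Bp bound} (with $R=\widetilde Q\supset Q$). It is a doubling-type lower bound for the small cube, not a reverse-doubling bound.

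Each of your steps runs the wrong way for $R\subset Q$:
\begin{itemize}
\item The lower bound you display is true, but it is just the reciprocal of your first display. The ``opposite combination'' (Lemma \ref{est fQ} on $R$, \eqref{wM 2} on $Q$) would give an upper bound instead.
\item For $R\subset Q$, \eqref{ww RQ} bounds $w(Q)/w(R)$ from above, never from below by $(l(Q)/l(R))^{\gamma}$.
\item The factor $\frac{\|\mathbf{1}_Q\|_{L^{p(\cdot)}}|R|}{\|\mathbf{1}_R\|_{L^{p(\cdot)}}|Q|}\sim\|\mathbf{1}_R\|_{L^{p'(\cdot)}}/\|\mathbf{1}_Q\|_{L^{p'(\cdot)}}$ is bounded above but not below; for constant $p>1$ it equals $(|R|/|Q|)^{1/p'}$.
\item Enlarging the exponent of a base $\ge1$ on the left-hand side makes the claim stronger, not weaker.
\end{itemize}

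For the intended case $Q\subset R$, the proof is the same as your first part:
\begin{enumerate}
\item Lemma \ref{Holder} with $\|\mathbf{1}_Q\|_{L^{p'(\cdot)}}\le\|\mathbf{1}_R\|_{L^{p'(\cdot)}}$, Lemma \ref{est Q}, and \eqref{wM 2} on $R$ give $w(Q)/w(R)\le C_{p(\cdot),n}[w]_{\mathcal{A}_{p(\cdot),\infty}}\|w\mathbf{1}_Q\|_{L^{p(\cdot)}}/\|w\mathbf{1}_R\|_{L^{p(\cdot)}}$.
\item \eqref{ww RQ}, together with $[w]_{A_\infty}\le C_{p(\cdot),n}[w]_{\mathcal{A}_{p(\cdot),\infty}}$, bounds $w(Q)/w(R)$ from below.
\item Since now $l(Q)/l(R)\le 1$, passing to the larger exponent $2^n[1+\log_2(C_{p(\cdot),n}[w]_{\mathcal{A}_{p(\cdot),\infty}})]$ is legitimate.
\end{enumerate}
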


\begin{proof}
By Lemmas \ref{Holder} and \ref{est Q}
and the obvious fact that $Q\subset \lambda Q$,
we have
\begin{align*}
w(Q) \leq C_{p(\cdot)} \left\| w\mathbf{1}_Q \right\|_{L^{p(\cdot)}} \left\| \mathbf{1}_{\lambda Q} \right\|_{L^{p'(\cdot)}}
\leq C_{p(\cdot),n} \left\| w\mathbf{1}_Q \right\|_{L^{p(\cdot)}} \frac{|\lambda Q|}{\| \mathbf{1}_{\lambda Q} \|_{L^{p(\cdot)}}},
\end{align*}
which, with dividing $w(\lambda Q)$ on the both sides,
further implies that
\begin{align}\label{EB eq 2}
\frac{w(Q)}{w(\lambda Q)}
\leq C_{p(\cdot),n} \frac{\| w\mathbf{1}_Q \|_{L^{p(\cdot)}}}{w(\lambda Q)} \frac{|\lambda Q|}{\| \mathbf{1}_{\lambda Q} \|_{L^{p(\cdot)}}}
= C_{p(\cdot),n} \frac{\|w\mathbf{1}_{Q}\|_{L^{p(\cdot)}}}{\|w\mathbf{1}_{\lambda Q}\|_{L^{p(\cdot)}}}
\frac{|\lambda Q|}{w(\lambda Q)} \frac{\|w\mathbf{1}_{\lambda Q}\|_{L^{p(\cdot)}} }{\|\mathbf{1}_{\lambda Q}\|_{L^{p(\cdot)}}}.
\end{align}
From \eqref{wM 2}, we infer that
$\frac{1}{\|\mathbf{1}_{\lambda Q}\|_{L^{p(\cdot)}}}\|w\mathbf{1}_{\lambda Q}\|_{L^{p(\cdot)}}
\leq [w]_{\mathcal{A}_{p(\cdot),\infty}} \fint_{\lambda Q} w(x)\,dx, $
which, together with \eqref{EB eq 2}, implies that
\begin{align}\label{EB eq 5}
\frac{w(Q)}{w(\lambda Q)} \leq C_{p(\cdot),n} [w]_{\mathcal{A}_{p(\cdot),\infty}} \frac{\|w\mathbf{1}_Q\|_{L^{p(\cdot)}}}{\|w\mathbf{1}_{\lambda Q}\|_{L^{p(\cdot)}}}.
\end{align}
Combining this with Lemmas \ref{lambdaQ Q} and \ref{Ap Apinfty 1},
we conclude that
\begin{align*}
\left(2\lambda\right)^{-2^n(1+\log_2 (C_{p(\cdot),n}[w]_{\mathcal{A}_{p(\cdot),\infty}}))}
\leq \left(2\lambda\right)^{-2^n(1+\log_2 [w]_{A_\infty})}
\leq \frac{w(Q)}{w(\lambda Q)}
\leq C_{p(\cdot),n} [w]_{\mathcal{A}_{p(\cdot),\infty}} \frac{\|w\mathbf{1}_Q\|_{L^{p(\cdot)}}}{\|w\mathbf{1}_{\lambda Q}\|_{L^{p(\cdot)}}},
\end{align*}
which further implies that
\begin{align*}
\lambda^{-2^n(1+\log_2 (C_{p(\cdot),n}[w]_{\mathcal{A}_{p(\cdot),\infty}}))}
\leq 2^{2^n} C^{2^n +1}_{p(\cdot)} [w]^{2^n + 1}_{\mathcal{A}_{p(\cdot),\infty}} \frac{\|w\mathbf{1}_Q\|_{L^{p(\cdot)}}}{\|w\mathbf{1}_{\lambda Q}\|_{L^{p(\cdot)}}}.
\end{align*}
This finishes the proof of \eqref{EB eq 4}.

Moreover, for any cubes $Q$ and $R$ with $Q\subset R$,
similar to the previous proof about \eqref{EB eq 4},
by Lemma \ref{Ap Apinfty 1}, \eqref{ww RQ},
and \eqref{EB eq 5} with $\lambda Q$ replaced by $R$,
we obtain
\begin{align*}
\left[ \frac{l(Q)}{l(R)} \right]^{2^n[1 + \log_2 (C_{p(\cdot),n}[w]_{\mathcal{A}_{p(\cdot),\infty}})]}
\leq 2^{2^n + 1} C^{2^{n+1} + 2}_{p(\cdot)} [w]^{2^{n+1} + 2}_{\mathcal{A}_{p(\cdot),\infty}}
\frac{\|w\mathbf{1}_Q\|_{L^{p(\cdot)}}}{\|w\mathbf{1}_R\|_{L^{p(\cdot)}}},
\end{align*}
which completes the proof of Lemma \ref{EB 1}.
\end{proof}

The following lemma is an essential estimate about $\|w\mathbf{1}_Q\|_{L^{p(\cdot)}}^{p_-(Q) - p_+(Q)}$
(see, for instance, \cite[Lemma 3.3]{cfn12} for a similar result about $\mathcal{A}_{p(\cdot)}$ weights).
\begin{lemma}\label{Bp bound}
Let $p(\cdot)\in \mathcal{P}$ with $p(\cdot) \in LH$ and let $w\in \mathcal{A}_{p(\cdot),\infty}$.
Then there exist positive constants $C$ and $A$,
depending only on $p(\cdot)$ and $n$, such that,
for any cube $Q$ in $\mathbb{R}^n$,
\begin{align}\label{EB eq}
\left\|w\mathbf{1}_Q\right\|_{L^{p(\cdot)}}^{p_-(Q) - p_+(Q)} \leq C L_w [w]^{A}_{\mathcal{A}_{p(\cdot),\infty}},
\end{align}
where $L_w$ is the same as in \eqref{def Lw}.
\end{lemma}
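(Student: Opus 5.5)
The plan is to reduce to the case $\|w\mathbf{1}_Q\|_{L^{p(\cdot)}}<1$. Then I would bound $\|w\mathbf{1}_Q\|_{L^{p(\cdot)}}$ from below by $\|w\mathbf{1}_{Q_0}\|_{L^{p(\cdot)}}$ times a power of a ratio of edge lengths, using the second estimate of Lemma \ref{EB 1}. Finally, the log-H\"older conditions should absorb that ratio once it is raised to the small exponent $p_+(Q)-p_-(Q)$.

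\emph{Step 1 (trivial case).} Since the exponent $p_-(Q)-p_+(Q)$ is nonpositive, if $\|w\mathbf{1}_Q\|_{L^{p(\cdot)}}\ge 1$ then the left-hand side of \eqref{EB eq} is at most $1$. Moreover $L_w\ge1$, being the maximum of $t^{p_--p_+}$ and $t^{p_+-p_-}$ for $t=\|w\mathbf{1}_{Q_0}\|_{L^{p(\cdot)}}$. So this case is done, and I assume $\|w\mathbf{1}_Q\|_{L^{p(\cdot)}}<1$ from now on.

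\emph{Step 2 (comparison with a big cube).} Write $Q=Q(x_Q,l(Q))$ and let $R$ be a cube centered at $\mathbf{0}$ with $R\supset Q\cup Q_0$ and
$$l(R)\sim \max\{2e,\ |x_Q|+l(Q)\}.$$
Set $B:=2^n[1+\log_2(C_{p(\cdot),n}[w]_{\mathcal{A}_{p(\cdot),\infty}})]$. The second inequality of Lemma \ref{EB 1}, with the roles of the cubes taken as $Q\subset R$, gives
$$\|w\mathbf{1}_Q\|_{L^{p(\cdot)}}\gtrsim [w]_{\mathcal{A}_{p(\cdot),\infty}}^{-(2^{n+1}+2)}\left[\frac{l(Q)}{l(R)}\right]^{B}\|w\mathbf{1}_R\|_{L^{p(\cdot)}}.$$
Since $\|w\mathbf{1}_R\|_{L^{p(\cdot)}}\ge\|w\mathbf{1}_{Q_0}\|_{L^{p(\cdot)}}$, I raise this inequality to the nonpositive power $-(p_+(Q)-p_-(Q))$. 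This yields three factors:
\begin{itemize}
\item a factor $[w]_{\mathcal{A}_{p(\cdot),\infty}}^{(2^{n+1}+2)(p_+-p_-)}$ times a constant;
\item the factor $\|w\mathbf{1}_{Q_0}\|_{L^{p(\cdot)}}^{-(p_+(Q)-p_-(Q))}$;
\item the factor $[l(R)/l(Q)]^{B(p_+(Q)-p_-(Q))}$.
\end{itemize}
The middle factor is at most $L_w$: it is at most $\|w\mathbf{1}_{Q_0}\|^{p_--p_+}_{L^{p(\cdot)}}$ if $\|w\mathbf{1}_{Q_0}\|_{L^{p(\cdot)}}<1$, and at most $1$ otherwise.

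\emph{Step 3 (main obstacle: bounding $[l(R)/l(Q)]^{p_+(Q)-p_-(Q)}$ by a constant depending only on $p(\cdot)$ and $n$).} Once this bound $K$ is available, $K^{B}$ equals a constant times a power of $[w]_{\mathcal{A}_{p(\cdot),\infty}}$, because $B$ is affine in $\log_2[w]_{\mathcal{A}_{p(\cdot),\infty}}$. This produces the factor $[w]^A_{\mathcal{A}_{p(\cdot),\infty}}$ in \eqref{EB eq}. I would prove the bound on $K$ by splitting into cases.
\begin{itemize}
\item[(a)] If $|x_Q|\lesssim \max\{l(Q),1\}$ and $l(Q)\ge1$, then $l(R)/l(Q)$ is bounded and there is nothing to do.
\item[(b)] If $|x_Q|\lesssim 1$ and $l(Q)<1$, then $l(R)\sim1$. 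The Diening condition, Lemma \ref{weight bound}(i), gives $l(Q)^{-(p_+(Q)-p_-(Q))}=|Q|^{-(p_+(Q)-p_-(Q))/n}\le C_D^{1/n}$.
\item[(c)] If $|x_Q|\gg\max\{l(Q),1\}$, then every point of $Q$ has modulus comparable to $|x_Q|$. The condition \eqref{clogp infty} then gives $p_+(Q)-p_-(Q)\le 2C_\infty/\log(e+|x_Q|/2)$, so $(|x_Q|)^{p_+(Q)-p_-(Q)}\lesssim e^{4C_\infty}$. I write $l(R)/l(Q)\lesssim |x_Q|\cdot l(Q)^{-1}$. The first factor is handled by the estimate just given. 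The second factor is either at most $1$ when $l(Q)\ge1$, or is controlled by the Diening condition as in (b) when $l(Q)<1$.
\end{itemize}
Combining the three cases bounds $[l(R)/l(Q)]^{p_+(Q)-p_-(Q)}$ uniformly, and collecting the factors from Step 2 gives \eqref{EB eq} with $C$ and $A$ depending only on $p(\cdot)$ and $n$.
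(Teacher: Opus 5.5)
Your proof is correct and runs on the same engine as the paper's. You compare $Q$ with a larger cube via the second estimate of Lemma \ref{EB 1}, and you rightly apply it with $Q\subset R$, which is what its proof gives and how the paper uses it. You then raise to the power $p_-(Q)-p_+(Q)$ and absorb the side-length ratio with the Diening condition of Lemma \ref{weight bound}(i) and \eqref{clogp infty}.

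The differences are organizational, but not merely cosmetic.

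\emph{Handling of $L_w$.} The paper first normalizes $\|w\mathbf{1}_{Q_0}\|_{L^{p(\cdot)}}=1$ and recovers $L_w$ at the end by rescaling to $v:=w/\|w\mathbf{1}_{Q_0}\|_{L^{p(\cdot)}}$, a template it reuses in Lemmas \ref{EB 2} and \ref{EB 3}. You instead keep $\|w\mathbf{1}_R\|_{L^{p(\cdot)}}\ge\|w\mathbf{1}_{Q_0}\|_{L^{p(\cdot)}}$ and bound $\|w\mathbf{1}_{Q_0}\|_{L^{p(\cdot)}}^{p_-(Q)-p_+(Q)}\le\max\{1,\|w\mathbf{1}_{Q_0}\|_{L^{p(\cdot)}}^{p_--p_+}\}\le L_w$ directly. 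This is shorter and slightly sharper.

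\emph{Case split.} The paper uses four cases, according to $l\le 2e$ or $l>2e$ and the size of $\mathrm{dist}(Q,Q_0)$, with comparison cubes $5Q_0$, $5Q$ and $Q(\mathbf{0},4d)$. Your single cube $R$, with the split by $|x_Q|$ versus $\max\{l(Q),1\}$, is more robust. In your case (c), the condition $|x_Q|\gg l(Q)$ forces $|x|\sim|x_Q|$ on all of $Q$, so \eqref{clogp infty} controls the whole ratio. In the paper's last case ($l>2e$, $d>l$), the claimed inequality $d<\mathrm{dist}(\mathbf{0},Q)$ actually fails. For $n=1$ and $Q=[a,a+l]$ with $a>e$, one has $d=a+l-e>a$. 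So the paper's separate bound $d^{p_+(Q)-p_-(Q)}\le e^{2C_\infty}$ has to be replaced by a joint estimate of $d/l$, which your case distinction provides automatically.

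\emph{One implicit step.} You do not state that $C_{p(\cdot),n}[w]_{\mathcal{A}_{p(\cdot),\infty}}\ge1$, which comes from Lemma \ref{Ap Apinfty 1}. This fact is what lets you:
\begin{itemize}
\item replace $p_+(Q)-p_-(Q)$ by $p_+-p_-$ in the power of $[w]_{\mathcal{A}_{p(\cdot),\infty}}$;
\item turn $K^B$ into a constant times a power of $[w]_{\mathcal{A}_{p(\cdot),\infty}}$;
\item absorb the trivial case $\|w\mathbf{1}_Q\|_{L^{p(\cdot)}}\ge1$ into $CL_w[w]_{\mathcal{A}_{p(\cdot),\infty}}^A$.
\end{itemize}
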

\begin{proof}
From the fact that $p_-(Q) - p_+(Q) \leq 0$,
we deduce that,
if $\|w\mathbf{1}_Q\|_{L^{p(\cdot)}} \geq 1$,
then it is obvious that
$\|w\mathbf{1}_Q\|_{L^{p(\cdot)}}^{p_-(Q) - p_+(Q)} \leq 1. $
Hence, we only need to consider the case $\|w\mathbf{1}_Q\|_{L^{p(\cdot)}} < 1$.

Now, fix a cube $Q := Q(y_0,l)$ with $y_0 \in \mathbb{R}^n$ and $l \in (0,\infty)$
and let $Q_0$ be the same as in \eqref{def Q0}.
To make the proof clear,
we begin with the assumption $\| w\mathbf{1}_{Q_0} \|_{L^{p(\cdot)}} = 1$.
During the following discussions,
we divide the proof into the following four cases:
$l\leq 2e$ and ${\mathop\mathrm{\,dist\,}}(Q,Q_0) \leq 2e$,
$l > 2e$ and ${\mathop\mathrm{\,dist\,}}(Q,Q_0) \leq l$,
$l\leq 2e$ and ${\mathop\mathrm{\,dist\,}}(Q,Q_0) > 2e$,
or $l > 2e$ and ${\mathop\mathrm{\,dist\,}}(Q,Q_0) > l$,
where ${\mathop\mathrm{\,dist\,}}(Q,Q_0)$ is the same as in \eqref{def d}.

We first consider the case where $l\leq 2e$ and ${\mathop\mathrm{\,dist\,}}(Q,Q_0) \leq 2e$.
In this case, let $\widetilde{Q} := 5Q_0$ and hence
it is obvious that $Q\subset \widetilde{Q}$.
Thus, by this and Lemma \ref{EB 1} with $Q := Q$ and $R := \widetilde{Q}$,
we obtain
\begin{align*}
\frac{\|w\mathbf{1}_{\widetilde{Q}}\|_{L^{p(\cdot)}}}{\|w\mathbf{1}_Q\|_{L^{p(\cdot)}}}
\lesssim \left\{C_{p(\cdot),n}[w]_{\mathcal{A}_{p(\cdot),\infty}}\right\}^{2^{n+1} + 2}
\left[ \frac{l(\widetilde{Q})}{l(Q)} \right]^{2^n[1 + \log_2 (C_{p(\cdot),n}[w]_{\mathcal{A}_{p(\cdot),\infty}})]},
\end{align*}
where $C_{p(\cdot),n}$ is the same as in Lemma \ref{est fQ},
which further implies that
\begin{align}\label{EB eq 7}
\left\{C_{p(\cdot),n}[w]_{\mathcal{A}_{p(\cdot),\infty}}\right\}^{-2^{n+1} - 2}
\left[\frac{l}{10e}\right]^{2^n[1 + \log_2 (C_{p(\cdot),n}[w]_{\mathcal{A}_{p(\cdot),\infty}})]}
&\lesssim \left\|w\mathbf{1}_Q\right\|_{L^{p(\cdot)}}.
\end{align}
Notice that, by Lemma \ref{Ap Apinfty 1},
we have
\begin{align}\label{EB eq 8}
1 \leq C_{p(\cdot),n}[w]_{\mathcal{A}_{p(\cdot),\infty}}.
\end{align}
Thus, using this and \eqref{EB eq 7} with raising both sides to the power $p_-(Q) - p_+(Q)$
and using Lemmas \ref{weight bound} {\rm (i)},
we find that
\begin{align*}
&\left\|w\mathbf{1}_Q\right\|_{L^{p(\cdot)}}^{p_-(Q)-p_+(Q)}\\
&\quad \lesssim \left\{C_{p(\cdot),n}[w]_{\mathcal{A}_{p(\cdot),\infty}}\right\}^{(2^{n+1} + 2)(p_+(Q) - p_-(Q))}
\left(\frac{l}{10e}\right)^{2^n(p_+(Q) - p_-(Q))\{1 + \log_2 (C_{p(\cdot),n}[w]_{\mathcal{A}_{p(\cdot),\infty}})\}}\\
&\quad \leq \left\{C_{p(\cdot),n}[w]_{\mathcal{A}_{p(\cdot),\infty}}\right\}^{(2^{n+1} + 2)(p_+ - p_-)}
(10e)^{2^n(p_+ - p_-)\{1 + \log_2 (C_{p(\cdot),n}[w]_{\mathcal{A}_{p(\cdot),\infty}})\}}
C_D^{\frac{2^n}{n}\{1 + \log_2 (C_{p(\cdot),n}[w]_{\mathcal{A}_{p(\cdot),\infty}})\}}\\
&\quad \sim [w]_{\mathcal{A}_{p(\cdot),\infty}}^{(p_+ - p_-)(2^{n+1} + 2 + 2^n\log_2 (10e)) + \frac{2^n}{n}\log_2(C_D)},
\end{align*}
where $C_D$ is the same as in Lemma \ref{weight bound}.
This finishes the proof of the case where $l \leq 2e$ and ${\mathop\mathrm{\,dist\,}}(Q,Q_0) \leq 2e$.

Next, we consider the case where $l > 2e$ and ${\mathop\mathrm{\,dist\,}}(Q,Q_0) \leq l$.
Let $\widetilde{Q} := 5Q$.
Then $Q_0 \subset \widetilde{Q}$.
From this and Lemma \ref{EB 1} with $Q := Q$ and $R := \widetilde{Q}$,
we infer that
\begin{align*}
\frac{\|w\mathbf{1}_{\widetilde{Q}}\|_{L^{p(\cdot)}}}{\|w\mathbf{1}_Q\|_{L^{p(\cdot)}}}
&\lesssim \left\{C_{p(\cdot),n}[w]_{\mathcal{A}_{p(\cdot),\infty}}\right\}^{2^{n+1} + 2}
\left[ \frac{l(\widetilde{Q})}{l(Q)} \right]^{2^n[1 + \log_2 (C_{p(\cdot),n}[w]_{\mathcal{A}_{p(\cdot),\infty}})]}\\
&\sim \left\{C_{p(\cdot),n}[w]_{\mathcal{A}_{p(\cdot),\infty}}\right\}^{2^{n+1} + 2 + 2^n\log_2 5},
\end{align*}
which further implies that
$$\left\{C_{p(\cdot),n}[w]_{\mathcal{A}_{p(\cdot),\infty}}\right\}^{-2^{n+1} - 2 - 2^n\log_2 5} \left\|w\mathbf{1}_{\widetilde{Q}}\right\|_{L^{p(\cdot)}}
\lesssim \left\|w\mathbf{1}_Q\right\|_{L^{p(\cdot)}}.$$
By this with raising both sides to the power $p_-(Q)-p_+(Q)$ and \eqref{EB eq 8},
we conclude that
\begin{align*}
\left\|w\mathbf{1}_Q\right\|_{L^{p(\cdot)}}^{p_-(Q)-p_+(Q)}
&\lesssim \left\{C_{p(\cdot),n}[w]_{\mathcal{A}_{p(\cdot),\infty}}\right\}^{ (2^{n+1} + 2 + 2^n \log_25)(p_+(Q)-p_-(Q))}
\left\|w\mathbf{1}_{\widetilde{Q}}\right\|_{L^{p(\cdot)}}^{p_-(Q) - p_+(Q)}\\
&\leq \left\{C_{p(\cdot),n}[w]_{\mathcal{A}_{p(\cdot),\infty}}\right\}^{ (2^{n+1} + 2 + 2^n \log_25)(p_+-p_-)}
\left\|w\mathbf{1}_{Q_0}\right\|_{L^{p(\cdot)}}^{p_-(Q) - p_+(Q)}\\
&\sim [w]_{\mathcal{A}_{p(\cdot),\infty}}^{ (2^{n+1} + 2 + 2^n \log_25)(p_+-p_-)},
\end{align*}
which completes the proof of the case where $l > 2e$ and ${\mathop\mathrm{\,dist\,}}(Q,Q_0) \leq l$.

Finally, we consider the last two cases:
$l\leq 2e$ and ${\mathop\mathrm{\,dist\,}}(Q,Q_0) > 2e$ or
$l > 2e$ and
$${\mathop\mathrm{\,dist\,}}(Q,Q_0) > l.$$
Let $d := {\mathop\mathrm{\,dist\,}}(Q,Q_0)$ and $\widetilde{Q} := Q(\mathbf{0}, 4d)$.
It is easy to find that $Q \subset \widetilde{Q}$.
By this and Lemma \ref{EB 1} with $Q := Q$ and $R := \widetilde{Q}$,
we find that
\begin{align*}
\frac{\|w\mathbf{1}_{\widetilde{Q}}\|_{L^{p(\cdot)}}}{\|w\mathbf{1}_Q\|_{L^{p(\cdot)}}}
&\lesssim  \left\{C_{p(\cdot),n}[w]_{\mathcal{A}_{p(\cdot),\infty}}\right\}^{2^{n+1} + 2}
\left[ \frac{4d}{l} \right]^{2^n[1 + \log_2 (C_{p(\cdot),n}[w]_{\mathcal{A}_{p(\cdot),\infty}})]},
\end{align*}
which further implies that
\begin{align*}
\left\{C_{p(\cdot),n}[w]_{\mathcal{A}_{p(\cdot),\infty}}\right\}^{-(2^{n+1} + 2)}
\left[ \frac{l}{4d} \right]^{2^n[1 + \log_2 (C_{p(\cdot),n}[w]_{\mathcal{A}_{p(\cdot),\infty}})]}
\left\|w\mathbf{1}_{\widetilde{Q}}\right\|_{L^{p(\cdot)}}
\lesssim \left\|w\mathbf{1}_Q\right\|_{L^{p(\cdot)}}.
\end{align*}
Hence, using this with raising both sides to the power $p_-(Q)-p_+(Q)$
and using Lemma \ref{weight bound}(i), \eqref{EB eq 8},
and the assumption $\|w\mathbf{1}_{Q_0}\|_{L^{p(\cdot)}} = 1$,
we conclude that
\begin{align}\label{EB eq 3}
\left\|w\mathbf{1}_Q\right\|_{L^{p(\cdot)}}^{p_-(Q)-p_+(Q)}
&\lesssim \left\|w\mathbf{1}_{\widetilde{Q}}\right\|_{L^{p(\cdot)}}^{p_-(Q) - p_+(Q)}\left\{C_{p(\cdot),n}[w]_{\mathcal{A}_{p(\cdot),\infty}}\right\}^{[p_+(Q)-p_-(Q)](2^{n+1} + 2)}\nonumber\\
&\quad \times \left(\frac{4l}{d}\right)^{2^n[p_-(Q) - p_+(Q)][1 + \log_2(C_{p(\cdot),n} [w]_{\mathcal{A}_{p(\cdot),\infty}})] } \nonumber\\
&\leq \left\|w\mathbf{1}_{Q_0}\right\|_{L^{p(\cdot)}}^{p_- - p_+} \left\{C_{p(\cdot),n}[w]_{\mathcal{A}_{p(\cdot),\infty}}\right\}^{(p_+-p_-)(2^{n+1} + 2)}
 C_D^{\frac{2^n}{n}[1 + \log_2(C_{p(\cdot),n} [w]_{\mathcal{A}_{p(\cdot),\infty}}) ]} \nonumber\\
&\quad \times 4^{2^n(p_+-p_-)[1 + \log_2(C_{p(\cdot),n} [w]_{\mathcal{A}_{p(\cdot),\infty}}) ]} d^{2^n[p_+(Q) - p_-(Q)][1 + \log_2 (C_{p(\cdot),n}[w]_{\mathcal{A}_{p(\cdot),\infty}})]} \nonumber\\
&\lesssim [w]^{(p_+-p_-)(2^{n+2} + 2) + \frac{1}{n}2^n\log_2 C_D}_{\mathcal{A}_{p(\cdot),\infty}} d^{2^n[p_+(Q) - p_-(Q)][1 + \log_2 (C_{p(\cdot),n}[w]_{\mathcal{A}_{p(\cdot),\infty}})]}.
\end{align}
Since $p(\cdot) \in LH$,
we deduce that $p(\cdot)$ is continuous and hence
there exist points $x,y \in \overline{Q}$
such that $p(x) = p_-(Q)$ and $p(y) = p_+(Q)$.
Let $d_Q := {\mathop\mathrm{\,dist\,}}(\mathbf{0},Q)$.
Then, by $\mathbf{0} \in Q_0$,
we conclude that $d < d_Q$ and $|x|, |y| \geq d_Q$.
Thus, using this and the assumption $p(\cdot) \in LH$,
we obtain
$$ p_+(Q)-p_-(Q) = \left| p_+(Q)-p_-(Q)  \right|
\leq \left| p(x) - p_\infty \right| + \left| p(y) - p_\infty \right|
\leq \frac{2C_\infty}{\log(e + d_Q)}, $$
where $C_\infty$ and $p_\infty$ are the same as in \eqref{clogp infty}.
From this and the fact $d < d_Q$,
we further infer that
\begin{align*}
d^{p_+(Q)-p_-(Q)}
\leq d_Q^{\frac{2C_\infty}{\log(e + d_Q)}}
\leq (e+d_Q)^{\frac{2C_\infty}{\log(e + d_Q)}}
 = e^{\frac{2C_\infty\log(e + d_Q)}{\log(e + d_Q)}} = e^{2C_\infty},
\end{align*}
which, combined with \eqref{EB eq 3},
further implies that
\begin{align*}
\left\|w\mathbf{1}_Q\right\|_{L^{p(\cdot)}}^{p_-(Q)-p_+(Q)}
&\lesssim  [w]^{(p_+-p_-)(2^{n+2} + 2) + \frac{1}{n}2^n\log_2 C_D}_{\mathcal{A}_{p(\cdot),\infty}}
e^{2C_\infty \log_2 [w]_{\mathcal{A}_{p(\cdot),\infty}}}\\
& = [w]^{(p_+-p_-)(2^{n+2} + 2) + \frac{1}{n}2^n\log_2 C_D + 2C_\infty \log_2 e}_{\mathcal{A}_{p(\cdot),\infty}}.
\end{align*}
This finishes the proof of \eqref{EB eq} in the case $\|w\mathbf{1}_{Q_0}\|_{L^{p(\cdot)}} = 1$,
that is, there exists a positive constant $A$, depending only on $n$ and $p(\cdot)$,
such that, for any cube $Q$ in $\mathbb{R}^n$,
\begin{align}\label{EB eq 6}
\left\|w\mathbf{1}_Q\right\|_{L^{p(\cdot)}} \lesssim [w]_{\mathcal{A}_{p(\cdot),\infty}}^A.
\end{align}

In the final step, we consider the case $\|w\mathbf{1}_{Q_0}\|_{L^{p(\cdot)}} \neq 1$.
Since $w \in \mathcal{A}_{p(\cdot),\infty}$, from Lemma \ref{Ap Apinfty 1},
it follows that $w \in A_\infty$ and
$0 < w(Q_0) \lesssim \|w\mathbf{1}_{Q_0}\|_{L^{p(\cdot)}} \|\mathbf{1}_{Q_0}\|_{L^{p,(\cdot)}}$,
which further implies that $\|w\mathbf{1}_{Q_0}\|_{L^{p(\cdot)}} > 0$.
Using this and letting $v(x) := \frac{w(x)}{\|w\mathbf{1}_{Q_0}\|_{L^{p(\cdot)}}}$,
we easily find that $\|v\mathbf{1}_{Q_0}\|_{L^{p(\cdot)}} = 1$.
Notice that, for any cube $Q$ in $\mathbb{R}^n$,
$$ \frac{1}{\|\mathbf{1}_{Q}\|_{L^{p(\cdot)}}} \left\|v\mathbf{1}_Q\right\|_{L^{p(\cdot)}} \exp\left( \fint_Q \log v^{-1}(x)\,dx \right)
= \frac{1}{\|\mathbf{1}_{Q}\|_{L^{p(\cdot)}}} \left\|w\mathbf{1}_Q\right\|_{L^{p(\cdot)}} \exp\left( \fint_Q \log w^{-1}(x)\,dx \right), $$
which, together with the definition of $\mathcal{A}_{p(\cdot),\infty}$,
further implies that
\begin{align}\label{EB eq 13}
 [v]_{\mathcal{A}_{p(\cdot),\infty}} = [w]_{\mathcal{A}_{p(\cdot),\infty}} .
\end{align}
By this and the just proven conclusion that \eqref{EB eq 6} holds with $w$ replaced by $v$,
we conclude that, for any cube $Q$ in $\mathbb{R}^n$,
\begin{align*}
\left\| w \mathbf{1}_Q \right\|_{L^{p(\cdot)}}^{p_-(Q) - p_+(Q)}
&= \left\| w \mathbf{1}_{Q_0} \right\|_{L^{p(\cdot)}}^{p_+(Q) - p_-(Q)}
\left\| v \mathbf{1}_{Q} \right\|_{L^{p(\cdot)}}^{p_-(Q) - p_+(Q)} \\
& \lesssim \max\left\{ \left\| w \mathbf{1}_{Q_0} \right\|_{L^{p(\cdot)}}^{p_- - p_+}, \left\| w \mathbf{1}_{Q_0} \right\|_{L^{p(\cdot)}}^{p_+ - p_-} \right\} [v]_{\mathcal{A}_{p(\cdot),\infty}}^{A}
 = L_w [w]_{\mathcal{A}_{p(\cdot),\infty}}^{A}.
\end{align*}
This finishes the proof of Lemma \ref{Bp bound}.
\end{proof}
The following lemma is exactly \cite[Proposition 2.21]{cf13}.
\begin{lemma}\label{rho f norm}
Let $p(\cdot) \in \mathcal{P}$.
If $f\in L^{p(\cdot)}$ and $\|f\|_{L^{p(\cdot)}} \neq 0$,
then $\rho_{p(\cdot)}(\frac{f}{\|f\|_{L^{p(\cdot)}}}) = 1$.
\end{lemma}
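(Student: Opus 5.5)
The plan is to argue directly from the definition of the Luxemburg norm, using only the monotonicity and homogeneity properties of the modular $\rho_{p(\cdot)}$. Throughout, the relevant case is $p_+<\infty$: the modular in this paper is the pure integral $\int|f|^{p(x)}\,dx$, and this is also the setting in which \cite[Proposition 2.21]{cf13} yields equality rather than merely ``$\le 1$''. I will therefore assume $p_+<\infty$ and note this explicitly. Set $\lambda_0:=\|f\|_{L^{p(\cdot)}}\in(0,\infty)$. I will show $\rho_{p(\cdot)}(f/\lambda_0)\le 1$ and $\rho_{p(\cdot)}(f/\lambda_0)\ge 1$ separately.

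\emph{Upper bound.} By definition of the infimum, there is a sequence $\lambda_k\downarrow\lambda_0$ with $\rho_{p(\cdot)}(f/\lambda_k)\le 1$ for every $k$. Pointwise we have $|f(x)/\lambda_k|^{p(x)}\uparrow|f(x)/\lambda_0|^{p(x)}$. Hence the monotone convergence theorem (or Fatou's lemma) gives
$$\rho_{p(\cdot)}(f/\lambda_0)=\lim_{k\to\infty}\rho_{p(\cdot)}(f/\lambda_k)\le 1.$$

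\emph{Lower bound.} Suppose, for contradiction, that $\rho_{p(\cdot)}(f/\lambda_0)<1$. For any $\lambda\in(0,\lambda_0)$ we have $\lambda_0/\lambda>1$, so pointwise
$$\left|\frac{f(x)}{\lambda}\right|^{p(x)}=\left(\frac{\lambda_0}{\lambda}\right)^{p(x)}\left|\frac{f(x)}{\lambda_0}\right|^{p(x)}\le\left(\frac{\lambda_0}{\lambda}\right)^{p_+}\left|\frac{f(x)}{\lambda_0}\right|^{p(x)}.$$
Integrating gives $\rho_{p(\cdot)}(f/\lambda)\le(\lambda_0/\lambda)^{p_+}\rho_{p(\cdot)}(f/\lambda_0)$. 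Because $p_+<\infty$ and $\rho_{p(\cdot)}(f/\lambda_0)$ is finite and strictly less than $1$, we can choose $\lambda<\lambda_0$ close enough to $\lambda_0$ that the right-hand side is still at most $1$. That $\lambda$ is admissible in the infimum defining $\|f\|_{L^{p(\cdot)}}$, which contradicts the minimality of $\lambda_0$. Therefore $\rho_{p(\cdot)}(f/\lambda_0)=1$.

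The only delicate step is the lower bound. It requires the scaling estimate with the finite exponent $p_+$, together with the finiteness of $\rho_{p(\cdot)}(f/\lambda_0)$, which the upper bound has already established. This is exactly where the assumption $p_+<\infty$ enters. If $p$ were allowed to take the value $\infty$ on a set of positive measure, the $L^\infty$-part of the modular could produce $\rho<1$, and the argument would fail.
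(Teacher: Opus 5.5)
Your proof is correct and is the standard argument behind \cite[Proposition 2.21]{cf13}, which the paper simply cites: monotone convergence gives $\rho_{p(\cdot)}(f/\|f\|_{L^{p(\cdot)}})\le 1$, and the scaling bound $\rho_{p(\cdot)}(f/\lambda)\le(\lambda_0/\lambda)^{p_+}\rho_{p(\cdot)}(f/\lambda_0)$ gives the reverse inequality. You are right to add the hypothesis $p_+<\infty$, since the lemma as literally stated for all $p(\cdot)\in\mathcal{P}$ is false without it; this costs nothing in the paper, because every application has $p(\cdot)\in LH$, which forces $p_+<\infty$.

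One correction to your closing remark: the real obstruction is unboundedness of $p$, not an $L^\infty$-part, which scales like $1/\lambda$ and would not break your argument. For example, take $p(x):=\max\{x,1\}$ on $\mathbb{R}$ and $f:=(c/x^2)^{1/x}\mathbf{1}_{[1,\infty)}$ with $c\in(0,1)$. Then $\rho_{p(\cdot)}(f/\lambda)=\infty$ for every $\lambda<1$, so $\|f\|_{L^{p(\cdot)}}=1$, yet $\rho_{p(\cdot)}(f)=c<1$.
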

The following lemma is precisely \cite[Lemma 2.10]{cc22}
with $X = \mathbb{R}^n$.
\begin{lemma}\label{rs f}
Let $G$ be a measurable set in $\mathbb{R}^n$
and $r(\cdot), s(\cdot) \in \mathcal{P}_0$ satisfy, for any $y\in G$,
$$ \left| r(y) - s(y) \right| \leq \frac{C_0}{\log(e + |y|)}, $$
where $C_0$ is a positive constant.
Then, for any $t\in (0,\infty)$, any measure $\mu$,
and any $f\in \mathscr{M}$ with $|f| \leq 1$ on $G$,
\begin{align*}
\int_{G} \left|f(y)\right|^{s(y)}\,d\mu(y) \leq e^{ntC_0} \int_{G}\left| f(y) \right|^{r(y)}\,d\mu(y)
+ \int_G \frac{1}{(e + |y|)^{nts_-(G)}}\,d\mu(y).
\end{align*}
\end{lemma}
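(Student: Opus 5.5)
The plan is a pointwise splitting of $G$ according to whether $|f(y)|$ lies above or below the threshold $(e+|y|)^{-nt}$. We integrate each piece against $\mu$ and add. Nothing from the variable Lebesgue space theory is needed: only the assumed bound on $|r(\cdot)-s(\cdot)|$ and $|f|\le 1$ on $G$.

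\emph{Step 1 (small values).} Let
$$G_1:=\left\{y\in G:\ |f(y)|\le (e+|y|)^{-nt}\right\}.$$
For $y\in G_1$ the base satisfies $|f(y)|\le (e+|y|)^{-nt}<1$. Since $s(y)\ge s_-(G)$ for almost every $y\in G$, and $a\mapsto b^a$ is nonincreasing when $b\in[0,1)$, we get
$$|f(y)|^{s(y)}\le (e+|y|)^{-nts(y)}\le (e+|y|)^{-nts_-(G)}.$$
Integrating over $G_1$ against $\mu$ gives at most $\int_G (e+|y|)^{-nts_-(G)}\,d\mu(y)$, which is the second term on the right-hand side.

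\emph{Step 2 (large values).} On $G_2:=G\setminus G_1$ we have $(e+|y|)^{-nt}<|f(y)|\le 1$, so in particular $f(y)\neq 0$. We write
$$|f(y)|^{s(y)}=|f(y)|^{r(y)}\,|f(y)|^{s(y)-r(y)}$$
and bound the second factor in two cases.
\begin{itemize}
\item If $s(y)\ge r(y)$, then $|f(y)|^{s(y)-r(y)}\le 1$ because $|f(y)|\le 1$.
\item If $s(y)<r(y)$, then, using the lower bound on $|f(y)|$ and the hypothesis $r(y)-s(y)\le C_0/\log(e+|y|)$,
$$|f(y)|^{s(y)-r(y)}=|f(y)|^{-(r(y)-s(y))}\le (e+|y|)^{nt(r(y)-s(y))}\le (e+|y|)^{\frac{ntC_0}{\log(e+|y|)}}=e^{ntC_0}.$$
\end{itemize}
In either case $|f(y)|^{s(y)}\le e^{ntC_0}|f(y)|^{r(y)}$ on $G_2$. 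Integrating over $G_2\subset G$ gives at most $e^{ntC_0}\int_G|f|^{r(\cdot)}\,d\mu$.

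\emph{Step 3 (conclusion).} Adding the estimates of Steps 1 and 2 over the disjoint sets $G_1$ and $G_2$ yields the claimed inequality.

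The only step needing care is Step 2. The threshold $(e+|y|)^{-nt}$ is chosen exactly so that the log-decay of $|r-s|$ cancels the logarithm in the exponent, turning the factor into the uniform constant $e^{ntC_0}$. The remaining issues are measure-theoretic trivialities: $G_1$ and $G_2$ are measurable since $f$ is, and null sets where $s(y)<s_-(G)$ can be ignored, provided $\mu$ is absolutely continuous, as it is in the intended applications. Otherwise one interprets $s_-(G)$ as the infimum over $G$.
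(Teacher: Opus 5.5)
Your argument is correct and is the standard proof behind \cite[Lemma 2.10]{cc22}, which the paper quotes without proof. That proof splits $G$ at the threshold $|f(y)|\le (e+|y|)^{-nt}$, bounds the small-value part by $(e+|y|)^{-nts_-(G)}$, and bounds the large-value part by $e^{ntC_0}|f(y)|^{r(y)}$ using $|f(y)|^{-(r(y)-s(y))}\le (e+|y|)^{ntC_0/\log(e+|y|)}=e^{ntC_0}$. Your caveat that Step 1 needs $s(y)\ge s_-(G)$ for $\mu$-almost every $y\in G$ is a real precision for measures that are not absolutely continuous with respect to Lebesgue measure, and it holds automatically for the measures $\mathbb{W}(x)\,dx$ and $\mathbb{W}(x)^r\,dx$ to which the paper applies the lemma.
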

The following lemma is exactly \cite[Lemma 2.12]{cp24},
which is a consequence  of Lemma \ref{rs f}.

\begin{lemma}\label{rs f 1}
Let $p(\cdot) \in \mathcal{P}_0$ with $p(\cdot) \in LH$.
Then, for any measure $\mu$, any measurable set $E$ with $\mu(E)<\infty$,
any $t\in (0,\infty)$,
and any $f\in \mathscr{M}$ with $|f| \leq 1$ on $E$,
\begin{align*}
\int_{E} \left|f(y)\right|^{p(y)}\,d\mu(y) \leq e^{ntC_\infty} \int_{E}\left| f(y) \right|^{p_\infty}\,d\mu(y)
+ \int_E \frac{1}{(e + |y|)^{ntp_-}}\,d\mu(y)
\end{align*}
and
\begin{align*}
\int_{E} \left|f(y)\right|^{p_\infty}\,d\mu(y) \leq e^{ntC_\infty} \int_{E}\left| f(y) \right|^{p(y)}\,d\mu(y)
+ \int_E \frac{1}{(e + |y|)^{ntp_-}}\,d\mu(y),
\end{align*}
where $C_\infty$ is the same as in \eqref{clogp infty}.
\end{lemma}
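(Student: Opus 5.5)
The plan is to obtain both inequalities as direct specializations of Lemma \ref{rs f}, with $G:=E$ and the constant function $p_\infty$ as one of the two exponents. The log-H\"older continuity at infinity supplies the hypothesis of Lemma \ref{rs f}. Since $p(\cdot)\in LH$, \eqref{clogp infty} gives, for every $y\in\mathbb{R}^n$,
$$\left|p(y)-p_\infty\right|\le \frac{C_\infty}{\log(e+|y|)}.$$
So the hypothesis of Lemma \ref{rs f} holds on $G=E$, with $C_0$ there replaced by $C_\infty$, for the pair $r(\cdot):=p_\infty$, $s(\cdot):=p(\cdot)$ and equally for the swapped pair $r(\cdot):=p(\cdot)$, $s(\cdot):=p_\infty$. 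Both exponents lie in $\mathcal{P}_0$: this is given for $p(\cdot)$, and for the constant $p_\infty$ it is the fact $p_\infty>0$ from the definition of $LH_\infty$.

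For the first inequality, take $s(\cdot):=p(\cdot)$ and $r(\cdot):=p_\infty$. Lemma \ref{rs f} applied to $f$ with $|f|\le1$ on $E$ gives
$$\int_E |f|^{p(y)}\,d\mu\le e^{ntC_\infty}\int_E|f|^{p_\infty}\,d\mu+\int_E (e+|y|)^{-nts_-(E)}\,d\mu(y).$$
Since $s_-(E)=p_-(E)\ge p_-$ and $e+|y|\ge1$, we have $(e+|y|)^{-nts_-(E)}\le (e+|y|)^{-ntp_-}$, which gives the stated bound.

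For the second inequality, swap the roles: take $s(\cdot):=p_\infty$ and $r(\cdot):=p(\cdot)$. Now $s_-(E)=p_\infty$, and the same monotonicity argument goes through once we know $p_\infty\ge p_-$. This is the only point needing a small justification, and I would argue it by contradiction. Suppose $p_\infty<p_-$. Then for all $x$ with $|x|$ large we would have
$$|p(x)-p_\infty|\ge p_--p_\infty>0,$$
while \eqref{clogp infty} forces $|p(x)-p_\infty|\to0$ as $|x|\to\infty$. Hence $p_\infty\ge p_-$, so $(e+|y|)^{-ntp_\infty}\le(e+|y|)^{-ntp_-}$, and the second inequality follows.

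The finiteness of $\mu(E)$ is not needed for the inequalities themselves; it only ensures the last term is finite, so that the estimates are meaningful. I expect no real obstacle beyond checking these exponent comparisons.
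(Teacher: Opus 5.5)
Your proof is correct and follows the route the paper indicates. The paper cites this lemma as \cite[Lemma 2.12]{cp24} and notes that it is a consequence of Lemma \ref{rs f}; you specialize that lemma to the pair $r(\cdot)=p_\infty$, $s(\cdot)=p(\cdot)$ and to the swapped pair, and you correctly supply the exponent comparisons $p_-(E)\ge p_-$ and $p_\infty\ge p_-$. In the contradiction argument for $p_\infty\ge p_-$, note that $p(x)\ge p_-$ holds only almost everywhere, not for all large $|x|$; this is harmless because $\{|x|>R\}$ has positive measure.
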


Based on the above lemmas, we have the following estimate.

\begin{lemma}\label{EB 2}
Let $p(\cdot) \in \mathcal{P}$ with $p(\cdot) \in LH$
and $w\in\mathcal{A}_{p(\cdot),\infty}$.
If $\|w\mathbf{1}_{Q_0}\|_{L^{p(\cdot)}} = 1$,
where $Q_0$ is the same as in \eqref{def Q0},
then there exists a positive constant $A$, depending only on $p(\cdot)$ and $n$,
such that, for any measurable set $E$ in $\mathbb{R}^n$ with $\|w\mathbf{1}_E\|_{L^{p(\cdot)}} \in [1,\infty)$,
\begin{align}\label{WWinfty}
C^{-1} [w]_{\mathcal{A}_{p(\cdot),\infty}}^{-A} \mathbb{W}(E)^{\frac{1}{p_\infty}}
\leq \|w\mathbf{1}_E\|_{L^{p(\cdot)}}
\leq C [w]_{\mathcal{A}_{p(\cdot),\infty}}^{A} \mathbb{W}(E)^{\frac{1}{p_\infty}},
\end{align}
where $\mathbb{W}(\cdot):=[w(\cdot)]^{p(\cdot)}$.
\end{lemma}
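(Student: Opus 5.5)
Normalize $f := w\mathbf{1}_E/\|w\mathbf{1}_E\|_{L^{p(\cdot)}}$. Lemma \ref{rho f norm} gives $\int_E f^{p(y)}\,dy = 1$, and $|f|\le 1$ holds on the set where $w\le \|w\mathbf{1}_E\|_{L^{p(\cdot)}}$. The plan is to compare $\int_E f^{p(y)}dy$ with $\int_E f^{p_\infty}dy = \|w\mathbf{1}_E\|^{-p_\infty}\int_E w^{p_\infty}$ via Lemma \ref{rs f 1}. It is cleaner to change measure, so that $\mathbb{W}(E)$ itself appears. Write $w(y)^{p(y)} = \lambda^{p(y)}(w/\lambda)^{p(y)}$ with $\lambda:=\|w\mathbf{1}_E\|_{L^{p(\cdot)}}\ge1$, so that $\mathbb{W}(E)=\int_E \lambda^{p(y)} f(y)^{p(y)}\,dy$. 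Since $\lambda\ge1$, we have $\lambda^{p_-}\le\lambda^{p(y)}\le\lambda^{p_+}$. This crude bound loses too much, so instead we treat the quotient $\lambda^{p(y)-p_\infty}$ with log-H\"older decay.

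Concretely, I would apply Lemma \ref{rs f 1} with the measure $d\mu = \lambda^{p(y)}\,dy$ restricted to $E$, or equivalently with the function $g:=\min\{f,1\}$, and split $E = E_1\cup E_2$, where $E_1:=\{y\in E: w(y)\le \lambda\}$. On $E_1$, Lemma \ref{rs f 1} directly gives $\int_{E_1} f^{p(y)} \le e^{ntC_\infty}\int_{E_1} f^{p_\infty} + \int_{E_1}(e+|y|)^{-ntp_-}dy$, and the reverse inequality holds as well. Choosing $t$ with $ntp_->n$ makes the error term at most a constant $c_{n,p(\cdot)}$. On $E_2$, where $w/\lambda>1$, apply the lemma instead to $1/f$ with the measure $f^{p(y)}dy$ or $f^{p_\infty}dy$; this is the standard trick for the region $|f|>1$. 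Combining the two regions yields two-sided estimates between $1=\rho_{p(\cdot)}(f)$ and $\lambda^{-p_\infty}\int_E w^{p_\infty}$, up to additive errors.

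Those estimates involve $\int_E w^{p_\infty}$ rather than $\mathbb{W}(E)=\int_E w^{p(y)}$. So the second step is to run the same comparison once more, between $\int_E w^{p(y)}$ and $\int_E w^{p_\infty}$, with $w$ normalized by $\lambda$. In effect this is one comparison between $\lambda^{-p(\cdot)}$-weighted and $\lambda^{-p_\infty}$-weighted integrals of $w^{p(\cdot)}$. Here $\lambda^{p(y)-p_\infty}$ must be controlled. On cubes far from the origin, the log-H\"older condition at infinity makes $|p(y)-p_\infty|\log\lambda$ bounded provided $\lambda\lesssim [w]^{A}(e+|y|)^{C}$. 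A polynomial bound of this kind on the norms $\|w\mathbf{1}_Q\|_{L^{p(\cdot)}}$ follows from the normalization $\|w\mathbf{1}_{Q_0}\|=1$ together with Lemma \ref{EB 1} and the arguments of Lemma \ref{Bp bound}, in particular \eqref{EB eq 6}.

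The main obstacle is the additive error terms $\int_E (e+|y|)^{-ntp_-}\,d\mu$. They must be absorbed into the main terms, and they must be converted into the multiplicative factor $[w]^{\pm A}$. My approach is to choose $t$ depending on $[w]_{\mathcal{A}_{p(\cdot),\infty}}$ only logarithmically, and then use $\|w\mathbf{1}_E\|\ge1$, that is, $\rho_{p(\cdot)}(w\mathbf{1}_E)\ge1$ (Lemma \ref{rhof 1}), so that each main term is bounded below by $1$. The error is controlled by comparing $\mu$ on dyadic annuli $Q(\mathbf 0,2^k)\setminus Q(\mathbf 0,2^{k-1})$ with $\mu(Q_0)$ via the doubling estimate \eqref{ww lambdaQ} for $w\in A_\infty$ (Lemma \ref{Ap Apinfty 1}). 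This gives growth $2^{kN}$ with $N\sim\log[w]_{\mathcal{A}_{p(\cdot),\infty}}$, so taking $ntp_- > N+n$ makes the series sum to $\lesssim[w]^{A}$. Then $e^{ntC_\infty}=[w]^{O(1)}$, and the error term $\lesssim [w]^A\le [w]^A\cdot(\text{main term})$. Finally, raising to the power $1/p_\infty$ gives \eqref{WWinfty}.
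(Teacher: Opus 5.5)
Your final plan is the paper's argument: use Lemma \ref{rs f 1} to compare the $\lambda^{-p(\cdot)}$- and $\lambda^{-p_\infty}$-weighted integrals of $\mathbb{W}$ over $E$, with $\lambda:=\|w\mathbf{1}_E\|_{L^{p(\cdot)}}$. You then bound the additive error on annuli around $Q_0$ and take $t\sim\log[w]_{\mathcal{A}_{p(\cdot),\infty}}$, so that $e^{ntC_\infty}$ is polynomial in $[w]_{\mathcal{A}_{p(\cdot),\infty}}$.

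The clean way to set this up is with the \emph{constant} function $g\equiv\lambda^{-1}$ and $d\mu:=\mathbb{W}\,dx$. The hypothesis $\lambda\ge1$ is exactly what gives $g\le1$. Lemma \ref{rho f norm} gives $\int_E g^{p(x)}\,d\mu=1$, and $\int_E g^{p_\infty}\,d\mu=\lambda^{-p_\infty}\mathbb{W}(E)$. So the split $E=E_1\cup E_2$ and the detour through $\int_E w^{p_\infty}$ are unnecessary.

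Pointwise control of $\lambda^{p(y)-p_\infty}$ cannot work. Since $E$ is arbitrary, $\lambda$ may be huge while $E$ contains points near the origin, and there no bound $\lambda\lesssim[w]^A(e+|y|)^C$ holds. (Despite its printed form, \eqref{EB eq 6} is \eqref{EB eq} with $L_w=1$. It bounds $\|w\mathbf{1}_Q\|_{L^{p(\cdot)}}^{p_-(Q)-p_+(Q)}$, not the norm itself, which grows with $Q$.) That near-origin region is exactly what the additive error in Lemma \ref{rs f 1} absorbs.

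Two steps of your error analysis fail as written.

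\emph{Growth of the error.} The error is $\mathcal{E}_t:=\int_E\mathbb{W}(x)(e+|x|)^{-ntp_-}\,dx$, so you need growth bounds on $\mathbb{W}(Q_k)$. The doubling \eqref{ww lambdaQ} of $w\in A_\infty$ controls $\int_{Q_k}w$, not $\mathbb{W}(Q_k)$. Doubling of $\mathbb{W}$ is not available here: that $\mathbb{W}\in A_\infty$ is Proposition \ref{reverse Holder w1}, whose proof goes through this lemma. Instead, argue as follows. Since $Q_0\subset Q_k$, the normalization gives $\|w\mathbf{1}_{Q_k}\|_{L^{p(\cdot)}}\ge1$, hence $\mathbb{W}(Q_k)\le\|w\mathbf{1}_{Q_k}\|_{L^{p(\cdot)}}^{p_+}$ by Lemma \ref{rhof 1}. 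Lemma \ref{EB 1} then bounds $\|w\mathbf{1}_{Q_k}\|_{L^{p(\cdot)}}$ by $C_w[l(Q_k)/l(Q_0)]^{s}$, with $C_w$ a power of $[w]_{\mathcal{A}_{p(\cdot),\infty}}$ and $s\sim\log[w]_{\mathcal{A}_{p(\cdot),\infty}}$.

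\emph{Absorption in the upper bound.} For the upper bound in \eqref{WWinfty}, Lemma \ref{rs f 1} gives $1\le e^{ntC_\infty}\lambda^{-p_\infty}\mathbb{W}(E)+\mathcal{E}_t$. The main term $\lambda^{-p_\infty}\mathbb{W}(E)$ is not bounded below by $1$. Lemma \ref{rhof 1} only gives $\mathbb{W}(E)\ge\lambda^{p_-}$, so a priori this term may be as small as $\lambda^{p_--p_\infty}$; a uniform lower bound for it is precisely what is to be proved. An error that is merely $\lesssim[w]^A$ therefore yields nothing. You must enlarge $t$ by $\frac{1}{np_-}\log(cC_w^{p_+})$ so that $\mathcal{E}_t\le\frac12$, and absorb it into the left-hand side; this still keeps $t\sim\log[w]_{\mathcal{A}_{p(\cdot),\infty}}$. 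For the lower bound, where $\lambda^{-p_\infty}\mathbb{W}(E)\le e^{ntC_\infty}+\mathcal{E}_t$, your estimate $\mathcal{E}_t\lesssim[w]^A$ does suffice.
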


\begin{proof}
We begin with the proof of the first inequality of \eqref{WWinfty}.
In what follows, for simplicity of presentation,
we let $C_w := 2^{2^n} C^{2^n +1}_{p(\cdot)} [w]^{2^n + 1}_{\mathcal{A}_{p(\cdot),\infty}}$
and $r_w := \frac{2^n}{n}[1 + \log_2 (C_{p(\cdot),n}[w]_{\mathcal{A}_{p(\cdot),\infty}})]$.
By Lemma \ref{Ap Apinfty 1}, we find that $C_w \geq 1$.
From Lemma \ref{EB 1},
it follows that, for any cube $Q$ in $\mathbb{R}^n$
and for any $\lambda \in (1,\infty)$,
\begin{align}\label{w doubling 1}
\left\| w\mathbf{1}_{\lambda Q} \right\|_{L^{p(\cdot)}} \leq C_w \lambda^{r_w} \left\| w\mathbf{1}_{Q} \right\|_{L^{p(\cdot)}}.
\end{align}
By Lemma \ref{rs f 1} with $ f := \|w\mathbf{1}_E\|^{-1}_{L^{p(\cdot)}} $ and $\mu := \mathbb{W}$,
we easily find that
\begin{align}\label{WWinfty eq 1}
\left\|w\mathbf{1}_E\right\|^{-p_\infty}_{L^{p(\cdot)}} \mathbb{W}(E) =
\int_E \frac{\mathbb{W}(x)}{\|w\mathbf{1}_E\|^{p_\infty}_{L^{p(\cdot)}}} \,dx
\leq e^{ntC_\infty} \int_E \frac{\mathbb{W}(x)}{\|w\mathbf{1}_E\|^{p(x)}_{L^{p(\cdot)}}} \,dx
+ \int_E \frac{\mathbb{W}(x)}{(e + |x|)^{ntp_-}} \,dx.
\end{align}
Notice that, from Lemma \ref{rho f norm},
it follows immediately that
\begin{align}\label{WWinfty eq 4}
1 = \rho_{L^{p(\cdot)}}\left( \frac{w\mathbf{1}_E}{\|w\mathbf{1}_E\|_{L^{p(\cdot)}}} \right)
= \int_E \frac{\mathbb{W}(x)}{\|w\mathbf{1}_E\|^{p(x)}_{L^{p(\cdot)}}} \,dx.
\end{align}
Now, let $Q_0$ be the same as in \eqref{def Q0} and
$Q_k := Q(\mathbf{0},2e^{k+1})$ for any $k\in\mathbb{N}$.
Then, for any $k \in \mathbb{N}$ and any $x\in Q_{k} \setminus Q_{k-1}$,
we have $|x| \leq e^k$ and hence
\begin{align}\label{WWinfty eq 11}
(e + |x|)^{-ntp_-} \leq e^{-kntp_-},\ \ \forall\, t\in(0,\infty).
\end{align}
By the assumption $\|w\mathbf{1}_{Q_0}\|_{L^{p(\cdot)}} = 1$
and the fact that $ Q_0\subset Q_k $ for any $k\in\mathbb{N}$,
we conclude that $ \|w\mathbf{1}_{Q_k}\|_{L^{p(\cdot)}}\geq 1 $
for any $k\in\mathbb{Z}_+$.
Using this, \eqref{WWinfty eq 11}, and Lemma \ref{rhof 1},
we obtain
\begin{align}\label{WWinfty eq 2}
\int_E \frac{\mathbb{W}(x)}{(e + |x|)^{ntp_-}} \,dx
&\leq \int_{Q_0} \frac{\mathbb{W}(x)}{(e + |x|)^{ntp_-}} \,dx + \sum_{k = 1}^\infty \int_{Q_k\setminus Q_{k-1}} \frac{\mathbb{W}(x)}{(e + |x|)^{ntp_-}}\,dx\nonumber\\
&\leq e^{-ntp_-}\mathbb{W}(Q_0) + \sum_{k = 1}^\infty e^{-kntp_-} \mathbb{W}(Q_k)\nonumber\\
&\leq e^{-ntp_-}\left\|w\mathbf{1}_{Q_0}\right\|^{p_+}_{L^{p(\cdot)}}
+ \sum_{k = 1}^\infty e^{-kntp_-} \left\|w\mathbf{1}_{Q_k}\right\|^{p_+}_{L^{p(\cdot)}},
\end{align}
where $t \in (0,\infty)$ is a constant which will be determined later.
Then, by \eqref{w doubling 1} and the assumption $\|w\mathbf{1}_{Q_0}\|_{L^{p(\cdot)}} = 1$,
we find that, for any $k\in\mathbb{Z}_+$,
\begin{align*}
\left\|w\mathbf{1}_{Q_k}\right\|_{L^{p(\cdot)}} \leq C_w e^{knr_w} \left\|w\mathbf{1}_{Q_0}\right\|_{L^{p(\cdot)}} = C_w e^{nkr_w}.
\end{align*}
Thus, from this and \eqref{WWinfty eq 2},
we deduce that
\begin{align}\label{WWinfty eq 3}
\int_E \frac{\mathbb{W}(x)}{(e + |x|)^{ntp_-}} \,dx
\leq e^{-ntp_-} + C_w^{p_+} \sum_{k = 1}^\infty  e^{-k(ntp_- - nr_wp_+)},
\end{align}
and it is obvious that the summation above absolutely converges if $t > \frac{r_w p_+}{p_-}$.
Next, let
$$t_1 := \frac{r_w p_+}{p_-} + \frac{1}{np_-} \log\left( 2C_w^{p_+} \right).$$
Since $C_w \geq 1$,
it follows that $t_1 > \frac{r_w p_+}{p_-}$.
Hence, by the summation formula for geometric series with taking $t := t_1$,
we find that
\begin{align*}
C_w^{p_+} \sum_{k = 1}^\infty  e^{-k(nt_1p_- - nr_wp_+)}
= C_w^{p_+} \sum_{k = 1}^\infty e^{-k\log\left( 2C_w^{p_+} \right)}
= C_w^{p_+} \frac{e^{-\log\left( 2C_w^{p_+} \right)} }{ 1 - e^{-\log\left( 2C_w^{p_+} \right)} }
= \frac{C_w^{p_+}}{2C_w^{p_+} - 1} \leq 1.
\end{align*}
Now, combining both this and \eqref{WWinfty eq 3} with $t := t_1$,
we conclude that
\begin{align*}
\int_E \frac{\mathbb{W}(x)}{(e + |x|)^{nt_1p_-}} \,dx
\leq e^{-nt_1p_-} + 1 < 1 + 1 = 2.
\end{align*}
From this, \eqref{WWinfty eq 1}, and \eqref{WWinfty eq 4} with $t := t_1$,
it follows that
\begin{align*}
\left\|w\mathbf{1}_E\right\|^{-p_\infty}_{L^{p(\cdot)}} \mathbb{W}(E)
\leq e^{\frac{n r_w p_+ C_\infty}{p_-}}\left(2C_w^{p_+}\right)^\frac{C_\infty}{p_-} + 2
\leq 3 e^{\frac{n r_w p_+ C_\infty}{p_-}}\left(2C_w^{p_+}\right)^\frac{C_\infty}{p_-},
\end{align*}
where the final inequality comes from the fact
$ 1 < e^{\frac{n r_w p_+ C_\infty}{p_-}}\left(2C_w^{p_+}\right)^\frac{C_\infty}{p_-} $.
Hence, by this with rearranging the terms,
we conclude that
\begin{align*}
3^{-\frac{1}{p_\infty}}e^{-\frac{n r_w p_+ C_\infty}{p_-p_\infty}}\left(2C_w^{p_+}\right)^{-\frac{C_\infty}{p_-p_\infty}}\mathbb{W}(E)^\frac{1}{p_\infty}
\leq \left\|w\mathbf{1}_E\right\|_{L^{p(\cdot)}}.
\end{align*}
This finishes the estimation of the first part of \eqref{WWinfty}.

Next, we consider the second inequality of \eqref{WWinfty}.
From Lemmas \ref{rho f norm} and \ref{rs f 1}
with $ f := \|w\mathbf{1}_E\|^{-1}_{L^{p(\cdot)}} $ and $\mu := \mathbb{W}$,
it follows that
\begin{align}\label{WWinfty eq 5}
1 & = \rho_{L^{p(\cdot)}}\left( \frac{w\mathbf{1}_E}{\|w\mathbf{1}_E\|_{L^{p(\cdot)}}} \right)
= \int_E \frac{\mathbb{W}(x)}{\|w\mathbf{1}_E\|^{p(x)}_{L^{p(\cdot)}}} \,dx
\leq e^{ntC_\infty}  \int_E \frac{\mathbb{W}(x)}{\|w\mathbf{1}_E\|^{p_\infty}_{L^{p(\cdot)}}} \,dx
+ \int_E \frac{\mathbb{W}(x)}{(e + |x|)^{ntp_-}} \,dx\nonumber\\
&= e^{ntC_\infty} \left\|w\mathbf{1}_E\right\|^{-p_\infty}_{L^{p(\cdot)}} \mathbb{W}(E)
+ \int_E \frac{\mathbb{W}(x)}{(e + |x|)^{ntp_-}} \,dx,
\end{align}
where $t\in (0,\infty)$ is a constant which will be determined later.

Now, let
$$t_2 := \frac{r_w p_+}{p_-} + \frac{1}{np_-} \log\left( 5C_w^{p_+} \right). $$
Then, by \eqref{WWinfty eq 3} with taking $t := t_2$
and by the summation formula for geometric series
and the known fact $C_w \geq 1$,
we find that
\begin{align*}
\int_E \frac{\mathbb{W}(x)}{(e + |x|)^{nt_2p_-}} \,dx
& \leq e^{-nt_2p_-} + C_w^{p_+} \sum_{k = 1}^\infty  e^{-k(nt_2p_- - nr_wp_+)}\\
&= e^{-nr_wp_+}e^{-\log(5C_w^{p_+})} + C_w^{p_+} \sum_{k = 1}^\infty e^{-k\log\left( 5C_w^{p_+} \right)}\\
&\leq \frac{1}{5C_w^{p_+}} + C_w^{p_+} \frac{e^{-\log\left( 5C_w^{p_+} \right)} }{ 1 - e^{-\log\left( 5C_w^{p_+} \right)} }
\leq \frac{1}{5} + \frac{C_w^{p_+}}{5C_w^{p_+} - 1} < \frac15 + \frac14 < \frac12.
\end{align*}
Hence, from this and \eqref{WWinfty eq 5} with $t := t_2$,
we infer that
\begin{align*}
1 < e^{\frac{n r_w p_+ C_\infty}{p_-}}\left(5C_w^{p_+}\right)^\frac{C_\infty}{p_-} \left\|w\mathbf{1}_E\right\|^{-p_\infty}_{L^{p(\cdot)}} \mathbb{W}(E) + \frac12,
\end{align*}
which further implies that
\begin{align*}
\left\|w\mathbf{1}_E\right\|_{L^{p(\cdot)}}
\leq 2e^{\frac{n r_w p_+ C_\infty}{p_-p_\infty}}\left(5C_w^{p_+}\right)^{\frac{C_\infty}{p_-p_\infty}}
\mathbb{W}(E)^\frac{1}{p_\infty}.
\end{align*}
This finishes the estimation of the second part of \eqref{WWinfty},
which completes the proof of Lemma \ref{EB 2}.
\end{proof}

\begin{remark}\label{rem EB 2}
Notice that, in the proof of Lemma \ref{EB 2},
the condition $w \in \mathcal{A}_{p(\cdot),\infty}$ is only used to obtain \eqref{w doubling 1}.
Thus, by this observation, we conclude that, for any $p(\cdot)\in\mathcal{P}$ with $p(\cdot) \in LH$
and for any weight function $w$,
if $w$ satisfies \eqref{w doubling 1} and $\|w\mathbf{1}_{Q_0}\|_{L^{p(\cdot)}} = 1$,
then, for any measurable bounded set $E$ with $\|w\mathbf{1}_{E}\|_{L^{p(\cdot)}} \geq 1$,
we have $ \|w\mathbf{1}_{E}\|_{L^{p(\cdot)}} \sim \mathbb{W}(E)^{\frac{1}{p_\infty}} $,
where $\mathbb{W} := [w(\cdot)]^{p(\cdot)}$ and the positive equivalence constants are independent of $E$.
\end{remark}
The following is an estimate about $\|w\mathbf{1}_E\|_{L^{p(\cdot)}}$
with $w\in \mathcal{A}_{p(\cdot),\infty}$.
\begin{lemma}\label{EB 3}
Let $p(\cdot) \in \mathcal{P}$ with $p(\cdot) \in LH$
and let $w \in \mathcal{A}_{p(\cdot),\infty}$.
Then there exist positive constants $C$ and $A$,
depending only on $p(\cdot)$ and $n$, such that,
for any cube $Q$ in $\mathbb{R}^n$ and for any measurable set $E\subset Q$,
\begin{align}\label{EBeq 7}
\frac{\|w\mathbf{1}_E\|_{L^{p(\cdot)}}}{\|w\mathbf{1}_{Q}\|_{L^{p(\cdot)}}}
\leq C [w]_{\mathcal{A}_{p(\cdot),\infty}}^{A}
L_w^{\frac{1}{p_+}} \left[\frac{\mathbb{W}(E)}{\mathbb{W}(Q)}\right]^{\frac{1}{p_+}},
\end{align}
where $\mathbb{W} := [w(\cdot)]^{p(\cdot)}$ and $L_w$ is the same as in \eqref{def Lw}.
\end{lemma}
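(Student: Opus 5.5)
We would prove \eqref{EBeq 7} by first normalizing and then comparing the norm and the modular through Lemma \ref{rhof 1}, with Lemma \ref{Bp bound} absorbing the price of the variable exponent on $Q$. Both sides of \eqref{EBeq 7} are homogeneous of degree zero in $w$, except $L_w$. So we begin exactly as at the end of the proof of Lemma \ref{Bp bound}. Put $v:=w/\|w\mathbf{1}_{Q_0}\|_{L^{p(\cdot)}}$. Then $[v]_{\mathcal{A}_{p(\cdot),\infty}}=[w]_{\mathcal{A}_{p(\cdot),\infty}}$ by \eqref{EB eq 13}, and the left-hand ratio is unchanged. Moreover, $\mathbb{V}:=v^{p(\cdot)}$ differs from $\mathbb{W}$ by the factor $\|w\mathbf{1}_{Q_0}\|^{-p(x)}_{L^{p(\cdot)}}$. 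On $Q$ this factor lies between $\|w\mathbf{1}_{Q_0}\|^{-p_\pm(Q)}_{L^{p(\cdot)}}$, so $\mathbb{V}(E)/\mathbb{V}(Q)\le \|w\mathbf{1}_{Q_0}\|^{|p_+(Q)-p_-(Q)|}\mathbb{W}(E)/\mathbb{W}(Q)$-type bounds hold, which is controlled by $L_w$. The $L_w^{1/p_+}$ in \eqref{EBeq 7} should come from this step, possibly together with the $L_w$ inside Lemma \ref{Bp bound}. If a power of $L_w$ larger than $1/p_+$ appears, we redistribute it so that exactly $L_w^{1/p_+}$ multiplies the ratio, using the monotonicity of the quantities involved. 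The constants produced this way are allowed to depend on $p(\cdot)$ and $n$.

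Next, work with the normalized $v$, so that $L_v=1$. Write $a:=\|v\mathbf{1}_E\|_{L^{p(\cdot)}}$ and $b:=\|v\mathbf{1}_Q\|_{L^{p(\cdot)}}$, and note $a\le b$. By Lemma \ref{rho f norm},
$$1=\int_E \mathbb{V}(x)a^{-p(x)}\,dx \quad\text{and}\quad 1=\int_Q \mathbb{V}(x)b^{-p(x)}\,dx .$$
Since $a/b\le 1$ and $p(x)\ge p_-(Q)$ on $Q$, we get
$$1=\int_E \mathbb{V}\,b^{-p(x)}(a/b)^{-p(x)}\,dx\le (a/b)^{-p_+}\,b^{-p_-(Q)}\,\mathbb{V}(E)\cdot\max\{1,b^{p_-(Q)-p_+(Q)}\},$$
more precisely after bounding $b^{-p(x)}$ above on $Q$. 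Similarly, from the identity on $Q$ we obtain $\mathbb{V}(Q)\le \max\{b^{p_+(Q)},b^{p_-(Q)}\}$. Multiplying these gives
$$(a/b)^{p_+}\le b^{-(p_+(Q)-p_-(Q))}\text{ or }1\cdot \frac{\mathbb{V}(E)}{\mathbb{V}(Q)},$$
that is,
$$\Bigl(\frac ab\Bigr)^{p_+}\le \max\bigl\{1,\|v\mathbf{1}_Q\|^{p_-(Q)-p_+(Q)}_{L^{p(\cdot)}}\bigr\}\frac{\mathbb{V}(E)}{\mathbb{V}(Q)}.$$
This uses only that $\mathbb{V}(E)\le\mathbb{V}(Q)$ and elementary case distinctions according to whether $a,b\le 1$ or $\ge1$. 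We then apply Lemma \ref{Bp bound} to $v$ to bound $\|v\mathbf{1}_Q\|^{p_-(Q)-p_+(Q)}_{L^{p(\cdot)}}\lesssim[w]^{A}_{\mathcal{A}_{p(\cdot),\infty}}$, and take $p_+$-th roots.

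The main obstacle is the bookkeeping in the exponent comparison. In each case we must check that the mismatch between $b^{-p(x)}$ over $E$ and $b^{-p(x)}$ over $Q$ is only a factor $b^{\pm(p_+(Q)-p_-(Q))}$. The harmful sign, $b<1$ with exponent $p_-(Q)-p_+(Q)$, is exactly what Lemma \ref{Bp bound} handles, while $b\ge1$ gives a factor at most $1$. We also have to be careful to use $(a/b)^{p(x)}\le (a/b)^{p_-}$, or $\ge (a/b)^{p_+}$, in the right direction. Finally, we must track how $L_w$ reenters when undoing the normalization, so that it appears only to the power $1/p_+$ after taking roots.
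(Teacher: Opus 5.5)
Your normalization is correct, and so is your case $b=\|v\mathbf{1}_Q\|_{L^{p(\cdot)}}<1$; the latter is essentially the paper's first case, with Lemma~\ref{Bp bound} applied to $v$, for which $L_v=1$. The gap is the case $b\ge1$. Your claim that the factor is then at most $1$ is an arithmetic slip, and it hides a real difficulty. For $b\ge1$ you have $\sup_{x\in Q}b^{-p(x)}=b^{-p_-(Q)}$ and $\mathbb{V}(Q)\le b^{p_+(Q)}$, so your chain actually gives
\begin{align*}
\left(\frac{a}{b}\right)^{p_+}\le b^{-p_-(Q)}\,\mathbb{V}(E)\le \left\|v\mathbf{1}_Q\right\|_{L^{p(\cdot)}}^{p_+(Q)-p_-(Q)}\frac{\mathbb{V}(E)}{\mathbb{V}(Q)}.
\end{align*}
This prefactor is unbounded. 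For a constant weight and $Q=Q(\mathbf{0},R)$, Lemma~\ref{weight bound}(ii) shows it is comparable to $R^{n[p_+(Q)-p_-(Q)]/p_\infty}$, which tends to infinity as $R\to\infty$ whenever $p(\cdot)$ is not constant. Lemma~\ref{Bp bound} only controls the opposite power $\|v\mathbf{1}_Q\|^{p_-(Q)-p_+(Q)}_{L^{p(\cdot)}}$, so nothing in your argument absorbs this loss.

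The repair is to split according to $a$, not only $b$.

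If $a\le1\le b$, bound $a$ and $b$ separately using Lemma~\ref{rhof 1}: $a\le\mathbb{V}(E)^{1/p_+(Q)}$ and $b\ge\mathbb{V}(Q)^{1/p_+(Q)}$. This gives the estimate with constant $1$.

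If $a\ge1$ (hence $b\ge1$), no modular comparison that sees $p(\cdot)$ on $Q$ only through $p_\pm(Q)$ can work. For large norms such a comparison cannot tell the powers $1/p_-(Q)$ and $1/p_+(Q)$ apart. The missing ingredient is Lemma~\ref{EB 2}: after normalization, $\|v\mathbf{1}_F\|_{L^{p(\cdot)}}$ is comparable to $\mathbb{V}(F)^{1/p_\infty}$, up to factors $[w]^{\pm A}_{\mathcal{A}_{p(\cdot),\infty}}$, for every $F$ with $\|v\mathbf{1}_F\|_{L^{p(\cdot)}}\ge1$. Its proof uses $p(\cdot)\in LH_\infty$ through Lemma~\ref{rs f 1}. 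The error term $\int\mathbb{V}(x)(e+|x|)^{-ntp_-}\,dx$ is kept bounded by the doubling estimate of Lemma~\ref{EB 1}. Applying it to $F=E$ and $F=Q$ gives $a/b\lesssim[w]^{2A}_{\mathcal{A}_{p(\cdot),\infty}}[\mathbb{V}(E)/\mathbb{V}(Q)]^{1/p_\infty}$. Then $1/p_+\le1/p_\infty$ together with $\mathbb{V}(E)\le\mathbb{V}(Q)$ finishes the case.

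This three-case structure is exactly the paper's proof.
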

\begin{proof}
Assume first $\|w\mathbf{1}_{Q_0}\|_{L^{p(\cdot)}} = 1$.
We firstly consider the case $\|w\mathbf{1}_Q\|_{L^{p(\cdot)}} \leq 1$.
In this case, using Lemma \ref{rhof 1} with the fact
$\|w\mathbf{1}_E\|_{L^{p(\cdot)}} \leq \|w\mathbf{1}_Q\|_{L^{p(\cdot)}}\leq 1$,
we find that $\|w\mathbf{1}_E\|_{L^{p(\cdot)}} \leq \mathbb{W}(E)^{\frac{1}{p_+(Q)}}$
and $\mathbb{W}(Q)^{\frac{1}{p_-(Q)}} \leq \|w\mathbf{1}_Q\|_{L^{p(\cdot)}}$.
By this and Lemma \ref{Bp bound},
we conclude that there exists a positive constant $A$, depending only on $p(\cdot)$ and $n$,
such that
\begin{align}\label{EBeq 8}
\frac{\|w\mathbf{1}_E\|_{L^{p(\cdot)}}}{\|w\mathbf{1}_{Q}\|_{L^{p(\cdot)}}}
 &= \|w\mathbf{1}_{Q}\|^{\frac{p_-(Q) - p_+(Q)}{p_+(Q)}}_{L^{p(\cdot)}} \frac{\|w\mathbf{1}_E\|_{L^{p(\cdot)}}}{\|w\mathbf{1}_{Q}\|^{\frac{p_-(Q)}{p_+(Q)}}_{L^{p(\cdot)}}}
 \lesssim [w]^{\frac{A}{p_+(Q)}}_{\mathcal{A}_{p(\cdot),\infty}}
\left[\frac{\mathbb{W}(E)}{\mathbb{W}(Q)}\right]^{\frac{1}{p_+(Q)}}.
\end{align}
Since Lemma \ref{Ap Apinfty 1} with $w \in \mathcal{A}_{p(\cdot),\infty}$,
we deduce that $1 \leq C_{p(\cdot),n}[w]_{\mathcal{A}_{p(\cdot),\infty}}$.
Hence, using this, \eqref{EBeq 8}, and the facts $p_+(Q) \leq p_+$ and $\frac{\mathbb{W}(E)}{\mathbb{W}(Q)} \leq 1$,
we obtain
\begin{align*}
\frac{\|w\mathbf{1}_E\|_{L^{p(\cdot)}}}{\|w\mathbf{1}_{Q}\|_{L^{p(\cdot)}}}  \leq [w]^{\frac{A}{p_+}}_{\mathcal{A}_{p(\cdot),\infty}}
\left[\frac{\mathbb{W}(E)}{\mathbb{W}(Q)}\right]^{\frac{1}{p_+}}.
\end{align*}
This finishes the proof for the case $\|w\mathbf{1}_Q\|_{L^{p(\cdot)}} \leq 1$.

Secondly, we consider the case
$\|w\mathbf{1}_E\|_{L^{p(\cdot)}} \leq 1 \leq \|w\mathbf{1}_Q\|_{L^{p(\cdot)}}$.
Then it follows from this and Lemma \ref{rhof 1} that
$\|w\mathbf{1}_E\|_{L^{p(\cdot)}} \leq \mathbb{W}(E)^{\frac{1}{p_+(Q)}}$
and $\mathbb{W}(Q)^{\frac{1}{p_+(Q)}} \leq \|w\mathbf{1}_Q\|_{L^{p(\cdot)}}$.
Hence, using this, we conclude that
\begin{align*}
\frac{\|w\mathbf{1}_E\|_{L^{p(\cdot)}}}{\|w\mathbf{1}_{Q}\|_{L^{p(\cdot)}}} \leq \left[\frac{\mathbb{W}(E)}{\mathbb{W}(Q)}\right]^{\frac{1}{p_+(Q)}}
\leq \left[\frac{\mathbb{W}(E)}{\mathbb{W}(Q)}\right]^{\frac{1}{p_+}},
\end{align*}
which completes the proof for the case $ \|w\mathbf{1}_E\|_{L^{p(\cdot)}} \leq 1 \leq \|w\mathbf{1}_Q\|_{L^{p(\cdot)}} $.

Finally, we consider the case $\|w\mathbf{1}_E\|_{L^{p(\cdot)}}\geq 1$.
By this and Lemma \ref{EB 2},
we find that
\begin{align}\label{EBeq 1}
\frac{\|w\mathbf{1}_E\|_{L^{p(\cdot)}}}{\|w\mathbf{1}_{Q}\|_{L^{p(\cdot)}}}
\lesssim [w]_{\mathcal{A}_{p(\cdot),\infty}}^{2A} \left[\frac{\mathbb{W}(E)}{\mathbb{W}(Q)}\right]^{\frac{1}{p_\infty}}.
\end{align}
Notice that $\frac{1}{p_+} \leq \frac{1}{p_\infty}$.
Using this, \eqref{EBeq 1}, and the fact $ \frac{\mathbb{W}(E)}{\mathbb{W}(Q)}\leq 1 $,
we conclude that
\begin{align*}
\frac{\|w\mathbf{1}_E\|_{L^{p(\cdot)}}}{\|w\mathbf{1}_{Q}\|_{L^{p(\cdot)}}}
\lesssim [w]_{\mathcal{A}_{p(\cdot),\infty}}^{2A} \left[\frac{\mathbb{W}(E)}{\mathbb{W}(Q)}\right]^{\frac{1}{p_+}}.
\end{align*}
This finishes the proof of \eqref{EBeq 7} in the case $\|w\mathbf{1}_E\|_{L^{p(\cdot)}} \geq 1$.
Namely, there exists a positive constant $A$, depending only on $p(\cdot)$ and $n$,
such that, for any cube $Q$ in $\mathbb{R}^n$ and any measurable set $E\subset Q$,
\begin{align}\label{EBeq 6}
\frac{\|w\mathbf{1}_E\|_{L^{p(\cdot)}}}{\|w\mathbf{1}_{Q}\|_{L^{p(\cdot)}}}
\lesssim [w]_{\mathcal{A}_{p(\cdot),\infty}}^{A} \left[\frac{\mathbb{W}(E)}{\mathbb{W}(Q)}\right]^{\frac{1}{p_+}}.
\end{align}

In the last step,
we consider the case $\|w\mathbf{1}_{Q_0}\|_{L^{p(\cdot)}} \neq 1$.
Similar to the proof of Lemma \ref{Bp bound},
since $w \in \mathcal{A}_{p(\cdot),\infty}$,
it follows that $\|w\mathbf{1}_{Q_0}\|_{L^{p(\cdot)}} > 0$.
Next, letting $v := \frac{w}{\|w\mathbf{1}_{Q_0}\|_{L^{p(\cdot)}}}$,
we have $ \|v\mathbf{1}_{Q_0}\|_{L^{p(\cdot)}} = 1 $
and, by \eqref{EB eq 13},
we obtain immediately $[v]_{\mathcal{A}_{p(\cdot),\infty}} = [w]_{\mathcal{A}_{p(\cdot),\infty}}$.
Thus, using this and the just proven conclusion that \eqref{EBeq 6}
holds with $w$ replaced by $v$
and letting $\mathbb{W}_0 := [v(\cdot)]^{p(\cdot)}$,
we obtain
\begin{align}\label{EBeq 2}
\frac{\|v\mathbf{1}_E\|_{L^{p(\cdot)}}}{\|v\mathbf{1}_{Q}\|_{L^{p(\cdot)}}}
\lesssim [v]_{\mathcal{A}_{p(\cdot),\infty}}^{A}
\left[\frac{\mathbb{W}_0(E)}{\mathbb{W}_0(Q)}\right]^{\frac{1}{p_+}}.
\end{align}
If $\|w\mathbf{1}_{Q_0}\|_{L^{p(\cdot)}} < 1$,
then
$ \|w\mathbf{1}_{Q_0}\|_{L^{p(\cdot)}}^{-p_-} \leq \|w\mathbf{1}_{Q_0}\|_{L^{p(\cdot)}}^{-p(x)} \leq \|w\mathbf{1}_{Q_0}\|_{L^{p(\cdot)}}^{-p_+} $
for any $x\in\mathbb{R}^n$.
Hence, combining this with \eqref{EBeq 2},
we find that
\begin{align}\label{EBeq 3}
\frac{\mathbb{W}_0(E)}{\mathbb{W}_0(Q)} = \frac{\int_E v(x)^{p(x)}\,dx}{\int_Q v(x)^{p(x)}\,dx}
\leq \frac{ \|w\mathbf{1}_{Q_0}\|^{-p_+}_{L^{p(\cdot)}} \int_E w(x)^{p(x)}\,dx}{\|w\mathbf{1}_{Q_0}\|^{-p_-}_{L^{p(\cdot)}}\int_Q w(x)^{p(x)}\,dx}
=  \left\|w\mathbf{1}_{Q_0}\right\|^{p_--p_+}_{L^{p(\cdot)}} \frac{\mathbb{W}(E)}{\mathbb{W}(Q)}.
\end{align}
Conversely,
if $\|w\mathbf{1}_{Q_0}\|_{L^{p(\cdot)}} \geq 1$,
then $ \|w\mathbf{1}_{Q_0}\|_{L^{p(\cdot)}}^{-p_+} \leq \|w\mathbf{1}_{Q_0}\|_{L^{p(\cdot)}}^{-p(x)} \leq \|w\mathbf{1}_{Q_0}\|_{L^{p(\cdot)}}^{-p_-}$ for any $x\in\mathbb{R}^n$.
From this and \eqref{EBeq 2},
we infer that
\begin{align*}
\frac{\mathbb{W}_0(E)}{\mathbb{W}_0(Q)} = \frac{\int_E v(x)^{p(x)}\,dx}{\int_Q v(x)^{p(x)}\,dx}
\leq \frac{ \|w\mathbf{1}_{Q_0}\|^{-p_-}_{L^{p(\cdot)}} \int_E w(x)^{p(x)}\,dx}{\|w\mathbf{1}_{Q_0}\|^{-p_+}_{L^{p(\cdot)}}\int_Q w(x)^{p(x)}\,dx}
=  \left\|w\mathbf{1}_{Q_0}\right\|^{p_+-p_-}_{L^{p(\cdot)}} \frac{\mathbb{W}(E)}{\mathbb{W}(Q)}.
\end{align*}
Thus, combining this, \eqref{EBeq 2}, and \eqref{EBeq 3},
we conclude that
\begin{align*}
\frac{\|w\mathbf{1}_E\|_{L^{p(\cdot)}}}{\|w\mathbf{1}_{Q}\|_{L^{p(\cdot)}}}
& = \frac{\|v\mathbf{1}_E\|_{L^{p(\cdot)}}}{\|v\mathbf{1}_{Q}\|_{L^{p(\cdot)}}}
 \lesssim [w]_{\mathcal{A}_{p(\cdot),\infty}}^{A}
\max\left\{ \left\|w\mathbf{1}_{Q_0}\right\|^{p_--p_+}_{L^{p(\cdot)}}, \left\|w\mathbf{1}_{Q_0}\right\|^{p_+ - p_-}_{L^{p(\cdot)}} \right\}^\frac{1}{p_+}
\left[\frac{\mathbb{W}(E)}{\mathbb{W}(Q)}\right]^{\frac{1}{p_+}}. \nonumber
\end{align*}
This finishes the proof of Lemma \ref{EB 3}.
\end{proof}
Now, we recall the reverse H\"older's inequality for $A_\infty$ weights
(see, for instance, \cite[Theorem 2.3]{hp13}).

\begin{lemma}\label{reverse Holder}
If $w\in A_\infty$,
then, for any $r \in [1, 1 + \frac{1}{\tau_n [w]_{A_\infty}} ]$
with $\tau_n$ being a positive constant depending only on $n$ and for
any cube $Q$ in $\mathbb{R}^n$,
\begin{align*}
\left[ \fint_Q w^r(x)\,dx \right]^{\frac1r} \leq 2 \fint_Q w(x)\,dx.
\end{align*}
\end{lemma}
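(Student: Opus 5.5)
We would follow the Hytönen--P\'erez route. It passes through the Fujii--Wilson constant
$$[w]_{A_\infty}^{FW}:=\sup_Q \frac{1}{w(Q)}\int_Q M(w\mathbf{1}_Q)(x)\,dx ,$$
in two steps: first compare it with the exponential constant $[w]_{A_\infty}$ used here, then run a local Calder\'on--Zygmund argument on a fixed cube $Q$. Since $\left[\fint_Q w^r\right]^{1/r}$ is nondecreasing in $r$ by H\"older's inequality, it suffices to prove the claim at the endpoint $r=1+\varepsilon$ with $\varepsilon:=\frac{1}{\tau_n[w]_{A_\infty}}$.

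\emph{Step 1: comparison of constants.} We aim to show $[w]_{A_\infty}^{FW}\le c_n[w]_{A_\infty}$.
\begin{itemize}
\item Fix $Q$, a point $x\in Q$, and a cube $R\ni x$. By the definition of $[w]_{A_\infty}$,
$$\fint_R w\mathbf{1}_Q\le \frac{|R\cap Q|}{|R|}\fint_{R\cap Q}w\le [w]_{A_\infty}\exp\left(\fint_{\widetilde R}\log\left(w\mathbf{1}_Q\right)\right)$$
up to a dimensional constant. This needs some care, because $R\cap Q$ is a rectangle rather than a cube. One can restrict to cubes $R\subset 3Q$ by dyadic or shifted-grid reductions, or work directly with the logarithmic maximal function $M_0f:=\sup_{R\ni x}\exp\left(\fint_R\log|f|\right)$, which yields $M(w\mathbf{1}_Q)\lesssim_n[w]_{A_\infty}M_0(w\mathbf{1}_Q)$.
\item Jensen's inequality gives $M_0f\le [M(|f|^s)]^{1/s}$ for every $s\in(0,1)$. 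The weak-type $(1,1)$ bound of $M$ together with Kolmogorov's inequality gives $\|[M(|f|^s)]^{1/s}\|_{L^1}\le \left(\frac{C_n}{1-s}\right)^{1/s}\|f\|_{L^1}$. Taking $s=\frac12$, say, gives $\int M_0(w\mathbf{1}_Q)\le c_n w(Q)$, and the comparison follows.
\end{itemize}
We expect this comparison to be the main technical obstacle: the geometry of truncated cubes and the choice of $M_0$ versus its dyadic version must be handled so that the constant stays linear in $[w]_{A_\infty}$.

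\emph{Step 2: local Calder\'on--Zygmund decomposition.} Let $M_Q$ be the dyadic maximal operator localized to $Q$, and write $w_Q:=\fint_Q w$. For every $\lambda\ge w_Q$, the set $\{M_Qw>\lambda\}$ is a disjoint union of maximal dyadic subcubes $Q_j$ with $\lambda<w_{Q_j}\le 2^n\lambda$. Hence
$$w(\{M_Qw>\lambda\})\le 2^n\lambda\,|\{M_Qw>\lambda\}|.$$
We use $w\le M_Qw$ almost everywhere on $Q$ and split the layer-cake integral at $\lambda=w_Q$:
$$\int_Q w^{1+\varepsilon}\le\int_Q (M_Qw)^\varepsilon w
=\int_0^\infty\varepsilon\lambda^{\varepsilon-1}w(\{M_Qw>\lambda\})\,d\lambda
\le w_Q^{\varepsilon}w(Q)+\frac{2^n\varepsilon}{1+\varepsilon}\int_Q (M_Qw)^{1+\varepsilon}.$$

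\emph{Step 3: the remaining maximal integral.} We need $\int_Q (M_Qw)^{1+\varepsilon}\le 2[w]^{FW}_{A_\infty}w_Q^{\varepsilon}w(Q)$ when $\varepsilon\le (2^{n+1}[w]^{FW}_{A_\infty})^{-1}$.
\begin{itemize}
\item The same decomposition applied to the measure $M_Qw\,dx$ gives, on each $Q_j$, the bound $\int_{Q_j}M_Qw\le\int_{Q_j}M(w\mathbf{1}_{Q_j})\le[w]^{FW}w(Q_j)\le 2^n\lambda[w]^{FW}|Q_j|$.
\item Consequently
$$\int_Q (M_Qw)^{1+\varepsilon}\le w_Q^{\varepsilon}\int_QM_Qw+\frac{2^n\varepsilon[w]^{FW}}{1+\varepsilon}\int_Q(M_Qw)^{1+\varepsilon},$$
and $\int_QM_Qw\le[w]^{FW}w(Q)$.
\item The last term is absorbed into the left-hand side. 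To justify absorption we first replace $w$ by $\min\{w,N\}$, whose maximal function integrals are finite, and then let $N\to\infty$ by monotone convergence.
\end{itemize}

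\emph{Conclusion.} Inserting Step 3 into Step 2 with $\tau_n:=2^{n+1}c_n$ gives
$$\fint_Qw^{1+\varepsilon}\le w_Q^{1+\varepsilon}\left(1+\frac{2^{n+1}\varepsilon[w]^{FW}_{A_\infty}}{1+\varepsilon}\right)\le 2w_Q^{1+\varepsilon}.$$
Taking the power $\frac1{1+\varepsilon}$ and using $2^{1/(1+\varepsilon)}\le 2$ yields the claimed inequality with constant $2$. Monotonicity in $r$ then extends it to all $r\in\left[1,1+\frac{1}{\tau_n[w]_{A_\infty}}\right]$.
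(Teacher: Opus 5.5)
Your architecture is right, and it is the standard proof behind the result the paper only cites. The Hyt\"onen--P\'erez reverse H\"older inequality is proved for the Fujii--Wilson constant, so your Step 1 supplies exactly the passage to the exponential constant used here. Steps 2--3 reproduce their localized Calder\'on--Zygmund argument, and the bookkeeping is correct ($2^n\varepsilon[w]^{\mathrm{FW}}_{A_\infty}\le\frac12$, final factor $\le 2$, monotonicity in $r$). Two steps are incomplete as written, however.

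The first is the localization in Step 1: the auxiliary cube must lie inside $Q$. Indeed $\exp(\fint_{\widetilde R}\log(w\mathbf{1}_Q))=0$ as soon as $|\widetilde R\setminus Q|>0$. Dropping the indicator would force a comparison with $w(3Q)$, i.e.\ a doubling constant that is not linear in $[w]_{A_\infty}$, so reductions to $3Q$ or to shifted grids are not the right fix. The correct fix costs nothing. If $x\in Q$, $R\ni x$ and $l(R)\le l(Q)$, slide coordinatewise a cube $\widetilde R\subset Q$ with $l(\widetilde R)=l(R)$ and $R\cap Q\subset\widetilde R$. Then $\fint_R w\mathbf{1}_Q\le\fint_{\widetilde R}w\le[w]_{A_\infty}\exp(\fint_{\widetilde R}\log w)\le[w]_{A_\infty}M_0(w\mathbf{1}_Q)(x)$. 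If $l(R)>l(Q)$, take $\widetilde R:=Q$. A smaller point: $\int[M(|f|^{1/2})]^2\le C_n\int|f|$ comes from the $L^2$-boundedness of $M$ applied to $|f|^{1/2}$. It does not come from Kolmogorov's inequality, which only controls $\int_E|Tf|^s$ with $s<1$ on sets of finite measure.

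The second is the truncation. Replacing $w$ by $w_N:=\min\{w,N\}$ justifies the absorption only if the constant of $w_N$ is bounded uniformly in $N$, and you do not check this. It is true: $N\mapsto\log\fint_Q w_N-\fint_Q\log w_N$ has derivative $\frac{|Q\cap\{w>N\}|}{|Q|}\bigl(\frac{1}{\fint_Q w_N}-\frac1N\bigr)\ge0$. Hence $[w_N]_{A_\infty}\le[w]_{A_\infty}$, and by Step 1, $[w_N]^{\mathrm{FW}}_{A_\infty}\le c_n[w]_{A_\infty}$.

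It is simpler to truncate the maximal function rather than the weight. Set $Y_N:=\int_Q\min\{M_Qw,N\}^{1+\varepsilon}<\infty$. Your layer-cake computation then involves only levels $\lambda<N$, where $\{M_Qw>\lambda\}=\{\min\{M_Qw,N\}>\lambda\}$. It gives $Y_N\le w_Q^{\varepsilon}[w]^{\mathrm{FW}}_{A_\infty}w(Q)+\frac{2^n\varepsilon[w]^{\mathrm{FW}}_{A_\infty}}{1+\varepsilon}Y_N$, so you can absorb and let $N\to\infty$.

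Finally, say why $\int_{Q_j}M_Qw\le\int_{Q_j}M(w\mathbf{1}_{Q_j})$. The dyadic ancestors of $Q_j$ in $Q$ have averages $\le\lambda<w_{Q_j}$, so $M_Qw=M_{Q_j}w$ on $Q_j$.
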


Next, we are ready to prove Proposition \ref{reverse Holder w1}.
\begin{proof}[Proof of Proposition \ref{reverse Holder w1}]
From Lemma \ref{Ap Apinfty 1} with $w \in \mathcal{A}_{p(\cdot),\infty}$,
it follows immediately that $w \in A_\infty$ and
$[w]_{A_\infty} \leq C_{p(\cdot),n} [w]_{\mathcal{A}_{p(\cdot),\infty}}$.
Hence, by this and Lemma \ref{reverse Holder},
we obtain, for any cube $Q$ in $\mathbb{R}^n$,
\begin{align*}
\left[ \fint_Q w(x)^\delta\,dx \right]^\frac1\delta \leq 2 \fint_Q w(x)\,dx,
\end{align*}
where $\delta:= 1 + \frac{1}{\tau_n C_{p(\cdot),n}[w]_{\mathcal{A}_{p(\cdot),\infty}}}$.
Using this and H\"older's inequality with the obvious fact $\delta \geq 1$,
we find that, for any cube $Q$ in $\mathbb{R}^n$ and any measurable set $E\subset Q$ with $|E| > 0$,
\begin{align*}
w(E) &= |E| \fint_E w(x)\,dx \leq |E| \left[\fint_E w(x)^\delta \,dx\right]^\frac1\delta
= |E|^{1 - \frac{1}{\delta}} |Q|^{\frac{1}{\delta}} \left[\fint_Q w(x)^\delta \,dx\right]^\frac1\delta\\
&\leq 2|E|^{1 - \frac{1}{\delta}} |Q|^{\frac{1}{\delta}} \fint_Q w(x) \,dx
= 2\left( \frac{|E|}{|Q|} \right)^{1-\frac{1}{\delta}} w(Q),
\end{align*}
which further implies that
\begin{align}\label{wEQ EQ}
\frac{w(E)}{w(Q)} \leq 2\left( \frac{|E|}{|Q|} \right)^{1-\frac{1}{\delta}}.
\end{align}

Now, let $\alpha \in (0,\infty)$ be a constant which will be determined later.
From \eqref{wEQ EQ}, it follows that,
for any cube $Q$ in $\mathbb{R}^n$ and any measurable set $E\subset Q$
with $|E| \leq \alpha |Q|$,
we have $\frac{w(E)}{w(Q)} \leq 2\alpha^{1-\frac1\delta} $,
which further implies that $w(E) \leq 2\alpha^{1-\frac1\delta} w(Q)$.
From this and the arbitrariness of $E$,
via replacing $E$ by $Q\setminus E$,
we deduce that, for any cube $Q$ in $\mathbb{R}^n$ and any measurable set $E\subset Q$
with $ |E| > (1 - \alpha) |Q| $,
\begin{align}\label{wEQ EQ 1}
1 - 2\alpha^{1-\frac1\delta} < \frac{w(E)}{w(Q)}.
\end{align}
Similar to the proof of \eqref{EB eq 2},
using Lemma \ref{est fQ} and \eqref{wM 2},
we obtain
\begin{align*}
\frac{w(E)}{w(Q)} \leq C_{p(\cdot),n}\frac{ \|w\mathbf{1}_E\|_{L^{p(\cdot)}}}{\|w\mathbf{1}_Q\|_{L^{p(\cdot)}}}
\frac{|Q|}{w(Q)} \frac{\|w\mathbf{1}_Q\|_{L^{p(\cdot)}}}{\|\mathbf{1}_Q\|_{L^{p(\cdot)}}}
\leq C_{p(\cdot),n}[w]_{\mathcal{A}_{p(\cdot),\infty}} \frac{ \|w\mathbf{1}_E\|_{L^{p(\cdot)}}}{\|w\mathbf{1}_Q\|_{L^{p(\cdot)}}},
\end{align*}
which, combined with Lemma \ref{EB 3},
further implies that there exists a positive constant $A$,
depending only on $p(\cdot)$ and $n$,
such that
\begin{align*}
\frac{w(E)}{w(Q)} \lesssim [w]_{\mathcal{A}_{p(\cdot),\infty}}^{A}
L_w^{\frac{1}{p_+}}\left[\frac{\mathbb{W}(E)}{\mathbb{W}(Q)}\right]^{\frac{1}{p_+}},
\end{align*}
where $\mathbb{W} := [w(\cdot)]^{p(\cdot)}$ and $L_w$ is the same as in Lemma \ref{EB 3}.
Thus, by this and \eqref{wEQ EQ 1},
we conclude that,
for any cube $Q$ in $\mathbb{R}^n$ and any measurable set $E\subset Q$ with $ |E| > (1 - \alpha) |Q| $,
\begin{align*}
1 - 2\alpha^{1-\frac1\delta} \lesssim [w]_{\mathcal{A}_{p(\cdot),\infty}}^{A}
L_w^{\frac{1}{p_+}}\left[\frac{\mathbb{W}(E)}{\mathbb{W}(Q)}\right]^{\frac{1}{p_+}},
\end{align*}
which, together with \cite[Lemma 7.3.3]{g14}, further implies that $\mathbb{W} \in A_\infty$.
This finishes the proof of Proposition \ref{reverse Holder w1}.
\end{proof}
Next, we show the sufficiency of Theorem \ref{Apinfty A} in the following proposition.
\begin{proposition}\label{Apinfty Ainfty}
Let $p(\cdot)\in\mathcal{P}$ with $p(\cdot) \in LH$.
Then, for any $\mathbb{W} \in A_\infty$,
$w := [\mathbb{W}(\cdot)]^{\frac{1}{p(\cdot)}} \in \mathcal{A}_{p(\cdot),\infty}$;
moreover, there exist positive constants $C$ and $A$, depending only on $p(\cdot)$ and $n$,
such that
\begin{align}\label{Apinfty Ainfty 1}
[w]_{\mathcal{A}_{p(\cdot),\infty}} \leq C \widetilde{L_w}
[\mathbb{W}]_{A_\infty}^A,
\end{align}
where
$$ \widetilde{L_w} := \max\left\{ \left[\mathbb{W}\left(Q_0\right)\right]^{\frac{1}{p_-} - \frac{1}{p_+}},
\left[\mathbb{W}\left(Q_0\right)\right]^{\frac{1}{p_+} - \frac{1}{p_-}} \right\}  $$
and $Q_0$ is the same as in \eqref{def Q0}.
\end{proposition}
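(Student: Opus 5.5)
Given $\mathbb{W}\in A_\infty$ and $w:=\mathbb{W}^{1/p(\cdot)}$, the task is to bound, uniformly in cubes $Q$,
\[
\frac{\|w\mathbf 1_Q\|_{L^{p(\cdot)}}}{\|\mathbf 1_Q\|_{L^{p(\cdot)}}}\exp\Big(\fint_Q\log w^{-1}\Big).
\]
The plan is to compare the numerator with $\mathbb{W}(Q)^{1/p_Q}$, the denominator with $|Q|^{1/p_Q}$, and the geometric mean $\exp(\fint_Q\log w^{-1})=\exp(-\fint_Q p^{-1}\log\mathbb{W})$ with $\exp(-\frac{1}{p_Q}\fint_Q\log\mathbb{W})$. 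If these comparisons hold up to factors $C\widetilde{L_w}[\mathbb{W}]_{A_\infty}^A$, the product becomes
\[
\Big(\fint_Q\mathbb{W}\cdot\exp\big(\fint_Q\log\mathbb{W}^{-1}\big)\Big)^{1/p_Q}\le[\mathbb{W}]_{A_\infty}^{1/p_-},
\]
and we are done.

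First, as in Lemmas \ref{Bp bound} and \ref{EB 3}, I will normalize. Put $\mathbb{V}:=\mathbb{W}/\mathbb{W}(Q_0)$, so $\mathbb{V}(Q_0)=1$ and $[\mathbb{V}]_{A_\infty}=[\mathbb{W}]_{A_\infty}$. Then $v:=\mathbb{V}^{1/p(\cdot)}=w\,\mathbb{W}(Q_0)^{-1/p(\cdot)}$, and pointwise $v/w$ lies between $\mathbb{W}(Q_0)^{-1/p_-}$ and $\mathbb{W}(Q_0)^{-1/p_+}$. The ratio of the $\mathcal{A}_{p(\cdot),\infty}$ quantities of $w$ and $v$ is therefore at most $\widetilde{L_w}$, so it suffices to treat $\mathbb{W}(Q_0)=1$.

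Second, I handle the numerator by cases, as in Lemma \ref{EB 2}. Since $\rho(w\mathbf 1_Q)=\mathbb{W}(Q)$, Lemma \ref{rhof 1} gives $\|w\mathbf 1_Q\|\le \mathbb{W}(Q)^{1/p_-(Q)}$ or $\le\mathbb{W}(Q)^{1/p_+(Q)}$, depending on whether $\mathbb{W}(Q)\le1$ or $>1$.
\begin{itemize}
\item For small cubes ($|Q|\le1$), I need $\mathbb{W}(Q)^{1/p_\pm(Q)}\lesssim[\mathbb{W}]^A\,\mathbb{W}(Q)^{1/p_Q}$, i.e.\ a lower bound on $\mathbb{W}(Q)^{p_-(Q)-p_+(Q)}$. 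This is the analogue of Lemma \ref{Bp bound}. I will prove it with the same four-case geometry, using \eqref{ww RQ} for $\mathbb{W}$ directly in place of Lemma \ref{EB 1}. The factor $l(Q)^{C(p_+(Q)-p_-(Q))}$ is controlled by Lemma \ref{weight bound}(i), and the factor $d^{p_+(Q)-p_-(Q)}$ by $LH_\infty$ exactly as in that proof.
\item For large cubes, I follow the argument of Lemma \ref{EB 2} with $\mu=\mathbb{W}$, now using doubling from \eqref{ww lambdaQ} for $\mathbb{W}$ to sum the annuli. This gives $\|w\mathbf 1_Q\|\sim\mathbb{W}(Q)^{1/p_\infty}$ when $\|w\mathbf 1_Q\|\ge1$, and Lemma \ref{weight bound}(ii) gives $\|\mathbf 1_Q\|\sim|Q|^{1/p_\infty}$.
\end{itemize}
In both regimes the denominator is $\sim|Q|^{1/p_Q}$ by Lemma \ref{est Q}.

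Third, the geometric mean. Write $\log\mathbb{W}=\log(\mathbb{W}/c)+\log c$ with $c:=\fint_Q\mathbb{W}$. Then
\[
\fint_Q\frac{\log\mathbb{W}}{p}=\frac{\log c}{p_Q}+\fint_Q\frac{\log(\mathbb{W}/c)}{p}.
\]
The remainder term is the delicate part. For the positive part, $\log_+(\mathbb{W}/c)\le\frac1\varepsilon(\mathbb{W}/c)^\varepsilon$, whose average is bounded by Jensen. For the negative part, $\fint_Q\log_-(\mathbb{W}/c)\le\log[\mathbb{W}]_{A_\infty}+\fint_Q\log_+(\mathbb{W}/c)$, by the $A_\infty$ definition. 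Since $1/p$ lies in $[1/p_+,1]$, both pieces contribute factors depending only on $[\mathbb{W}]_{A_\infty}$ and $p_\pm$. For large cubes, where the numerator was compared with exponent $1/p_\infty$ rather than $1/p_Q$, I would use Lemma \ref{rs f 1} again in logarithmic form, or split $Q$ into a bounded region and the region $|x|\gtrsim$ large where $|1/p-1/p_\infty|\lesssim1/\log(e+|x|)$. The main obstacle is this mismatch between the exponents $1/p(x)$ in the geometric mean and $1/p_Q$ or $1/p_\infty$ in the norm estimates for large, far-away cubes: the error $\fint_Q|1/p-1/p_\infty|\,|\log\mathbb{W}|$ must be absorbed, and this needs $|\log\mathbb{W}|$ to grow at most like $\log(e+|x|)$ times powers of $[\mathbb{W}]$ on average. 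I would obtain that growth bound from \eqref{ww RQ} with $\mathbb{W}(Q_0)=1$, which gives polynomial bounds $\mathbb{W}(Q)\lesssim(e+d+l)^{C[\mathbb{W}]}$ and matching lower bounds.
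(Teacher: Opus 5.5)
Your outline is sound. For the numerator and denominator it is essentially the paper's route: normalizing to $\mathbb{W}(Q_0)=1$ at cost $\widetilde{L_w}$, an analogue of Lemma \ref{Bp bound} built from \eqref{ww RQ} (the paper's Lemma \ref{ww Bp}), the argument of Lemma \ref{EB 2} run with the doubling \eqref{ww lambdaQ}, Lemma \ref{est Q}, and control of the $p_\infty$ versus $p_Q$ mismatch by the growth of $\mathbb{W}$ away from $Q_0$. Together these are Lemma \ref{ww BB}. Your direct check of the normalization is correct, since the geometric mean changes by exactly $\mathbb{W}(Q_0)^{1/p_Q}$.

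The genuine difference is the geometric mean. The paper uses Lemma \ref{ww BMO} and $\fint_Q(\frac1p-\frac1{p_Q})=0$ to reach $(2e[\mathbb{W}]_{A_\infty})^{2/p_-}\exp(\frac1{p_Q}\fint_Q\log\mathbb{W}^{-1})$, and only then applies the $A_\infty$ condition to the power $1/p_Q$. Your splitting with $c=\fint_Q\mathbb{W}$ produces the exponent $1/p_Q$ exactly. Since $1/p\le 1$, the positive part of $\log(\mathbb{W}/c)$ only helps, so
\[
\fint_Q\frac{1}{p}\log\frac{c}{\mathbb{W}}\le\fint_Q\log_+\frac{c}{\mathbb{W}}=\fint_Q\log\frac{c}{\mathbb{W}}+\fint_Q\log_+\frac{\mathbb{W}}{c}\le\log[\mathbb{W}]_{A_\infty}+1 .
\]
Hence $\exp(\fint_Q\log w^{-1})\le e[\mathbb{W}]_{A_\infty}c^{-1/p_Q}$, which cancels exactly against $(\mathbb{W}(Q)/|Q|)^{1/p_Q}$. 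This avoids the BMO characterization and is more elementary than the paper's argument.

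Three details need repair.

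First, Lemma \ref{rhof 1} is applied backwards. The correct upper bounds are $\|w\mathbf{1}_Q\|_{L^{p(\cdot)}}\le\mathbb{W}(Q)^{1/p_+(Q)}$ if $\mathbb{W}(Q)\le 1$, and $\le\mathbb{W}(Q)^{1/p_-(Q)}$ if $\mathbb{W}(Q)>1$. So for $\mathbb{W}(Q)\le1$ you need the \emph{upper} bound $\mathbb{W}(Q)^{p_-(Q)-p_+(Q)}\lesssim[\mathbb{W}]_{A_\infty}^A$, which is a lower bound on $\mathbb{W}(Q)$. The case split must also be by $\mathbb{W}(Q)$ versus $1$, not by $|Q|$. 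Large cubes with $\mathbb{W}(Q)<1$ go through this case. Small cubes with $\mathbb{W}(Q)>1$ need instead an upper bound on $\mathbb{W}(Q)^{p_+(Q)-p_-(Q)}$, which comes from doubling plus $LH_\infty$.

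Second, because your splitting already yields $1/p_Q$, the only mismatch is the cube-level factor $(\mathbb{W}(Q)/|Q|)^{|1/p_\infty-1/p_Q|}$ for $|Q|\ge 1$ and $\mathbb{W}(Q)\ge1$. No pointwise control of $|\log\mathbb{W}|$ is needed. For $Q=Q(x_0,l)$, bound it using
\begin{itemize}
\item $\log\mathbb{W}(Q)\lesssim(1+\log[\mathbb{W}]_{A_\infty})\log(e+|x_0|+l)$, and
\item $|\frac1{p_Q}-\frac1{p_\infty}|\le\fint_Q\frac{C_\infty\,dx}{\log(e+|x|)}\lesssim\frac{1}{\log(e+|x_0|+l)}$.
\end{itemize}
The second estimate is trivial for cubes far from the origin and is the content of \eqref{ww BB eq 8} for cubes near it. You should state it explicitly, since it is the heart of the obstacle you identify.

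Third, the growth exponent must be $O(1+\log[\mathbb{W}]_{A_\infty})$, as \eqref{ww lambdaQ} provides. With $(e+d+l)^{C[\mathbb{W}]}$ as written, or $|\log\mathbb{W}|$ growing like $\log(e+|x|)$ times a power of $[\mathbb{W}]_{A_\infty}$, exponentiating gives $e^{C[\mathbb{W}]_{A_\infty}}$. That is not the polynomial bound $[\mathbb{W}]_{A_\infty}^A$ claimed in \eqref{Apinfty Ainfty 1}.
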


\begin{remark}
Let $p(\cdot) \in \mathcal{P}$ with $p(\cdot) \in LH$.
As we can see, $\widetilde{L_w}$ in \eqref{Apinfty Ainfty 1} depends on both $W$ and $p(\cdot)$.
Indeed,   there does not exist any measurable positive function $\varphi$
such that, for any $\mathbb{W} \in A_\infty$,
$ [w]_{\mathcal{A}_{p(\cdot),\infty}} \lesssim \varphi([\mathbb{W}]_{A_\infty}) $,
where the implicit positive constant is independent of $\mathbb{W}$.

We show this by giving an example.
Let $\mathbb{W}_k := k$ for any $k\in\mathbb{N}$.
Then it is obvious that $\mathbb{W}_k \in A_\infty$
and $[\mathbb{W}_k]_{A_\infty} = 1$ for any $k\in\mathbb{N}$.
Now, assume
\begin{align*}
p(x) :=
\begin{cases}
\displaystyle 1 &\text{if}\ x\in (-\infty,-1)\cup (1,\infty),\\
\displaystyle 3 + 2x &\text{if}\  x\in [-1,0],\\
\displaystyle 3 - 2x &\text{if}\  x\in (0,1]
\end{cases}
\end{align*}
and $w_k := [\mathbb{W}_k(\cdot)]^{\frac{1}{p(\cdot)}}$ for any $k\in\mathbb{N}$.
If there exists a positive function $\varphi$ such that
$ [w]_{\mathcal{A}_{p(\cdot),\infty}} \lesssim \varphi([\mathbb{W}]_{A_\infty}) $,
then, from the fact that $ [\mathbb{W}_k]_{A_\infty} = 1 $ for any $k\in\mathbb{N}$,
it follows immediately that
\begin{align}\label{rem ap eq 5}
\sup_k [w_k]_{\mathcal{A}_{p(\cdot),\infty}} < \infty.
\end{align}
However, this is not true.
Indeed, letting $Q := (0,1]$,
then, by the definition of $\mathcal{A}_{p(\cdot),\infty}$,
we obtain, for any $k\in\mathbb{N}$,
\begin{align}\label{rem ap eq 1}
[w_k]_{\mathcal{A}_{p(\cdot),\infty}}
\geq \frac{1}{\|\mathbf{1}_Q\|_{L^{p(\cdot)}}} \left\|w_k\mathbf{1}_Q\right\|_{L^{p(\cdot)}} \exp\left( \fint_Q \log w_k^{-1}(x) \,dx \right).
\end{align}
Using Lemma \ref{rhof 1} with the fact $\rho_{L^{p(\cdot)}}(\mathbf{1}_Q) = |Q| = 1$,
we find that
\begin{align}\label{rem ap eq 2}
\|\mathbf{1}_Q\|_{L^{p(\cdot)}} = 1;
\end{align}
moreover, for any $k\in\mathbb{N}$, we obtain
\begin{align*}
\exp\left( \fint_Q \log w_k^{-1}(x) \,dx \right)
= \exp\left( -\log k \int_0^1 \frac{1}{3-2x}\,dx \right) = k^{-\frac{\log 3}{2}}.
\end{align*}
Thus, from these and \eqref{rem ap eq 2},
we infer that, for any $k\in\mathbb{N}$,
\begin{align}\label{rem ap eq 4}
\frac{1}{\|\mathbf{1}_Q\|_{L^{p(\cdot)}}} \left\|w_k\mathbf{1}_Q\right\|_{L^{p(\cdot)}} \exp\left( \fint_Q \log w_k^{-1}(x) \,dx \right)
 =  \left\| k^{\frac{1}{p(\cdot)}-\frac{\log 3}{2}} \mathbf{1}_Q\right\|_{L^{p(\cdot)}}.
\end{align}
By the definition of $\rho_{L^{p(\cdot)}}$,
we have, for any $k\in\mathbb{N}$,
\begin{align*}
\rho_{L^{p(\cdot)}}\left( k^{\frac{1}{p(\cdot)}-\frac{\log 3}{2}}  \mathbf{1}_Q \right)
= \int_0^1 k^{1 - \frac{\log 3}{2}(3-2x)}\,dx = \frac{k^{1 + \log 3} - 1}{3\log(3) k^{\frac{3\log 3}{2}}  \log k }.
\end{align*}
Since $ \log 3 + 1 > \frac32 \log3 $,
it follows that $\rho_{L^{p(\cdot)}}( k^{\frac{1}{p(\cdot)}-\frac{\log 3}{2}}  \mathbf{1}_Q ) \to\infty$
as $k\to \infty$.
Using this and Lemma \ref{rhof 1},
we conclude that the right-hand side of \eqref{rem ap eq 4}
tends to infinity as $k\to \infty$,
which, combined with \eqref{rem ap eq 1}, yields
$[w_k]_{\mathcal{A}_{p(\cdot),\infty}} \to \infty $ as $k \to \infty$.
This contradicts \eqref{rem ap eq 5} and hence finishes the proof of the above claim.
\end{remark}

Before giving the proof of Proposition \ref{Apinfty Ainfty},
we recall some basic properties of $A_\infty$ weights.
The following result is an estimate about the $p_-(Q) - p_+(Q)$ power of $\mathbb{W}(Q)$
for any $\mathbb{W} \in A_\infty$ and any cube $Q$ in $\mathbb{R}^n$,
which is similar to Lemma \ref{Bp bound}.
\begin{lemma}\label{ww Bp}
Let $p(\cdot) \in \mathcal{P}$ with $p(\cdot) \in LH$
and $\mathbb{W} \in A_\infty$.
Then there exist positive constants $C$ and $A$, depending only on $p(\cdot)$ and $n$,
such that, for any cube $Q$ in $\mathbb{R}^n$,
\begin{align}\label{ww B1}
\mathbb{W}(Q)^{p_-(Q) - p_+(Q)} \leq C [\mathbb{W}]_{A_\infty}^A.
\end{align}
Moreover, if $\mathbb{W}(Q) \leq 1$,
then there further exists a positive constant $C$, depending only on $p(\cdot)$ and $n$,
such that, for any $x\in Q$,
\begin{align}\label{ww B2}
C^{-1} \left[\mathbb{W}\right]_{A_\infty}^{-\frac{A}{p_-}} \mathbb{W}(Q)^{-1}
\leq \mathbb{W}(Q)^{-\frac{p(x)}{p_Q}}
\leq C \left[\mathbb{W}\right]_{A_\infty}^{\frac{A}{p_-}} \mathbb{W}(Q)^{-1}.
\end{align}
\end{lemma}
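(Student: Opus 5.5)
The plan is to mirror the proof of Lemma \ref{Bp bound}, with $\|w\mathbf{1}_Q\|_{L^{p(\cdot)}}$ replaced by $\mathbb{W}(Q)$. The doubling estimates of Lemma \ref{lambdaQ Q} then apply to $\mathbb{W}$ directly, so no detour through Lemma \ref{EB 1} is needed. One caveat first: \eqref{ww B1} cannot hold with a constant depending only on $[\mathbb{W}]_{A_\infty}$. Taking $\mathbb{W}\equiv\varepsilon$ with $\varepsilon\to0$ makes $\mathbb{W}(Q)^{p_-(Q)-p_+(Q)}$ blow up on any fixed cube where $p$ is nonconstant. I would therefore prove \eqref{ww B1} under the normalization $\mathbb{W}(Q_0)=1$. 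The general case then follows by applying it to $\mathbb{W}/\mathbb{W}(Q_0)$, which has the same $A_\infty$ constant, at the cost of a factor $\max\{\mathbb{W}(Q_0)^{p_--p_+},\mathbb{W}(Q_0)^{p_+-p_-}\}$. This is consistent with the $\widetilde{L_w}$ appearing in Proposition \ref{Apinfty Ainfty}.

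\textbf{Proof of \eqref{ww B1}.} If $\mathbb{W}(Q)\ge1$, the left side of \eqref{ww B1} is at most $1$ and there is nothing to prove. Otherwise, write $Q=Q(y_0,l)$ and split into the same four cases as in Lemma \ref{Bp bound}.
\begin{itemize}
\item If $l\le 2e$ and ${\mathop\mathrm{\,dist\,}}(Q,Q_0)\le 2e$: apply \eqref{ww RQ} with $R:=5Q_0$. This gives $\mathbb{W}(Q)\gtrsim [\mathbb{W}]_{A_\infty}^{-2^{n+1}}(l/10e)^{2^n(1+\log[\mathbb{W}]_{A_\infty})}\mathbb{W}(Q_0)$. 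Raise both sides to the power $p_-(Q)-p_+(Q)$. The factor $l^{-2^n(p_+(Q)-p_-(Q))(1+\log[\mathbb{W}]_{A_\infty})}$ is controlled by Lemma \ref{weight bound}(i), via $|Q|^{p_-(Q)-p_+(Q)}\le C_D$, and contributes $C_D^{\frac{2^n}{n}(1+\log[\mathbb{W}]_{A_\infty})}$, which is a power of $[\mathbb{W}]_{A_\infty}$.
\item If $l>2e$ and ${\mathop\mathrm{\,dist\,}}(Q,Q_0)\le l$: then $Q_0\subset 5Q$. By \eqref{ww lambdaQ}, $\mathbb{W}(Q)\ge 10^{-2^n(1+\log_2[\mathbb{W}]_{A_\infty})}\mathbb{W}(5Q)\ge 10^{-2^n(1+\log_2[\mathbb{W}]_{A_\infty})}$.
\item In the two far-away cases: set $d:={\mathop\mathrm{\,dist\,}}(Q,Q_0)$ and compare $Q$ with $Q(\mathbf{0},4d)\supset Q_0\cup Q$ through \eqref{ww RQ}. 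The resulting factor $d^{c(p_+(Q)-p_-(Q))(1+\log[\mathbb{W}]_{A_\infty})}$ is handled exactly as before via $LH_\infty$, using $d^{p_+(Q)-p_-(Q)}\le e^{2C_\infty}$. The factor $(l/4d)^{\cdots}$ is handled via Lemma \ref{weight bound}(i).
\end{itemize}
In every case one uses $[\mathbb{W}]_{A_\infty}\ge1$ to turn $c^{\log[\mathbb{W}]_{A_\infty}}$ into $[\mathbb{W}]_{A_\infty}^{A}$.

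\textbf{Proof of \eqref{ww B2}.} This follows from \eqref{ww B1}. Write
\[
\mathbb{W}(Q)^{-\frac{p(x)}{p_Q}}=\mathbb{W}(Q)^{-1}\,\mathbb{W}(Q)^{\frac{p_Q-p(x)}{p_Q}}.
\]
For $x\in Q$, both $p(x)$ and $p_Q$ lie in $[p_-(Q),p_+(Q)]$ and $p_Q\ge p_-$, so $\bigl|\frac{p_Q-p(x)}{p_Q}\bigr|\le\frac{p_+(Q)-p_-(Q)}{p_-}$. Since $\mathbb{W}(Q)\le1$, we get
\[
\mathbb{W}(Q)^{\frac{p_Q-p(x)}{p_Q}}\in\Bigl[\mathbb{W}(Q)^{\frac{p_+(Q)-p_-(Q)}{p_-}},\ \mathbb{W}(Q)^{\frac{p_-(Q)-p_+(Q)}{p_-}}\Bigr].
\]
By \eqref{ww B1}, the right endpoint is at most $(C[\mathbb{W}]_{A_\infty}^A)^{1/p_-}$, and the left endpoint is its reciprocal. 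This gives both inequalities in \eqref{ww B2}.

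The main obstacle is the far-away case, where the exponent growing like $\log[\mathbb{W}]_{A_\infty}$ must be tamed by the log-H\"older decay at infinity. There, as in \eqref{EB eq 3}, I would use the continuity of $p(\cdot)$ to locate the extremal points of $p$ in $\overline{Q}$ and bound $p_+(Q)-p_-(Q)\le 2C_\infty/\log(e+d_Q)$, where $d_Q:={\mathop\mathrm{\,dist\,}}(\mathbf{0},Q)>d$.
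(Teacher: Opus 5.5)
Your route is the paper's: prove \eqref{ww B1} by rerunning the proof of Lemma~\ref{Bp bound} with Lemma~\ref{lambdaQ Q} in place of Lemma~\ref{EB 1}, then derive \eqref{ww B2} from \eqref{ww B1}. Your caveat is correct. For $\mathbb{W}\equiv\varepsilon$ one has $[\mathbb{W}]_{A_\infty}=1$, yet $\mathbb{W}(Q)^{p_-(Q)-p_+(Q)}\to\infty$ as $\varepsilon\to0$ on any cube where $p$ is nonconstant. So \eqref{ww B1} as printed needs the normalization $\mathbb{W}(Q_0)=1$, and the paper only invokes the lemma under that normalization, in Lemma~\ref{ww BB}. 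Your proof of \eqref{ww B2}, via $|p_Q-p(x)|\le p_+(Q)-p_-(Q)$ and $p_Q\ge p_-$, is correct and slightly more direct than the paper's Jensen step.

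There is, however, a step that fails in the far-away cases: the claim $d_Q:=\mathrm{dist}(\mathbf{0},Q)>d$. Since $d=\sup_{x\in Q}\mathrm{dist}(x,Q_0)$, the fact $\mathbf{0}\in Q_0$ only gives $d\le\sup_{x\in Q}|x|\le d_Q+\sqrt{n}\,l$.

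Counterexample: take $n=1$ and $Q=[2e,2e+l]$ with $l$ large. Then $d=l+e>l>2e$, so you are in the fourth case, but $d_Q=2e$. With $p(x):=2+1/\log(e+|x|)$, which is in $LH$, one has $p_+(Q)-p_-(Q)\to 1/\log(3e)$. Hence $d^{p_+(Q)-p_-(Q)}\to\infty$, and the bound $d^{p_+(Q)-p_-(Q)}\le e^{2C_\infty}$ is false. Splitting $(4d/l)^{\gamma}$ into a $d$-factor and an $l$-factor throws away the fact that $d\sim l$ here. The paper's own write-up, both here and in Lemma~\ref{Bp bound}, contains the same slip.

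The repair is a sub-case split based on $d\le d_Q+\sqrt{n}\,l$.
\begin{itemize}
\item If $d\le 2\sqrt{n}\,l$, do not split the factor at all. Since $4d/l\le 8\sqrt{n}$, the whole factor $(4d/l)^{2^n(1+\log[\mathbb{W}]_{A_\infty})(p_+(Q)-p_-(Q))}$ is at most $(8\sqrt{n})^{2^n(p_+-p_-)(1+\log[\mathbb{W}]_{A_\infty})}$. This is a power of $[\mathbb{W}]_{A_\infty}$, and $LH_\infty$ is not needed.
\item If $d>2\sqrt{n}\,l$, then $d_Q\ge d-\sqrt{n}\,l>d/2$. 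So $d^{p_+(Q)-p_-(Q)}\le (2d_Q)^{2C_\infty/\log(e+d_Q)}\le (2e)^{2C_\infty}$. The $l$-factor is then handled by Lemma~\ref{weight bound}(i) exactly as you planned.
\end{itemize}
With this split the rest of your argument goes through.
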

\begin{proof}
Since the proof of \eqref{ww B1} is a slight modification
of the proof of Lemma \ref{Bp bound} with Lemma \ref{EB 1} replaced
by Lemma \ref{lambdaQ Q},
we omit the details here and only show \eqref{ww B2}.
To this end, by \eqref{ww B1}, the assumption $\mathbb{W}(Q) \leq 1$, and the fact that
$ p_-(Q) - p_+(Q) \leq - |p(x) - p(y)| $ for any $x,y\in Q$,
we find that there exists a positive constant $A$, depending only on $p(\cdot)$ and $n$,
such that $ \mathbb{W}(Q)^{-|p(x) - p(y)|} \leq \mathbb{W}(Q)^{p_-(Q) - p_+(Q)} \lesssim [\mathbb{W}]_{A_\infty}^A $.
Using this, Jensen's inequality, and the assumption $\mathbb{W}(Q) \leq 1$,
we conclude that, for any $x \in Q$,
\begin{align}\label{ww BB eq 9}
\mathbb{W}(Q)^{-|\frac{p(x)}{p_Q} - 1|} & = \mathbb{W}(Q)^{-|\fint_Q \frac{p(x)-p(y)}{p(y)}\,dy|}
 \leq \mathbb{W}(Q)^{-\fint_Q |\frac{p(x) - p(y)}{p(y)}|\,dy}\nonumber\\
& \leq  \left[\fint_Q \mathbb{W}(Q)^{-|p(x) - p(y)|}\,dy\right]^{\frac{1}{p_-}}
\lesssim \left[\mathbb{W}\right]_{A_\infty}^{\frac{A}{p_-}}.
\end{align}
Notice that, by the assumption $\mathbb{W}(Q) \leq 1$,
$$\mathbb{W}(Q)^{-\frac{p(x)}{p_Q}} \leq \mathbb{W}(Q)^{-|\frac{p(x)}{p_Q} - 1|} \mathbb{W}(Q)^{-1}
\ \ \text{and}\ \
\mathbb{W}(Q)^{-1} \leq \mathbb{W}(Q)^{-|\frac{p(x)}{p_Q} - 1|} \mathbb{W}(Q)^{-\frac{p(x)}{p_Q}},  $$
which, combined with \eqref{ww BB eq 9}, further implies that
$ [\mathbb{W}]_{A_\infty}^{-\frac{A}{p_-}} \mathbb{W}(Q)^{-1}
\lesssim \mathbb{W}(Q)^{-\frac{p(x)}{p_Q}} \lesssim [\mathbb{W}]_{A_\infty}^{\frac{A}{p_-}} \mathbb{W}(Q)^{-1}. $
This finishes the proof of Lemma \ref{ww Bp}.
\end{proof}
The following lemma is some estimates about
$\|\mathbb{W}^{\frac{1}{p(\cdot)}} \mathbf{1}_Q \|_{L^{p(\cdot)}}$.
We note that this result can be found in \cite[Corollary 3.7]{dh08};
however, since we need the explicit expressions of related positive constants,
we give a detailed proof here.
\begin{lemma}\label{ww BB}
Let $p(\cdot) \in \mathcal{P}$ with $p(\cdot) \in LH$ and let $\mathbb{W} \in A_\infty$.
Then there exist positive constants $C$ and $A$,
depending only on $p(\cdot)$ and $n$,
such that, for any cube $Q$ in $\mathbb{R}^n$,
\begin{align}\label{ww BB eq 5}
C^{-1} \left[\mathbb{W}\right]^{-A}_{A_\infty} \widetilde{L_w}^{-1} \mathbb{W}(Q)^{\frac{1}{p_Q}}
\leq \left\| w \mathbf{1}_Q \right\|_{L^{p(\cdot)}} \leq C \left[\mathbb{W}\right]^A_{A_\infty}
\widetilde{L_w} \mathbb{W}(Q)^{\frac{1}{p_Q}},
\end{align}
where $w := \mathbb{W}(\cdot)^{\frac{1}{p(\cdot)}}$
and $\widetilde{L_w}$ is as in Proposition \ref{Apinfty Ainfty}.
\end{lemma}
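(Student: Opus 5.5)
We first reduce to the normalized case $\mathbb{W}(Q_0)=1$, where $Q_0$ is as in \eqref{def Q0}. Given a general $\mathbb{W}\in A_\infty$, set $\mathbb{V}:=\mathbb{W}/\mathbb{W}(Q_0)$. Then $[\mathbb{V}]_{A_\infty}=[\mathbb{W}]_{A_\infty}$ and $v:=\mathbb{V}^{1/p(\cdot)}=w\,\mathbb{W}(Q_0)^{-1/p(\cdot)}$. Hence $\|v\mathbf{1}_Q\|_{L^{p(\cdot)}}$ and $\|w\mathbf{1}_Q\|_{L^{p(\cdot)}}$ differ by a factor lying between $\mathbb{W}(Q_0)^{-1/p_-}$ and $\mathbb{W}(Q_0)^{-1/p_+}$. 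Likewise $\mathbb{V}(Q)^{1/p_Q}=\mathbb{W}(Q)^{1/p_Q}\mathbb{W}(Q_0)^{-1/p_Q}$, and $1/p_Q\in[1/p_+,1/p_-]$. Comparing the two factors produces at most $\widetilde{L_w}$ on each side, exactly as in the last step of Lemma \ref{EB 3}. So it suffices to prove \eqref{ww BB eq 5} with $\widetilde{L_w}$ replaced by $1$ when $\mathbb{W}(Q_0)=1$.

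Next, the basic identity. By Lemma \ref{rho f norm}, $\lambda:=\|w\mathbf{1}_Q\|_{L^{p(\cdot)}}$ satisfies
\[
1=\int_Q \mathbb{W}(x)\lambda^{-p(x)}\,dx .
\]
The goal is to show that $\lambda\sim\mathbb{W}(Q)^{1/p_Q}$, with constants that are powers of $[\mathbb{W}]_{A_\infty}$. We split into two cases according to the size of $\mathbb{W}(Q)$.

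\emph{Case $\mathbb{W}(Q)\le1$.} Set $\mu:=\mathbb{W}(Q)^{1/p_Q}$. By \eqref{ww B2} of Lemma \ref{ww Bp}, for every $x\in Q$,
\[
\mu^{-p(x)}=\mathbb{W}(Q)^{-p(x)/p_Q}\sim[\mathbb{W}]_{A_\infty}^{\pm A/p_-}\,\mathbb{W}(Q)^{-1}.
\]
It follows that $\int_Q\mathbb{W}(x)\mu^{-p(x)}\,dx$ lies between $c[\mathbb{W}]^{-A/p_-}_{A_\infty}$ and $C[\mathbb{W}]_{A_\infty}^{A/p_-}$. For $K\ge1$ we have $(K\mu)^{-p(x)}\le K^{-p_-}\mu^{-p(x)}$ and $(\mu/K)^{-p(x)}\ge K^{p_-}\mu^{-p(x)}$. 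Choosing $K$ to be a suitable power of $[\mathbb{W}]_{A_\infty}$ makes the modular of $w\mathbf{1}_Q/(K\mu)$ at most $1$ and that of $w\mathbf{1}_Q/(\mu/K)$ at least $1$. This gives $\mu/K\le\lambda\le K\mu$.

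\emph{Case $\mathbb{W}(Q)>1$.} Here I would mimic Lemma \ref{EB 2}, with $\mathbb{W}$ replacing $w^{p(\cdot)}$ and the doubling estimate \eqref{ww lambdaQ} replacing \eqref{w doubling 1}. Using $\mathbb{W}(Q_k)\le(2e^k)^{2^n(1+\log_2[\mathbb{W}]_{A_\infty})}$ with $Q_k=Q(\mathbf 0,2e^{k+1})$, Lemma \ref{rs f 1} yields
\[
\lambda\sim[\mathbb{W}]_{A_\infty}^{\pm A}\,\mathbb{W}(Q)^{1/p_\infty}
\quad\text{whenever }\lambda\ge1 .
\]
The subcase $\lambda<1$ while $\mathbb{W}(Q)>1$ is easy: Lemma \ref{rhof 1} gives $\lambda\ge\mathbb{W}(Q)^{1/p_-}>1$, which is impossible. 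It remains to pass from $\mathbb{W}(Q)^{1/p_\infty}$ to $\mathbb{W}(Q)^{1/p_Q}$, which needs a bound on $\mathbb{W}(Q)^{|1/p_\infty-1/p_Q|}$. I will do this by the argument at the end of Lemma \ref{Bp bound}, in three steps.

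First, if $Q$ is far from the origin, i.e.\ $d_Q:={\rm dist}(\mathbf0,Q)$ is large, then $LH_\infty$ gives $|1/p_Q-1/p_\infty|\lesssim 1/\log(e+d_Q)$. Second, the doubling estimate \eqref{ww lambdaQ} applied to a cube centred at $\mathbf0$ containing $Q$ gives $\mathbb{W}(Q)\lesssim (e+d_Q+l(Q))^{C(1+\log[\mathbb{W}]_{A_\infty})}$. Third, if instead $l(Q)$ dominates, then $Q$ is large and $p_Q$ is close to $p_\infty$ up to $1/\log(e+l(Q))$; this is Lemma \ref{weight bound}(ii) combined with Lemma \ref{est Q}, or can be argued directly. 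In each situation the exponent multiplied by the log-size gives a bounded quantity, so
\[
\mathbb{W}(Q)^{|1/p_\infty-1/p_Q|}\lesssim e^{C\log[\mathbb{W}]_{A_\infty}},
\]
which is a power of $[\mathbb{W}]_{A_\infty}$.

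This comparison of $p_Q$ with $p_\infty$ for large-mass cubes is the main obstacle. In particular, the case of a large cube near the origin, where $p_Q$ averages over a region containing non-asymptotic values, needs care. I would handle it by splitting $1/p_Q$ as an average, bounding the contribution of the part of $Q$ inside $Q(\mathbf 0,\sqrt{l(Q)})$, where $p$ deviates from $p_\infty$, by $|Q|^{-1/2}$ times a bounded quantity.
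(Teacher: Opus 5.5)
Your argument is correct, and its skeleton matches the paper's. You normalize to $\mathbb{W}(Q_0)=1$, and the same bookkeeping produces $\widetilde{L_w}$. This normalization is also what makes \eqref{ww B2} legitimately available: \eqref{ww B1} cannot hold with a $\mathbb{W}$-independent constant for $\mathbb{W}\equiv\varepsilon$. You then treat $\mathbb{W}(Q)\le 1$ via \eqref{ww B2} and a modular comparison. For $\mathbb{W}(Q)>1$ you run the $A_\infty$-version of Lemma \ref{EB 2} to get $\|w\mathbf{1}_Q\|_{L^{p(\cdot)}}\sim[\mathbb{W}]_{A_\infty}^{\pm A}\mathbb{W}(Q)^{1/p_\infty}$.

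You depart from the paper in how the large-mass case is closed. The paper splits geometrically:
\begin{itemize}
\item If $\sqrt{n}\,l\le|x_0|$, it never uses $p_\infty$. It checks $|p(x)-p_Q|\le C_0/\log(e+|x|)$ on $Q$ and reruns the proof of Lemma \ref{EB 2} with the general Lemma \ref{rs f} and the constant exponent $p_Q$.
\item Only if $\sqrt{n}\,l>|x_0|$ does it go through $p_\infty$. It bounds $|1/p_\infty-1/p_Q|$ by the average of $C_\infty/\log(e+|x|)$ over the centred cube $Q(\mathbf{0},l)$, which is a rearrangement step. It then compares $\mathbb{W}(Q)$ with $\mathbb{W}(Q(\mathbf{0},l))$ by doubling and finishes with the calculus estimate \eqref{ww BB eq 8}.
\end{itemize}

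You instead use the $p_\infty$ route for every cube with $\mathbb{W}(Q)>1$. You control $\mathbb{W}(Q)^{|1/p_\infty-1/p_Q|}$ uniformly by pairing two bounds. The first is $\log\mathbb{W}(Q)\lesssim(1+\log[\mathbb{W}]_{A_\infty})\log(e+d_Q+l(Q))$, from doubling out of $Q_0$. The second is $|1/p_Q-1/p_\infty|\lesssim 1/\log(e+\max\{d_Q,l(Q)\})$. When $d_Q\gtrsim l(Q)$ this follows from pointwise $LH_\infty$. When $l(Q)$ is large it follows from Lemmas \ref{weight bound}(ii) and \ref{est Q}, which together give $|Q|^{|1/p_Q-1/p_\infty|}\lesssim 1$ whenever $|Q|\ge 1$; your $\sqrt{l(Q)}$-splitting works equally well.

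Your route is shorter and more uniform: it needs neither Lemma \ref{rs f} with a cube-dependent exponent, nor the rearrangement step, nor \eqref{ww BB eq 8}. The paper's far-cube argument is instead a local version of Lemma \ref{EB 2} that compares with $p_Q$ directly, so no exponent matching is needed there.

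The one regime you leave implicit is $d_Q,l(Q)\lesssim 1$, where $1/\log|Q|$ gives nothing. There $|1/p_Q-1/p_\infty|\le 1/p_-$ and $\mathbb{W}(Q)\lesssim[\mathbb{W}]_{A_\infty}^{C}$ by doubling, which suffices.
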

\begin{proof}
We first prove \eqref{ww BB eq 5} under the assumption $ \mathbb{W}(Q_0) = 1 $, where $Q_0$ is the same as in \eqref{def Q0}.
If $\mathbb{W}(Q) \leq 1$,
then, by Lemma \ref{ww Bp}, we obtain
\begin{align}\label{ww BB eq 12}
\rho_{p(\cdot)} \left( \mathbb{W}(Q)^{-\frac{1}{p_Q}} w \mathbf{1}_Q \right)
& = \int_{Q} \mathbb{W}(Q)^{-\frac{p(x)}{p_Q}} \mathbb{W}(x) \,dx
\gtrsim \left[\mathbb{W}\right]_{A_\infty}^{-A} \int_Q \mathbb{W}(Q)^{-1} \mathbb{W}(x)\,dx = \left[\mathbb{W}\right]_{A_\infty}^{-A}
\end{align}
and
\begin{align*}
\rho_{p(\cdot)} \left( \mathbb{W}(Q)^{-\frac{1}{p_Q}} w \mathbf{1}_Q \right)
& = \int_{Q} \mathbb{W}(Q)^{-\frac{p(x)}{p_Q}} \mathbb{W}(x) \,dx
\lesssim \left[\mathbb{W}\right]_{A_\infty}^{A} \int_Q \mathbb{W}(Q)^{-1} \mathbb{W}(x)\,dx = \left[\mathbb{W}\right]_{A_\infty}^{A},
\end{align*}
where $A$ is a positive constant, depending only on $p(\cdot)$ and $n$.
Combining this, \eqref{ww BB eq 12}, and Lemma \ref{rhof 1},
we conclude that
$ [\mathbb{W}]_{A_\infty}^{-\frac{A}{p_+}} \lesssim \|\mathbb{W}(Q)^{-\frac{1}{p_Q}} w \mathbf{1}_Q\|_{L^{p(\cdot)}} \lesssim [\mathbb{W}]_{A_\infty}^{\frac{A}{p_+}} $,
which further implies the desired estimate
$[\mathbb{W}]_{A_\infty}^{-\frac{A}{p_+}} \mathbb{W}(Q)^{\frac{1}{p_Q}} \lesssim \| w \mathbf{1}_Q\|_{L^{p(\cdot)}} \lesssim [\mathbb{W}]_{A_\infty}^{\frac{A}{p_+}} \mathbb{W}(Q)^{\frac{1}{p_Q}}$.

Next, we consider the case $\mathbb{W}(Q) \geq 1$.
Let $Q := Q(x_0, l)$ with $x_0\in\mathbb{R}^n$ and $l \in (0,\infty)$
and we prove \eqref{ww BB eq 5} in the following two cases:
$\sqrt{n}l \leq |x_0|$ and $\sqrt{n}l > |x_0|$.
We first begin with the case $\sqrt{n}l \leq |x_0|$.
By the assumption $\sqrt{n}l \leq |x_0|$,
we find that $|x| \leq |x_0| + \sqrt{n}l \lesssim |x_0|$
and $|x| \geq |x_0| - \frac{\sqrt{n}}{2} l \gtrsim |x_0|$ for any $x \in Q$,
which further imply that, for any $x \in Q$, $|x| \sim |x_0|$.
Thus, using this, we obtain, for any $x \in Q$,
$$ \frac{1}{\log(e + |x|)} \sim \frac{1}{\log(e + |x_0|)} = \fint_Q \frac{1}{\log(e + |x_0|)}\,dy \sim \fint_Q \frac{1}{\log(e + |y|)}\,dy. $$
From this, Remark \ref{rem p}, and the definition of $p_Q$,
we deduce that
\begin{align*}
\left| p(x) - p_Q \right| & = \left| p(x) p_Q \right| \left|\frac{1}{p_Q} - \frac{1}{p(x)}\right|
\lesssim p_+^2 \left|\frac{1}{p_Q} - \frac{1}{p(x)}\right|
\lesssim \left| \frac{1}{p(x)} - \frac{1}{p_\infty} \right| + \left| \frac{1}{p_Q} - \frac{1}{p_\infty} \right|\\
&\leq \frac{1}{\log(e + |x|)} + \fint_Q \frac{1}{\log(e+|y|)} \,dy
\lesssim \frac{1}{\log(e + |x|)}, \nonumber
\end{align*}
which further implies that there exists a positive constant $C_0$,
depending only on $p(\cdot)$ and $n$,
such that
\begin{align*}
\left| p(x) - p_Q \right| \leq \frac{C_0}{\log(e + |x|)}.
\end{align*}
Then, by applying an argument similar to that used in the proof of Lemma \ref{EB 2}
via replacing Lemma \ref{rs f 1} by Lemma \ref{rs f},
we conclude that there exists a positive constant $A$,
depending only on $p(\cdot)$ and $n$, such that
$$\left[\mathbb{W}\right]_{A_\infty}^{-A} \mathbb{W}(Q)^{\frac{1}{p_Q}} \lesssim \left\| w\mathbf{1}_Q \right\|_{L^{p(\cdot)}} \lesssim \left[\mathbb{W}\right]_{A_\infty}^A \mathbb{W}(Q)^{\frac{1}{p_Q}}. $$
This finishes the proof of the case $\sqrt{n}l \leq |x_0|$.

Now, we consider the case $\sqrt{n}l > |x_0|$.
Let $\widetilde{Q} := Q(\mathbf{0}, l)$ and $S := Q(\mathbf{0}, 2|x_0| + l)$.
Then $ Q\subset S $ and $\widetilde{Q}\subset S$.
Using this, Lemma \ref{lambdaQ Q} with $\lambda = \frac{2|x_0| + l}{l}$,
and the assumption $ |x_0| < \sqrt{n}l $,
we conclude that
\begin{align*}
\frac{\mathbb{W}(Q)}{\mathbb{W}(\widetilde{Q})}
&= \frac{\mathbb{W}(Q)}{\mathbb{W}(S)}\frac{\mathbb{W}(S)}{\mathbb{W}(\widetilde{Q})}
\leq \frac{\mathbb{W}(Q)}{\mathbb{W}(S)} \left( 2\frac{2|x_0| + l}{l} \right)^{ 2^n(1 + \log_2 [\mathbb{W}]_{A_\infty}) }\\
&\lesssim \left( 2 + 4\sqrt{n} \right)^{2^n \log_2 [\mathbb{W}]_{A_\infty} }
 = \left[\mathbb{W}\right]_{A_\infty}^{ 2^n \log_2( 2 + 4\sqrt{n} ) },
\end{align*}
which further implies that
\begin{align}\label{QQ1}
\mathbb{W}(Q) \lesssim \left[\mathbb{W}\right]_{A_\infty}^{ 2^n \log_2( 2 + 4\sqrt{n} ) } \mathbb{W}(\widetilde{Q}),
\end{align}
where the implicit positive constant depends only on $p(\cdot)$ and $n$.
Observe that, with replacing \eqref{w doubling 1} by Lemma \ref{lambdaQ Q},
Lemma \ref{EB 2} still holds for $A_\infty$ weights.
Thus, there exists a positive constant $A$,
depending only on $p(\cdot)$ and $n$,
such that, for any cube $Q$ in $\mathbb{R}^n$ with $\mathbb{W}(Q) \geq 1$,
$$ [\mathbb{W}]^{-A}_{A_\infty}\mathbb{W}(Q)^{\frac{1}{p_\infty}} \lesssim \left\|w\mathbf{1}_Q\right\|_{L^{p(\cdot)}} \lesssim [\mathbb{W}]^A_{A_\infty} \mathbb{W}(Q)^{\frac{1}{p_\infty}}, $$
which further implies that
$[\mathbb{W}]^{-A}_{A_\infty} \mathbb{W}(Q)^{\frac{1}{p_Q}} \lesssim \mathbb{W}(Q)^{\frac{1}{p_Q} - \frac{1}{p_\infty}} \|w\mathbf{1}_Q\|_{L^{p(\cdot)}}$
and
\begin{align*}
\left\|w\mathbf{1}_Q\right\|_{L^{p(\cdot)}} \lesssim
 [\mathbb{W}]^A_{A_\infty} \mathbb{W}(Q)^{\frac{1}{p_Q}} \mathbb{W}(Q)^{\frac{1}{p_\infty} - \frac{1}{p_Q}}.
\end{align*}
Thus, using this, to prove \eqref{ww BB eq 5},
we only need to show that
\begin{align}\label{ww BB eq 6}
\mathbb{W}(Q)^{|\frac{1}{p_\infty} - \frac{1}{p_Q}|} \lesssim 1.
\end{align}
From \eqref{QQ1} with the assumption $\mathbb{W}(Q) \geq 1$
and the fact that
$\fint_Q \frac{1}{\log(e + |x|)}\,dx \leq \fint_{\widetilde{Q}}\frac{1}{\log(e + |x|)}\,dx, $
it follows that
\begin{align}\label{ww BB eq 7}
\mathbb{W}(Q)^{|\frac{1}{p_\infty} - \frac{1}{p_Q}|}
&\leq \mathbb{W}(Q)^{\fint_Q \frac{C_\infty}{\log(e + |x|)}\,dx}
\leq \mathbb{W}(Q)^{\fint_{\widetilde{Q}} \frac{C_\infty}{\log(e + |x|)}\,dx}\nonumber\\
&\lesssim \left\{\left[\mathbb{W}\right]_{A_\infty}^{ 2^n \log_2( 2 + 4\sqrt{n} ) }  \mathbb{W}\left(\widetilde{Q}\right)\right\}^{\fint_{\widetilde{Q}} \frac{C_\infty}{\log(e + |x|)}\,dx}\nonumber\\
& \leq \left[\mathbb{W}\right]_{A_\infty}^{C_\infty 2^n \log_2( 2 + 4\sqrt{n} ) } \exp\left( \log\left(\mathbb{W}\left(\widetilde{Q}\right)\right) \fint_{\widetilde{Q}} \frac{C_\infty}{\log(e + |x|)}\,dx \right),
\end{align}
where $C_\infty$ is the constant appearing in \eqref{clogp infty}
with  $p(\cdot)$ replaced by $\frac{1}{p(\cdot)}$.

If $ l \leq 2e $, then, by  the assumption $\mathbb{W}(Q_0) = 1$,
we find that $ \mathbb{W}(\widetilde{Q}) \leq \mathbb{W}(Q_0) \leq 1 $,
which, combined with \eqref{ww BB eq 7}, further implies that
$$ \mathbb{W}(Q)^{|\frac{1}{p_\infty} - \frac{1}{p_Q}|} \lesssim \left[\mathbb{W}\right]_{A_\infty}^{C_\infty 2^n \log_2( 2 + \sqrt{n} ) }. $$
Next, if $l > 2e$,
then, from Lemma \ref{lambdaQ Q} and the assumption $\mathbb{W}(Q_0) = 1$,
we infer that
\begin{align}\label{ww BB eq 10}
\log\left(\mathbb{W}\left(\widetilde{Q}\right)\right) \fint_{\widetilde{Q}} \frac{dx}{\log(e + |x|)}
& \leq 2^n\left( 1 + \log_2 [\mathbb{W}]_{A_\infty} \right) \log\left(\frac{l}{e}\right) \fint_{\widetilde{Q}} \frac{dx}{\log(e + |x|)}\nonumber\\
& \lesssim 2^n\left( 1 + \log_2 [\mathbb{W}]_{A_\infty} \right) \log\left(\frac{l}{e}\right) l^{-n} \int_0^{\frac{\sqrt{n}l}{2}} \frac{r^{n-1}}{\log(e + r)}\,dr \nonumber\\
& \sim \left( 1 + \log_2 [\mathbb{W}]_{A_\infty} \right) \log\left(\frac{l}{e}\right) \left(\frac{e}{l}\right)^n \int_0^{\frac{\sqrt{n}l}{4e}} \frac{r^{n-1}}{\log(e + 2er)}\,dr.
\end{align}
Using this with letting $t := \frac{l}{e}$ and using \eqref{ww BB eq 7} ,
to prove \eqref{ww BB eq 6},
it is sufficient to show that, for any $t \in (2,\infty)$,
\begin{align}\label{ww BB eq 8}
\frac{\log t}{t^n} \int_{0}^{\frac{\sqrt{n}t}{4}} \frac{r^{n-1}}{\log(e + 2e r)}\,dr \lesssim 1.
\end{align}
Noticing that the left-hand side of \eqref{ww BB eq 8} is continuous on $(2,\infty)$,
we obtain $\frac{\log t}{t^n} \int_{0}^{\frac{\sqrt{n} t}{4}} \frac{r^{n-1}}{\log(e + 2e r)}\,dr$
is bounded on $t \in (2, e^2]$.
Let $ g(t) := \int_{0}^{\frac{\sqrt{n} t}{4}} \frac{r^{n-1}}{\log(e + 2e r)}\,dr - c\frac{t^n}{\log t}$,
where $c$ is a positive constant which will be determined later.
Then the derivative of $g(t)$ is
$$ g'(t) = \left( \frac{\sqrt{n}}{4} \right)^{n} \frac{t^{n-1}}{\log(e + t)} \left\{\frac{\log t}{\log(e + 2e t)} - cn + \frac{c}{\log t}\right\}. $$
Clearly, if $c$ is large enough, then
$g'(t) \leq 0$ and $g(e^2) \leq 0$,
which further implies that, for any $t \in [e^2,\infty)$, $g(t) \leq 0$ and hence
$$\frac{\log t}{t^n} \int_{0}^{\frac{\sqrt{n} t}{4}} \frac{r^{n-1}}{\log(e + 2e r)}\,dr \lesssim 1. $$
Moreover, since \eqref{ww BB eq 8} is independent of $\mathbb{W}$,
it follows that the implicit positive constants in \eqref{ww BB eq 10} are independent of $\mathbb{W}$.
Hence, combining this, \eqref{ww BB eq 8}, \eqref{ww BB eq 7}, and \eqref{ww BB eq 10},
we find that there exists a positive constant $A$, depending only on $p(\cdot)$ and $n$,
such that
$$[\mathbb{W}]_{A_\infty}^{-A} \mathbb{W}(Q)^{\frac{1}{p_Q}} \lesssim \|w\mathbf{1}_Q\|_{L^{p(\cdot)}} \lesssim [\mathbb{W}]_{A_\infty}^A \mathbb{W}(Q)^{\frac{1}{p_Q}}. $$
This finishes the proof of \eqref{ww BB eq 5} in the case $\sqrt{n} > |x_0|$.

Finally, we consider the case $\mathbb{W}(Q_0) \neq 1$.
Since $\mathbb{W} \in A_\infty$, we deduce that $\mathbb{W}(Q_0) \neq 0$.
Now, letting $V := \frac{\mathbb{W}}{\mathbb{W}(Q_0)}$,
we obtain $V\in A_\infty$ and $ V(Q_0) = 1 $
and, moreover, by the definition of $A_\infty$,
we conclude $[V]_{A_\infty} = [\mathbb{W}]_{A_\infty}$.
Using this and the above proved result for the case $\mathbb{W}(Q_0) = 1$,
we find that
\begin{align}\label{ww BB eq 13}
\left\| \mathbb{W}(Q_0)^{-\frac{1}{p(\cdot)}}w\mathbf{1}_Q \right\|_{L^{p(\cdot)}}
&\gtrsim [V]_{A_\infty}^{-A} V(Q)^{\frac{1}{p_Q}}
 = [\mathbb{W}]_{A_\infty}^{-A} \mathbb{W}(Q_0)^{-\frac{1}{p_Q}} \mathbb{W}(Q)^{\frac{1}{p_Q}}
\end{align}
and
\begin{align}\label{ww BB eq 11}
\left\| \mathbb{W}(Q_0)^{-\frac{1}{p(\cdot)}}w\mathbf{1}_Q \right\|_{L^{p(\cdot)}}
&\lesssim [V]_{A_\infty}^A V(Q)^{\frac{1}{p_Q}}
 = [\mathbb{W}]_{A_\infty}^A \mathbb{W}(Q_0)^{-\frac{1}{p_Q}} \mathbb{W}(Q)^{\frac{1}{p_Q}}.
\end{align}
Notice, if $\mathbb{W}(Q_0) \leq 1$,
then
$$\mathbb{W}(Q_0)^{-\frac{1}{p_+}} \left\| w\mathbf{1}_Q\right\|_{L^{p(\cdot)}}
\leq \left\| \mathbb{W}(Q_0)^{-\frac{1}{p(\cdot)}}w\mathbf{1}_Q\right\|_{L^{p(\cdot)}}
\leq \mathbb{W}(Q_0)^{-\frac{1}{p_-}} \left\| w\mathbf{1}_Q\right\|_{L^{p(\cdot)}} $$
and $\mathbb{W}(Q_0)^{-\frac{1}{p_+}} \leq \mathbb{W}(Q_0)^{-\frac{1}{p_Q}} \leq \mathbb{W}(Q_0)^{-\frac{1}{p_-}} $.
Hence, from this with \eqref{ww BB eq 13} and \eqref{ww BB eq 11},
we infer that
\begin{align*}
[\mathbb{W}]_{A_\infty}^{-A} \mathbb{W}(Q_0)^{\frac{1}{p_-}-\frac{1}{p_+}} \mathbb{W}(Q)^{\frac{1}{p_Q}}
\| w\mathbf{1}_Q\|_{L^{p(\cdot)}} \lesssim [\mathbb{W}]_{A_\infty}^A \mathbb{W}(Q_0)^{\frac{1}{p_+}-\frac{1}{p_-}} \mathbb{W}(Q)^{\frac{1}{p_Q}}.
\end{align*}
Conversely, if $\mathbb{W}(Q_0) > 1$,
then
$$\mathbb{W}(Q_0)^{-\frac{1}{p_-}} \| w\mathbf{1}_Q\|_{L^{p(\cdot)}}
\leq \left\| \mathbb{W}(Q_0)^{-\frac{1}{p(\cdot)}}w\mathbf{1}_Q\right\|_{L^{p(\cdot)}}
\leq \mathbb{W}(Q_0)^{-\frac{1}{p_+}} \| w\mathbf{1}_Q\|_{L^{p(\cdot)}}$$
and $ \mathbb{W}(Q_0)^{-\frac{1}{p_-}} \leq \mathbb{W}(Q_0)^{-\frac{1}{p_Q}} \leq \mathbb{W}(Q_0)^{-\frac{1}{p_+}} $.
Thus, from this, \eqref{ww BB eq 13}, and \eqref{ww BB eq 11},
it follows that
\begin{align*}
[\mathbb{W}]_{A_\infty}^{-A} \mathbb{W}(Q_0)^{\frac{1}{p_+}-\frac{1}{p_-}} \mathbb{W}(Q)^{\frac{1}{p_Q}}
\lesssim \| w\mathbf{1}_Q\|_{L^{p(\cdot)}}
\lesssim [\mathbb{W}]_{A_\infty}^A \mathbb{W}(Q_0)^{\frac{1}{p_-}-\frac{1}{p_+}} \mathbb{W}(Q)^{\frac{1}{p_Q}}.
\end{align*}
This finishes the proof of Lemma \ref{ww BB}.
\end{proof}
Finally, we recall an equivalent characterization of $A_\infty$.
We first recall the concept of the space ${\rm BMO}$
(see, for instance, \cite[Chapter 3]{g14 1}).
\begin{definition}
The \emph{space ${\rm BMO}$} is defined to be the set of
all $f\in\mathscr{M}$ such that
\begin{align*}
\|f\|_{\rm BMO} := \sup_Q \fint_Q \left| f(x) - \fint_Q f(y)\,dy \right|\,dx < \infty,
\end{align*}
where the supremum is taken over all cubes $Q$ in $\mathbb{R}^n$.
\end{definition}
The following is an equivalent characterization of $A_\infty$ weights
in terms of ${\rm BMO}$ (see, for instance, \cite[Proposition 1.21]{hp13}).
\begin{lemma}\label{ww BMO}
For any scalar weight $\mathbb{W}$, $\mathbb{W} \in A_\infty$ if and only if $\log \mathbb{W} \in \rm{BMO}$;
moreover, $ \|\log \mathbb{W}\|_{\rm BMO} \leq \log(2e[\mathbb{W}]_{A_\infty})$.
\end{lemma}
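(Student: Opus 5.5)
The natural route for the forward direction is to pass from the $A_\infty$ bound to an estimate of $\fint_Q|\log\mathbb{W}-(\log\mathbb{W})_Q|$ by Jensen's inequality. The converse needs John--Nirenberg, and I expect it to hold only in a modified form, explained below.

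\emph{Step 1 (reduction to the positive part).} Fix a cube $Q$ and write $f:=\log\mathbb{W}$, $f_Q:=\fint_Q f$ and $g:=f-f_Q$. The quantity $f_Q$ is finite on every cube because $\mathbb{W}\in A_\infty$ forces $\log\mathbb{W}\in L^1_{\rm loc}$. Since $\fint_Q g=0$, we have
\begin{align*}
\fint_Q|g|=2\fint_Q g_+ .
\end{align*}
Moreover,
\begin{align*}
\fint_Q e^{g}=\fint_Q\mathbb{W}\,\exp\left(\fint_Q\log\mathbb{W}^{-1}\right)\le[\mathbb{W}]_{A_\infty}.
\end{align*}

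\emph{Step 2 (Jensen on the set where $g>0$).} Let $E:=\{x\in Q: g(x)>0\}$ and $\theta:=|E|/|Q|\in[0,1]$. Jensen's inequality on $E$ gives
\begin{align*}
\fint_Q g_+=\theta\fint_E g\le\theta\log\left(\fint_E e^{g}\right)\le\theta\log\left(\frac{[\mathbb{W}]_{A_\infty}}{\theta}\right).
\end{align*}
Optimizing over $\theta\in(0,1]$ yields a bound of the form $\|\log\mathbb{W}\|_{\rm BMO}\le c\log(c'[\mathbb{W}]_{A_\infty})$. This is exactly the type of bound in Lemma \ref{ww BMO}, and it uses $[\mathbb{W}]_{A_\infty}\ge1$ (see \cite[Proposition 7.3.2(4)]{g14}).

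\emph{Main obstacle: the constant.} The optimization in Step 2 only gives about $2\log[\mathbb{W}]_{A_\infty}$ when $[\mathbb{W}]_{A_\infty}\ge e$. This is not obviously bounded by $\log(2e[\mathbb{W}]_{A_\infty})$. To reach the stated form I would first try to avoid losing the factor $2$ in Step 1. One way is to pair the estimate of $\fint_Q g_+$ with a separate estimate of $\fint_Q g_-$ through a level-set argument. Another is to apply Jensen to $\tfrac12|g|$ split as $\tfrac12 g_+ + \tfrac12 g_-$. If the sharp constant does not come out, I would quote it from \cite[Proposition 1.21]{hp13}. For all later uses in this paper, only the qualitative form $\|\log\mathbb{W}\|_{\rm BMO}\lesssim 1+\log[\mathbb{W}]_{A_\infty}$ matters.

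\emph{Step 3 (converse).} Let $\log\mathbb{W}\in{\rm BMO}$. By the John--Nirenberg inequality there exists $\varepsilon\in(0,1)$, depending only on $n$ and $\|\log\mathbb{W}\|_{\rm BMO}$, such that
\begin{align*}
\fint_Q e^{\varepsilon|f-f_Q|}\le C_n .
\end{align*}
Hence both $\fint_Q\mathbb{W}^{\varepsilon}e^{-\varepsilon f_Q}$ and $\fint_Q\mathbb{W}^{-\varepsilon}e^{\varepsilon f_Q}$ are bounded by $C_n$. So $\mathbb{W}^{\varepsilon}\in A_2\subset A_\infty$. If the BMO norm is small, one may take $\varepsilon=1$, and then $\mathbb{W}$ itself lies in $A_\infty$. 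The unqualified ``if'' cannot be true: $\mathbb{W}(x)=|x|^{-n}$ has logarithm in ${\rm BMO}$ but is not locally integrable. I would therefore state and prove the converse in this power form, which is the version in \cite{hp13}, and check that subsequent applications only use the forward implication together with its norm bound.
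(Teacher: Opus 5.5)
The weak point is the constant in the forward direction; your treatment of the converse is correct. I'll take the converse first.

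\textbf{Converse.} You are right, and you are more accurate than the statement itself. The paper calls any a.e.\ positive measurable function a scalar weight. Under that definition $\mathbb{W}(x)=|x|^{-n}$ has $\log\mathbb{W}\in{\rm BMO}$ but $\mathbb{W}\notin A_\infty$. So only your John--Nirenberg version, $\mathbb{W}^{\varepsilon}\in A_2$, can be proved. The paper gives no argument for this lemma; it only cites \cite[Proposition 1.21]{hp13}. It also uses the lemma only once, in the proof of Proposition \ref{Apinfty Ainfty}, and there only the forward implication with the norm bound is needed.

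\textbf{Forward direction: the constant.} Your Steps 1--2 are correct. They give
$\|\log\mathbb{W}\|_{\rm BMO}\le 2\max\{\log[\mathbb{W}]_{A_\infty},[\mathbb{W}]_{A_\infty}/e\}\le 2\log(e[\mathbb{W}]_{A_\infty})$.
This implies the stated bound $\log(2e[\mathbb{W}]_{A_\infty})$ only when $[\mathbb{W}]_{A_\infty}\le 2e$.

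Neither of your proposed repairs can close this, because as described both work on the single cube $Q$. The single-cube inequality $\fint_Q|g|\le\log(2e\fint_Q e^{g})$ for mean-zero $g$ is false. Take $F\subset Q$ with $|F|=\eta|Q|$, and set $g:=s$ on $Q\setminus F$ and $g:=-s(1-\eta)/\eta$ on $F$. Then:
\begin{itemize}
\item $\fint_Q g=0$;
\item $\fint_Q e^{g}\le(1-\eta)e^{s}+\eta\le e^{s}$, so $\log(2e\fint_Q e^{g})\le s+\log(2e)$;
\item $\fint_Q|g|=2s(1-\eta)$.
\end{itemize}
For $s=5$ and $\eta=1/10$ this gives $\fint_Q|g|=9$, while $s+\log(2e)<6.7$. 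So the factor $2$ lost in Step 1 is sharp for one cube.

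Any proof of the stated constant therefore has to use the $A_\infty$ condition on subcubes of $Q$. That condition forbids such a sharp dip: if $F$ is a subcube, its translate by half its side length has an enormous $A_\infty$ ratio. Your fallback of quoting the constant is exactly the paper's citation, so on this point the proposal proves nothing beyond what the paper asserts.

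\textbf{Effect on the paper.} The gap is harmless for the paper's purposes. Inserting your bound into the proof of Proposition \ref{Apinfty Ainfty} replaces $(2e[\mathbb{W}]_{A_\infty})^{2/p_-}$ by $(e[\mathbb{W}]_{A_\infty})^{4/p_-}$. This only changes the exponent $A$ in \eqref{Apinfty Ainfty 1}.
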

Next, we give the proof of Proposition \ref{Apinfty Ainfty}.
\begin{proof}[Proof of Proposition \ref{Apinfty Ainfty}]
Let $\mathbb{W} \in A_\infty$
and $w := [\mathbb{W}(\cdot)]^{\frac{1}{p(\cdot)}}$.
For any cube $Q$ in $\mathbb{R}^n$,
let
$${\rm I}(Q) := \frac{1}{\|\mathbf{1}_Q\|_{L^{p(\cdot)}}} \left\|w\mathbf{1}_Q\right\|_{L^{p(\cdot)}}
\ \ \text{and}\ \
 {\rm II}(Q) := \exp\left( \fint_Q \frac{1}{p(y)} \log \mathbb{W}^{-1}(y) \,dy\right). $$
From Lemmas \ref{est Q} and \ref{ww BB},
we deduce that there exists a positive constant $A$,
depending only on $p(\cdot)$ and $n$, such that
\begin{align}\label{ww BB eq 4}
{\rm I}(Q) \lesssim [\mathbb{W}]_{A_\infty}^{A}
\max\left\{ \mathbb{W}(Q_0)^{\frac{1}{p_+} - \frac{1}{p_-}}, \mathbb{W}(Q_0)^{\frac{1}{p_-} - \frac{1}{p_+}} \right\}
 \left(\frac{\mathbb{W}(Q)}{|Q|}\right)^{\frac{1}{p_Q}}.
\end{align}

Now, by the definition of $p_Q$,
we obtain $ \fint_Q [\frac{1}{p(x)} - \frac{1}{p_Q}] \,dx= 0 $.
From this and Lemma \ref{ww BMO},
we infer that
\begin{align*}
{\rm II}(Q)
&= \exp\left( \fint_Q \frac{1}{p_Q} \log \mathbb{W}^{-1}(y) \,dy\right)\exp\left( \fint_Q \left[\frac{1}{p(y)}-\frac{1}{p_Q}\right] \log \mathbb{W}^{-1}(y) \,dy\right) \\
&= \exp\left( \fint_Q \frac{1}{p_Q} \log \mathbb{W}^{-1}(y) \,dy\right)\exp\left( \fint_Q \left[\frac{1}{p(y)}-\frac{1}{p_Q}\right] \left[\log \mathbb{W}^{-1}(y) - \left(\log \mathbb{W}^{-1}\right)_Q \right] \,dy\right) \\
&\quad \times \exp\left( \left(\log \mathbb{W}^{-1}\right)_Q \fint_Q \left[\frac{1}{p(y)}-\frac{1}{p_Q}\right]  \,dy\right) \\
&\leq \exp\left( \fint_Q \frac{1}{p_Q} \log \mathbb{W}^{-1}(y) \,dy\right)\exp\left( \fint_Q \left|\frac{1}{p(y)}-\frac{1}{p_Q}\right| \left|\log \mathbb{W}^{-1}(y) - \left(\log \mathbb{W}^{-1}\right)_Q \right| \,dy\right) \\
&\leq \exp\left( \fint_Q \frac{1}{p_Q} \log \mathbb{W}^{-1}(y) \,dy\right)\exp\left( \frac{2}{p_-} \fint_Q \left|\log \mathbb{W}^{-1}(y) - \left(\log \mathbb{W}^{-1}\right)_Q \right| \,dy\right) \\
&\leq \left( 2e[\mathbb{W}]_{A_\infty} \right)^{\frac{2}{p_-}} \exp\left( \fint_Q \frac{1}{p_Q} \log \mathbb{W}^{-1}(y) \,dy\right).
\end{align*}
Combining this with \eqref{ww BB eq 4},
we conclude that
\begin{align*}
{\rm I}(Q) \times {\rm II}(Q)
& \lesssim [\mathbb{W}]_{A_\infty}^{A + \frac{2}{p_-}}
\max\left\{ \mathbb{W}(Q_0)^{\frac{1}{p_+} - \frac{1}{p_-}}, \mathbb{W}(Q_0)^{\frac{1}{p_-} - \frac{1}{p_+}} \right\}\\
&\quad \times \left[ \fint_Q \mathbb{W}(x)\,dx\right]^{\frac{1}{p_Q}}  \left[ \exp\left( \fint_Q \log \mathbb{W}^{-1}(y) \,dy\right) \right]^\frac{1}{p_Q} \\
& \lesssim [\mathbb{W}]_{A_\infty}^{A + \frac{2}{p_-}}
\max\left\{ \mathbb{W}(Q_0)^{\frac{1}{p_+} - \frac{1}{p_-}}, \mathbb{W}(Q_0)^{\frac{1}{p_-} - \frac{1}{p_+}} \right\}
 [\mathbb{W}]_{A_\infty}^{\frac{1}{p_Q}},
\end{align*}
which, together with the definition of $\mathcal{A}_{p(\cdot),\infty}$,
further implies that $\mathbb{W}^{\frac{1}{p(\cdot)}} = w \in \mathcal{A}_{p(\cdot),\infty}$.
This finishes the proof of Proposition \ref{Apinfty Ainfty}.
\end{proof}

To give the proof of Theorem \ref{Apinfty A}, we also need
the following result on the convexification of $\mathcal{A}_{p(\cdot),\infty}$ weights.

\begin{lemma}\label{wr}
Let $p(\cdot) \in \mathcal{P}_0$.
Then, for any $r\in (0,\infty)$,
the weight $w \in \mathcal{A}_{p(\cdot),\infty}$
if and only if $w^r \in \mathcal{A}_{\frac{p(\cdot)}{r},\infty}$;
moreover, for any weight $w$,
$ [w]_{\mathcal{A}_{p(\cdot),\infty}} = [w^r]_{\mathcal{A}_{\frac{p(\cdot)}{r},\infty}}^{\frac1r}$.
\end{lemma}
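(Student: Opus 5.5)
The plan is to verify the identity $[w]_{\mathcal{A}_{p(\cdot),\infty}} = [w^r]_{\mathcal{A}_{\frac{p(\cdot)}{r},\infty}}^{\frac1r}$ cube by cube; the equivalence $w\in \mathcal{A}_{p(\cdot),\infty}\iff w^r \in \mathcal{A}_{\frac{p(\cdot)}{r},\infty}$ then follows at once, since one quantity is finite exactly when the other is. Fix $r\in(0,\infty)$ and a cube $Q$. I would compare the three factors in the definition \eqref{def Apinfty} for the pair $(w,p(\cdot))$ with those for the pair $(w^r,\frac{p(\cdot)}{r})$.

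The key tool is the convexification identity $\|f\|_{L^{rq(\cdot)}}=\||f|^r\|_{L^{q(\cdot)}}^{\frac1r}$ from Lemma \ref{con f}, applied with $q(\cdot):=\frac{p(\cdot)}{r}$. This gives $\|w\mathbf{1}_Q\|_{L^{p(\cdot)}}=\|w^r\mathbf{1}_Q\|^{\frac1r}_{L^{\frac{p(\cdot)}{r}}}$ and, with $f=\mathbf{1}_Q$, also $\|\mathbf{1}_Q\|_{L^{p(\cdot)}}=\|\mathbf{1}_Q\|^{\frac1r}_{L^{\frac{p(\cdot)}{r}}}$. For the geometric-mean factor, linearity of the integral gives
$$\exp\left(\fint_Q\log (w^{-r}(y))\,dy\right)=\left[\exp\left(\fint_Q\log (w^{-1}(y))\,dy\right)\right]^r.$$
Multiplying the three factors, the quantity inside the supremum for $(w^r,\frac{p(\cdot)}{r})$ is exactly the $r$-th power of the corresponding quantity for $(w,p(\cdot))$. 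Since $t\mapsto t^{\frac1r}$ is increasing on $[0,\infty]$, it commutes with the supremum over all cubes $Q$, which yields the stated identity.

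The only step needing care is Lemma \ref{con f}, which is stated under $p_+<\infty$, whereas here $p(\cdot)\in\mathcal{P}_0$ may take the value $\infty$. I would handle this by checking the identity directly from the definition of the Luxemburg norm. On the set where $p<\infty$, one has $|f/\lambda|^{p(x)}=(|f|^r/\lambda^r)^{p(x)/r}$, so the modulars match under the substitution $\lambda\mapsto\lambda^r$. On the set where $p=\infty$, the corresponding $L^\infty$-type constraint $|f|\le\lambda$ is equivalent to $|f|^r\le\lambda^r$. Taking infima therefore gives the identity in general; if the paper's convention only allows finite exponents, Lemma \ref{con f} applies directly. A secondary point is the degenerate cases of the form $0\cdot\infty$, which I would handle by interpreting both sides under the same convention. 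With these points settled, the proof is a direct computation.
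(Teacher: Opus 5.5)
Your proposal is correct and is essentially the paper's proof: apply the convexification identity of Lemma \ref{con f} to $w\mathbf{1}_Q$ and to $\mathbf{1}_Q$, observe that the geometric-mean factor becomes its $r$-th power, and pass the monotone map $t\mapsto t^{\frac1r}$ through the supremum over cubes. Your direct check of $\|f\|_{L^{p(\cdot)}}=\||f|^r\|^{\frac1r}_{L^{\frac{p(\cdot)}{r}}}$ from the modular removes the hypothesis $p_+<\infty$ of Lemma \ref{con f}, which the paper's proof does not address. That check suffices because the paper defines $L^{p(\cdot)}$ only for finite-valued exponents.
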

\begin{proof}
Notice that,
by Lemma \ref{con f}, for  any cube $Q$ in $\mathbb{R}^n$,
\begin{align*}
&\frac{1}{\|\mathbf{1}_Q\|_{L^{p(\cdot)}}} \left\|w\mathbf{1}_Q\right\|_{L^{p(\cdot)}}
\exp\left( \fint_Q \log\left( w^{-1}(y) \right)\,dy \right)\\
&\quad = \left\{\frac{1}{\|\mathbf{1}_Q\|_{L^{\frac{p(\cdot)}{r}}}} \left\|w^r\mathbf{1}_Q\right\|_{L^{\frac{p(\cdot)}{r}}}
\exp\left( \fint_Q \log\left( w^{-r}(y) \right)\,dy \right)\right\}^\frac1r.
\end{align*}
This, combined with the definition of $ \mathcal{A}_{p(\cdot),\infty} $,
further implies that $w \in \mathcal{A}_{p(\cdot),\infty}$
if and only if $w^r \in\mathcal{A}_{\frac{p(\cdot)}{r},\infty} $
and $ [w]_{\mathcal{A}_{p(\cdot),\infty}} = [w^r]_{\mathcal{A}_{\frac{p(\cdot)}{r},\infty}}^{\frac1r}$.
This finishes the proof of Lemma \ref{wr}.
\end{proof}

Finally, we  prove Theorem \ref{Apinfty A}.

\begin{proof}[Proof of Theorem \ref{Apinfty A}]
We begin with assuming $p(\cdot) \in \mathcal{P}$.
It follows from Propositions \ref{reverse Holder w1} and \ref{Apinfty Ainfty}
that $ w\in\mathcal{A}_{p(\cdot),\infty} $ if and only if $w^{p(\cdot)} \in A_\infty$.

Next, we discuss about the case $p(\cdot)\in \mathcal{P}_0$.
Using Lemma \ref{wr},
we obtain $w \in \mathcal{A}_{p(\cdot),\infty}$ if and only if $w^{p_-} \in \mathcal{A}_{\frac{p(\cdot)}{p_-},\infty}$.
By the assumption that $p(\cdot) \in LH$,
we have $\frac{p(\cdot)}{p_-} \in \mathcal{P}$ and $\frac{p(\cdot)}{p_-} \in LH$.
From this and the previous discussion,
we deduce that $w^{p_-} \in \mathcal{A}_{\frac{p(\cdot)}{p_-},\infty}$
if and only if $\mathbb{W} := (w^{p_-})^{\frac{p(\cdot)}{p_-}} \in A_\infty $.
Hence, by this, we conclude that
$w \in \mathcal{A}_{p(\cdot),\infty}$ if and only if $\mathbb{W} \in A_\infty$.
This finishes the proof of Theorem \ref{Apinfty A}.
\end{proof}

\subsection{The Minimal Operator}\label{sec Apinfty 1}
In this subsection, as an application of Theorem \ref{Apinfty A},
we characterize the $\mathcal{A}_{p(\cdot),\infty}$ weights by the minimal operator.
Recall that the \emph{minimal operator $\mathfrak{m}$} was introduced by Cruz-Uribe and Neugebauer in \cite{cn95}
to  characterize the reverse H\"older class, which is defined by setting,
for any $f\in \mathscr{M}$ and $x \in \mathbb{R}^n$,
\begin{align}\label{def minimal}
\mathfrak{m}(f)(x) := \inf_{Q \ni x } \fint_Q \left|f(x)\right|\,dx,
\end{align}
where the infimum is taken over all cubes $Q$ containing $x$ in $\mathbb{R}^n$.

We now recall the concept  of $A_p$  weights
(see \cite{m72}).

\begin{definition}
Let $p\in [1,\infty)$.
A scalar weight $w$ on $\mathbb{R}^n$ is called an \emph{$A_p$-weight} if,
when $p\in (1,\infty)$,
\begin{align*}
[w]_{A_p} := \sup_{Q} \left[ \fint_Q w(x)\,dx \right]
\left\{ \fint_{Q} [w(x)]^{\frac{1}{1-p}}\,dx \right\}^{p-1}<\infty
\end{align*}
and, when $p = 1$,
\begin{align*}
[w]_{A_1} := \sup_{Q} \frac{1}{|Q|}\int_Q w(x)\,dx
\left[ \left\|w^{-1} \right\|_{L^\infty(Q)} \right]<\infty,
\end{align*}
where the suprema are taken over all cubes $Q$ in $\mathbb{R}^n$.
\end{definition}
\begin{remark}\label{rem Ap 1}
It is well known that $A_\infty = \bigcup_{p\in [1,\infty)}A_p$
and $A_p \subsetneqq A_\infty$ for any $p \in [1,\infty)$;
see \cite{cf74}.
\end{remark}

Next, we recall the concept of $\mathcal{A}_{p(\cdot)}$ weights
 introduced by Cruz-Uribe et al. \cite{cdh11}.

\begin{definition}\label{def Ap 1}
Let $p(\cdot) \in \mathcal{P}$. Then the \emph{weight class $\mathcal{A}_{p(\cdot)}$}
is defined to be the set of all scalar weights $w$ on $\mathbb{R}^n$
satisfying
\begin{align*}
[w]_{\mathcal{A}_{p(\cdot)}} := \sup_Q \frac{1}{|Q|} \left\|w\mathbf{1}_Q\right\|_{L^{p(\cdot)}}
\left\| w^{-1}\mathbf{1}_Q \right\|_{L^{p'(\cdot)}}< \infty,
\end{align*}
where the supremum is taken over all cubes $Q$ in $\mathbb{R}^n$.
\end{definition}

To give an equivalent description of $\mathcal{A}_{p(\cdot)}$ weights,
we recall the following concept (see \cite{cdh11}).
\begin{definition}
Let $p(\cdot) \in \mathcal{P}$.
Then \emph{weight class $A_{p(\cdot)}^{\dagger}$} is defined to be the set of all
scalar weights $w$ on $\mathbb{R}^n$ satisfying
\begin{align*}
[w]_{A_{p(\cdot)}^{\dagger}} := \sup_{Q} |Q|^{p_Q} \left\|w\mathbf{1}_Q\right\|_{L^1} \left\|w^{-1}\mathbf{1}_Q\right\|_{L^{\frac{p'(\cdot)}{p(\cdot)}}} < \infty,
\end{align*}
where the supremum is taken over all cubes $Q$ in $\mathbb{R}^n$.
\end{definition}

We have the following basic properties of $A^\dagger_{p(\cdot)}$,
where Lemma \ref{rem Ap}{\rm (i)} is precisely \cite[Lemma 17]{wgx24} and
Lemma \ref{rem Ap}{\rm (ii)} is a direct consequence of \cite[Theorems 1.1 and 1.3]{cdh11}
(see also \cite[p.\,187]{cf13}).

\begin{lemma}\label{rem Ap}
If $p(\cdot) \in \mathcal{P}$ with $p(\cdot) \in LH$ and $p_- > 1$,
then
\begin{itemize}
\item[{\rm (i)}]
$ A_{p_-} \subset A_{p(\cdot)}^{\dagger} \subset A_{p_+} $,
\item[{\rm (ii)}]
for any weight $w$,
$w \in \mathcal{A}_{p(\cdot)}$ if and only if $w^{p(\cdot)} \in A^{\dagger}_{p(\cdot)}$.
\end{itemize}
\end{lemma}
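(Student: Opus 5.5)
The lemma has two parts, and the paper cites both to the literature. The plan is to give a short self-contained justification of each from classical facts.

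For (i), first note that $|Q|^{p_Q}$ in the definition of $A^\dagger_{p(\cdot)}$ should be read as the normalization $|Q|^{-p_Q}$. With that reading, the quantity is comparable to
\[
\fint_Q w \cdot \Bigl(|Q|^{-1/(p/p')_Q}\,\|w^{-1}\mathbf{1}_Q\|_{L^{p'(\cdot)/p(\cdot)}}\Bigr),
\]
up to constants from Lemma \ref{est Q} applied to the exponent $\frac{p'(\cdot)}{p(\cdot)}=\frac{1}{p(\cdot)-1}$. This exponent is log-H\"older by Remark \ref{rem p}, since $p_->1$.

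For the inclusion $A_{p_-}\subset A^\dagger_{p(\cdot)}$, fix $w\in A_{p_-}$. Then $w^{-1/(p_--1)}$ is locally integrable and lies in $A_{p_-'}\subset A_\infty$, so it has the reverse H\"older and doubling properties of Lemmas \ref{reverse Holder} and \ref{lambdaQ Q}. The goal is to bound the variable Luxemburg average of $w^{-1}$ in exponent $\frac{1}{p(\cdot)-1}$ by the constant-exponent average $\bigl(\fint_Q w^{-1/(p_--1)}\bigr)^{p_--1}$. The tool is the modular comparison Lemma \ref{rs f 1}, combined with the Diening condition Lemma \ref{weight bound}(i) for small cubes and the $LH_\infty$ estimate (as in Lemma \ref{EB 2}) for large cubes. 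Here the pointwise exponent $\frac{1}{p(x)-1}$ is at most $\frac{1}{p_--1}$, and on a cube it is close to its average, so H\"older's inequality reduces the variable average to the $p_-$ one.

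For the inclusion $A^\dagger_{p(\cdot)}\subset A_{p_+}$, run the reverse argument. The exponent satisfies $\frac{1}{p(x)-1}\ge \frac{1}{p_+-1}$, so Lemma \ref{est fQ} (a variable-exponent H\"older inequality) bounds $\bigl(\fint_Q w^{-1/(p_+-1)}\bigr)^{p_+-1}$ by the normalized variable norm. This gives $[w]_{A_{p_+}}\lesssim [w]_{A^\dagger_{p(\cdot)}}$.

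For (ii), write $\mathbb{W}=w^{p(\cdot)}$. Then $\|w\mathbf{1}_Q\|_{L^{p(\cdot)}}$ and $\mathbb{W}(Q)^{1/p_Q}$ should be comparable, as in Lemma \ref{ww BB}. Likewise, $\|w^{-1}\mathbf{1}_Q\|_{L^{p'(\cdot)}}$ should be comparable to $\|\mathbb{W}^{-1}\mathbf{1}_Q\|_{L^{p'(\cdot)/p(\cdot)}}^{1/p'_Q}$, using the identity $w^{-p'}=(\mathbb{W}^{-1})^{p'/p}$ pointwise. Multiplying these comparisons and normalizing by $|Q|$ shows that $\frac{1}{|Q|}\|w\mathbf{1}_Q\|_{p(\cdot)}\|w^{-1}\mathbf{1}_Q\|_{p'(\cdot)}$ is comparable to a power $1/p_Q$ of the $A^\dagger$ quantity, up to multiplicative constants.

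The main obstacle is that these comparisons are not uniform: they depend on normalizations of the type $\widetilde{L_w}$ and on the doubling of $\mathbb{W}$. This is exactly the content of \cite[Theorems 1.1 and 1.3]{cdh11}, which proves both characterizations through the boundedness of the maximal operator. The plan is therefore to invoke that theorem rather than redo it, and to reduce (ii) to it by the exponent bookkeeping above.
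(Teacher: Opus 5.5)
Your proposal is correct in substance. For (ii) it ends where the paper does: the paper gives no argument of its own, citing \cite[Lemma 17]{wgx24} for (i) and \cite[Theorems 1.1 and 1.3]{cdh11} for (ii).

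The real difference is in (i), where you give a direct proof instead of a citation, and that proof works. Write $q(\cdot):=\frac{p'(\cdot)}{p(\cdot)}=\frac{1}{p(\cdot)-1}$, so that $q_+=\frac{1}{p_--1}$ and $q_-=\frac{1}{p_+-1}$. Both inclusions then come from the two-sided cube embedding
\[
\Bigl(\fint_Q |f|^{q_-}\Bigr)^{1/q_-}\lesssim \frac{\|f\mathbf{1}_Q\|_{L^{q(\cdot)}}}{\|\mathbf{1}_Q\|_{L^{q(\cdot)}}}\lesssim \Bigl(\fint_Q |f|^{q_+}\Bigr)^{1/q_+}
\]
applied to $f=w^{-1}$. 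The left inequality is Lemma \ref{est fQ} after convexification by Lemma \ref{con f}. The right one is the generalized H\"older inequality, or your modular argument.

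To this you add two facts, with $\bar p_Q:=\fint_Q p(x)\,dx$:
\begin{itemize}
\item[(a)] $\|\mathbf{1}_Q\|_{L^{q(\cdot)}}\sim |Q|^{\bar p_Q-1}$, which is Lemma \ref{est Q};
\item[(b)] $|Q|^{\bar p_Q-p_Q}\sim 1$, which comes from the Diening condition on small cubes and from $LH_\infty$ on large cubes.
\end{itemize}
It is (b), not Lemma \ref{est Q}, that reconciles the normalization $|Q|^{-p_Q}$ (you were right to fix the sign typo) with $\|\mathbf{1}_Q\|_{L^{q(\cdot)}}^{-1}$.

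Some smaller points:
\begin{itemize}
\item The reverse H\"older and doubling properties of $w^{-1/(p_--1)}$ play no role and can be dropped.
\item $(p/p')_Q$ should read $(p'/p)_Q$.
\item Since $q(\cdot)<1$ somewhere when $p_+>2$, Lemmas \ref{est Q} and \ref{est fQ} apply only after convexification.
\end{itemize}
Your route gives an explicit proof with constants depending only on $n$, $p(\cdot)$ and the weight constant; the paper's buys brevity.

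For (ii), drop the ``exponent bookkeeping''. It is not what connects the statement to \cite{cdh11}, and as written it is wrong in two ways:
\begin{itemize}
\item The power is off. Since $w^{-p'(\cdot)}=(\mathbb{W}^{-1})^{p'(\cdot)/p(\cdot)}$, already for constant $p$ one has $\|w^{-1}\mathbf{1}_Q\|_{L^{p'}}=\|\mathbb{W}^{-1}\mathbf{1}_Q\|_{L^{p'/p}}^{1/p}$. The power is therefore $1/p_Q$, not $1/p'_Q$.
\item Comparisons of the type in Lemma \ref{ww BB} presuppose $\mathbb{W}\in A_\infty$ and carry weight-dependent constants.
\end{itemize}

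The actual bridge is an exact identity. Because
\[
\int_{\mathbb{R}^n}\left|\frac{f(x)}{\lambda}\right|^{p(x)}\mathbb{W}(x)\,dx=\rho_{L^{p(\cdot)}}\left(\frac{fw}{\lambda}\right),
\]
the space normed by the $\mathbb{W}$-weighted modular coincides isometrically with $L^{p(\cdot)}_w$ (compare Remark \ref{rem reverse Holder Apinfty}). By \cite{cdh11}, the Hardy--Littlewood maximal operator is bounded on this single space if and only if $\mathbb{W}\in A^\dagger_{p(\cdot)}$. By Lemma \ref{M Ap}, it is bounded there if and only if $w\in\mathcal{A}_{p(\cdot)}$. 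Comparing the two characterizations gives (ii) with no estimates at all.
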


On the relationships  among
$\mathcal{A}_{p(\cdot)}$, $\mathcal{A}_{p(\cdot),\infty}$, and $A_\infty$,
we have the following result.

\begin{proposition}\label{Ap Apinfty 2}
Let $p(\cdot) \in \mathcal{P}$ with $p(\cdot) \in LH$ and $p_- > 1$.
Then $\mathcal{A}_{p(\cdot)} \subsetneqq \mathcal{A}_{p(\cdot),\infty} \subsetneqq A_\infty$.
\end{proposition}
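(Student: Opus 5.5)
We prove the four assertions separately: the two inclusions and the two strictness statements. The inclusion $\mathcal{A}_{p(\cdot),\infty}\subset A_\infty$ is already Lemma \ref{Ap Apinfty 1}.

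\textbf{Inclusion $\mathcal{A}_{p(\cdot)}\subset\mathcal{A}_{p(\cdot),\infty}$.} We avoid $A^\dagger_{p(\cdot)}$ and bound the geometric mean directly. For a cube $Q$, Jensen's inequality and Lemma \ref{est fQ} applied with the exponent $p'(\cdot)$ (which is in $\mathcal{P}$ and $LH$ since $p_->1$) give
\begin{align*}
\exp\left(\fint_Q\log w^{-1}\right)\le \fint_Q w^{-1}\lesssim \frac{\|w^{-1}\mathbf{1}_Q\|_{L^{p'(\cdot)}}}{\|\mathbf{1}_Q\|_{L^{p'(\cdot)}}}.
\end{align*}
Lemma \ref{est Q} gives $\|\mathbf{1}_Q\|_{L^{p(\cdot)}}\|\mathbf{1}_Q\|_{L^{p'(\cdot)}}\sim|Q|^{1/p_Q+1/p'_Q}=|Q|$. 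Multiplying by $\|w\mathbf{1}_Q\|_{L^{p(\cdot)}}/\|\mathbf{1}_Q\|_{L^{p(\cdot)}}$ then yields
\begin{align*}
[w]_{\mathcal{A}_{p(\cdot),\infty}}\lesssim[w]_{\mathcal{A}_{p(\cdot)}}.
\end{align*}

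\textbf{Strictness $\mathcal{A}_{p(\cdot),\infty}\ne A_\infty$.} By Theorem \ref{Apinfty A}, it suffices to find $w\in A_\infty$ with $w^{p(\cdot)}\notin A_\infty$. Take $p$ constant equal to $2$ (allowed, since the statement is for a given $p(\cdot)$; if $p(\cdot)$ is fixed, see the last paragraph). Take $w(x)=\exp(\phi(x))$ with $\phi\in{\rm BMO}$ of small norm, so $w\in A_\infty$ by Lemma \ref{ww BMO} or by the John--Nirenberg inequality. We need $w^2=e^{2\phi}\notin A_\infty$. That fails for small-norm BMO, so we use the power-weight family instead: $w(x)=|x|^{a}$ is in $A_\infty$ iff $a>-n$. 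Choose $a\in(-n,-n/p_+)$, for instance close to $-n$. Then $w\in A_\infty$. Near the origin we have $p(x)\approx p(0)$ by local $LH$ continuity, so $w^{p(\cdot)}\approx|x|^{ap(0)}$ with $ap(0)\le -n$. Then $w^{p(\cdot)}$ is not locally integrable near $0$, hence not in $A_\infty$. Translating the singularity to a point $x_0$ where $p(x_0)\ge(p_++p_-)/2>1$ and choosing $a\in(-n,-n/p(x_0))$ handles a general variable $p(\cdot)$. The nonintegrability uses $p(x)\ge p(x_0)-\varepsilon$ on a small ball, with $a(p(x_0)-\varepsilon)\le-n$ after adjusting $a$.

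\textbf{Strictness $\mathcal{A}_{p(\cdot)}\ne\mathcal{A}_{p(\cdot),\infty}$.} This is the main obstacle. Using Theorem \ref{Apinfty A} and Lemma \ref{rem Ap}(ii), we need $\mathbb{W}\in A_\infty$ with $\mathbb{W}\notin A^\dagger_{p(\cdot)}$. By Lemma \ref{rem Ap}(i), it suffices that $\mathbb{W}\notin A_{p_+}$. Take $\mathbb{W}(x)=|x-x_0|^{b}$ with $b\ge n(p_+-1)$. This is in $A_\infty$ because $b>-n$. It is not in $A_{p_+}$, because $\mathbb{W}^{1/(1-p_+)}=|x-x_0|^{-b/(p_+-1)}$ is not locally integrable. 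Then $w=\mathbb{W}^{1/p(\cdot)}\in\mathcal{A}_{p(\cdot),\infty}$ but $w\notin\mathcal{A}_{p(\cdot)}$. The delicate point is that Lemma \ref{rem Ap}(i) requires the inclusion $A^\dagger_{p(\cdot)}\subset A_{p_+}$ globally. We use it as stated, which is exactly why the example only needs to fail $A_{p_+}$.
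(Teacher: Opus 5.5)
Your proof is correct and follows the paper's own route. You use the same Jensen plus Lemma \ref{est fQ} estimate, with $\|\mathbf{1}_Q\|_{L^{p(\cdot)}}\|\mathbf{1}_Q\|_{L^{p'(\cdot)}}\sim|Q|$, for $\mathcal{A}_{p(\cdot)}\subset\mathcal{A}_{p(\cdot),\infty}$. You use Theorem \ref{Apinfty A}, Lemma \ref{rem Ap} and a weight in $A_\infty\setminus A_{p_+}$ for the first strictness, and a power weight whose $p(\cdot)$-th power is not locally integrable for the second.

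Delete the BMO/$p\equiv 2$ detour, since the exponent is given, not chosen. Also delete the interval $(-n,-n/p_+)$: it does not force $ap(0)\le -n$ when $p(0)<p_+$. The paper's choice $a_0\in(-n,-n/p_-)$ gives $a_0p(x)\le a_0p_-<-n$ for every $x$, hence $|x|^{a_0p(x)}\ge|x|^{-n}$ for $0<|x|<1$, so your localization at $x_0$ is unnecessary.
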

\begin{proof}
By Lemma \ref{Ap Apinfty 1},
we obtain $\mathcal{A}_{p(\cdot),\infty} \subset A_\infty $.
Now, let $w \in \mathcal{A}_{p(\cdot)}$.
Then, using Jensen's inequality and Lemma \ref{est fQ},
we find that, for any cube $Q$ in $\mathbb{R}^n$,
\begin{align*}
&\frac{1}{\|\mathbf{1}_Q\|_{L^{p(\cdot)}}} \left\| w \mathbf{1}_Q \right\|_{L^{p(\cdot)}} \exp\left( \fint_Q \log\left( w^{-1}(x) \right)\,dx \right)\\
&\quad\leq \frac{1}{\|\mathbf{1}_Q\|_{L^{p(\cdot)}}} \left\| w \mathbf{1}_Q \right\|_{L^{p(\cdot)}}  \fint_Q  w^{-1}(x) \,dx
\lesssim \frac{1}{|Q|} \left\| w \mathbf{1}_Q \right\|_{L^{p(\cdot)}} \left\|w^{-1} \mathbf{1}_Q \right\|_{L^{p'(\cdot)}}
\leq [w]_{\mathcal{A}_{p(\cdot)}},
\end{align*}
which further implies that $w \in \mathcal{A}_{p(\cdot),\infty}$.
Thus, $\mathcal{A}_{p(\cdot)} \subset \mathcal{A}_{p(\cdot),\infty}$.

Next, we show that they are both proper contained.
We first prove $\mathcal{A}_{p(\cdot)} \subsetneqq \mathcal{A}_{p(\cdot),\infty}$.
From Remark \ref{rem Ap 1} and Lemma \ref{rem Ap}{\rm (i)},
we infer immediately that $A^{\dagger}_{p(\cdot)} \subsetneqq A_\infty$.
Using this, Theorem \ref{Apinfty A}, and Lemma \ref{rem Ap}{\rm (ii)},
we conclude that $\mathcal{A}_{p(\cdot)} \subsetneqq \mathcal{A}_{p(\cdot),\infty}$.

Finally, we prove $ \mathcal{A}_{p(\cdot),\infty} \subsetneqq A_\infty $.
Indeed, from \cite[Example 7.1.7]{g14},
it follows that the power function $|x|^a$ belongs to $A_\infty$
if and only if $a \in (-n,\infty)$.
Let $a_0 \in (-n, -\frac{n}{p_-})$
and $w(x) := |x|^{a_0}$ for any $x\in\mathbb{R}^n$.
Then $w \in A_\infty$.
However, since $a_0 \in (-n,-\frac{n}{p_-})$,
we deduce that $[w(x)]^{p(x)} \geq |x|^{-n}$ for any $|x| \in (0,1)$
and hence $[w(\cdot)]^{p(\cdot)}$ is not integrable over the cube $Q(\mathbf{0},1)$.
This further implies $[w(\cdot)]^{p(\cdot)} \notin A_\infty$.
Thus, using this and Theorem \ref{Apinfty A},
we conclude that $w \notin \mathcal{A}_{p(\cdot),\infty}$
and hence $ \mathcal{A}_{p(\cdot),\infty} \subsetneqq A_\infty $.
This finishes the proof of Proposition \ref{Ap Apinfty 2}.
\end{proof}

Now, we recall the concept of weighted variable Lebesgue spaces (see \cite{cdh11}).
\begin{definition}\label{def weight Leb}
Let $p(\cdot)\in \mathcal{P}_0$ and $w$ be any scalar weight on $\mathbb{R}^n$.
Then the \emph{weighted variable Lebesgue space $L^{p(\cdot)}_w$} is defined
to be the set of all $f\in \mathscr{M}$ such that
$\|f\|_{L^{p(\cdot)}_w} :=\|fw\|_{L^{p(\cdot)}} < \infty.$
\end{definition}

The main result of this subsection  is the following characterization of $\mathcal{A}_{p(\cdot),\infty}$ weights
in terms of the minimal operator.

\begin{theorem}\label{inf Apinfty}
Let $p(\cdot) \in \mathcal{P}$ with $p(\cdot) \in LH$ and $p_-> 1$.
Then, for any scalar weight $w$,
$ w \in \mathcal{A}_{p(\cdot),\infty} $ if and only if, for any $f\in \mathscr{M}$
satisfying $f^{-1} \in L^{p(\cdot)}_w$,
\begin{align}\label{eq inf Apinfty}
\left\| \left[\mathfrak{m}(f)\right]^{-1} \right\|_{L^{p(\cdot)}_w} \leq C \left\| f^{-1} \right\|_{L^{p(\cdot)}_w},
\end{align}
where $C$ is a positive constant independent of $f$.
\end{theorem}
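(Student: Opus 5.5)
The plan is to prove the two implications separately. Sufficiency reduces to Theorem \ref{w reverse} by a test-function argument. Necessity goes through Theorem \ref{Apinfty A} and the boundedness of the Hardy--Littlewood maximal operator on weighted variable Lebesgue spaces.

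\emph{Sufficiency.} Assume \eqref{eq inf Apinfty} holds. Since $p_->1$, Theorem \ref{w reverse} shows it suffices to prove $w\in\mathcal{A}^\ast_{p(\cdot),\infty}$.
\begin{itemize}
\item Fix a cube $Q$ with $\fint_Q w<\infty$; if the average is infinite there is nothing to prove.
\item For $N\in\mathbb{N}$ take the test function $f_N:=w\mathbf{1}_Q+N\mathbf{1}_{\mathbb{R}^n\setminus Q}$. Then $f_N^{-1}w=\mathbf{1}_Q+N^{-1}w\mathbf{1}_{\mathbb{R}^n\setminus Q}$.
\item If $f_N^{-1}\notin L^{p(\cdot)}_w$, replace $N\mathbf{1}_{\mathbb{R}^n\setminus Q}$ by $N\max\{w,1\}\mathbf{1}_{\mathbb{R}^n\setminus Q}$. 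Then $\|f_N^{-1}\|_{L^{p(\cdot)}_w}\le\|\mathbf{1}_Q\|_{L^{p(\cdot)}}+N^{-1}\|\mathbf{1}_{\mathbb{R}^n\setminus Q}\|_{L^{p(\cdot)}}$, and the second norm is finite for bounded exponents (if not, truncate to a large ball).
\item For $x\in Q$, using $Q$ itself in the infimum gives $\mathfrak{m}(f_N)(x)\le\fint_Q w$. Hence $[\mathfrak{m}(f_N)]^{-1}\ge(\fint_Q w)^{-1}\mathbf{1}_Q$.
\item Plugging into \eqref{eq inf Apinfty} and letting $N\to\infty$ gives $\|w\mathbf{1}_Q\|_{L^{p(\cdot)}}\le C\|\mathbf{1}_Q\|_{L^{p(\cdot)}}\fint_Q w$, which is exactly the $\mathcal{A}^\ast_{p(\cdot),\infty}$ condition.
\end{itemize}

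\emph{Necessity.} Assume $w\in\mathcal{A}_{p(\cdot),\infty}$.
\begin{itemize}
\item By Jensen's inequality for $t\mapsto 1/t$ and then for $\log$, for any cube $Q\ni x$ we have
\[
\Big(\fint_Q|f|\Big)^{-1}\le \exp\Big(\fint_Q\log |f|^{-1}\Big)\le\Big(\fint_Q |f|^{-r}\Big)^{1/r},\qquad r\in(0,\infty).
\]
Hence $[\mathfrak{m}(f)]^{-1}\le [M(|f|^{-r})]^{1/r}$ pointwise, with $M$ the Hardy--Littlewood maximal operator.
\item Set $q(\cdot):=p(\cdot)/r$ and $v:=w^r$. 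By Lemma \ref{con f},
\[
\|[\mathfrak{m}(f)]^{-1}\|_{L^{p(\cdot)}_w}\le\|M(f^{-r})\,v\|_{L^{q(\cdot)}}^{1/r}.
\]
\item It therefore suffices to show that $M$ is bounded on $L^{q(\cdot)}_v$ for some $r\in(0,p_-)$. By the result of Cruz-Uribe et al.\ \cite{cdh11}, this holds when $q(\cdot)\in LH$, $q_->1$ and $v\in\mathcal{A}_{q(\cdot)}$. Granting it, we get $\|M(f^{-r})v\|_{L^{q(\cdot)}}^{1/r}\lesssim\|f^{-r}v\|^{1/r}_{L^{q(\cdot)}}=\|f^{-1}\|_{L^{p(\cdot)}_w}$, again by Lemma \ref{con f}.
\item To get $v\in\mathcal{A}_{q(\cdot)}$: Theorem \ref{Apinfty A} gives $\mathbb{W}:=w^{p(\cdot)}\in A_\infty$. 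Remark \ref{rem Ap 1} then gives $\mathbb{W}\in A_s$ for some $s\in(1,\infty)$.
\item Choose $r\in(0,p_-)$ so small that $q_-=p_-/r\ge s$. Then $\mathbb{W}\in A_s\subset A_{q_-}$.
\item Lemma \ref{rem Ap}(i), applied to $q(\cdot)$ (which is $LH$ with $q_->1$), gives $\mathbb{W}\in A^\dagger_{q(\cdot)}$.
\item Since $v^{q(\cdot)}=w^{p(\cdot)}=\mathbb{W}$, Lemma \ref{rem Ap}(ii) yields $v\in\mathcal{A}_{q(\cdot)}$, as required.
\end{itemize}

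The main obstacle is this last chain, namely choosing $r$ so that $A_\infty$ membership of $w^{p(\cdot)}$ upgrades to $\mathcal{A}_{p(\cdot)/r}$ membership of $w^r$. It needs the $A_\infty=\bigcup_s A_s$ fact and the inclusion $A_{q_-}\subset A^\dagger_{q(\cdot)}$, both of which are available above. A secondary technical point is the admissibility of the test functions in the sufficiency part, which the truncation handles.
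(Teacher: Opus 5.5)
Your argument is the paper's in both directions. For necessity, the paper also obtains $w^{p(\cdot)}\in A_\infty$ from Theorem \ref{Apinfty A}. It then picks $r_0\ge1$ with $w^{p(\cdot)}\in A_{r_0p_-}$, deduces $w^{1/r_0}\in\mathcal{A}_{r_0p(\cdot)}$ via Lemma \ref{rem Ap}, and combines $[\mathfrak{m}(f)]^{-1}\le[\mathcal{M}(|f|^{-1/r_0})]^{r_0}$ with Lemmas \ref{con f} and \ref{M Ap}. This is exactly your chain with $r=1/r_0$. For sufficiency, the paper uses the same idea: a test function equal to $w$ on $Q$ gives the $\mathcal{A}^\ast_{p(\cdot),\infty}$ bound, and Theorem \ref{w reverse} finishes.

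One step fails as written: the admissibility of your test functions.

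\begin{itemize}
\item You claim $\|\mathbf{1}_{\mathbb{R}^n\setminus Q}\|_{L^{p(\cdot)}}<\infty$ for bounded exponents, but this norm is always infinite. For every $\lambda>0$, the modular of $\lambda^{-1}\mathbf{1}_{\mathbb{R}^n\setminus Q}$ is at least $\min\{\lambda^{-p_-},\lambda^{-p_+}\}\,|\mathbb{R}^n\setminus Q|=\infty$.
\item Consequently neither $w\mathbf{1}_Q+N\mathbf{1}_{\mathbb{R}^n\setminus Q}$ nor $w\mathbf{1}_Q+N\max\{w,1\}\mathbf{1}_{\mathbb{R}^n\setminus Q}$ satisfies $f_N^{-1}\in L^{p(\cdot)}_w$ in general. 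Take $w\equiv1$: then $w/f_N=N^{-1}$ off $Q$.
\item Truncating to a large ball does not help. $f_N$ must still be defined outside the ball, with $w/f_N\in L^{p(\cdot)}$ there.
\end{itemize}

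The repair is easy. Fix a positive $h\in L^{p(\cdot)}$, for instance $h(x):=(1+|x|)^{-K}$ with $Kp_->n$. Set $f_N:=w\mathbf{1}_Q+Nwh^{-1}\mathbf{1}_{\mathbb{R}^n\setminus Q}$. Then $f_N^{-1}w=\mathbf{1}_Q+N^{-1}h\mathbf{1}_{\mathbb{R}^n\setminus Q}$, so $\|f_N^{-1}\|_{L^{p(\cdot)}_w}\le\|\mathbf{1}_Q\|_{L^{p(\cdot)}}+N^{-1}\|h\|_{L^{p(\cdot)}}$. The bound $\mathfrak{m}(f_N)\le\fint_Q w$ on $Q$ still holds, since only the cube $Q$ itself is used. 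Letting $N\to\infty$ then finishes as you intended.

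The paper avoids the limit altogether by taking $f:=w/\mathbf{1}_Q$, that is, $f=+\infty$ off $Q$, so that $f^{-1}w=\mathbf{1}_Q$ exactly. Your instinct to keep $f$ finite-valued is the more careful one; it just needs a decaying $h$ rather than a constant.
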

\begin{remark}
We note that, when $p(\cdot)$ is a constant exponent,
the necessary of Theorem \ref{inf Apinfty} reduces to \cite[Theorem 3.1]{cn95}.
Moreover, in this case, since \cite[Theorme 3.1]{cn95} only shows $w \in A_\infty$
and $\mathcal{A}_{p,\infty} \subsetneqq A_\infty$,
it follows that Theorem \ref{inf Apinfty} is a little bit
stronger than \cite[Theorem 3.1]{cn95}.
\end{remark}

To prove Theorem \ref{inf Apinfty},
we   recall that the \emph{Hardy--Littlewood maximal operator $\mathcal{M}$}
is defined by setting, for any $f\in \mathscr{M}$ and $x \in \mathbb{R}^n$,
\begin{align*}
\mathcal{M}(f)(x) := \sup_{Q \ni x } \fint_Q \left|f(x)\right|\,dx,
\end{align*}
where the supremum is taken over all cubes $Q$ containing $x$.
The following result is the boundedness of the Hardy--Littlewood maximal operator on
weighted variable Lebesgue spaces (see \cite[Theorem 1.3]{cdh11}).
\begin{lemma}\label{M Ap}
Let $p(\cdot)\in \mathcal{P}$ with $p(\cdot) \in LH$ and $p_- > 1$.
Then, for any weight $w$,
$w \in \mathcal{A}_{p(\cdot)}$ if and only if the Hardy--Littlewood maximal operator $\mathcal{M}$
is bounded on $L^{p(\cdot)}_w$.
\end{lemma}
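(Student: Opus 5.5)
Since this is \cite[Theorem 1.3]{cdh11}, in the paper it may simply be cited. If I were to prove it, I would treat the two directions separately.

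\emph{Necessity} (boundedness of $\mathcal{M}$ on $L^{p(\cdot)}_w$ implies $w\in\mathcal{A}_{p(\cdot)}$) is a testing argument. Fix a cube $Q$ and a nonnegative $h$ supported in $Q$. For every $x\in Q$ we have $\mathcal{M}(h)(x)\ge \fint_Q h$, so boundedness gives
$$\left(\fint_Q h\right)\|w\mathbf{1}_Q\|_{L^{p(\cdot)}}\le \|\mathcal{M}(h)\,w\mathbf{1}_Q\|_{L^{p(\cdot)}}\lesssim \|hw\|_{L^{p(\cdot)}}.$$
Write $h=gw^{-1}$ and take the supremum over all $g\ge 0$ supported in $Q$ with $\|g\|_{L^{p(\cdot)}}\le 1$. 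By the norm-conjugate formula (the associate-space dual of H\"older's inequality, Lemma \ref{Holder}), $\sup_g\int_Q g\,w^{-1}\sim\|w^{-1}\mathbf{1}_Q\|_{L^{p'(\cdot)}}$. This yields $\frac{1}{|Q|}\|w\mathbf{1}_Q\|_{L^{p(\cdot)}}\|w^{-1}\mathbf{1}_Q\|_{L^{p'(\cdot)}}\lesssim 1$ uniformly in $Q$. One technical point: we do not know a priori that $w^{-1}\mathbf{1}_Q\in L^{p'(\cdot)}$. To handle this, first truncate, replacing $w^{-1}$ by $\min\{w^{-1},N\}$, and then let $N\to\infty$ by monotone convergence.

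\emph{Sufficiency} is the hard direction. First reduce to dyadic maximal operators $\mathcal{M}^{\mathcal{D}}$ over finitely many shifted dyadic lattices (the three-lattice theorem), and normalize $\|fw\|_{L^{p(\cdot)}}=1$. By Lemma \ref{rhof 1} it suffices to show that the modular $\int (\mathcal{M}^{\mathcal{D}}f)^{p(x)}w^{p(x)}\,dx$ is bounded by a constant. Split $f=f\mathbf{1}_{\{|f|>1\}}+f\mathbf{1}_{\{|f|\le1\}}$.

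For the small part, use Lemma \ref{rs f 1} (the $LH_\infty$ comparison) to replace $p(x)$ by $p_\infty$, paying only an error term $\int (e+|x|)^{-ntp_-}w^{p(x)}\,dx$. That error is finite because, by Lemma \ref{rem Ap}(ii) and (i), $w^{p(\cdot)}\in A^\dagger_{p(\cdot)}\subset A_{p_+}$, so $w^{p(\cdot)}$ is doubling with controlled growth. The constant-exponent weighted bound for $\mathcal{M}$ then applies.

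For the large part, perform a Calder\'on--Zygmund decomposition at heights $a^k$ into disjoint maximal dyadic cubes $Q_j^k$, which form a sparse family. On $Q_j^k$ we estimate $(\fint_{Q_j^k}|f|)^{p(x)}$. The key pointwise ingredient is a weighted Diening-type estimate, the analogue of Lemma \ref{weight bound}(i) and of Lemma \ref{Bp bound} for $\mathcal{A}_{p(\cdot)}$:
$$\Big(\fint_Q |f|\Big)^{p(x)} \lesssim \Big(\fint_Q |f|^{p(\cdot)/p_-(Q)}\Big)^{p_-(Q)}\quad\text{for } x\in Q,$$
valid when $\|fw\|\le1$. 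It is derived from the $\mathcal{A}_{p(\cdot)}$ condition together with local log-H\"older continuity, in the spirit of \cite[Lemma 3.3]{cfn12}. Summing over the sparse family then reduces the estimate to a weighted maximal inequality with an exponent strictly below $p(\cdot)$.

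\emph{Main obstacle.} The step I expect to be hardest is this final reduction, which requires a self-improvement (left-openness) property: $w\in\mathcal{A}_{p(\cdot)}$ should imply $w\in\mathcal{A}_{p(\cdot)/s}$ for some $s>1$ (equivalently, $w^{p(\cdot)}\in A^\dagger_{p(\cdot)/s}$). My first attempt would be to obtain it from the reverse H\"older inequality for $w^{-p'(\cdot)}$, which is an $A_\infty$ weight by Lemma \ref{rem Ap}(i) and Lemma \ref{reverse Holder}. The difficulty is transferring that gain through variable-exponent norms while keeping track of the normalizations $\|w\mathbf{1}_{Q_0}\|$, handled as in Lemmas \ref{EB 2} and \ref{ww BB}.
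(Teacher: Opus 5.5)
The paper gives no argument for this lemma; it is quoted from \cite[Theorem 1.3]{cdh11}, so your first sentence is exactly the paper's route. Your testing argument for necessity (truncate $w^{-1}$ to $\min\{w^{-1},N\}$, then use the norm-conjugate formula) is a correct proof of that direction. Your sufficiency sketch, however, contains a false step.

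\textbf{The small part.} You replace $p(x)$ by $p_\infty$ via Lemma \ref{rs f 1} and then apply ``the constant-exponent weighted bound for $\mathcal{M}$''. That bound is boundedness of $\mathcal{M}$ on $L^{p_\infty}(\mathbb{W}(x)\,dx)$ with $\mathbb{W}:=w^{p(\cdot)}$, so it requires $\mathbb{W}\in A_{p_\infty}$. This does not follow from $w\in\mathcal{A}_{p(\cdot)}$; Lemma \ref{rem Ap} only gives $\mathbb{W}\in A_{p_+}$.

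For a counterexample on $\mathbb{R}$, take $p\equiv 3$ on $[-1,1]$, $p\equiv 2$ for $|x|\geq 2$, linear in between, and $w(x):=\min\{|x|^{1/2},1\}$.
\begin{itemize}
\item On intervals $Q\subset[-1,1]$ the $\mathcal{A}_{p(\cdot)}$ quantity equals $\bigl(\fint_Q |x|^{3/2}\,dx\bigr)^{1/3}\bigl(\fint_Q |x|^{-3/4}\,dx\bigr)^{2/3}$. This is the $A_3$ quantity of $|x|^{3/2}$, which is bounded.
\item Off $[-1,1]$ we have $w\equiv 1$, and $w^{-1}\mathbf{1}_{[-1,1]}\in L^{p'(\cdot)}$, so the remaining intervals are handled by Lemma \ref{est Q}. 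Hence $w\in\mathcal{A}_{p(\cdot)}$.
\item But $\mathbb{W}^{-1}=|x|^{-3/2}$ near $0$ is not locally integrable, so $\mathbb{W}\notin A_2=A_{p_\infty}$.
\end{itemize}
The global $LH_\infty$ trick works in the unweighted case only because $\mathcal{M}$ is bounded on $L^{p_\infty}$. With weights, the behaviour of $\mathbb{W}$ near $x_0$ is governed by $A_{p(x_0)}$, so the part of $f$ below height $1$ cannot be disposed of globally. It must go through the local, cube-by-cube weighted estimate as well.

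\textbf{The large part.} The step you single out yourself is only named, not proved. This is the self-improvement of $\mathcal{A}_{p(\cdot)}$ to a strictly smaller exponent, which is what lets the sparse sum be dominated by a $\mathbb{W}$-weighted dyadic maximal operator bounded on $L^s(\mathbb{W})$. So the large part is not closed either. Note also that the condition equivalent to $w^{p(\cdot)}\in A^{\dagger}_{p(\cdot)/s}$ is $w^{s}\in\mathcal{A}_{p(\cdot)/s}$, not $w\in\mathcal{A}_{p(\cdot)/s}$.

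\textbf{Circularity.} Your route leans on Lemma \ref{rem Ap}(ii), both for the growth of $\mathbb{W}$ and, via duality, for $w^{-p'(\cdot)}\in A_\infty$. The paper obtains Lemma \ref{rem Ap}(ii) as a consequence of \cite[Theorems 1.1 and 1.3]{cdh11}, and Theorem 1.3 there is exactly the statement you are proving. A self-contained argument would have to derive these properties of $\mathbb{W}$ directly from the $\mathcal{A}_{p(\cdot)}$ condition. One starting point is $|E|/|Q|\lesssim\|w\mathbf{1}_E\|_{L^{p(\cdot)}}/\|w\mathbf{1}_Q\|_{L^{p(\cdot)}}$ for $E\subset Q$, combined with arguments like those in Lemma \ref{EB 2}.

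As it stands, your proposal proves necessity, while sufficiency rests entirely on the citation, just as in the paper.
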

Next, we begin to prove Theorem \ref{inf Apinfty}.
\begin{proof}[Proof of Theorem \ref{inf Apinfty}]
We first consider the necessity.
Let $w \in \mathcal{A}_{p(\cdot)},\infty$.
By Theorem \ref{Apinfty A}, we find that $\mathbb{W} := [w(\cdot)]^{p(\cdot)} \in A_\infty$.
Using this and Remark \ref{rem Ap 1},
we conclude that there exists $r_0 \in [1,\infty)$
such that $\mathbb{W} \in A_{r_0p_-}$.
From this and Lemma \ref{rem Ap},
we infer that $ \mathbb{W} \in A_{r_0p(\cdot)}^{\dagger} $ and hence
$ w^{\frac{1}{r_0}} = [\mathbb{W}(\cdot)]^{\frac{1}{r_0p(\cdot)}} \in \mathcal{A}_{r_0p(\cdot)} $.

Now, let $s := r_0 + 1$.
Then, by H\"older's inequality,
we find that, for any $f\in \mathscr{M}$ and any cube $Q$ in $\mathbb{R}^n$,
\begin{align*}
1 &= \fint_Q 1\,dy = \fint_{Q} |f(y)|^{\frac1s} |f(y)|^{-\frac1s}\,dy
 \leq \left[\fint_{Q} |f(y)|\,dy\right]^\frac{1}{s} \left[ \fint_Q |f(y)|^{-\frac{s'}{s}}\,dy\right]^\frac{1}{s'}\\
& = \left[\fint_{Q} |f(y)|\,dy\right]^\frac{1}{s} \left[ \fint_Q |f(y)|^{-\frac{1}{r_0}}\,dy\right]^\frac{1}{s'},
\end{align*}
which, together with raising both sides to $s$ power, further implies that
\begin{align*}
 \left[\fint_{Q} |f(y)|\,dy\right]^{-1} \leq \left[ \fint_Q |f(y)|^{-\frac{1}{r_0}}\,dy\right]^{r_0}.
\end{align*}
Thus, for any $x\in \mathbb{R}^n$,
using this with taking the supremum over all cubes $Q$ containing $x$,
we conclude that
\begin{align*}
\frac{1}{\mathfrak{m}f(x)} = \sup_{Q} \left[\fint_{Q} |f(y)|\,dy\right]^{-1} \leq \left[ \mathcal{M}\left(|f|^{-\frac{1}{r_0}}\right)(x)\right]^{r_0}.
\end{align*}
From this and Lemmas \ref{con f} and \ref{M Ap},
it follows that
\begin{align*}
\left\| \frac{w}{\mathfrak{m}f} \right\|_{L^{p(\cdot)}}
&\leq \left\| \mathcal{M}\left(|f|^{-\frac{1}{r_0}}\right)^{r_0} w \right\|_{L^{p(\cdot)}}
 = \left\| \mathcal{M}\left(|f|^{-\frac{1}{r_0}}\right) w^{\frac{1}{r_0}} \right\|_{L^{\frac{p(\cdot)}{r_0}}}^{\frac{1}{r_0}}
 \lesssim \left\| |f|^{-\frac{1}{r_0}} w^{\frac{1}{r_0}} \right\|_{L^{\frac{p(\cdot)}{r_0}}}^{\frac{1}{r_0}}
 = \left\| \frac{w}{f} \right\|_{L^{p(\cdot)}}.
\end{align*}
This finishes the proof of the necessity.

Next, we prove the sufficiency.
Fix a cube $Q$ in $\mathbb{R}^n$ and let $f := \frac{w}{\mathbf{1}_Q}$.
Then $\frac{1}{f} \in L^{p(\cdot)}_w$.
Using this and the definition of the minimal operator,
we find that, for any $x \in Q$
\begin{align*}
\mathfrak{m}f(x) \leq \fint_Q w(x)\,dx.
\end{align*}
Combining this with \eqref{eq inf Apinfty},
we obtain
\begin{align*}
\left[ \fint_Q w(x)\,dx\right]^{-1} \left\| w \mathbf{1}_Q\right\|_{L^{p(\cdot)}}
\leq \left\| \frac{w}{\mathfrak{m}(f)} \right\|_{L^{p(\cdot)}} \lesssim \left\|\frac{w}{f}\right\|_{L^{p(\cdot)}}
 = \left\|\mathbf{1}_Q\right\|_{L^{p(\cdot)}},
\end{align*}
which further implies that
\begin{align*}
\frac{1}{\|\mathbf{1}_Q\|_{L^{p(\cdot)}}} \left\|w\mathbf{1}_Q\right\|_{L^{p(\cdot)}} \lesssim \fint_Q w(x)\,dx.
\end{align*}
From this and Theorem \ref{w reverse},
we deduce that $w \in \mathcal{A}_{p(\cdot),\infty}$.
This finishes the proof of the sufficiency
and hence Theorem \ref{inf Apinfty}.
\end{proof}

\subsection{Reverse H\"older's inequality for $\mathcal{A}_{p(\cdot),\infty}$}
\label{sec Apinfty}

In this subsection, we first establish the reverse H\"older's inequality
for $\mathcal{A}_{p(\cdot),\infty}$ weights in variable Lebesgue spaces (see Theorem \ref{reverse Holder Apinfty}),
and then show that the assertion that the reverse H\"older's inequality
for $w$ in variable Lebesgue spaces holds is equivalent to $w \in \mathcal{A}_{p(\cdot),\infty}$
(see Theorem \ref{Apinfty Holder}).

The reverse H\"older's inequality for $\mathcal{A}_{p(\cdot),\infty}$ weights
in variable Lebesgue spaces is as follows.

\begin{theorem}\label{reverse Holder Apinfty}
Let $p(\cdot)\in\mathcal{P}_0$ with $p(\cdot) \in LH$.
Then, for any $w\in \mathcal{A}_{p(\cdot),\infty}$,
there exist positive constants $C$, $C_1$, $C_2$,
$A$, and $A_1$,
depending only on $p(\cdot)$ and $n$,
such that, for any $r \in (1,r_w]$
with
$$ r_w := 1+ \frac{1}{C_1 [w]_{\mathcal{A}_{p(\cdot),\infty}}^{A_1} 2^{C_2 [w]_{\mathcal{A}_{p(\cdot),\infty}}}} $$
and for any cube $Q$ in $\mathbb{R}^n$,
\begin{align}\label{reverse Holder Apinfty 1}
\frac{1}{\|\mathbf{1}_{Q}\|_{L^{rp(\cdot)}}} \left\|w\mathbf{1}_Q\right\|_{L^{rp(\cdot)}} \leq C [w]_{\mathcal{A}_{p(\cdot),\infty}}^{A} \frac{1}{\|\mathbf{1}_{Q}\|_{L^{p(\cdot)}}} \left\|w\mathbf{1}_Q\right\|_{L^{p(\cdot)}}.
\end{align}
\end{theorem}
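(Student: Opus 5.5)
First reduce to the case $p(\cdot)\in\mathcal{P}$ via Lemma \ref{wr}. Replacing $w$ by $w^{p_-}$ and $p(\cdot)$ by $p(\cdot)/p_-$ changes neither side of \eqref{reverse Holder Apinfty 1}, by Lemma \ref{con f}. The weight constant only changes by the power $[w^{p_-}]_{\mathcal{A}_{p(\cdot)/p_-,\infty}}=[w]^{p_-}_{\mathcal{A}_{p(\cdot),\infty}}$, which the exponents $A,A_1,C_2$ can absorb. We also normalize so that $\|w\mathbf{1}_{Q_0}\|_{L^{p(\cdot)}}=1$. By \eqref{EB eq 13} this leaves $[w]_{\mathcal{A}_{p(\cdot),\infty}}$ unchanged, and both sides of \eqref{reverse Holder Apinfty 1} scale by the same factor. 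With this normalization $L_w=1$, so Proposition \ref{reverse Holder w1} and Lemmas \ref{Bp bound}, \ref{EB 2} and \ref{EB 3} hold with constants that are pure powers of $[w]_{\mathcal{A}_{p(\cdot),\infty}}$.

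The core step is a quantitative reverse Hölder inequality for $\mathbb{W}:=w^{p(\cdot)}$ whose constants depend only on $[w]_{\mathcal{A}_{p(\cdot),\infty}}$. Proposition \ref{reverse Holder w1} gives the $A_\infty$-type condition: $|E|>(1-\alpha)|Q|$ implies $\mathbb{W}(E)\ge \eta\,\mathbb{W}(Q)$. Here we fix $\alpha=2^{-\delta/(\delta-1)-1}$, so that $\alpha^{1-1/\delta}\le 1/4$, and $\eta=c[w]^{-A}_{\mathcal{A}_{p(\cdot),\infty}}$. Note that $\alpha\approx 2^{-c[w]}$ since $\delta-1\sim 1/[w]$. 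We then run the standard Calderón--Zygmund/Gehring argument (as in \cite[Theorem 7.3.3]{g14}). This produces $\varepsilon>0$ with
\[
\Big(\fint_Q \mathbb{W}^{1+\varepsilon}\Big)^{1/(1+\varepsilon)}\le C\fint_Q\mathbb{W},
\]
where $\varepsilon\gtrsim \alpha\,\eta\,/\log(\dots)$ is of the form $\big(C_1[w]^{A_1}2^{C_2[w]}\big)^{-1}$. This is exactly the shape of $r_w-1$, and it is why the exponential factor $2^{C_2[w]}$ appears. For $r\in(1,r_w]$, Hölder's inequality then gives the same bound with $1+\varepsilon$ replaced by $r$.

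Next we transfer this to the norms. For the left side of \eqref{reverse Holder Apinfty 1}, note that $\|w\mathbf{1}_Q\|_{L^{rp(\cdot)}}=\|\mathbb{W}^{r/p(\cdot)}\mathbf{1}_Q\|^{1/r}_{L^{p(\cdot)}}$. We have $\mathbb{W}^r\in A_\infty$ with $[\mathbb{W}^r]_{A_\infty}\lesssim$ a power of $[w]$, using the reverse Hölder bound above together with the bound on $[\mathbb{W}]_{A_\infty}$ extracted from the proof of Proposition \ref{reverse Holder w1}. So we apply Lemma \ref{ww BB} to $\mathbb{W}^r$ with exponent $p(\cdot)$. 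This gives $\|w\mathbf{1}_Q\|_{L^{rp(\cdot)}}\lesssim [w]^{A}\widetilde{L}^{1/r}\,\mathbb{W}^r(Q)^{1/(rp_Q)}$. Lemma \ref{ww BB} applied to $\mathbb{W}$ itself gives $\|w\mathbf{1}_Q\|_{L^{p(\cdot)}}\gtrsim [w]^{-A}\mathbb{W}(Q)^{1/p_Q}$. By Lemma \ref{est Q}, $\|\mathbf{1}_Q\|_{L^{rp(\cdot)}}\sim|Q|^{1/(rp_Q)}$. The inequality then reduces to
\[
\Big(\fint_Q\mathbb{W}^r\Big)^{1/(rp_Q)}\lesssim\Big(\fint_Q\mathbb{W}\Big)^{1/p_Q},
\]
which is the reverse Hölder inequality from the previous step raised to the power $1/p_Q$, with $p_Q\ge1$.

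The main obstacle is the factors $\widetilde{L_w}$ built from $\mathbb{W}^r(Q_0)$ and $\mathbb{W}(Q_0)$. These must be controlled purely by powers of $[w]$. The plan is to use the normalization $\|w\mathbf{1}_{Q_0}\|_{L^{p(\cdot)}}=1$ with Lemma \ref{rhof 1}, which gives $\mathbb{W}(Q_0)=1$. The reverse Hölder inequality on $Q_0$ then gives $1\lesssim\mathbb{W}^r(Q_0)\lesssim 1$, with the lower bound coming from Jensen. If $[\mathbb{W}^r]_{A_\infty}$ turns out to be awkward to bound, the fallback is to redo Lemma \ref{ww BB}'s argument directly for the exponent $rp(\cdot)$. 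In that argument, Lemma \ref{weight bound} and the log-Hölder constants of $rp(\cdot)$ are uniform for $r\in(1,2]$.
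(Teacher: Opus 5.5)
Your reduction, normalization and Gehring step agree with the paper; the Gehring step is its Lemma \ref{reverse Holder w}. The transfer step, however, has a genuine gap. Write $[w]$ for $[w]_{\mathcal{A}_{p(\cdot),\infty}}$.

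Lemma \ref{ww BB} controls $\|\mathbb{W}^{1/p(\cdot)}\mathbf{1}_Q\|_{L^{p(\cdot)}}$ only up to factors $[\mathbb{W}]_{A_\infty}^{\pm A}$. So your route gives a constant of the form $C[\mathbb{W}]_{A_\infty}^{A'}$. Two side issues are fine: $[\mathbb{W}^r]_{A_\infty}\le 2[\mathbb{W}]_{A_\infty}^r$ is immediate from the reverse H\"older inequality, and the $\widetilde{L}$ factors are harmless because $\mathbb{W}(Q_0)=1$.

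To reach $C[w]^A$ you therefore need $[\mathbb{W}]_{A_\infty}\lesssim[w]^{A''}$ under the normalization, and nothing in the paper gives this. Proposition \ref{reverse Holder w1} only yields the implication $|E|>(1-\alpha)|Q|\Rightarrow\mathbb{W}(E)>\eta\,\mathbb{W}(Q)$, with $\alpha\approx 2^{-c[w]}$ and $\eta\approx[w]^{-A}$. It then concludes $\mathbb{W}\in A_\infty$ only qualitatively, via \cite[Lemma 7.3.3]{g14}.

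Quantifying that implication the natural way, by iterating $|\{x\in Q:\mathbb{W}(x)\le\eta\fint_Q\mathbb{W}\}|\le(1-\alpha)|Q|$, gives only $\log[\mathbb{W}]_{A_\infty}\lesssim\alpha^{-1}\log(1/\eta)\approx 2^{c[w]}\log[w]$. That is nowhere near polynomial.

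The comparison is genuinely delicate. Without normalization, $[\mathbb{W}]_{A_\infty}$ is not bounded by any function of $[w]$: for $w\equiv k$ one has $[w]=1$, but $[k^{p(\cdot)}]_{A_\infty}\to\infty$ as $k\to\infty$ whenever $p(\cdot)$ is nonconstant. The paper also says explicitly that it cannot compare the two constants. So the real obstacle is this, not the $\widetilde{L}$ factors you singled out.

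The missing idea is to bypass $[\mathbb{W}]_{A_\infty}$ and compare modulars directly. This is what the paper does, with $q(\cdot)=rp(\cdot)$ and $q(x)/q_Q=p(x)/p_Q$.

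For $|Q|\le 1$, it strips the variable exponent off the Lebesgue measure of the cube rather than off $\mathbb{W}(Q)$. It uses the Diening bound $|Q|^{-p(x)/p_Q}\sim|Q|^{-1}$ from Lemma \ref{weight bound}(i), giving $\int_Q|Q|^{-p(x)/p_Q}\mathbb{W}^r\,dx\lesssim\fint_Q\mathbb{W}^r\le 2(\fint_Q\mathbb{W})^r\lesssim(\int_Q|Q|^{-p(x)/p_Q}\mathbb{W}\,dx)^r$.

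For $|Q|>1$, it proceeds in three steps:
\begin{itemize}
\item it replaces $p_Q$ by $p_\infty$ using Lemma \ref{weight bound}(ii);
\item it exchanges $p(x)$ and $p_\infty$ with Lemma \ref{rs f 1};
\item it controls the tail $\int\mathbb{W}^r(e+|x|)^{-ntp_-}\,dx$ by summing over $Q(\mathbf{0},2e^{k+1})$, using the reverse H\"older inequality and the doubling of $\|w\mathbf{1}_{\lambda Q}\|_{L^{p(\cdot)}}$ from Lemma \ref{EB 1}. That doubling is quantified by $[w]$ and anchored by $\|w\mathbf{1}_{Q_0}\|_{L^{p(\cdot)}}=1$.
\end{itemize}

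Your fallback of redoing Lemma \ref{ww BB} for $rp(\cdot)$ works only with exactly these substitutions: Lemma \ref{EB 1} in place of Lemma \ref{lambdaQ Q}, and the Diening condition on $|Q|$ in place of Lemma \ref{ww Bp}. As written, that lemma's proof also runs on $[\mathbb{W}]_{A_\infty}$.

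A minor slip: with $\alpha=2^{-\delta/(\delta-1)-1}$ you get $\alpha^{1-1/\delta}=2^{-1-(\delta-1)/\delta}>1/4$, not $\le 1/4$. The paper's choice $\alpha=2^{-2\delta/(\delta-1)}$ gives exactly $1/4$; your choice only costs an extra power of $[w]$ in $\eta$.
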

\begin{remark}\label{rem reverse Holder Apinfty}
We point out that the concept of $\mathcal{A}_{p(\cdot),\infty}$ weights
is essential for the reverse H\"older's inequality in variable Lebesgue spaces.
Indeed, although we can also establish a reverse H\"older's inequality in variable Lebesgue spaces
with just using the  $A_\infty$ weights,
but this is not our desired result.
To be more precise, for any given $p(\cdot) \in \mathcal{P}_0$ and for any weight $\mathbb{W}$,
we define an \emph{alternative weighted variable Lebesgue space
$\widetilde{L}^{p(\cdot)}_{\mathbb{W}}$}
(see \cite[(2.2)]{dh08} for more details),
which consists of all $f \in \mathscr{M}$
satisfying
\begin{align*}
\|f\|_{\widetilde{L}^{p(\cdot)}_{\mathbb{W}}}
:= \inf\left\{ \lambda \in (0,\infty):  \int_{\mathbb{R}^n} \left|\frac{f(x)}{\lambda}\right|^{p(x)} \mathbb{W}(x)\,dx < 1 \right\}
< \infty.
\end{align*}
Then, for any given $p(\cdot) \in \mathcal{P}_0$,
any weight $\mathbb{W}$,
and any $f\in \mathscr{M}$,
$\|f\|_{\widetilde{L}^{p(\cdot)}_{\mathbb{W}}} = \|f\|_{L^{p(\cdot)}_w}$,
where $w := [\mathbb{W}(\cdot)]^{\frac{1}{p(\cdot)}}$ and $L^{p(\cdot)}_w$ is the weighted variable Lebesgue space
in Definition \ref{def weight Leb}.
By this and Theorems \ref{Apinfty A} and \ref{reverse Holder Apinfty},
we find that, for any $\mathbb{W} \in A_\infty$,
there exist positive constants $C$ and $r\in(1,\infty)$,
depending on $n$, $p(\cdot)$, and $\mathbb{W}$,
such that, for any cube $Q$ in $\mathbb{R}^n$,
\begin{align}\label{reverse Holder WW}
\frac{1}{\|\mathbf{1}_Q\|_{L^{rp(\cdot)}}} \left\| \mathbf{1}_Q \right\|_{\widetilde{L}^{rp(\cdot)}_{\mathbb{W}^r}}
\leq C \frac{1}{\|\mathbf{1}_Q\|_{L^{p(\cdot)}}} \left\| \mathbf{1}_Q \right\|_{\widetilde{L}^{p(\cdot)}_{\mathbb{W}}}.
\end{align}
However, inequality \eqref{reverse Holder WW} is unsatisfactory
because  the constants $C$ and $r$ may depend not only on
the weight constant  $[\mathbb{W}]_{A_\infty}$, but also on $\mathbb{W}$ itself.
Indeed, if $p(\cdot)$ is not a constant exponent,
there may  not exist positive functions
$\varphi$ and $\phi$ such that
$C = \varphi([\mathbb{W}]_{A_\infty})$
and $r = \phi([\mathbb{W}]_{A_\infty})$.

To see this, we give an example on $\mathbb{R}$.
Let
\begin{align*}
p(x) :=
\begin{cases}
\displaystyle 2 & \text{if}\ x \in (-\infty,-1)\cup [1,\infty), \\
\displaystyle 1-x & \text{if}\ x \in [-1,0), \\
\displaystyle 1+x & \text{if}\ x \in [0,1).
\end{cases}
\end{align*}
Then $p(\cdot) \in  \mathcal{P}_0 \cap LH$.
Now, assume that \eqref{reverse Holder WW} holds  with $C = \varphi([\mathbb{W}]_{A_\infty})$
and $r = \phi([\mathbb{W}]_{A_\infty})$, where $\varphi$ and $\phi$ are some positive functions.
Then, letting $\mathbb{W}_k := k$ for any $k\in\mathbb{N}$,
it is easy to find that $\mathbb{W}_k \in A_\infty $
and $[\mathbb{W}_k]_{A_\infty} = 1$ for any $k\in\mathbb{N}$.
Applying \eqref{reverse Holder WW}  to $\mathbb{W}_k$, we find that, for any $k \in \mathbb{N}$ and any interval $Q$,
\begin{align}\label{reverse Holder WW 1}
\frac{1}{\|\mathbf{1}_Q\|_{L^{rp(\cdot)}}} \left\| \mathbf{1}_Q \right\|_{\widetilde{L}^{rp(\cdot)}_{\mathbb{W}_k^r}}
\leq C \frac{1}{\|\mathbf{1}_Q\|_{L^{p(\cdot)}}} \left\| \mathbf{1}_Q \right\|_{\widetilde{L}^{p(\cdot)}_{\mathbb{W}_k}}.
\end{align}
Fixing the cube $Q := [0,1]$ and writing $\lambda_k := \| \mathbf{1}_Q \|_{\widetilde{L}^{p(\cdot)}_{\mathbb{W}_k}}$
for any $k\in\mathbb{N}$,
by using \eqref{reverse Holder WW 1},
we obtain
$\| \lambda_k^{-1} \mathbf{1}_Q \|_{\widetilde{L}^{rp(\cdot)}_{\mathbb{W}_k}} \lesssim 1 $
for any $k\in\mathbb{N}$,
which, combined with Lemma \ref{rhof 1},
further implies that
\begin{align}\label{reverse Holder WW 2}
\sup_{k\in\mathbb{N}} \int_Q \lambda_k^{-rp(x)} [\mathbb{W}_k(x)]^r \,dx
= \sup_{k\in\mathbb{N}} \int_0^1 \lambda_k^{-r(x + 1)} k^r \,dx < \infty.
\end{align}
From Lemma \ref{rhof 1},
we infer that, for any $k \in\mathbb{N}$,
\begin{align*}
1 = \int_0^1 \lambda_k^{-x-1} k\,dx
 = \frac{k}{\lambda_k \log \lambda_k}\left(1-\frac{1}{\lambda_k}\right),
\end{align*}
which further implies that
$ k = \frac{\lambda_k^2 \log \lambda_k}{\lambda_k - 1}$.
Taking this into the integral in the right-hand side of \eqref{reverse Holder WW 2},
we find that, for any $k \in\mathbb{N}$,
\begin{align}\label{reverse Holder WW 3}
\int_0^1 \lambda_k^{-r(x + 1)} k^r \,dx
& = \int_0^1 \lambda_k^{-r(x + 1)} \frac{\lambda_k^{2r} (\log \lambda_k)^r}{(\lambda_k - 1)^r} \,dx
= \frac{\lambda_k^r (\log \lambda_k)^r}{(\lambda_k - 1)^r} \int_0^1 \lambda_k^{-rx}  \,dx \nonumber\\
& = \frac1r\left(\log \lambda_k\right)^{r - 1} \left(\frac{\lambda_k}{\lambda_k - 1}\right)^r
\left( 1 - \frac{1}{\lambda_k^r}\right).
\end{align}
Notice that, by Lemma \ref{rhof 1},
we have $\lambda_k \geq k^{\frac{1}{p_+}} \to \infty$
as $k \to \infty$.
Using this, the fact $r>1$, and \eqref{reverse Holder WW 3},
we conclude that
$$\sup_{k\in\mathbb{N}} \int_Q \lambda_k^{-rp(x)} [\mathbb{W}_k(x)]^r \,dx = \infty, $$
which contradicts \eqref{reverse Holder WW 2}
and shows that such $\varphi$ and $\psi$ do not exist.
This finishes the proof of the above claim.
\end{remark}
Before giving the proof of Theorem \ref{reverse Holder Apinfty},
we first establish the reverse H\"older's inequality for $\mathbb{W} := [w(\cdot)]^{p(\cdot)}$
with $w \in \mathcal{A}_{p(\cdot),\infty}$,
whose proof is based on the proof of \cite[Theorem 2.3]{hp13}
with \cite[Lemma 6.6]{hp13} replaced by Proposition \ref{reverse Holder w1}.
For the completeness of the article,
we give a proof here.
In what follows, $M_d$ denotes the \emph{dyadic maximal operator},
which is defined by setting, for any $f\in \mathscr{M}$ and $x\in\mathbb{R}^n$,
\begin{align*}
M_d(f)(x) := \sup_{Q \ni x } \fint_Q \left|f(x)\right|\,dx,
\end{align*}
where the supremum is taken over all dyadic cubes $Q$ in $\mathbb{R}^n$.
\begin{lemma}\label{reverse Holder w}
Let $p(\cdot)\in \mathcal{P}$ with $p(\cdot) \in LH$
and let $w\in\mathcal{A}_{p(\cdot),\infty}$.
Then $\mathbb{W} := [w(\cdot)]^{p(\cdot)}$ is an $A_\infty$ weight.
Moreover, there exists positive constants $C_1$, $C_2$, and $A_1$,
depending only on $n$ and $p(\cdot)$,
such that, for any $r \in (1, r_w]$ with
\begin{align}\label{def rw}
r_w := 1+ \frac{1}{C_1 L_w [w]_{\mathcal{A}_{p(\cdot),\infty}}^{A_1} 2^{C_2[w]_{\mathcal{A}_{p(\cdot),\infty}}}},
\end{align}
where $L_w$ is as in \eqref{def Lw} and $Q_0$ as in \eqref{def Q0}
and, for any cube $Q$ in $\mathbb{R}^n$,
\begin{align*}
\fint_Q \mathbb{W}(x)^{r}\,dx \leq 2\left[ \fint_Q \mathbb{W}(x)\,dx\right]^r.
\end{align*}
\end{lemma}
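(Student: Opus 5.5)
The plan is to follow the proof of \cite[Theorem 2.3]{hp13}, the sharp reverse H\"older inequality for $A_\infty$ weights. We replace the $A_\infty$ characterization used there (\cite[Lemma 6.6]{hp13}) by Proposition \ref{reverse Holder w1}, which controls $\mathbb{W}$-measures of large subsets of $Q$ explicitly. The first assertion, $\mathbb{W}\in A_\infty$, is already part of Proposition \ref{reverse Holder w1}. The real task is the quantitative exponent $r_w$ in \eqref{def rw}.

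\emph{Step 1: a quantitative $A_\infty$ condition for $\mathbb{W}$.} Fix $\alpha:=2^{-\frac{2\delta}{\delta-1}}$, with $\delta=1+\frac{1}{\tau_nC_{p(\cdot),n}[w]_{\mathcal{A}_{p(\cdot),\infty}}}$. Then $1-2\alpha^{1-\frac1\delta}=\frac12$. Since $\frac{\delta}{\delta-1}\sim[w]_{\mathcal{A}_{p(\cdot),\infty}}$, we get $\alpha^{-1}= 2^{C[w]_{\mathcal{A}_{p(\cdot),\infty}}}$. Proposition \ref{reverse Holder w1} then gives
\[
\mathbb{W}(Q)\le K\,\mathbb{W}(E)\quad\text{whenever } E\subset Q,\ |E|>(1-\alpha)|Q|,
\]
where $K:=C2^{p_+}[w]^{A}_{\mathcal{A}_{p(\cdot),\infty}}L_w$. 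Passing to complements, every $F\subset Q$ with $|F|<\alpha|Q|$ satisfies $\mathbb{W}(F)\le(1-K^{-1})\mathbb{W}(Q)$.

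\emph{Step 2: a Fujii--Wilson type bound.} Localize to a dyadic grid adapted to $Q$ and consider $M_d(\mathbb{W}\mathbf{1}_Q)$. I aim to show
\[
\int_Q M_d(\mathbb{W}\mathbf{1}_Q)\,dx\le \Lambda\,\mathbb{W}(Q),\qquad \Lambda\lesssim K\alpha^{-1}.
\]
For this, use a Calder\'on--Zygmund decomposition at heights $\lambda_k:=a^k\fint_Q\mathbb{W}$ with $a:=2^n/\alpha$. Take maximal dyadic cubes $Q_j^k$ on which the average of $\mathbb{W}$ exceeds $\lambda_k$, and set $\Omega_k:=\bigcup_jQ_j^k$. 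In each $Q^k_j$ one has $|\Omega_{k+1}\cap Q_j^k|\le 2^n a^{-1}|Q_j^k|=\alpha|Q^k_j|$. Applying Step 1 to $E:=Q_j^k\setminus\Omega_{k+1}$ gives $\mathbb{W}(Q_j^k)\le K\,\mathbb{W}(Q_j^k\setminus\Omega_{k+1})$. Summing the layer-cake decomposition $\int_QM_d\lesssim\sum_k\lambda_{k+1}|\Omega_k|$, and using $\lambda_k|Q^k_j|<\mathbb{W}(Q^k_j)$ together with the disjointness of the sets $\Omega_k\setminus\Omega_{k+1}$, gives the bound $\Lambda\lesssim aK\sim 2^{C[w]_{\mathcal{A}_{p(\cdot),\infty}}}L_w[w]^{A}_{\mathcal{A}_{p(\cdot),\infty}}$.

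\emph{Step 3: the reverse H\"older step.} Run the argument of \cite[Theorem 2.3]{hp13} verbatim with $\Lambda$ in place of $[\mathbb{W}]_{A_\infty}$. Write $\int_Q (M_d(\mathbb{W}\mathbf{1}_Q))^{\varepsilon}\mathbb{W}\,dx$ by the layer-cake formula. Use the dyadic Calder\'on--Zygmund estimate $\mathbb{W}(\{M_d>t\}\cap Q)\le 2^nt|\{M_d>t\}\cap Q|$ for $t\ge\fint_Q\mathbb{W}$. This yields
\[
\fint_Q\mathbb{W}^{1+\varepsilon}\le\fint_Q(M_d\mathbb{W}\mathbf{1}_Q)^{\varepsilon}\mathbb{W}\le\Big(\fint_Q\mathbb{W}\Big)^{1+\varepsilon}+\frac{\varepsilon2^n}{1+\varepsilon}\fint_Q(M_d\mathbb{W}\mathbf{1}_Q)^{1+\varepsilon}.
\]
For $\varepsilon\le\frac{1}{2^{n+1}\Lambda}$, absorb the last term; this is legitimate after a truncation $\min\{\mathbb{W},N\}$ so that it is finite. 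One gets $\fint_Q\mathbb{W}^{r}\le2(\fint_Q\mathbb{W})^r$ for $r=1+\varepsilon$. Taking $r_w:=1+\frac{1}{C\Lambda}$ then has exactly the form \eqref{def rw}.

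\emph{Main obstacle.} The hardest part is Step 2. Step 1 only gives a \emph{fixed}-proportion condition rather than the Fujii--Wilson constant. The iteration must be organized so that the loss is linear in $K$ and in $\alpha^{-1}$, rather than, say, a power $\alpha^{-\log K}$, which is what a naive decay iteration would produce. The absorption in Step 3 also needs the truncation to be compatible with Step 1. Truncated weights need not satisfy Step 1, so I would instead absorb using an a priori finiteness obtained from $\mathbb{W}\in A_\infty$ (via the classical, non-quantitative reverse H\"older inequality). With that, the constants do not depend on the truncation.
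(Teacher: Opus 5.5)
Your argument is correct, but it is organized differently from the paper's. The paper never passes through a Fujii--Wilson constant. After normalizing $\fint_Q\mathbb{W}=1$, it applies the layer-cake formula directly to $X:=\int_Q[M_d(\mathbb{W}\mathbf{1}_Q)]^{\epsilon}\mathbb{W}\,dx$ at the levels $a^k$. It then replaces $\mathbb{W}(Q_{k,j})$ by $C_w\mathbb{W}(E_{k,j})$ through Proposition~\ref{reverse Holder w1}, with the same $\alpha=2^na^{-1}=2^{-\frac{2\delta}{\delta-1}}$ you chose. Finally it uses the disjointness of the sets $E_{k,j}=Q_{k,j}\setminus\Omega_{k+1}$, together with $M_d(\mathbb{W}\mathbf{1}_Q)\ge\fint_{Q_{k,j}}\mathbb{W}$ on $E_{k,j}$, to obtain $X\le|Q|+C_w\epsilon a^{\epsilon}\log a\,X$, which is absorbed in a single step.

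You spend exactly these ingredients on the intermediate bound $\int_Q M_d(\mathbb{W}\mathbf{1}_Q)\le\Lambda\,\mathbb{W}(Q)$ and then run the Hyt\"onen--P\'erez scheme. The last term in your Step~3 display is not absorbed into the left-hand side directly. It is controlled by the separate estimate $\int_Q[M_d(\mathbb{W}\mathbf{1}_Q)]^{1+\epsilon}\le2\Lambda(\fint_Q\mathbb{W})^{\epsilon}\mathbb{W}(Q)$. This estimate holds because your Step~2 is valid for every cube, hence on each Calder\'on--Zygmund cube $Q^t_j$ of level $t\ge\fint_Q\mathbb{W}$. On such a cube $M_d(\mathbb{W}\mathbf{1}_Q)$ coincides with the dyadic maximal operator localized to $Q^t_j$.

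Your route is modular and gives a quantitative Fujii--Wilson bound for $\mathbb{W}$ as a by-product; the paper's is shorter and self-contained. In both, the factor $2^{C_2[w]_{\mathcal{A}_{p(\cdot),\infty}}}$ is an artifact. The paper's absorption condition $C_w\epsilon a^{\epsilon}\log a<\frac12$ already holds for $\epsilon\sim(C_w\log a)^{-1}$. In your Step~2, using $|\{M_d>t\}|\le t^{-1}\mathbb{W}(\{M_d>t\})$ at every level $t\ge\fint_Q\mathbb{W}$ gives $\Lambda\le1+K\log a$ instead of $1+aK$. Either way \eqref{def rw} follows.

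Your remedy for finiteness in the absorption is not quite sufficient. The qualitative reverse H\"older inequality makes $\int_Q[M_d(\mathbb{W}\mathbf{1}_Q)]^{1+\epsilon}$ finite only for $\epsilon$ below some uncontrolled $\eta(\mathbb{W})$, which may be smaller than $(2^{n+1}\Lambda)^{-1}$. Truncate the maximal function instead of the weight. The quantity $Y_N:=\int_Q\min\{M_d(\mathbb{W}\mathbf{1}_Q),N\}^{\epsilon}M_d(\mathbb{W}\mathbf{1}_Q)\,dx$ satisfies $Y_N\le N^{\epsilon}\Lambda\,\mathbb{W}(Q)<\infty$ and dominates $\int_Q\min\{M_d(\mathbb{W}\mathbf{1}_Q),N\}^{1+\epsilon}\,dx$. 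It also satisfies $Y_N\le\Lambda(\fint_Q\mathbb{W})^{\epsilon}\mathbb{W}(Q)+\frac{2^n\epsilon\Lambda}{1+\epsilon}Y_N$, so you can absorb and let $N\to\infty$. The paper absorbs its own $X$ without addressing this point.
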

\begin{proof}
Fix a cube $Q$ in $\mathbb{R}^n$.
By the homogeneity of $\fint_Q \mathbb{W}(x)\,dx $,
we only need to consider the case $\fint_Q \mathbb{W}(x)\,dx = 1$.
Similar to the proof of \cite[Theorem 2.3]{hp13},
we obtain
\begin{align}\label{akw 3}
\int_Q \mathbb{W}(x)^{1 + \epsilon}\,dx
\leq \int_Q \left[M_d\left( \mathbb{W}\mathbf{1}_Q\right)(x)\right]^\epsilon\mathbb{W}(x)\,dx
\leq |Q| + \epsilon a^\epsilon \log a \sum_{k = 0}^\infty a^{k\epsilon}\mathbb{W}\left(\Omega_k\right),
\end{align}
where $a \in (1,\infty)$ and $\epsilon \in (0,\infty)$ are two constants to be determined later
and
$$\Omega_k :=\left\{x \in Q:  M_d(\mathbb{W}\mathbf{1}_Q)(x) > a^k\right\}.$$

Using the Calder\'on--Zygmund decomposition of $\mathbb{W}$ adapted to $Q$,
we find that,
for any $k\in\mathbb{Z}_+$,
there exists a family of maximal nonoverlapping dyadic cubes $\{Q_{k,j}\}_{k,j\in\mathbb{Z}_+}$
strictly contained in $Q$
for which $\Omega_k = \bigcup_{j\in\mathbb{Z}_+} Q_{k,j}$ and
$a^k < \fint_{Q_{k,j}} \mathbb{W}(x)\,dx \leq 2^n a^k$.
Then, by this and the disjointness of $\{Q_{k,j}\}_{j\in\mathbb{Z}_+}$ for any $k\in\mathbb{Z}_+$,
we obtain
\begin{align}\label{akw 1}
\sum_{k = 0}^\infty a^{k\epsilon}\mathbb{W}(\Omega_k)
= \sum_{k = 0}^\infty \sum_{j = 0}^\infty a^{k\epsilon}\mathbb{W}\left(Q_{k,j}\right)
\leq \sum_{k = 0}^\infty \sum_{j = 0}^\infty \left[ \fint_{Q_{k,j}} \mathbb{W}(x)\,dx \right]^{\epsilon}\mathbb{W}\left(Q_{k,j}\right).
\end{align}
For any $k,j\in\mathbb{Z}_+$,
let $E_{k,j} := Q_{k,j} \setminus \Omega_{k+1}$.
Then, similar to the estimation of \cite[(6.9)]{hp13},
we find that, for any $a \in (2^n,\infty)$ and $k,j\in\mathbb{Z}_+$,
$| Q_{k,j} | < \frac{a}{a-2^n} | E_{k,j} |$,
which further implies that
\begin{align}\label{QE a}
\left( 1 - 2^{n}a^{-1} \right)\left| Q_{k,j} \right| < \left| E_{k,j} \right|.
\end{align}
Next, by Proposition \ref{reverse Holder w1},
there exist positive constants $C$ and $A$,
depending only on $n$ and $p(\cdot)$,
such that, for any $\alpha \in (0,2^{-\frac{\delta}{\delta - 1}})$,
any cube $Q$ in $\mathbb{R}^n$, and any measurable set $E\subset Q$ with $|E| > (1-\alpha)|Q|$,
\begin{align}\label{wEQ EQ 3}
\mathbb{W}(Q) < C \left( 1 - 2\alpha^{1-\frac1\delta} \right)^{-p_+} [w]_{\mathcal{A}_{p(\cdot),\infty}}^{A}
L_w\mathbb{W}(E),
\end{align}
where $\delta$ and $L_w$ are the same as in Proposition \ref{reverse Holder w1}.
We fix $a := 2^{n+\frac{2\delta}{\delta - 1}}$
and $\alpha := 2^{n}a^{-1} = 2^{-\frac{2\delta}{\delta - 1}}$.
Combining this with both \eqref{QE a} and \eqref{wEQ EQ 3} yields
\begin{align}\label{akw 2}
\mathbb{W}(Q) &<C \left[ 1 - 2(2^na^{-1})^{1-\frac1\delta} \right]^{-p_+}  [w]_{\mathcal{A}_{p(\cdot),\infty}}^{A}
L_w\mathbb{W}(E)
 = C2^{p_+} [w]_{\mathcal{A}_{p(\cdot),\infty}}^{A}
L_w \mathbb{W}(E)
=: C_w \mathbb{W}(E),
\end{align}
where
$C_w := C 2^{p_+}  [w]_{\mathcal{A}_{p(\cdot),\infty}}^{A}L_w. $
Thus, by taking \eqref{akw 2} into \eqref{akw 1},
we find that
\begin{align*}
\sum_{k = 0}^\infty a^{k\epsilon}\mathbb{W}(\Omega_k)
&\leq \sum_{k = 0}^\infty \sum_{j = 0}^\infty \left[ \fint_{Q_{k,j}} \mathbb{W}(x)\,dx \right]^{\epsilon}\mathbb{W}\left(Q_{k,j}\right)
\leq C_w \sum_{k = 0}^\infty \sum_{j = 0}^\infty \left[ \fint_{Q_{k,j}} \mathbb{W}(x)\,dx \right]^{\epsilon}\mathbb{W}\left(E_{k,j}\right)\\
&\leq C_w \sum_{k = 0}^\infty \sum_{j = 0}^\infty \int_{E_{k,j}} \left[M_d\left(\mathbb{W}\mathbf{1}_Q\right)(x)\right]^{\epsilon}\mathbb{W}(x)\,dx
\leq C_w \int_{Q} \left[M_d\left(\mathbb{W}\mathbf{1}_Q\right)(x)\right]^{\epsilon}\mathbb{W}(x)\,dx.
\end{align*}
Combining this with \eqref{akw 3},
we obtain
\begin{align}\label{akw 4}
\fint_Q \left[M_d\left( \mathbb{W}\mathbf{1}_Q \right)(x)\right]^{\epsilon}\mathbb{W}(x)\,dx
\leq 1 + C_w \epsilon a^\epsilon \log a \int_{Q} \left[M_d\left(\mathbb{W}\mathbf{1}_Q\right)(x)\right]^{\epsilon}\mathbb{W}(x)\,dx,
\end{align}
where $\epsilon$ is a positive constant to be determined later.
Now, let
\begin{align*}
\epsilon_1 :=\,& \frac{1}{C_w  (n + \frac{2\delta}{\delta-1}) \log(2)
2^{n+1+\frac{2\delta}{\delta-1}}} \\
=\,& \frac{1}{C2^{p_+} L_w [w]_{\mathcal{A}_{p(\cdot),\infty}}^{A}(n + 2 + 2\tau_n C_{p(\cdot),n}[w]_{\mathcal{A}_{p(\cdot),\infty}} )\log(2) 2^{n + 2 + 2\tau_n C_{p(\cdot)}[w]_{\mathcal{A}_{p(\cdot),\infty}}}},
\end{align*}
where $\delta$ and $C_w$ are the same as in \eqref{wEQ EQ 3}.
Then
\begin{align*}
C_w \epsilon_1 a^{\epsilon_1} \log a
& = C_w \frac{1}{C_w  (n + \frac{2\delta}{\delta-1}) \log(2) 2^{n+\frac{2\delta}{\delta-1}}}
2^{\frac{n + \frac{2\delta}{\delta-1}}{C_w  (n + \frac{2\delta}{\delta-1}) \log(2) 2^{n+\frac{2\delta}{\delta-1}}}}
\left(n + \frac{2\delta}{\delta-1}\right)\log(2)\\
& = 2^{-(n+\frac{2\delta}{\delta-1})}
2^{\frac{1}{C_w   \log(2) 2^{n+\frac{2\delta}{\delta-1}}}}
< \frac12.
\end{align*}
From Lemma \ref{Ap Apinfty 1} and $w\in \mathcal{A}_{p(\cdot),\infty}$,
it follows that  $C_{p(\cdot),n}[w]_{\mathcal{A}_{p(\cdot),\infty}}\geq 1$
and hence, by letting $C_2 := (n + 2 +2\tau_n)C_{p(\cdot),n}$,
we obtain
$n + 2 + 2\tau_n C_{p(\cdot),n}[w]_{\mathcal{A}_{p(\cdot),\infty}} \leq C_2[w]_{\mathcal{A}_{p(\cdot),\infty}}$.
Thus, from this and the definition of $\epsilon_1$, we deduce that
$$
\epsilon := \frac{1}{C2^{p_+}\log(2) L_w [w]_{\mathcal{A}_{p(\cdot),\infty}}^{A}(C_2[w]_{\mathcal{A}_{p(\cdot),\infty}} ) 2^{C_2[w]_{\mathcal{A}_{p(\cdot),\infty}}}}
\leq \epsilon_1.
$$
and hence
$$ C_w \epsilon a^\epsilon \log a \leq C_w \epsilon_1 a^{\epsilon_1} \log a < \frac12. $$
Thus, using this with taking $\epsilon$ into \eqref{akw 4},
we find that
\begin{align*}
\fint_Q \left[M_d\left( \mathbb{W}\mathbf{1}_Q \right)(x)\right]^{\epsilon}\mathbb{W}(x)\,dx
\leq 1 + \frac12 \int_{Q} \left[M_d\left(\mathbb{W}\mathbf{1}_Q\right)(x)\right]^{\epsilon}\mathbb{W}(x)\,dx.
\end{align*}
Hence
\begin{align*}
\int_{Q} \left[M_d\left(\mathbb{W}\mathbf{1}_Q\right)(x)\right]^{\epsilon}\mathbb{W}(x)\,dx \leq 2,
\end{align*}
which, together with \eqref{akw 3},
further implies that
\begin{align*}
\fint_Q \mathbb{W}(x)^{1+\epsilon}
\leq \int_{Q} \left[M_d\left(\mathbb{W}\mathbf{1}_Q\right)(x)\right]^{\epsilon}\mathbb{W}(x)\,dx
\leq 2.
\end{align*}
This finishes the proof of Lemma \ref{reverse Holder w}.
\end{proof}
Next, we give the proof of Theorem \ref{reverse Holder Apinfty}.

\begin{proof}[Proof of Theorem \ref{reverse Holder Apinfty}]
We first claim that we only need to prove the present theorem
in the case $ p_- \geq 1 $.
Indeed, assume that \eqref{reverse Holder Apinfty 1} holds
for any $\mathcal{A}_{q(\cdot),\infty}$ weight $w$ with $q(\cdot) \in \mathcal{P}$ and $q(\cdot) \in LH$.
For any $w\in \mathcal{A}_{p(\cdot),\infty}$ with $p(\cdot) \in \mathcal{P}_0$ and $p(\cdot) \in LH$,
by Lemma \ref{wr}, we find that $w^{p_-} \in \mathcal{A}_{\frac{p(\cdot)}{p_-},\infty}$
and $[w^{p_-}]_{\mathcal{A}_{\frac{p(\cdot)}{p_-},\infty}} = [w]_{\mathcal{A}_{p(\cdot),\infty}}$,
where $\frac{p(\cdot)}{p_-} \in \mathcal{P}$ and $\frac{p(\cdot)}{p_-} \in LH$.
From this and the above assumption,
we infer that there exist positive constants $C$ and $A$,
depending only on $\frac{p(\cdot)}{p_-}$ and $n$,
such that, for any cube $Q$ in $\mathbb{R}^n$,
\begin{align*}
\frac{1}{\|\mathbf{1}_{Q}\|_{L^{\frac{rp(\cdot)}{p_-}}}} \left\|w^{p_-} \mathbf{1}_Q\right\|_{L^{\frac{rp(\cdot)}{p_-}}}
\leq C [w^{p_-}]_{\mathcal{A}_{\frac{p(\cdot)}{p_-},\infty}}^{A}
\frac{1}{\|\mathbf{1}_{Q}\|_{L^{\frac{p(\cdot)}{p_-}}}} \left\|w^{p_-}\mathbf{1}_Q\right\|_{L^{\frac{p(\cdot)}{p_-}}},
\end{align*}
which, combined with Lemma \ref{wr}, further implies that
\begin{align*}
\frac{1}{\|\mathbf{1}_{Q}\|_{L^{rp(\cdot)}}} \left\|w\mathbf{1}_Q\right\|_{L^{rp(\cdot)}}
&\leq C^{\frac{1}{p_-}} [w^{p_-}]_{\mathcal{A}_{\frac{p(\cdot)}{p_-},\infty}}^{\frac{A}{p_-}}
\frac{1}{\|\mathbf{1}_{Q}\|_{L^{p(\cdot)}}} \left\|w\mathbf{1}_Q\right\|_{L^{p(\cdot)}} \\
& = C^{\frac{1}{p_-}} [w]_{\mathcal{A}_{p(\cdot),\infty}}^{\frac{A}{p_-}}
\frac{1}{\|\mathbf{1}_{Q}\|_{L^{p(\cdot)}}} \left\|w\mathbf{1}_Q\right\|_{L^{p(\cdot)}}.
\end{align*}
This proves the above claim.

Now, let $p(\cdot) \in \mathcal{P}$ with $p(\cdot) \in LH$.
From Lemma \ref{reverse Holder w},
it follows that, if $r := r_w$, where $r_w$ is the same as in \eqref{def rw},
then, for any cube $Q$ in $\mathbb{R}^n$,
\begin{align}\label{reverse eq 1}
 \fint_Q \mathbb{W}(x)^r\,dx  \leq 2 \left[\fint_Q \mathbb{W}(x)\,dx\right]^r,
\end{align}
where $\mathbb{W} := [w(\cdot)]^{p(\cdot)}$.

We begin with the assumption $\|w\mathbf{1}_{Q_0}\|_{L^{p(\cdot)}} = 1$.
By this, we find that $L_w = 1$, where $L_w$ is the same as in \eqref{def Lw}.
Let $q(\cdot) := rp(\cdot)$.
Then, for any $x\in\mathbb{R}^n$,
\begin{align}\label{eq rq}
\frac{q(x)}{q_Q} = \frac{p(x)}{p_Q}
\ \  \text{and} \ \  \frac{r}{q_-} = \frac{1}{p_-}.
\end{align}
Next, fix a cube $Q$ in $\mathbb{R}^n$.
Applying Lemma \ref{est Q} and Remark \ref{rem est Q},
we find that $\| \mathbf{1}_Q \|_{L^{p(\cdot)}}^{-1} \sim |Q|^{-\frac{1}{p_Q}}$
and $\| \mathbf{1}_Q \|_{L^{q(\cdot)}}^{-1} \leq 6|Q|^{-\frac{1}{q_Q}}$.
Hence, to prove \eqref{reverse Holder Apinfty 1},
we only need to show that
\begin{align*}
\left\| |Q|^{-\frac{1}{q_Q}} w\mathbf{1}_Q \right\|_{L^{q(\cdot)}} \lesssim \left\| |Q|^{-\frac{1}{p_Q}} w\mathbf{1}_Q \right\|_{L^{p(\cdot)}}.
\end{align*}
By the homogeneity of $\|\cdot\|_{L^{q(\cdot)}}$,
we assume $\| |Q|^{-\frac{1}{p_Q}} w\mathbf{1}_Q \|_{L^{p(\cdot)}} = 1 $.
We split the proof into the following two cases: $|Q| \leq 1$ and $|Q| > 1$.

First, we consider the case $|Q| \leq 1$.
By Lemma \ref{rhof 1} with the assumption $\| |Q|^{-\frac{1}{p_Q}} w\mathbf{1}_Q \|_{L^{p(\cdot)}} = 1  $,
we obtain
$ \rho_{L^{p(\cdot)}}( |Q|^{-\frac{1}{p_Q}} w\mathbf{1}_Q ) = 1 $.
Hence, using this, \eqref{eq rq}, Lemma \ref{weight bound}{\rm (i)}, and \eqref{reverse eq 1},
we find that
\begin{align*}
\rho_{L^{q(\cdot)}} \left( |Q|^{-\frac{1}{q_Q}} w\mathbf{1}_Q \right)
& = \int_{Q} |Q|^{-\frac{q(x)}{q_Q}} \mathbb{W}(x)^r\,dx
 = \int_{Q} |Q|^{-\frac{p(x)}{p_Q}} \mathbb{W}(x)^r\,dx
 \leq C_D^{\frac{1}{p_-}} \fint_Q \mathbb{W}(x)^r\,dx\\
& \leq 2C_D^{\frac{1}{p_-}} \left[ \fint_Q \mathbb{W}(x)\,dx\right]^r
\leq 2C_D^{\frac{1+r}{p_-}} \left[ \int_Q |Q|^{-\frac{p(x)}{p_Q}}  \mathbb{W}(x)\,dx\right]^r
 = 2 C_D^{\frac{1+r}{p_-}},
\end{align*}
where $C_D$ is the same as in Lemma \ref{weight bound}{\rm (i)}.
Thus, from this and Lemma \ref{rhof 1},
we deduce that
\begin{align*}
\left\| |Q|^{-\frac{1}{q_Q}} w\mathbf{1}_Q \right\|_{L^{q(\cdot)}}
&\leq \max\left\{\left[\rho_{L^{q(\cdot)}} \left( |Q|^{-\frac{1}{q_Q}} w\mathbf{1}_Q \right)\right]^{\frac{1}{q_-}},
\left[\rho_{L^{q(\cdot)}} \left( |Q|^{-\frac{1}{q_Q}} w\mathbf{1}_Q \right)\right]^{\frac{1}{q_+}}\right\} \\
&\leq \left(2 C_D^{\frac{1+r}{p_-}}\right)^{\frac{1}{q_-}}
\leq  2^{\frac{1}{q_-}} C_D^{\frac{2r}{p_-q_-}} \leq 2 C_D^2,
\end{align*}
where the last two inequalities come from the facts
$C_D \geq 1$ and $\frac{r}{q_-} = \frac{1}{p_-}$.
This finishes the estimation of the case $|Q|\leq 1$.

Now, we consider the case $|Q| > 1$.
By \eqref{eq rq} and Lemmas \ref{est Q} and \ref{weight bound}{\rm (ii)},
we find that
\begin{align}\label{reverse eq 2}
\rho_{L^{q(\cdot)}} \left( |Q|^{-\frac{1}{q_Q}} w\mathbf{1}_Q \right)
& = \int_{Q} |Q|^{-\frac{q(x)}{q_Q}} \mathbb{W}(x)^r\,dx
 \leq \int_{Q} |Q|^{-\frac{p(x)}{p_Q}} \mathbb{W}(x)^r\,dx\nonumber\\
& \lesssim \int_{Q} \left\| \mathbf{1}_Q \right\|_{L^{p(\cdot)}}^{p(x)} \mathbb{W}(x)^r\,dx
 \lesssim \int_{Q} |Q|^{-\frac{p(x)}{p_\infty}} \mathbb{W}(x)^r\,dx.
\end{align}
Since $|Q|\geq 1$, we infer $|Q|^{-\frac{1}{p_\infty}} \leq 1$.
Hence, using this, Lemma \ref{rs f 1}, and \eqref{reverse eq 1}
with $f := |Q|^{-\frac{1}{p_\infty}}$ and $d\mu(x) := \mathbb{W}(x)^r\,dx$ yields
\begin{align}\label{reverse eq 3}
\int_{Q} |Q|^{-\frac{p(x)}{p_\infty}} \mathbb{W}(x)^r\,dx
& \leq e^{ntC_\infty} \fint_Q \mathbb{W}(x)^r\,dx
+ \int_Q \frac{\mathbb{W}(x)^r}{(e + |x|)^{ntp_-}}\,dx\nonumber\\
& \leq 2e^{ntC_\infty} \left[\fint_Q  \mathbb{W}(x)\,dx\right]^r
+ \int_Q \frac{\mathbb{W}(x)^r}{(e + |x|)^{ntp_-}}\,dx
=: 2e^{ntC_\infty}{\rm I}^r + {\rm II},
\end{align}
where $t \in (0,\infty)$ is a constant to be determined later.
We first give the estimation of ${\rm II}$.
Similar to \eqref{WWinfty eq 2} with
$Q_k := Q(\mathbf{0},2e^{k+1})$ for any $k\in\mathbb{Z}_+$,
by \eqref{WWinfty eq 11},
we conclude that
\begin{align}\label{reverse eq 4}
{\rm II} & \leq \int_{\mathbb{R}^n} \frac{\mathbb{W}(x)^r}{(e + |x|)^{ntp_-}}\,dx
\leq e^{-ntp_-} \int_{Q_0} \mathbb{W}(x)^r\,dx + \sum_{k = 1}^\infty e^{-kntp_-} \int_{Q_k} \mathbb{W}(x)^r\,dx.
\end{align}
From \eqref{reverse eq 1} and the facts $|Q_k| \geq 1$ and $1-r<0$,
it follows that, for any $k\in\mathbb{Z}_+$,
\begin{align*}
\int_{Q_k} \mathbb{W}(x)^r\,dx = |Q_k| \fint_{Q_k} \mathbb{W}(x)^r\,dx
\leq 2|Q_k| \left[ \fint_{Q_k} \mathbb{W}(x)\,dx\right]^r
= 2|Q_k|^{1-r} \mathbb{W}(Q_k)^r \leq 2 \mathbb{W}(Q_k)^r.
\end{align*}
Using this, \eqref{reverse eq 4}, and Lemma \ref{rhof 1}
with the fact that $\mathbb{W}(Q_k) \geq 1$ for any $k\in\mathbb{Z}_+$,
we obtain
\begin{align}\label{reverse eq 5}
{\rm II} \leq e^{-ntp_-} \mathbb{W}\left(Q_0\right)^r + \sum_{k = 1}^\infty e^{-kntp_-} \mathbb{W}\left(Q_k\right)^r
\lesssim  e^{-ntp_-} \left\|w\mathbf{1}_{Q_0}\right\|_{L^{p(\cdot)}}^{rp_+} + \sum_{k = 1}^\infty e^{-kntp_-} \left\|w\mathbf{1}_{Q_k}\right\|_{L^{p(\cdot)}}^{rp_+}.
\end{align}
From Lemma \ref{EB 1} and the assumption $\|w\mathbf{1}_{Q_0}\|_{L^{p(\cdot)}} = 1$,
we deduce that,
for any $k\in\mathbb{Z}_+$,
\begin{align*}
\left\|w\mathbf{1}_{Q_k}\right\|_{L^{p(\cdot)}} \leq C_w e^{\delta_w} \left\|w\mathbf{1}_{Q_0}\right\|_{L^{p(\cdot)}}
= C_w e^{\delta_w},
\end{align*}
where $C_w := 2^{2^n}C_{p(\cdot),n}^{2^n + 1} [w]_{\mathcal{A}_{p(\cdot),\infty}}^{2^n + 1}$
and $\delta_w := \frac{2^n}{n}[1 + \log_2 (C_{p(\cdot),n}[w]_{\mathcal{A}_{p(\cdot),\infty}})]$.
By this and \eqref{reverse eq 5},
we obtain
\begin{align}\label{reverse eq 6}
{\rm II} \leq e^{-ntp_-}
+ C_w^{rp_+} \sum_{k = 1}^\infty e^{-kntp_- + knrp_+\delta_w},
\end{align}
which further implies that, if $t > \frac{rp_+\delta_w}{p_-}$,
then the summation in \eqref{reverse eq 6} converges.

Next, let
$$t_1 := \frac{rp_+\delta_w}{p_-} + \frac{1}{np_-} \log\left( 2C_w^{rp_+} \right). $$
Using the fact $C_w > 1$,
we easily find that $t_1 > \frac{rp_+\delta_w}{p_-}$.
Hence, by this and the summation formula for geometric series
with taking $t := t_1$,
we obtain
\begin{align*}
C_w^{rp_+} \sum_{k = 1}^\infty e^{-kntp_- + knrp_+\delta_w}
\lesssim C_w^{rp_+} \sum_{k = 1}^\infty e^{-k\log( 2C_w^{rp_+} )}
= \frac{C_w^{rp_+} e^{-\log( 2C_w^{rp_+} )}}{1- e^{-\log( 2C_w^{rp_+} )}}
< 1.
\end{align*}
Consequently, from this, \eqref{reverse eq 6},
and the fact $C_w > 1$ with taking $t:= t_1$,
we infer that
\begin{align}\label{reverse eq 7}
{\rm II} \leq e^{-nrp_+\delta_w} \left(2C_w^{rp_+}\right)^{-1} + 1 <2.
\end{align}

Now, we give the estimation of {\rm I}.
Using the fact $|Q|^{-\frac{1}{p_\infty}} \leq 1$,
Lemma \ref{rs f 1} with $f := |Q|^{-\frac{1}{p_\infty}}$,
and $d\mu(x) := \mathbb{W}(x)\,dx$ yields
\begin{align}\label{reverse eq 8}
{\rm I} \leq e^{nsC_\infty} \int_{Q} |Q|^{-\frac{p(x)}{p_\infty}} \mathbb{W}(x)\,dx
+ \int_Q \frac{\mathbb{W}(x)}{(e + |x|)^{nsp_-}}\,dx,
\end{align}
where $s$ is any positive constant waiting to be fixed.
Similar to \eqref{WWinfty eq 3},
we find that, if choosing
$$ s_1 := \frac{p_+\delta_w}{p_-} + \frac{1}{np_-} \log\left( 2C_w^{p_+} \right), $$
then
\begin{align}\label{reverse eq 9}
 \int_Q \frac{\mathbb{W}(x)}{(e + |x|)^{ns_1p_-}}\,dx < 2.
\end{align}
By Lemma \ref{rhof 1} with the assumption $\||Q|^{-\frac{1}{p_Q}} w\mathbf{1}_Q\|_{L^{p(\cdot)}} = 1$,
we conclude that
$$ \int_{Q} |Q|^{-\frac{p(x)}{p_Q}} \mathbb{W}(x)\,dx =  \rho_{L^{p(\cdot)}}(|Q|^{-\frac{1}{p_Q}} w\mathbf{1}_Q) = 1.  $$
Hence, from this, \eqref{reverse eq 9}, \eqref{reverse eq 8},
and Lemmas \ref{weight bound}{\rm (ii)} and \ref{est Q},
we deduce that
\begin{align*}
{\rm I} &\leq e^{\frac{nC_\infty p_+\delta_w}{p_-}} \left( 2C_w^{p_+} \right)^{\frac{C_\infty}{p_-}}
\int_{Q} |Q|^{-\frac{p(x)}{p_\infty}} \mathbb{W}(x)\,dx + 2
 \lesssim e^{\frac{nC_\infty p_+\delta_w}{p_-}} \left( 2C_w^{p_+} \right)^{\frac{C_\infty}{p_-}}
\int_{Q} \|\mathbf{1}_Q\|_{L^{p(\cdot)}}^{-p(x)} \mathbb{W}(x)\,dx + 2\\
& \lesssim e^{\frac{nC_\infty p_+\delta_w}{p_-}} \left( 2C_w^{p_+} \right)^{\frac{C_\infty}{p_-}}
\int_{Q} |Q|^{-\frac{p(x)}{p_Q}} \mathbb{W}(x)\,dx
 = e^{\frac{nC_\infty p_+\delta_w}{p_-}} \left( 2C_w^{p_+} \right)^{\frac{C_\infty}{p_-}} + 2
 \lesssim  e^{\frac{nC_\infty p_+\delta_w}{p_-}} \left( 2C_w^{p_+} \right)^{\frac{C_\infty}{p_-}},
\end{align*}
where the final inequality comes from the fact that
$1 < e^{\frac{nC_\infty p_+\delta_w}{p_-}} \left( 2C_w^{p_+} \right)^{\frac{C_\infty}{p_-}}$.
Thus, by combining this with \eqref{reverse eq 2}, \eqref{reverse eq 3}, and \eqref{reverse eq 7}
with $t = t_1$,
we conclude that
\begin{align*}
\rho_{L^{q(\cdot)}} \left( |Q|^{-\frac{1}{q_Q}} w\mathbf{1}_Q \right)
&\lesssim e^{\frac{n r p_+ \delta_w C_\infty}{p_-}} \left( 2C_w^{rp_+} \right)^{\frac{C_\infty}{p_-}}\left[ e^{\frac{nC_\infty p_+\delta_w}{p_-}} \left( 2C_w^{p_+} \right)^{\frac{C_\infty}{p_-}}\right]^r
+ 2
\lesssim C_w^{\frac{rp_+C_\infty}{p_-}} e^{\frac{n r p_+ \delta_w C_\infty}{p_-}}.
\end{align*}
Hence, using this and Lemma \ref{rhof 1} yields
\begin{align*}
\left\| |Q|^{-\frac{1}{q_Q}} w\mathbf{1}_Q  \right\|_{L^{q(\cdot)}}
&\leq \max\left\{\left[\rho_{L^{q(\cdot)}} \left( |Q|^{-\frac{1}{q_Q}} w\mathbf{1}_Q \right)\right]^{\frac{1}{q_+}}, \left[\rho_{L^{q(\cdot)}} \left( |Q|^{-\frac{1}{q_Q}} w\mathbf{1}_Q \right)\right]^{\frac{1}{q_-}}  \right\}\\
& \lesssim \left[ C_w^{\frac{rp_+C_\infty}{p_-}} e^{\frac{n r p_+ \delta_w C_\infty}{p_-}}\right]^\frac{1}{q_-}
= C_w^{\frac{p_+C_\infty}{p_-p_-}} e^{\frac{n p_+ \delta_w C_\infty}{p_-p_-}}.
\end{align*}
Since $C_w = 2^{2^n}C_{p(\cdot),n}^{2^n + 1} [w]_{\mathcal{A}_{p(\cdot),\infty}}^{2^n + 1}$
and $\delta_w := \frac{2^n}{n}[1 + \log_2(C_{p(\cdot),n}[w]_{\mathcal{A}_{p(\cdot),\infty}})]$,
we infer that there exists a positive constant $A$,
depending only on $p(\cdot)$ and $n$, such that
$\| |Q|^{-\frac{1}{q_Q}} w\mathbf{1}_Q  \|_{L^{q(\cdot)}}  \lesssim [w]_{\mathcal{A}_{p(\cdot),\infty}}^A.$
This finishes the proof of \eqref{reverse Holder Apinfty 1}
in the case $\|w\mathbf{1}_{Q_0}\|_{L^{p(\cdot)}} = 1$.

Finally, we consider the case $\|w\mathbf{1}_{Q_0}\|_{L^{p(\cdot)}} \neq 1$.
Similar to the proof of Lemma \ref{Bp bound},
since $w \in \mathcal{A}_{p(\cdot),\infty}$,
it follows that $ \| w\mathbf{1}_{Q_0} \|_{L^{p(\cdot)}} \neq 0$.
Letting $v := \frac{w}{\|w\mathbf{1}_{Q_0}\|_{L^{p(\cdot)}}}$,
by the definition of $\mathcal{A}_{p(\cdot),\infty}$,
we have $\|v\mathbf{1}_{Q_0}\|_{L^p(\cdot)} = 1$
and $[w]_{\mathcal{A}_{p(\cdot),\infty}} = [v]_{\mathcal{A}_{p(\cdot),\infty}}$.
From this and the discussion on the case $\|w\mathbf{1}_{Q_0}\|_{L^{p(\cdot)}} \neq 1$,
we deduce that
\begin{align*}
\frac{1}{\|\mathbf{1}_Q\|_{L^{q(\cdot)}}}\left\| w\mathbf{1}_Q  \right\|_{L^{q(\cdot)}}
 &= \left\| w\mathbf{1}_{Q_0}  \right\|_{L^{p(\cdot)}} \frac{1}{\|\mathbf{1}_Q\|_{L^{q(\cdot)}}}\left\| v\mathbf{1}_Q  \right\|_{L^{q(\cdot)}}
\lesssim [v]_{\mathcal{A}_{p(\cdot),\infty}}^{A} \left\| w\mathbf{1}_{Q_0}  \right\|_{L^{p(\cdot)}}
 \frac{1}{\|\mathbf{1}_Q\|_{L^{p(\cdot)}}} \left\| v\mathbf{1}_Q  \right\|_{L^{p(\cdot)}}  \\
& = [w]_{\mathcal{A}_{p(\cdot),\infty}}^{A}
 \frac{1}{\|\mathbf{1}_Q\|_{L^{p(\cdot)}}} \left\| w\mathbf{1}_Q  \right\|_{L^{p(\cdot)}}.
\end{align*}
This finishes the proof of Theorem \ref{reverse Holder Apinfty}.
\end{proof}

The following result shows that the $\mathcal{A}_{p(\cdot),\infty}$ condition
is equivalent to the reverse H\"older's inequality in variable Lebesgue spaces,
where the necessity is guaranteed by Theorem \ref{reverse Holder Apinfty}.
\begin{theorem}\label{Apinfty Holder}
Let $p(\cdot)\in\mathcal{P}_0$ with $p(\cdot) \in LH$.
Then, for any scalar weight $w$, the following two statements are equivalent:
\begin{itemize}
\item[{\rm (i)}] $w \in \mathcal{A}_{p(\cdot),\infty}$;
\item[{\rm (ii)}] there exist positive constants $r\in (1,\infty)$ and $C$
such that, for any cube $Q$ in $\mathbb{R}^n$,
\begin{align}\label{Ap Holder eq 1}
\frac{1}{\|\mathbf{1}_Q\|_{L^{rp(\cdot)}}} \left\|w\mathbf{1}_Q\right\|_{L^{rp(\cdot)}}
\leq C\frac{1}{\|\mathbf{1}_Q\|_{L^{p(\cdot)}}} \left\|w\mathbf{1}_Q\right\|_{L^{p(\cdot)}}.
\end{align}
\end{itemize}
\end{theorem}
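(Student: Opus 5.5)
The direction (i)$\Rightarrow$(ii) is exactly Theorem \ref{reverse Holder Apinfty}: for $w\in\mathcal{A}_{p(\cdot),\infty}$ we may take any $r\in(1,r_w]$ and $C:=C[w]^A_{\mathcal{A}_{p(\cdot),\infty}}$. So the whole work is (ii)$\Rightarrow$(i). The plan is to show that (ii) forces $\mathbb{W}:=[w(\cdot)]^{p(\cdot)}\in A_\infty$, and then invoke Theorem \ref{Apinfty A}.

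First we reduce to $p(\cdot)\in\mathcal{P}$ with $p_->1$. Raising to a power $s$ and using Lemma \ref{con f} gives
\[
\frac{\|w^s\mathbf{1}_Q\|_{L^{rp(\cdot)/s}}}{\|\mathbf{1}_Q\|_{L^{rp(\cdot)/s}}}=\left[\frac{\|w\mathbf{1}_Q\|_{L^{rp(\cdot)}}}{\|\mathbf{1}_Q\|_{L^{rp(\cdot)}}}\right]^s .
\]
Hence (ii) for $(w,p(\cdot))$ is equivalent to (ii) for $(w^s,p(\cdot)/s)$, with constant $C^s$. By Lemma \ref{wr}, (i) transfers in the same way. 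Choosing $s:=p_-/2$, we may therefore assume $p_-\ge 2$ and $p(\cdot)\in LH$. The LH property is preserved under this scaling.

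Now set $q(\cdot):=rp(\cdot)$ and $u:=\frac{1}{r}$. We want a Jensen/H\"older-type lower bound for the left side of \eqref{Ap Holder eq 1} in terms of something smaller. The cleanest route is to show $w\in\mathcal{A}^\ast_{p(\cdot),\infty}$ after a change of exponent, and then use Theorem \ref{w reverse}.

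Apply Lemma \ref{est fQ} to the function $w^{p_-}$ in the space $L^{p(\cdot)/p_-}$, and then Lemma \ref{con f}. This gives
\[
\left[\fint_Q w^{p_-}\right]^{1/p_-}\lesssim \frac{\|w\mathbf{1}_Q\|_{L^{p(\cdot)}}}{\|\mathbf{1}_Q\|_{L^{p(\cdot)}}}.
\]
Doing the same with $rp(\cdot)$ shows that the left side of \eqref{Ap Holder eq 1} dominates $\left[\fint_Q w^{rp_-}\right]^{1/(rp_-)}$. That alone is the wrong direction. Instead, I would prove directly that (ii) gives $\mathcal{A}^\ast_{\widetilde p(\cdot),\infty}$ for a suitable exponent, by interpolating: H\"older's inequality in variable Lebesgue spaces lets us control $\|w\mathbf{1}_Q\|_{L^{p(\cdot)}}$ by $\|w\mathbf{1}_Q\|_{L^{1}}^{\theta}\|w\mathbf{1}_Q\|_{L^{rp(\cdot)}}^{1-\theta}$, where $\frac{1}{p(x)}=\theta+\frac{1-\theta}{rp(x)}$. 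Since $\theta$ would then depend on $x$, I would rather work at the level of $\mathbb{W}$.

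The concrete route I commit to is to show that $\mathbb{W}$ satisfies a uniform reverse H\"older inequality, i.e., $\mathbb{W}\in RH_r$, which implies $\mathbb{W}\in A_\infty$ by \cite[Theorem 7.3.3]{g14}. The argument mirrors the proof of Theorem \ref{reverse Holder Apinfty} in reverse.

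We normalize $\| |Q|^{-1/p_Q}w\mathbf{1}_Q\|_{L^{p(\cdot)}}=1$. This gives $\int_Q|Q|^{-p(x)/p_Q}\mathbb{W}=1$. By (ii) together with Lemma \ref{est Q}, the modular $\int_Q|Q|^{-p(x)/p_Q}\mathbb{W}^r$ is bounded.

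For $|Q|\le1$, Lemma \ref{weight bound}(i) converts these two facts into
\[
\fint_Q\mathbb{W}^r\lesssim 1\sim\left(\fint_Q\mathbb{W}\right)^r .
\]
For $|Q|>1$, we compare $|Q|^{-p(x)/p_Q}$ with $|Q|^{-p(x)/p_\infty}$ and then with the constant $|Q|^{-1}$. The tools are Lemma \ref{weight bound}(ii) and Lemma \ref{rs f 1}, which produce error terms of the form $\int\mathbb{W}(e+|x|)^{-ntp_-}$.

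\textbf{Main obstacle.} The hard step is controlling these tail terms without already knowing $\mathbb{W}$ is doubling. The only doubling-type information available comes from (ii) itself, and the tail sums in Lemma \ref{EB 2} used \eqref{w doubling 1}. To get around this, I would first derive from (ii) and Lemma \ref{est fQ} the inequality
\[
\frac{\|w\mathbf{1}_Q\|_{L^{p(\cdot)}}}{\|\mathbf{1}_Q\|_{L^{p(\cdot)}}}\lesssim\fint_Q w .
\]
This follows by applying (ii) together with H\"older's inequality between $L^1$ and $L^{rp(\cdot)}$, with the exponent logic handled via Lemma \ref{est Q}. Granting it, $w\in\mathcal{A}^\ast_{p(\cdot),\infty}=\mathcal{A}_{p(\cdot),\infty}$ by Theorem \ref{w reverse}, since $p_->1$. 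If this shortcut fails, the fallback is Remark \ref{rem EB 2}: it needs only \eqref{w doubling 1}, which I would try to extract from (ii) via Lemma \ref{est fQ} applied on $\lambda Q$.
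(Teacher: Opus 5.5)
\textbf{Genuine gap: the key inequality in your plan is not established.}

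Your reduction to $p_-\ge 2$ is fine, and so is the direction (i)$\Rightarrow$(ii). The target $\|w\mathbf{1}_Q\|_{L^{p(\cdot)}}/\|\mathbf{1}_Q\|_{L^{p(\cdot)}}\lesssim\fint_Q w$ would indeed finish the proof via Theorem \ref{w reverse}. But that inequality is the entire content of (ii)$\Rightarrow$(i), and the justification you offer does not work.

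\begin{itemize}
\item Lemma \ref{est fQ} bounds $\fint_Q w$ from \emph{above} by the $L^{p(\cdot)}$-average, so it points the wrong way.
\item Interpolating between $L^1$ and $L^{rp(\cdot)}$ with a constant $\theta$ requires $\frac{1}{p(x)}=\theta+\frac{1-\theta}{rp(x)}$ for all $x$, which forces $p(\cdot)$ to be constant.
\item With the variable choice $\theta(x)=\frac{r-1}{rp(x)-1}$, the norm $\|w^{\theta(\cdot)}\mathbf{1}_Q\|_{L^{1/\theta(\cdot)}}$ only lies between $(\int_Q w)^{\theta_-(Q)}$ and $(\int_Q w)^{\theta_+(Q)}$. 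Making this usable means controlling $(\int_Q w)^{\theta_+(Q)-\theta_-(Q)}$ from both sides, uniformly in $Q$. That is a Diening-type estimate for $w$ (compare Lemma \ref{Bp bound}), and it is proved from doubling, which is exactly what you lack.
\item Lemma \ref{est Q} concerns only $\|\mathbf{1}_Q\|_{L^{p(\cdot)}}$ and cannot help.
\item Constant-$\theta$ interpolation between $L^{sp(\cdot)}$ and $L^{rp(\cdot)}$ is legitimate, but it only lowers the exponent to $sp(\cdot)$, never to $L^1$.
\item Your $RH_r$ route stalls on large cubes, as you note yourself.
\item The fallback has no mechanism: Lemma \ref{est fQ} says nothing about \eqref{w doubling 1}.
\end{itemize}

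\textbf{The missing idea is the paper's subset estimate.} For $E\subset Q$, apply Lemma \ref{Holder qr} to $w\mathbf{1}_E$ with the pair $rp(\cdot)$ and $\frac{r}{r-1}p(\cdot)$. Both are multiples of $p(\cdot)$, so Lemma \ref{con f} handles the exponents exactly. Then use $\|w\mathbf{1}_E\|_{L^{rp(\cdot)}}\le\|w\mathbf{1}_Q\|_{L^{rp(\cdot)}}$, then (ii), then Lemma \ref{EB 3} with $w\equiv1$. This gives
\[
\frac{\|w\mathbf{1}_E\|_{L^{p(\cdot)}}}{\|w\mathbf{1}_Q\|_{L^{p(\cdot)}}}\lesssim\left[\frac{\|\mathbf{1}_E\|_{L^{p(\cdot)}}}{\|\mathbf{1}_Q\|_{L^{p(\cdot)}}}\right]^{1-\frac1r}\lesssim\left(\frac{|E|}{|Q|}\right)^{\frac{1}{p_+}(1-\frac1r)}.
\]

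From here the paper does the following:
\begin{itemize}
\item it gets doubling from Lemma \ref{EQ 1};
\item it reruns Lemma \ref{Bp bound} and uses Remark \ref{rem EB 2} to compare with $\mathbb{W}(E)/\mathbb{W}(Q)$;
\item it deduces $\mathbb{W}\in A_\infty$ and concludes by Theorem \ref{Apinfty A}.
\end{itemize}

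\textbf{With this estimate, your own target follows directly.} Let $\Lambda_Q:=\|w\mathbf{1}_Q\|_{L^{p(\cdot)}}/\|\mathbf{1}_Q\|_{L^{p(\cdot)}}$ and $E:=\{x\in Q:\ w(x)>\Lambda_Q/2\}$.
\begin{itemize}
\item Since $w\le\Lambda_Q/2$ on $Q\setminus E$, we get $\|w\mathbf{1}_{Q\setminus E}\|_{L^{p(\cdot)}}\le\frac12\|w\mathbf{1}_Q\|_{L^{p(\cdot)}}$.
\item The triangle inequality in $L^{p(\cdot)}$ then gives $\|w\mathbf{1}_E\|_{L^{p(\cdot)}}\ge\frac12\|w\mathbf{1}_Q\|_{L^{p(\cdot)}}$.
\item The displayed estimate yields $|E|\ge c|Q|$ with $c$ independent of $Q$.
\item Hence $\fint_Q w\ge\frac{c}{2}\Lambda_Q$.
\end{itemize}
This gives $w\in\mathcal{A}^\ast_{p(\cdot),\infty}$, and Theorem \ref{w reverse} finishes. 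That is a shorter route than the paper's and avoids Theorem \ref{Apinfty A}. As written, though, your proposal lacks the subset estimate and does not establish (ii)$\Rightarrow$(i).
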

Before giving the proof of Theorem \ref{Apinfty Holder},
we first recall some useful tools.
\begin{lemma}\label{EQ 1}
Let $p(\cdot) \in \mathcal{P}$ with $p(\cdot) \in LH$ and let $w\in \mathscr{M}$.
If there exist $\alpha,\beta \in (0,1)$ such that, for any cube $Q$ in $\mathbb{R}^n$
and any measurable set $E\subset Q$ with $|E| \geq \alpha|Q|$,
$\|w\mathbf{1}_E\|_{L^{p(\cdot)}}\geq \beta \|w\mathbf{1}_Q\|_{L^{p(\cdot)}}$,
then there exist positive constants $C$ and $\delta$
such that, for any $\lambda \in (1,\infty)$ and any cube $Q$ in $\mathbb{R}^n$,
$\|w\mathbf{1}_{\lambda Q}\|_{L^{p(\cdot)}} \leq C \lambda^\delta
\|w\mathbf{1}_Q\|_{L^{p(\cdot)}}.$
\end{lemma}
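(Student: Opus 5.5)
The plan is the standard doubling argument via complements. The hypothesis gives a one-step doubling estimate for $\|w\mathbf{1}_{\cdot}\|_{L^{p(\cdot)}}$ at ratio $\lambda=2$, or more precisely at a ratio $\lambda_0>1$ close to $1$. Iterating this yields the polynomial bound.

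\emph{Step 1: a contraction estimate.} Fix a cube $Q$ and a ratio $\lambda_0\in(1,2]$ with $\lambda_0^{-n}\ge \alpha$. Set $R:=\lambda_0 Q$ and $E:=Q\subset R$. Then
\[
|E|=\lambda_0^{-n}|R|\ge \alpha|R|,
\]
so the hypothesis gives
\[
\|w\mathbf{1}_{\lambda_0 Q}\|_{L^{p(\cdot)}}\le \beta^{-1}\|w\mathbf{1}_Q\|_{L^{p(\cdot)}} .
\]
If $\alpha$ is so close to $1$ that we must take $\lambda_0$ very near $1$, this is still fine. Alternatively, to get a fixed ratio $2$, I would subdivide: $2Q$ is the union of $2^n$ cubes of side $l(Q)$. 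Each of them, together with $Q$, fits in a chain of cubes $Q=P_0,P_1,\dots,P_N$ with $P_{i}\cup P_{i+1}$ contained in a cube $S_i$ such that $|P_i|\ge\alpha|S_i|$. The step-1 bound applied along the chain then controls each piece, and the quasi-triangle inequality of $L^{p(\cdot)}$ (a genuine norm, since $p(\cdot)\in\mathcal{P}$) sums the $2^n$ pieces. The simplest route is nevertheless the first one: just use $\lambda_0:=\alpha^{-1/n}>1$, valid directly for any cube.

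\emph{Step 2: iteration.} For $\lambda\in(1,\infty)$, choose $k\in\mathbb{N}$ with $\lambda_0^{k-1}<\lambda\le\lambda_0^k$. Apply Step 1 $k$ times to the nested cubes $\lambda_0^jQ$, all concentric, using the monotonicity $\|w\mathbf{1}_{\lambda Q}\|\le\|w\mathbf{1}_{\lambda_0^kQ}\|$. This gives
\[
\|w\mathbf{1}_{\lambda Q}\|_{L^{p(\cdot)}}\le \beta^{-k}\|w\mathbf{1}_Q\|_{L^{p(\cdot)}}.
\]
Since $k\le 1+\log\lambda/\log\lambda_0$, we have $\beta^{-k}\le\beta^{-1}\lambda^{\delta}$ with $\delta:=\log(\beta^{-1})/\log\lambda_0=n\log(1/\beta)/\log(1/\alpha)$. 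The conclusion then holds with $C:=\beta^{-1}$.

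\emph{Main obstacle.} There is essentially none. The only point to check is that the hypothesis is applied with $E=Q$ and the ambient cube $\lambda_0Q$, which is legitimate because the hypothesis quantifies over all cubes and all measurable subsets with the density condition. Degenerate cases such as $\|w\mathbf{1}_Q\|_{L^{p(\cdot)}}=\infty$ or $0$ are handled trivially by the same inequality. In particular, neither the $LH$ assumption nor any variable-exponent machinery is needed beyond monotonicity of the norm.
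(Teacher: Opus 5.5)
Your proof is correct and is essentially the paper's argument. The paper picks $\mu\in(0,1)$ with $\mu^n>\alpha$ and applies the hypothesis to $E=\mu Q\subset Q$ to get $\|w\mathbf{1}_{\mu^{-1}Q}\|_{L^{p(\cdot)}}\le\beta^{-1}\|w\mathbf{1}_Q\|_{L^{p(\cdot)}}$, which is your Step 1 with $\lambda_0=\mu^{-1}$; it then iterates via the doubling bound at ratio $2$, whereas you iterate directly at ratio $\lambda_0$, which gives you the explicit exponent $\delta=n\log(1/\beta)/\log(1/\alpha)$. The chain/subdivision aside in Step 1 is unnecessary and can be dropped, as can the restriction $\lambda_0\in(1,2]$, which conflicts with $\lambda_0=\alpha^{-1/n}$ when $\alpha<2^{-n}$.
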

\begin{proof}
Since $\alpha \in (0,1)$,
it follows that there exists $\mu \in (0,1)$ such that $\mu^n > \alpha$.
Thus, for any cube $Q$ in $\mathbb{R}^n$, $\mu Q \subset Q$ and $|\mu Q| = \mu^n |Q| > \alpha |Q|$.
By these, we obtain
$\|w\mathbf{1}_{\mu Q}\|_{L^{p(\cdot)}}\geq \beta \|w\mathbf{1}_Q\|_{L^{p(\cdot)}}.$
Using this with $Q$ replaced by $\frac{1}{\mu}Q$,
we find that
\begin{align}\label{EQ 2}
\left\|w\mathbf{1}_{\frac{1}{\mu}Q}\right\|_{L^{p(\cdot)}} \leq \frac{1}{\beta} \left\|w\mathbf{1}_Q\right\|_{L^{p(\cdot)}}.
\end{align}
Next, let $q$ be the least integer such that $\frac{1}{\mu^q} > 2 $.
From this and \eqref{EQ 2},
we infer that
$\|w\mathbf{1}_{2Q}\|_{L^{p(\cdot)}} \leq \frac{1}{\beta^q} \|w\mathbf{1}_Q\|_{L^{p(\cdot)}},$
which further implies that there exists $\delta \in (0,\infty)$ such that,
for any $\lambda \in (1,\infty)$,
$\|w\mathbf{1}_{\lambda Q}\|_{L^{p(\cdot)}} \lesssim \lambda^\delta \|w\mathbf{1}_Q\|_{L^{p(\cdot)}}.$
This finishes the proof of Lemma \ref{EQ 1}.
\end{proof}
The following lemma is an application of Lemma \ref{Holder},
which is exactly \cite[Corollary 2.28]{cf13}.
\begin{lemma}\label{Holder qr}
Let $p(\cdot),q(\cdot),r(\cdot) \in \mathcal{P}$ satisfy
$\frac{1}{p(\cdot)} = \frac{1}{q(\cdot)} + \frac{1}{r(\cdot)}$.
Then there exists a positive constant $C$ such that,
for any $f \in L^{q(\cdot)}$ and $g \in L^{r(\cdot)}$,
$fg \in L^{p(\cdot)}$ and
$\|fg\|_{L^{p(\cdot)}} \leq C \|f\|_{L^{q(\cdot)}} \|g\|_{L^{r(\cdot)}}.$
\end{lemma}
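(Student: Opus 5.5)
The plan is the standard one: normalize, apply Young's inequality pointwise, and integrate. By homogeneity of all three quasi-norms, I may assume $\|f\|_{L^{q(\cdot)}}=\|g\|_{L^{r(\cdot)}}=1$. The degenerate case where either norm is $0$ is trivial, since then $fg=0$ almost everywhere. The goal becomes to show $\rho_{L^{p(\cdot)}}(fg/C)\le 1$ for some constant $C$ independent of $f$ and $g$; by the definition of the Luxemburg norm this gives $\|fg\|_{L^{p(\cdot)}}\le C$.

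\textbf{Pointwise step.} Fix $x$ with $p(x),q(x),r(x)<\infty$. Since $\frac{p(x)}{q(x)}+\frac{p(x)}{r(x)}=1$, Young's inequality with the conjugate pair $\frac{q(x)}{p(x)},\frac{r(x)}{p(x)}$ gives, for $a,b\ge 0$,
$$(ab)^{p(x)}\le \frac{p(x)}{q(x)}a^{q(x)}+\frac{p(x)}{r(x)}b^{r(x)}\le a^{q(x)}+b^{r(x)}.$$
Apply this with $a=|f(x)|$ and $b=|g(x)|$ and integrate. Because the norms are $1$, Lemma \ref{rho f norm} (or simply the definition of the norm) yields $\rho_{L^{q(\cdot)}}(f)\le 1$ and $\rho_{L^{r(\cdot)}}(g)\le 1$. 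Hence $\rho_{L^{p(\cdot)}}(fg)\le 2$.

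\textbf{Rescaling step.} Since $p(\cdot)\ge1$, the map $t\mapsto t^{p(x)}$ is convex, so $\rho_{L^{p(\cdot)}}(h/2)\le \frac12\rho_{L^{p(\cdot)}}(h)$. Taking $h=fg$ gives $\rho_{L^{p(\cdot)}}(fg/2)\le 1$, and therefore $\|fg\|_{L^{p(\cdot)}}\le 2$.

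\textbf{Main obstacle: infinite exponents.} Exponents in $\mathcal P$ may take the value $\infty$, and there the modular involves an $L^\infty$ part. I would split $\mathbb R^n$ into three sets:
\begin{itemize}
\item $\{p=\infty\}$, where necessarily $q=r=\infty$;
\item $\{p<\infty,\ q=\infty\}$, where $r=p$;
\item $\{p<\infty,\ r=\infty\}$, where $q=p$;
\end{itemize}
together with the set where all three exponents are finite. On the first set the estimate is just $\|fg\|_{L^\infty}\le \|f\|_{L^\infty}\|g\|_{L^\infty}\le 1$. On the two mixed sets, one factor is bounded by $1$ almost everywhere, so $|fg|^{p(x)}\le |g|^{r(x)}$ (respectively $\le|f|^{q(x)}$) pointwise. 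Each piece contributes at most a bounded amount to the modular, or at most $1$ to the sup part. The normalized function therefore has norm bounded by an absolute constant, say $C\le 4$, obtained by applying the same convexity rescaling to the sum of the pieces. This bookkeeping is the only delicate part; the finite-exponent case above carries the real content.
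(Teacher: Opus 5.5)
Your proof is correct. The paper gives no argument of its own here. It imports the statement verbatim as \cite[Corollary 2.28]{cf13}, remarking only that it is an application of the H\"older inequality in Lemma~\ref{Holder}.

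That reduction would go roughly as follows. After normalizing $\|f\|_{L^{q(\cdot)}}=\|g\|_{L^{r(\cdot)}}=1$, apply Lemma~\ref{Holder} on $\{p<\infty\}$ to $|f|^{p(\cdot)}$ and $|g|^{p(\cdot)}$, using the conjugate pair $s(\cdot):=q(\cdot)/p(\cdot)$ and $s'(\cdot)=r(\cdot)/p(\cdot)$. The bound $\rho_{L^{s(\cdot)}}(|f|^{p(\cdot)})\le\rho_{L^{q(\cdot)}}(f)\le 1$, and its analogue for $g$, give norms at most $1$. Hence $\rho_{L^{p(\cdot)}}(fg)$ is controlled by the H\"older constant.

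You instead apply Young's inequality pointwise with the exponents $q(x)/p(x)$ and $r(x)/p(x)$. This is exactly the mechanism inside the proof of Lemma~\ref{Holder}. Your route is self-contained, avoids the auxiliary exponent $s(\cdot)$, and gives an explicit constant.

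Your bookkeeping for infinite exponents (with the $L^\infty$ part of the modular as in \cite{cf13}) can be tightened:
\begin{itemize}
\item The sets $\{p<\infty,\,q=\infty\}$ and $\{p<\infty,\,r=\infty\}$ are disjoint from the set where all three exponents are finite.
\item Hence the $f$-integrals and $g$-integrals land on disjoint pieces of $\{q<\infty\}$ and $\{r<\infty\}$ respectively.
\item Since $\{p=\infty\}\subset\{q=\infty\}\cap\{r=\infty\}$ and $\|g\|_{L^\infty(\{r=\infty\})}\le 1$, you have $\|fg\|_{L^\infty(\{p=\infty\})}\le\|f\|_{L^\infty(\{q=\infty\})}$.
\end{itemize}
Therefore $\rho_{L^{p(\cdot)}}(fg)\le\rho_{L^{q(\cdot)}}(f)+\rho_{L^{r(\cdot)}}(g)\le 2$. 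Your halving step then gives $\|fg\|_{L^{p(\cdot)}}\le 2$ in all cases, not merely some $C\le 4$.
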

Now, we give the proof of Theorem \ref{Apinfty Holder}.
\begin{proof}[Proof of Theorem \ref{Apinfty Holder}]
By Theorem \ref{reverse Holder Apinfty},
we find that \eqref{Ap Holder eq 1} holds for any $w \in \mathcal{A}_{p(\cdot),\infty}$.
This finishes the proof that ${\rm (i)} \Rightarrow {\rm (ii)}$.
Hence, it only remains to prove ${\rm (ii)} \Rightarrow {\rm (i)}$.

First, we begin with the proof under the assumption $p(\cdot) \in \mathcal{P}$.
We claim that, for any cube $Q$ and any measurable set $E\subset Q$,
we have
\begin{align}\label{ww eq 7}
\left[\frac{\mathbb{W}(E)}{\mathbb{W}(Q)}\right]^{\frac{1}{p_-}} \lesssim \left( \frac{|E|}{|Q|} \right)^{\frac{1}{p_+}(1 - \frac{1}{r})},
\end{align}
where $\mathbb{W} := [w(\cdot)]^{p(\cdot)}$.
Indeed, if \eqref{ww eq 7} holds,
then, using this with \cite[Theorem 7.3.3]{g14},
we find $\mathbb{W}\in A_\infty$,
which, together with Theorem \ref{Apinfty A},
further implies $w\in\mathcal{A}_{p(\cdot),\infty}$.
Thus, we only need to prove the above claim.

We begin with the assumption $\|w\mathbf{1}_{Q_0}\|_{L^{p(\cdot)}} = 1$.
Indeed, by Lemma \ref{Holder qr} with taking $f := w\mathbf{1}_E$, $g := \mathbf{1}_E$,
$p(\cdot) := p(\cdot)$, $q(\cdot) := rp(\cdot)$,
and $r(\cdot) := \frac{r}{r-1}p(\cdot)$
and by Lemma \ref{con f} and \eqref{Ap Holder eq 1},
we find that, for any cube $Q$ in $\mathbb{R}^n$ and any measurable set $E\subset Q$,
\begin{align*}
\left\|w\mathbf{1}_E\right\|_{L^{p(\cdot)}}
&= \left\|\mathbf{1}_E\right\|_{L^{p(\cdot)}} \frac{\|w\mathbf{1}_E\|_{L^{p(\cdot)}}}{\|\mathbf{1}_E\|_{L^{p(\cdot)}}}
\lesssim \left\|\mathbf{1}_E\right\|_{L^{p(\cdot)}} \frac{\|w\mathbf{1}_E\|_{L^{rp(\cdot)}} \|\mathbf{1}_E\|_{L^{\frac{r}{r-1}p(\cdot)}}}{\|\mathbf{1}_E\|_{L^{p(\cdot)}}}\\
&= \left\|\mathbf{1}_E\right\|_{L^{p(\cdot)}} \frac{\|w\mathbf{1}_E\|_{L^{rp(\cdot)}}}{\|\mathbf{1}_E\|_{L^{rp(\cdot)}}}
 \leq \left\|\mathbf{1}_E\right\|_{L^{p(\cdot)}}^{1 - \frac1r} \left\|\mathbf{1}_Q\right\|_{L^{rp(\cdot)}} \frac{\|w\mathbf{1}_Q\|_{L^{rp(\cdot)}}}{\|\mathbf{1}_Q\|_{L^{rp(\cdot)}}}\\
&\lesssim \left\|\mathbf{1}_E\right\|_{L^{p(\cdot)}}^{1 - \frac1r} \left\|\mathbf{1}_Q\right\|_{L^{rp(\cdot)}} \frac{\|w\mathbf{1}_Q\|_{L^{p(\cdot)}}}{\|\mathbf{1}_Q\|_{L^{p(\cdot)}}}
 = \left[\frac{\|\mathbf{1}_E\|_{L^{p(\cdot)}}}{\|\mathbf{1}_Q\|_{L^{p(\cdot)}}}\right]^{1 - \frac1r} \left\|w\mathbf{1}_Q\right\|_{L^{p(\cdot)}},
\end{align*}
which further implies that
\begin{align}\label{ww eq 3}
\frac{\|w\mathbf{1}_E\|_{L^{p(\cdot)}}}{\|w\mathbf{1}_Q\|_{L^{p(\cdot)}}}
\lesssim \left[\frac{\|\mathbf{1}_E\|_{L^{p(\cdot)}}}{\|\mathbf{1}_Q\|_{L^{p(\cdot)}}}\right]^{1 - \frac1r}.
\end{align}
Using Lemma \ref{EB 3} and the fact $1\in \mathcal{A}_{p(\cdot),\infty}$,
we obtain $ \frac{\|\mathbf{1}_E\|_{L^{p(\cdot)}}}{\|\mathbf{1}_Q\|_{L^{p(\cdot)}}} \lesssim (\frac{|E|}{|Q|})^{\frac{1}{p_+}} $,
which, combined with \eqref{ww eq 3},
further yields
\begin{align}\label{ww eq 4}
\frac{\|w\mathbf{1}_E\|_{L^{p(\cdot)}}}{\|w\mathbf{1}_Q\|_{L^{p(\cdot)}}}
\lesssim \left(\frac{|E|}{|Q|}\right)^{\frac{1}{p_+}(1 - \frac1r)}.
\end{align}
Next, let $\alpha \in (0,1)$ be a fixed constant.
Then it follows from \eqref{ww eq 4} that there exists a positive constant $\beta \in (0,1)$
such that, for any cube $Q$ and any measurable set $E\subset Q$ with $|E| < \alpha |Q|$,
$\|w\mathbf{1}_E\|_{L^{p(\cdot)}} < \beta\|w\mathbf{1}_Q\|_{L^{p(\cdot)}}$.
Using this with $E$ replaced by $Q\setminus E$,
we conclude that, for any cube $Q$ and any measurable set $E\subset Q$ with $|E| \geq (1-\alpha)|Q|$,
$\|w\mathbf{1}_E\|_{L^{p(\cdot)}} \geq (1-\beta)\|w\mathbf{1}_Q\|_{L^{p(\cdot)}}$.
From this and Lemma \ref{EQ 1},
we deduce that there exists $\delta \in (0,\infty)$ such that,
for any cube $Q$ in $\mathbb{R}^n$ and any $\lambda \in [1,\infty)$,
\begin{align}\label{mu doubling}
\|w\mathbf{1}_{\lambda Q}\|_{L^{p(\cdot)}} \lesssim \lambda^\delta \|w\mathbf{1}_Q\|_{L^{p(\cdot)}}.
\end{align}
Recall that, in the proof of Lemma \ref{Bp bound},
we need to use Lemma \ref{EB 1}, which is the only place
using the properties of $\mathcal{A}_{p(\cdot),\infty}$ weights.
Thus, repeating the proof of Lemma \ref{Bp bound} with Lemma \ref{EB 1} replaced
by \eqref{mu doubling},
we find that, for any cube $Q$ in $\mathbb{R}^n$,
\begin{align}\label{ww eq 11}
\left\| w\mathbf{1}_Q \right\|_{L^{p(\cdot)}}^{p_-(Q) - p_+(Q)} \lesssim 1.
\end{align}
Now, let $Q$ be any cube in $\mathbb{R}^n$ and $E\subset Q$ be any measurable set.
If $\|w\mathbf{1}_Q\|_{L^{p(\cdot)}} \leq 1$,
then, from \eqref{ww eq 11} and Lemma \ref{rhof 1},
we infer that
\begin{align}\label{ww eq 5}
\frac{\|w\mathbf{1}_E\|_{L^{p(\cdot)}}}{\|w\mathbf{1}_Q\|_{L^{p(\cdot)}}}
= \|w\mathbf{1}_Q\|^{\frac{p_+(Q)}{p_-(Q)} - 1}_{L^{p(\cdot)}}
\frac{\|w\mathbf{1}_E\|_{L^{p(\cdot)}}}{\|w\mathbf{1}_Q\|^{\frac{p_+(Q)}{p_-(Q)}}_{L^{p(\cdot)}}}
\geq \|w\mathbf{1}_Q\|^{\frac{p_+(Q) - p_-(Q)}{p_-(Q)}}_{L^{p(\cdot)}} \left[\frac{\mathbb{W}(E)}{\mathbb{W}(Q)}\right]^\frac{1}{p_-(Q)}
\gtrsim \left[\frac{\mathbb{W}(E)}{\mathbb{W}(Q)}\right]^{\frac{1}{p_-}}.
\end{align}
If $\|w\mathbf{1}_E\|_{L^{p(\cdot)}} \leq 1 \leq \|w\mathbf{1}_Q\|_{L^{p(\cdot)}}$,
then, by Lemma \ref{rhof 1}, we conclude that
\begin{align}\label{ww eq 6}
\frac{\|w\mathbf{1}_E\|_{L^{p(\cdot)}}}{\|w\mathbf{1}_Q\|_{L^{p(\cdot)}}}
\geq \left[\frac{\mathbb{W}(E)}{\mathbb{W}(Q)}\right]^{\frac{1}{p_-(Q)}}
\geq \left[\frac{\mathbb{W}(E)}{\mathbb{W}(Q)}\right]^{\frac{1}{p_-}}.
\end{align}
If $\|w\mathbf{1}_E\|_{L^{p(\cdot)}} \geq 1$, then,
using Remark \ref{rem EB 2} and \eqref{mu doubling},
we find that $ \|w\mathbf{1}_E\|_{L^{p(\cdot)}} \sim \mathbb{W}(E)^{\frac{1}{p_\infty}} $
and $ \|w\mathbf{1}_Q\|_{L^{p(\cdot)}} \sim \mathbb{W}(Q)^{\frac{1}{p_\infty}} $,
which, together with the fact $p_- \leq p_\infty$, further implies that
\begin{align*}
\frac{\|w\mathbf{1}_E\|_{L^{p(\cdot)}}}{\|w\mathbf{1}_Q\|_{L^{p(\cdot)}}}
\sim \left[\frac{\mathbb{W}(E)}{\mathbb{W}(Q)}\right]^{\frac{1}{p_\infty}} \geq \left[\frac{\mathbb{W}(E)}{\mathbb{W}(Q)}\right]^{\frac{1}{p_-}}.
\end{align*}
Hence, from this, \eqref{ww eq 4}, \eqref{ww eq 5}, and \eqref{ww eq 6},
it follows that
\begin{align*}
\left[\frac{\mathbb{W}(E)}{\mathbb{W}(Q)}\right]^{\frac{1}{p_-}}
\lesssim \frac{\|w\mathbf{1}_E\|_{L^{p(\cdot)}}}{\|w\mathbf{1}_Q\|_{L^{p(\cdot)}}}
\lesssim \left( \frac{|E|}{|Q|} \right)^{\frac{1}{p_+}(1 - \frac{1}{r})}.
\end{align*}
This finishes ths proof of \eqref{ww eq 7} in the case $\|w\mathbf{1}_{Q_0}\|_{L^{p(\cdot)}} = 1$.

Next, we consider the case $\|w\mathbf{1}_{Q_0}\|_{L^{p(\cdot)}} \neq 1$.
Since $w$ is positive,
we deduce that $\|w\mathbf{1}_{Q_0}\|_{L^{p(\cdot)}} >0$.
Now, letting $v := \frac{w}{\|w\mathbf{1}_{Q_0}\|_{L^{p(\cdot)}}}$
and $\mathbb{W}_0 := [v(\cdot)]^{p(\cdot)}$, by the proof above,
we find that
\begin{align*}
\left[\frac{\mathbb{W}(E)}{\mathbb{W}(Q)}\right]^{\frac{1}{p_-}}
&\lesssim \max\left\{ \left\|w\mathbf{1}_{Q_0}\right\|^{\frac{p_+}{p_-} - 1}_{L^{p(\cdot)}}, \left\|w\mathbf{1}_{Q_0}\right\|^{1 - \frac{p_+}{p_-}}_{L^{p(\cdot)}} \right\}
\left[\frac{\mathbb{W}_0(E)}{\mathbb{W}_0(Q)}\right]^{\frac{1}{p_-}}
\lesssim \frac{\|v\mathbf{1}_E\|_{L^{p(\cdot)}}}{\|v\mathbf{1}_Q\|_{L^{p(\cdot)}}}
\lesssim \left( \frac{|E|}{|Q|} \right)^{\frac{1}{p_+}(1 - \frac1r)},
\end{align*}
which completes the proof of \eqref{ww eq 7} for the case $\|w\mathbf{1}_{Q_0}\|_{L^{p(\cdot)}} \neq 1$.

Finally, we consider the case $ p(\cdot)\in \mathcal{P}_0 $.
By Lemma \ref{con f},
we find that, for any cube $Q$,
\eqref{Ap Holder eq 1} is equivalent to
\begin{align*}
\left[\frac{1}{\|\mathbf{1}_Q\|_{L^{\frac{rp(\cdot)}{p_-}}}} \left\|w^{p_-}\mathbf{1}_Q\right\|_{L^{\frac{rp(\cdot)}{p_-}}}\right]^\frac{1}{p_-}
\lesssim \left[\frac{1}{\|\mathbf{1}_Q\|_{L^{\frac{p(\cdot)}{p_-}}}} \left\|w^{p_-}\mathbf{1}_Q\right\|_{L^{\frac{p(\cdot)}{p_-}}}\right]^\frac{1}{p_-},
\end{align*}
which further implies that
\begin{align*}
\frac{1}{\|\mathbf{1}_Q\|_{L^{\frac{rp(\cdot)}{p_-}}}} \left\|w^{p_-}\mathbf{1}_Q\right\|_{L^{\frac{rp(\cdot)}{p_-}}}
\lesssim \frac{1}{\|\mathbf{1}_Q\|_{L^{\frac{p(\cdot)}{p_-}}}} \left\|w^{p_-}\mathbf{1}_Q\right\|_{L^{\frac{p(\cdot)}{p_-}}}.
\end{align*}
From this, the facts that $\frac{p(\cdot)}{p_-} \in \mathcal{P}$ and $\frac{p(\cdot)}{p_-}\in LH$,
and the proved result for $p(\cdot) \in \mathcal{P}$ with $p(\cdot) \in LH$,
we infer that $ w^{p_-} \in \mathcal{A}_{\frac{p(\cdot)}{p_-},\infty}$.
Using this and Lemma \ref{wr},
we conclude that $w \in \mathcal{A}_{p(\cdot),\infty}$.
This finishes the proof of Theorem \ref{Apinfty Holder}.
\end{proof}

\section{Variable Matrix Weights}\label{sec matrix}
In this section, we first introduce matrix $\mathscr{A}_{p(\cdot),\infty}$ weights
and reducing operators related to matrix $\mathscr{A}_{p(\cdot),\infty}$ weights.
Then, in Subsection \ref{sec proApinfty},
we extend some results on scalar-valued weights to the matrix-valued case,
such as the proper inclusion (Proposition \ref{Ap Apinfty 2}),
the reverse H\"older's inequality (Theorem \ref{reverse Holder Apinfty}),
and the equivalent expressions (Theorem \ref{w reverse}).
Finally, in Subsection \ref{sec Apdimesnion},
for further applications of matrix $\mathscr{A}_{p(\cdot),\infty}$ weights,
we introduce and study the upper and the lower weight dimensions for matrix $\mathscr{A}_{p(\cdot),\infty}$ weights.

We first recall some basic concepts of matrices and matrix weights.
For any $m,n\in\mathbb{N}$, the set of all $m\times n$ complex-valued matrices
is denoted by the \emph{symbol $M_{m, n}$},
and $M_{m, m}$ is simply denoted by $M_m$.
For any $A\in M_m$, let
\begin{align}\label{def norm matrix}
\|A\| := \sup_{\vec{z}\in\mathbb{C}^m, |\vec{z}| = 1} \left| A\vec{z} \right|.
\end{align}
Then $(M_m, \|\cdot\|)$ is a Banach space.
Moreover, we have the following well-known result
(see, for instance, \cite[Lemma 2.3]{bhyy23}).
\begin{lemma}\label{norm matrix}
Let $A,B\in M_m$ be two nonnegative definite matrices.
Then $\|AB\| = \|BA\|$.
\end{lemma}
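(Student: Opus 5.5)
The claim is that for nonnegative definite $A,B\in M_m$ one has $\|AB\|=\|BA\|$. The plan is to express both products through adjoints and then use the fact that the operator norm is invariant under taking adjoints.

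First, since $A$ and $B$ are nonnegative definite, they are Hermitian, so $A^\ast=A$ and $B^\ast=B$. This gives
\begin{align*}
(AB)^\ast = B^\ast A^\ast = BA .
\end{align*}
It is worth noting that only self-adjointness is used here, not positivity.

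Second, we show that $\|C\|=\|C^\ast\|$ for every $C\in M_m$. For unit vectors $\vec{z}$ in $\mathbb{C}^m$, the Cauchy--Schwarz inequality gives
\begin{align*}
|C\vec{z}|^2=\langle C^\ast C\vec{z},\vec{z}\rangle\le \|C^\ast C\|\le \|C^\ast\|\,\|C\| .
\end{align*}
Taking the supremum over $\vec{z}$ in \eqref{def norm matrix} yields $\|C\|^2\le\|C^\ast\|\,\|C\|$, and hence $\|C\|\le\|C^\ast\|$. This division is legitimate when $C\neq 0$; if $C=0$, then $C^\ast=0$ and the inequality is trivial. Applying the same bound to $C^\ast$ in place of $C$, and using $(C^\ast)^\ast=C$, gives the reverse inequality $\|C^\ast\|\le\|C\|$.

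Finally, taking $C:=AB$ in the second step and using the first step gives $\|AB\|=\|(AB)^\ast\|=\|BA\|$, as required.

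None of these steps is a genuine obstacle. The only point needing care is the degenerate case $C=0$ in the second step, where one cannot divide by $\|C\|$, and that case is handled separately as above.
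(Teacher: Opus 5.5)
Your proof is correct and complete. Nonnegative definite matrices are Hermitian, so $(AB)^\ast=BA$, and $\|C\|=\|C^\ast\|$ then gives $\|AB\|=\|BA\|$. The paper does not prove this lemma itself; it quotes it as well known from \cite[Lemma 2.3]{bhyy23}, and your adjoint argument is the standard justification of that fact.
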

Next, we recall the concept of matrix weights
(see, for instance, \cite[Definition 2.7]{bhyy23}).
\begin{definition}\label{def matrix}
A matrix-valued function $W:  \mathbb{R}^n \to M_m$ is called a \emph{matrix weight}
if $W$ satisfies
\begin{itemize}
\item[{\rm (i)}] for almost every $x\in\mathbb{R}^n$, $W(x)$ is nonnegative definite,
\item[{\rm (ii)}] for almost every $x\in\mathbb{R}^n$, $W(x)$ is invertible,
\item[{\rm (iii)}] the entries of $W$ are all locally integrable.
\end{itemize}
\end{definition}
Now, we recall the definition of variable matrix $\mathscr{A}_{p(\cdot)}$ weights
(see \cite{cp23} for more details).
\begin{definition}\label{def Ap}
Let $p(\cdot)\in \mathcal{P}$.
A matrix weight $W$ on $\mathbb{R}^n$ is called a \emph{matrix $\mathscr{A}_{p(\cdot)}$ weight}
if
$$ \left[W\right]_{\mathscr{A}_{p(\cdot)}} := \sup_{Q} |Q|^{-1} \left\|\, \left\|\, \left\| W(x)W^{-1}(\cdot)  \right\| \mathbf{1}_Q(\cdot) \right\|_{L^{p'(\cdot)}} \mathbf{1}_{Q}(x) \right\|_{L^{p(\cdot)}_x} <\infty , $$
where the supremum is taken over all cubes $Q$ in $\mathbb{R}^n$
and $L^{p(\cdot)}_x$ indicates to take the norm  with respect to the variable $x$.
\end{definition}
\begin{remark}\label{rem cap}
\begin{itemize}
\item[{\rm (i)}] In the scalar-valued case (namely $m = 1$), $\mathscr{A}_{p(\cdot)}$
in Definition \ref{def Ap} reduces to $\mathcal{A}_{p(\cdot)}$.
\item[{\rm (ii)}] In Definition \ref{def Ap}, if $p(\cdot)=p$ is a constant exponent,
then, for any $W \in \mathscr{A}_{p}$,
the $p$-th power of $W$ is the classical matrix $A_p$ weight
(see, for example, \cite{v97} for more details about $A_p$ matrix weights
and the $p$-th power of matrices).
\item[{\rm (iii)}] By Definition \ref{def Ap},
it is obvious that, for any $W \in \mathscr{A}_{p(\cdot)}$,
$ W^{-1} \in \mathscr{A}_{p'(\cdot)}$.
\end{itemize}
\end{remark}

Then we recall the concept of  matrix $A_{p,\infty}$ weights
(see, for example, \cite{v97} and \cite{bhyy23} for more details).

\begin{definition}
Let $p\in (0,\infty)$. A matrix weight $W$ is called an \emph{$A_{p,\infty}$ matrix weight} if
\begin{align*}
[W]_{A_{p,\infty}} := \sup_{Q} \exp\left( \fint_Q \log \left( \fint_Q \left\| W^\frac1p (x) W^{-\frac1p}(y) \right\|^p\,dx \right)\,dy \right)< \infty,
\end{align*}
where the supremum is taken over all cubes $Q$ in $\mathbb{R}^n$.
\end{definition}

\begin{remark}\label{rem Apinfty}
\begin{itemize}
\item[{\rm (i)}] By the definitions of $A_{p,\infty}$ and $\mathscr{A}_{p(\cdot),\infty}$,
when $p(\cdot)=p$ is a constant exponent,
we easily find that, for any matrix weight $W$,
$W \in \mathscr{A}_{p,\infty}$ if and only if $W^p \in A_{p,\infty}$.
\item[{\rm (ii)}] When $p(\cdot) = 1$, it is obvious that $\mathscr{A}_{1,\infty} = A_{1,\infty}$.
\end{itemize}
\end{remark}

The following result gives an equivalent characterization of matrix $\mathscr{A}_{p(\cdot),\infty}$ weights.
\begin{proposition}\label{eq def infty lem}
Let $p(\cdot)\in \mathcal{P}_0$ with $p(\cdot) \in LH$
and let $W$ be a matrix weight.
If, for any cube $Q\subset \mathbb{R}^n$,
$$ \log_+\left( \left\|\,\left\| W(x)W^{-1}(\cdot) \right\|\mathbf{1}_Q \right\|_{L^{p(\cdot)}_x} \right) \in L^1(Q), $$
where $\log_+$ is the same as in \eqref{def log plus},
then
\begin{align}\label{eq def infty}
\left[W\right]_{\mathscr{A}_{p(\cdot),\infty}(\mathbb{R}^n)}
&\sim \sup_{Q}
\sup_{H\in \mathcal{F}_{Q,W}} \left[ \frac{1}{\|\mathbf{1}_Q\|_{L^{p(\cdot)}}} \left\|\,\left\| W(\cdot)H(\cdot) \right\|\mathbf{1}_Q \right\|_{L^{p(\cdot)}} \right]^{-1} \nonumber\\
&\quad \times \exp\left( \fint_{Q} \log\left( \frac{1}{\|\mathbf{1}_Q\|_{L^{p(\cdot)}}} \left\|\, \left\| W(\cdot)H(y)\right\| \mathbf{1}_Q \right\|_{L^{p(\cdot)}}
 \right)\,dy \right),
\end{align}
where the first supremum is taken over all cubes $Q$ in $\mathbb{R}^n$,
the positive equivalence constants  depend only on $p(\cdot)$ and $n$,
and
\begin{align*}
\mathcal{F}_{Q,W} := &\Bigg\{ H:  \mathbb{R}^n \to M_m(\mathbb{C})\ \text{measurable}:
\left\|\,\left\| W(\cdot)H(\cdot) \right\|\mathbf{1}_Q \right\|_{L^{p(\cdot)}} \neq 0,\\
&\quad \left.\log\left( \frac{1}{\|\mathbf{1}_Q\|_{L^{p(\cdot)}}} \left\|\, \left\| W(\cdot)H(y)\right\| \mathbf{1}_Q \right\|_{L^{p(\cdot)}}\right) \in L^1(Q) \right\}. \nonumber
\end{align*}
\end{proposition}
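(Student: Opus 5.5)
Following the scalar Proposition \ref{eq infty}, I plan to prove the two inequalities separately. For the lower bound of the right-hand side of \eqref{eq def infty}, I will take the special choice $H:=W^{-1}$. Then $\|W(\cdot)H(\cdot)\|=\|I\|=1$, so $\frac{1}{\|\mathbf{1}_Q\|_{L^{p(\cdot)}}}\|\,\|W(\cdot)H(\cdot)\|\mathbf{1}_Q\|_{L^{p(\cdot)}}=1$. The exponential factor is then exactly the quantity inside the supremum of \eqref{def matrix Apinfty}. The hypothesis $\log_+(\|\,\|W(x)W^{-1}(\cdot)\|\mathbf{1}_Q\|_{L^{p(\cdot)}_x})\in L^1(Q)$ (after noting that the negative part is harmless, or else the quantity is $0$ and the bound is trivial) gives $W^{-1}\in\mathcal{F}_{Q,W}$. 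Taking suprema then yields $[W]_{\mathscr{A}_{p(\cdot),\infty}}\le$ RHS.

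For the converse, I fix a cube $Q$ and $H\in\mathcal{F}_{Q,W}$, and use submultiplicativity of the operator norm pointwise: for $x,y\in Q$,
\[
\|W(x)H(y)\|\le \|W(x)W^{-1}(y)\|\,\|W(y)H(y)\|.
\]
Taking the $L^{p(\cdot)}_x$ norm over $Q$ and normalising gives
\[
\frac{\|\,\|W(\cdot)H(y)\|\mathbf{1}_Q\|_{L^{p(\cdot)}}}{\|\mathbf{1}_Q\|_{L^{p(\cdot)}}}\le \frac{\|\,\|W(\cdot)W^{-1}(y)\|\mathbf{1}_Q\|_{L^{p(\cdot)}}}{\|\mathbf{1}_Q\|_{L^{p(\cdot)}}}\,\|W(y)H(y)\|.
\]
Taking $\log$, averaging in $y$ over $Q$, and exponentiating, the first factor contributes at most $[W]_{\mathscr{A}_{p(\cdot),\infty}}$. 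The second contributes $\exp(\fint_Q\log\|W(y)H(y)\|\,dy)$.

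It remains to bound this geometric mean by $\frac{1}{\|\mathbf{1}_Q\|_{L^{p(\cdot)}}}\|\,\|W(\cdot)H(\cdot)\|\mathbf{1}_Q\|_{L^{p(\cdot)}}$ up to a constant depending on $p(\cdot)$ and $n$. This is the same step as in the proof of Proposition \ref{eq infty}, applied to the scalar function $g:=\|W(\cdot)H(\cdot)\|$. With $r:=\min\{1,p_-\}$, Jensen's inequality gives $\exp(\fint_Q\log g)=[\exp(\fint_Q\log g^r)]^{1/r}\le(\fint_Q g^r)^{1/r}$. Lemma \ref{est fQ} with $p(\cdot)/r\in\mathcal{P}$ (which is still $LH$), together with Lemma \ref{con f}, then gives $(\fint_Q g^r)^{1/r}\lesssim\|g\mathbf{1}_Q\|_{L^{p(\cdot)}}/\|\mathbf{1}_Q\|_{L^{p(\cdot)}}$. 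Dividing by the (nonzero) normalised norm of $\|W H\|$ and taking suprema over $H$ and $Q$ finishes the proof.

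The main obstacle is the measure-theoretic bookkeeping rather than the estimates. First, $\log\|W(y)H(y)\|$ must be integrable, or at least have integrable positive part, so that splitting the logarithm of the product into a sum is legitimate. If $\fint_Q\log\|WH\|=-\infty$ the inequality is trivial. Otherwise, membership of $H$ in $\mathcal{F}_{Q,W}$ and the $\log_+$ hypothesis control the positive parts, and the decomposition goes through with values in $[-\infty,\infty)$. Second, I need measurability of $y\mapsto\|\,\|W(\cdot)H(y)\|\mathbf{1}_Q\|_{L^{p(\cdot)}}$, which I would take as implicit in the definition, as the paper does.
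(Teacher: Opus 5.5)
Your proposal is correct and follows essentially the same route as the paper: the choice $H:=W^{-1}$ shows that the right-hand side is at least $[W]_{\mathscr{A}_{p(\cdot),\infty}}$. For the converse, you use the pointwise bound $\|W(x)H(y)\|\le\|W(x)W^{-1}(y)\|\,\|W(y)H(y)\|$, then Jensen's inequality with $r=\min\{1,p_-\}$ and Lemmas~\ref{est fQ} and~\ref{con f} applied to $p(\cdot)/r$, exactly as the paper does. Your integrability bookkeeping (positive parts of the logarithms, and the trivial cases where one factor vanishes) is a bit more careful than the paper's, which passes over these points silently.
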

\begin{proof}
It is obvious that, if we take $H = W^{-1}$,
then $H\in \mathcal{F}_{Q,W}$ and the right-hand side of \eqref{eq def infty} is
at least as big as $[W]_{\mathscr{A}_{p(\cdot),\infty}}$.
Thus, the left-hand side of \eqref{eq def infty} is not more than the right-hand one.

Next, we show the converse inequality. Let $r := \min\{1,p_-\}$.
From the definition of $[W]_{\mathscr{A}_{p(\cdot),\infty}}$, Jensen's inequality,
and Lemmas \ref{est fQ} and \ref{con f} with $\frac{p(\cdot)}{r} \in \mathcal{P}$ and $\frac{p(\cdot)}{r} \in LH$,
it follows that, for any cube $Q$ in $\mathbb{R}^n$ and any $H\in \mathcal{F}_{Q,W}$,
\begin{align*}
&\exp\left( \fint_{Q} \log\left( \frac{1}{\|\mathbf{1}_Q\|_{L^{p(\cdot)}}} \left\|\, \left\| W(\cdot)H(y) \right\| \mathbf{1}_Q \right\|_{L^{p(\cdot)}} \right)\,dy \right)\\
&\quad \leq \exp\left( \fint_{Q} \log\left( \frac{1}{\|\mathbf{1}_Q\|_{L^{p(\cdot)}}} \left\|\, \left\| W(\cdot)W^{-1}(y) \right\| \mathbf{1}_Q \right\|_{L^{p(\cdot)}} \right)\,dy \right)\\
& \quad\quad \times \exp\left( \fint_{Q} \log\left( \frac{1}{\|\mathbf{1}_Q\|_{L^{p(\cdot)}}} \left\|\, \left\| W(y)H(y) \right\| \mathbf{1}_Q \right\|_{L^{p(\cdot)}} \right)\,dy \right)\\
&\quad \leq [W]_{\mathscr{A}_{p(\cdot),\infty}} \left[\exp\left( \fint_{Q} \log\left( \left\| W(y)H(y) \right\|^r \right)\,dy \right)\right]^\frac1r\\
&\quad \lesssim [W]_{\mathscr{A}_{p(\cdot),\infty}} \left[\frac{1}{\|\mathbf{1}_Q\|_{L^{\frac{p(\cdot)}{r}}}} \left\|\, \left\| W(y)H(y) \right\|^r \mathbf{1}_Q \right\|_{L^{\frac{p(\cdot)}{r}}}\right]^\frac1r\\
&\quad = [W]_{\mathscr{A}_{p(\cdot),\infty}} \frac{1}{\|\mathbf{1}_Q\|_{L^{p(\cdot)}}} \left\|\, \left\| W(y)H(y) \right\| \mathbf{1}_Q \right\|_{L^{p(\cdot)}},
\end{align*}
which
further implies that the left-hand side of \eqref{eq def infty} is not less than the right-hand one.
This finishes the proof of Proposition \ref{eq def infty lem}.
\end{proof}

Now, we recall the concept of the reducing operators for matrix weights
(see, for example, \cite{cp23} for reducing operators for $\mathscr{A}_{p(\cdot)}$
and \cite{bhyy23} for reducing operators for $A_{p,\infty}$).
\begin{definition}
Let $p(\cdot) \in \mathcal{P}_0$,
$W$ be a matrix weight,
and $Q$ any cube in $\mathbb{R}^n$.
The matrix $A_Q\in M_m$ is called a
\emph{reducing operator} of order $p(\cdot)$ for $W$
if $A_Q$ is positive definite and self-adjoint such that,
for any $\vec{z} \in \mathbb{C}^m$,
\begin{align}\label{eq redu}
\left| A_Q \vec{z} \right|
\sim \frac{1}{\|\mathbf{1}_Q\|_{L^{p(\cdot)}}} \left\|\, \left| W(\cdot) \vec{z} \right|  \mathbf{1}_{Q} \right\|_{L^{p(\cdot)}},
\end{align}
where the positive equivalence constants depend only on $m$ and $p(\cdot)$.
\end{definition}
The following proposition guarantees the existence of reducing operators of order $p(\cdot)$
for matrix weights.
\begin{proposition}\label{ext redu}
Let $p(\cdot) \in \mathcal{P}_0$.
Then, for any matrix weight $W$ and any cube $Q$ in $\mathbb{R}^n$,
the reducing operator $A_Q$ of order $p(\cdot)$ for $W$ exists.
\end{proposition}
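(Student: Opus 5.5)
Fix a matrix weight $W$ and a cube $Q$, and define on $\mathbb{C}^m$ the functional
\begin{align*}
\rho_Q(\vec{z}) := \frac{1}{\|\mathbf{1}_Q\|_{L^{p(\cdot)}}} \left\|\, \left| W(\cdot)\vec{z}\right| \mathbf{1}_Q \right\|_{L^{p(\cdot)}}.
\end{align*}
The plan is to show that $\rho_Q$ is, up to constants, a norm on $\mathbb{C}^m$. The classical John ellipsoid theorem then produces a positive definite self-adjoint matrix $A_Q$ with $|A_Q\vec{z}|\le \rho_Q(\vec{z})\le \sqrt{m}\,|A_Q\vec{z}|$, as in \cite{bhyy23} and \cite{cp23}. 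Since $p(\cdot)\in\mathcal{P}_0$ is allowed to have $p_-<1$, $\rho_Q$ need not be a norm; it will only be a quasi-norm. I therefore first pass to a genuine norm using convexification.

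\textbf{Reduction to $p_-\ge 1$.} Let $r:=\min\{1,p_-\}$. By Lemma \ref{con f} (here I will assume $p_+<\infty$, or otherwise use the standard convention for $p=\infty$ pieces),
$$\rho_Q(\vec{z}) = \left[\frac{1}{\|\mathbf{1}_Q\|_{L^{p(\cdot)/r}}}\left\|\, |W(\cdot)\vec z|^r\mathbf{1}_Q\right\|_{L^{p(\cdot)/r}}\right]^{1/r}.$$
The functional $\vec z\mapsto \||W\vec z|^r\mathbf{1}_Q\|_{L^{p(\cdot)/r}}$ is then an $r$-norm, because $|W(x)(\vec z+\vec u)|^r\le |W(x)\vec z|^r+|W(x)\vec u|^r$ and $L^{p(\cdot)/r}$ is a Banach function space since $p(\cdot)/r\in\mathcal{P}$. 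Hence $\rho_Q$ satisfies $\rho_Q(\vec z+\vec u)^r\le \rho_Q(\vec z)^r+\rho_Q(\vec u)^r$ and is homogeneous of degree one. On the finite-dimensional space $\mathbb{C}^m$, such an $r$-norm is equivalent to its convex hull norm
$$\|\vec z\|_Q := \inf\left\{\sum_i \rho_Q(\vec z_i): \vec z=\sum_i\vec z_i\right\}.$$
The equivalence constant depends only on $m$ and $r$: by Carathéodory it suffices to take $2m$ real summands, and then $\sum\rho_Q(\vec z_i)\ge (2m)^{1-1/r}(\sum\rho_Q(\vec z_i)^r)^{1/r}\ge (2m)^{1-1/r}\rho_Q(\vec z)$.

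\textbf{Nondegeneracy and finiteness.} I must also check that $0<\rho_Q(\vec z)<\infty$ for $\vec z\neq 0$. Positivity is immediate because $W(x)$ is invertible almost everywhere, so $|W(x)\vec z|>0$ almost everywhere on $Q$. Finiteness is the delicate point, since the entries of $W$ are only assumed locally integrable, not locally $L^{p(\cdot)}$. If finiteness fails, I will restrict to the class where it holds, which is implicit in the paper's use of reducing operators for $\mathscr{A}_{p(\cdot),\infty}$ weights: finiteness of $\|\,\|W(\cdot)W^{-1}(y)\|\mathbf{1}_Q\|_{L^{p(\cdot)}}$ for some $y$ gives finiteness of $\rho_Q$. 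Alternatively, I would argue that the statement allows the convention that $A_Q$ exists in the extended sense. I expect this to be the main obstacle to a clean statement, though not to the construction itself. Given finiteness, the set $\{\vec z:\|\vec z\|_Q\le1\}$ is a convex, balanced, compact body with nonempty interior.

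\textbf{Conclusion.} Apply John's theorem to the unit ball of $\|\cdot\|_Q$. This gives a positive definite self-adjoint $A_Q$ with $|A_Q\vec z|\le\|\vec z\|_Q\le\sqrt{2m}\,|A_Q\vec z|$; one can write $A_Q=(\text{ellipsoid matrix})^{-1}$, made self-adjoint by polar decomposition, since $|U B\vec z|=|B\vec z|$ for unitary $U$. Combining this with the equivalence $\|\vec z\|_Q\sim\rho_Q(\vec z)$ yields \eqref{eq redu}, with constants depending only on $m$ and $p_-$, hence on $m$ and $p(\cdot)$.
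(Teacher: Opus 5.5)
Your proposal follows essentially the same route as the paper. The paper also derives the $p_-$-triangle inequality for $\rho_{p(\cdot),Q}$ from Lemma~\ref{con f}, passes to the Minkowski functional of the convex hull of its unit ball, uses Carath\'eodory's theorem to get equivalence with constants depending only on $m$ and $p_-$, and then applies a John-ellipsoid result (\cite[Theorem 4.11]{bc22}). The only differences are that you treat $p_-\ge 1$ in the same argument (the paper cites Cruz-Uribe and Penrod for that case), and that you explicitly flag the finiteness of $\rho_Q$. The paper's proof assumes this tacitly, and it genuinely requires $|W(\cdot)\vec z|\mathbf{1}_Q\in L^{p(\cdot)}$, which holds for instance when $W\in\mathscr{A}_{p(\cdot),\infty}$, as you observe.
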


\begin{proof}
The case $p_-\geq 1$ has been proved by Cruz--Uribe in \cite[p.\,1142]{cp23}
and hence we only need to show the case $p_- < 1$ here,
for which we borrow some ideas from \cite[p.\,1237]{fr04}.
To this end, for any cube $Q$ in $\mathbb{R}^n$ and any $\vec{z} \in \mathbb{C}^m$,
let
$$ \rho_{p(\cdot),Q}\left(\vec{z}\right) := \frac{1}{\|\mathbf{1}_Q\|_{L^{p(\cdot)}}} \left\|\,\left| W(\cdot)\vec{z} \right|\mathbf{1}_Q  \right\|_{L^{p(\cdot)}}. $$
Since $p_- \in (0,1)$, we deduce that $\rho_{p(\cdot),Q}$ is a quasi-norm.
Using Lemma \ref{con f} and the fact $\frac{p(\cdot)}{p_-} \in \mathcal{P}$,
we find that, for any $\vec{x},\vec{z} \in \mathbb{C}^m$,
\begin{align}\label{rho norm}
\left[\rho_{p(\cdot),Q}(\vec{x} + \vec{z})\right]^{p_-}
& = \frac{1}{\|\mathbf{1}_Q\|_{L^{\frac{p(\cdot)}{p_-}}}} \left\|\,\left| W(\cdot)\left(\vec{x} +\vec{z} \right) \right|^{p_-} \mathbf{1}_Q  \right\|_{L^{\frac{p(\cdot)}{p_-}}}\nonumber\\
&\leq \frac{1}{\|\mathbf{1}_Q\|_{L^{\frac{p(\cdot)}{p_-}}}} \left\|\,\left| W(\cdot)\vec{x}\right|^{p_-} \mathbf{1}_Q  \right\|_{L^{\frac{p(\cdot)}{p_-}}}
+ \frac{1}{\|\mathbf{1}_Q\|_{L^{\frac{p(\cdot)}{p_-}}}} \left\|\,\left| W(\cdot)\vec{z} \right|^{p_-} \mathbf{1}_Q  \right\|_{L^{\frac{p(\cdot)}{p_-}}}\nonumber\\
&= \left[\rho_{p(\cdot),Q}(\vec{x})\right]^{p_-} + \left[\rho_{p(\cdot),Q}(\vec{z})\right]^{p_-}.
\end{align}
Let $C := \{\vec{z} \in \mathbb{C}^m:  \rho_{p(\cdot),Q}(\vec{z}) < 1\}$
be the unit ball of $\rho_{p(\cdot),Q}$.
Since $\rho_{p(\cdot),Q}$ is a quasi-norm,
it follows that $C$ may not be convex.
Let $D$ be the convex hull of $C$
and $q(\vec{z}) := \inf\{t >0:  \frac{\vec{z}}{t} \in D\}$
be the Minkovski functional of $D$.
From the definition of $D$,
we infer that $q$ is a norm on $\mathbb{C}^m$.

To show the existence of reducing operators,
we claim that $D\subset (2m+1)^{\frac{1}{p_-}-1}C$.
If this claim holds,
then $C \subset D \subset (2m+1)^{\frac{1}{p_-}-1}C$
and hence
the quasi-norm $\rho_{p(\cdot),Q} \sim q$,
which, combined with \cite[Theorem 4.11]{bc22},
further implies that there exists a positive definite and invertible matrix $A_Q$
such that
$\rho_{p(\cdot),Q}(\vec{z}) \sim q(\vec{z}) \sim |A_Q \vec{z}|$ for any $\vec{z} \in \mathbb{C}^m$,
where the positive equivalence constants are independent of $Q$ and $\vec{z}$.

Finally, we prove the claim $D\subset (2m+1)^{\frac{1}{p_-}-1}C$.
For any $\vec{x} \in D$, by a classical theorem of Carath\'eodory (see \cite{c07} and also \cite[p.\,55]{w94}),
we conclude that there exist at most $2m+1$ points $\{\vec{x_i}\}_{i = 1}^{2m+1}$ of $C$
such that $\vec{x}$ is the convex combination of $\{\vec{x_i}\}_{i = 1}^{2m+1}$,
that is, $\vec{x} = \sum_{i = 1}^{2m+1} \alpha_i \vec{x_i}$,
where $\sum_{i = 1}^{2m+1}\alpha_i = 1$ and $\alpha_i \geq 0$ for any $i \in \{1,\dots,2m+1\}$.
Then, using \eqref{rho norm} and H\"older's inequality, we obtain
\begin{align*}
\left[\rho_{p(\cdot),Q}(\vec{x})\right]^{p_-}
&\leq \sum_{i = 1}^{2m+1} \left[\rho_{p(\cdot),Q}(\alpha_i\vec{x_i})\right]^{p_-}
= \sum_{i = 1}^{2m+1} \alpha^{p_-}_i\left[\rho_{p(\cdot),Q}(\vec{x_i})\right]^{p_-}
< \sum_{i = 1}^{2m+1} \alpha^{p_-}_i\\
&\leq (2m+1)^{\frac{1}{p_-}-1} \sum_{i = 1}^{2m+1} \alpha_i = (2m+1)^{\frac{1}{p_-}-1},
\end{align*}
which completes the proof of the claim above and hence Proposition \ref{ext redu}.
\end{proof}
Since the norm of matrices can be decomposed into the summation over the orthonormal basis
(see \cite[Lemma 3.2]{r03}),
it follows that we can extend \eqref{eq redu} from any vector
$\vec{z}$ \ to any matrix $M\in M_m$,
which is the following result.
\begin{lemma}\label{eq reduc M}
Let $p(\cdot) \in \mathcal{P}_0$, $W$ be a matrix weight,
and $Q$ any cube in $\mathbb{R}^n$.
If $A_Q$ is a reducing operator of order $p(\cdot)$ for $W$,
then, for any matrix $M\in M_m$,
$$ \left\| A_Q M \right\| \sim \frac{1}{\|\mathbf{1}_Q\|_{L^{p(\cdot)}}} \left\| \, \left\| W(\cdot) M  \right\|  \mathbf{1}_{Q} \right\|_{L^{p(\cdot)}}, $$
where the positive equivalence constants depend only on $m$ and $p(\cdot)$.
\end{lemma}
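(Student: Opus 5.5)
The plan is to reduce the matrix estimate to the vector estimate \eqref{eq redu} by decomposing the operator norm along an orthonormal basis. Fix an orthonormal basis $\{\vec{e}_j\}_{j=1}^m$ of $\mathbb{C}^m$. For any $B\in M_m$ one has the elementary two-sided bound
\begin{align*}
m^{-\frac12}\sum_{j=1}^m \left|B\vec{e}_j\right| \leq \max_{1\le j\le m}\left|B\vec{e}_j\right| \leq \|B\| \leq \left[\sum_{j=1}^m \left|B\vec{e}_j\right|^2\right]^{\frac12} \leq \sum_{j=1}^m \left|B\vec{e}_j\right|,
\end{align*}
where the first inequality uses only that a maximum dominates an average. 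Hence $\|B\|\sim\sum_{j=1}^m |B\vec{e}_j|$ with constants depending only on $m$; this is \cite[Lemma 3.2]{r03}. I will apply this to $B=A_QM$ and, pointwise in $x$, to $B=W(x)M$, setting $\vec{z}_j:=M\vec{e}_j$.

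For the left-hand side, this gives $\|A_QM\|\sim\sum_j|A_Q\vec{z}_j|$. By \eqref{eq redu} applied to each $\vec{z}_j$, this is comparable to
\begin{align*}
\frac{1}{\|\mathbf{1}_Q\|_{L^{p(\cdot)}}}\sum_{j=1}^m \left\|\,\left|W(\cdot)\vec{z}_j\right|\mathbf{1}_Q\right\|_{L^{p(\cdot)}}.
\end{align*}
It then remains to compare $\sum_j \| |W\vec{z}_j|\mathbf{1}_Q\|_{L^{p(\cdot)}}$ with $\| \sum_j |W\vec{z}_j|\mathbf{1}_Q\|_{L^{p(\cdot)}}$. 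By the pointwise bound above, the latter is comparable to $\|\,\|W(\cdot)M\|\mathbf{1}_Q\|_{L^{p(\cdot)}}$, using the lattice property of the (quasi-)norm.

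The key step is this exchange of a finite sum with the $L^{p(\cdot)}$ quasi-norm. One direction is immediate from monotonicity: each $|W\vec{z}_j|\le\sum_i|W\vec{z}_i|$, so the sum of norms is at most $m$ times the norm of the sum. For the other direction, when $p_-\ge1$ the triangle inequality suffices. When $p_-<1$, I will use the $p_-$-convexification from Lemma \ref{con f}, as in \eqref{rho norm}, which gives
\begin{align*}
\left\|\sum_{j=1}^m f_j\right\|_{L^{p(\cdot)}}^{p_-}\le\sum_{j=1}^m\|f_j\|_{L^{p(\cdot)}}^{p_-}\le m^{1-p_-}\left[\sum_{j=1}^m\|f_j\|_{L^{p(\cdot)}}\right]^{p_-}.
\end{align*}
Alternatively, one can bound $\|\sum_j f_j\|\le m\max_j\|f_j\|$ directly, since $\sum_j f_j\le m\max_j f_j$ pointwise and the maximum is dominated by the sum.

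Combining these comparisons yields the claimed equivalence, with constants depending only on $m$ and $p(\cdot)$. I do not expect any step to be a real obstacle. The only point needing care is the quasi-norm case $p_-<1$, which the convexification handles.
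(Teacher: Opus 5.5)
Your proposal is correct and follows the paper's route: decompose along an orthonormal basis via \cite[Lemma 3.2]{r03}, apply \eqref{eq redu} to each $M\vec{e}_j$, and exchange the finite sum with the $L^{p(\cdot)}$ quasi-norm. The paper does that exchange without comment; your convexification argument for $p_-<1$ supplies the justification.

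Two small slips do not affect the main argument but should be fixed. First, the leftmost constant must be $m^{-1}$, not $m^{-1/2}$: take $B=I_m$. Second, drop the ``alternative'' bound $\|\sum_j f_j\|_{L^{p(\cdot)}}\le m\max_j\|f_j\|_{L^{p(\cdot)}}$. It fails for $p_-<1$ (disjoint unit-interval indicators in $L^{1/2}$ give $m^2$ on the left and $m$ on the right). Its justification confuses the norm of the pointwise maximum with the maximum of the norms, so rely on the convexification step.
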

\begin{proof}
Let $\{\vec{e}_i\}_{i = 1}^m$ be any orthonormal basis of $\mathbb{C}^m$.
From \cite[Lemma 3.2]{r03},
we deduce that, for any matrix $M \in M_m$,
$\|M\|\sim \sum_{i = 1}^m  |M\vec{e}_i|, $
where the positive equivalence constants depend only on $m$.
By this and \eqref{eq redu},
we find that,
for any matrix $M\in M_m$,
\begin{align*}
\left\| A_Q M \right\| &\sim \sum_{i = 1}^m \left| A_Q M \vec{e}_i \right|
\sim \sum_{i = 1}^m \frac{1}{\|\mathbf{1}_Q\|_{L^{p(\cdot)}}}  \left\| \left| W(\cdot) M\vec{e}_i \right|  \mathbf{1}_{Q} \right\|_{L^{p(\cdot)}}\\
& \sim  \frac{1}{\|\mathbf{1}_Q\|_{L^{p(\cdot)}}}  \left\| \sum_{i = 1}^m \left| W(\cdot) M\vec{e}_i \right|  \mathbf{1}_{Q} \right\|_{L^{p(\cdot)}}
\sim \frac{1}{\|\mathbf{1}_Q\|_{L^{p(\cdot)}}}  \left\| \left\| W(\cdot) M \right\|  \mathbf{1}_{Q} \right\|_{L^{p(\cdot)}}.
\end{align*}
This finishes the proof of Lemma \ref{eq reduc M}.
\end{proof}

The following is the estimate on $A_Q^{-1}$
(see, for instance, \cite[Proposition 2.9]{bhyy24}
for a similar one about $A_{p,\infty}$ weights).
\begin{proposition}\label{AA-1}
Let $p(\cdot)\in \mathcal{P}_0$ with $p(\cdot)\in LH$, $W\in \mathscr{A}_{p(\cdot),\infty}$,
$Q$ be any cube in $\mathbb{R}^n$, and $A_Q$ the reducing operator of order $p(\cdot)$ for $W$.
Then, for any $\vec{z}\in\mathbb{C}^m$,
\begin{align*}
\left| A_Q^{-1} \vec{z} \right|
\sim \exp\left( \fint_{Q} \log \left| W^{-1}(y) \vec{z} \right| \,dy \right),
\end{align*}
where the positive equivalence constants depend only on $n$, $m$, $p(\cdot)$,
and $[W]_{\mathscr{A}_{p(\cdot)},\infty}$.
\end{proposition}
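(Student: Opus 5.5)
We prove the two inequalities separately. Both directions rest on Lemma \ref{eq reduc M}, which lets us replace $\|A_Q M\|$ by $\|\mathbf{1}_Q\|_{L^{p(\cdot)}}^{-1}\|\,\|W(\cdot)M\|\mathbf{1}_Q\|_{L^{p(\cdot)}}$. The lower bound also uses the elementary fact $|\vec z|\le \|B\|\,|B^{-1}\vec z|$ for invertible $B$.

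\emph{Lower bound $|A_Q^{-1}\vec z|\gtrsim\exp(\fint_Q\log|W^{-1}(y)\vec z|\,dy)$.} For a.e. $y\in Q$, apply the elementary fact with $B=W^{-1}(y)A_Q$. This gives
$$|W^{-1}(y)\vec z|=|W^{-1}(y)A_QA_Q^{-1}\vec z|\le \|W^{-1}(y)A_Q\|\,|A_Q^{-1}\vec z|.$$
Since $W^{-1}(y)$ and $A_Q$ are nonnegative definite, Lemma \ref{norm matrix} gives $\|W^{-1}(y)A_Q\|=\|A_QW^{-1}(y)\|$. By Lemma \ref{eq reduc M} with $M=W^{-1}(y)$,
$$\|A_QW^{-1}(y)\|\sim\|\mathbf{1}_Q\|_{L^{p(\cdot)}}^{-1}\big\|\,\|W(\cdot)W^{-1}(y)\|\mathbf{1}_Q\big\|_{L^{p(\cdot)}}.$$
Take logarithms, average over $y\in Q$, and exponentiate. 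The definition \eqref{def matrix Apinfty} then yields
$$\exp\Big(\fint_Q\log|W^{-1}(y)\vec z|\,dy\Big)\lesssim [W]_{\mathscr{A}_{p(\cdot),\infty}}\,|A_Q^{-1}\vec z|.$$
This is the only place where the weight constant enters.

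\emph{Upper bound $|A_Q^{-1}\vec z|\lesssim\exp(\fint_Q\log|W^{-1}(y)\vec z|\,dy)$.} Here we use duality. Since $A_Q$ is self-adjoint,
$$|A_Q^{-1}\vec z|=\sup_{|\vec u|=1}|\langle \vec z,A_Q^{-1}\vec u\rangle|.$$
It therefore suffices to show that, for every unit vector $\vec u$,
$$|\langle \vec z,A_Q^{-1}\vec u\rangle|\lesssim \exp\Big(\fint_Q\log|W^{-1}(y)\vec z|\,dy\Big).$$

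Write $\vec v:=A_Q^{-1}\vec u$. For a.e. $y$,
$$|\langle\vec z,\vec v\rangle|=|\langle W^{-1}(y)\vec z,W(y)\vec v\rangle|\le|W^{-1}(y)\vec z|\,|W(y)\vec v|.$$
Take $\exp\fint_Q\log$ of both sides. It then remains to bound $\exp(\fint_Q\log|W(y)\vec v|\,dy)$ by a constant.

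Put $r:=\min\{1,p_-\}$. Jensen's inequality gives
$$\exp\Big(\fint_Q\log|W(y)\vec v|\,dy\Big)\le\Big(\fint_Q|W(y)\vec v|^r\,dy\Big)^{1/r}.$$
Now apply Lemma \ref{est fQ} to the exponent $p(\cdot)/r\in\mathcal P\cap LH$, and then Lemma \ref{con f}. This bounds the right-hand side by a constant multiple of
$$\|\mathbf{1}_Q\|_{L^{p(\cdot)}}^{-1}\big\|\,|W(\cdot)\vec v|\mathbf{1}_Q\big\|_{L^{p(\cdot)}}\sim|A_Q\vec v|=|\vec u|=1,$$
where the equivalence is \eqref{eq redu}. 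This is exactly the Jensen step from the proof of Proposition \ref{eq infty}. Combining the two estimates gives the upper bound.

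\emph{Main obstacle.} The delicate point is the lower bound, specifically justifying the integrability needed to take the logarithmic average. The function $y\mapsto\log\|A_QW^{-1}(y)\|$ must be integrable, or at least have integrable positive part; this follows from $W\in\mathscr{A}_{p(\cdot),\infty}$. Measurability of the $\sup$ over $\vec u$ is not an issue, because each $\vec u$ is handled pointwise before taking the supremum. The resulting constants depend only on $n$, $m$, $p(\cdot)$, and $[W]_{\mathscr{A}_{p(\cdot),\infty}}$.
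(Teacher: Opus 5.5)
Your proposal is correct and follows the paper's proof. The bound $\exp(\fint_Q\log|W^{-1}(y)\vec z|\,dy)\lesssim[W]_{\mathscr{A}_{p(\cdot),\infty}}|A_Q^{-1}\vec z|$ goes exactly as in the paper, via the pointwise factorization through $A_Q$, Lemma \ref{eq reduc M} with $M=W^{-1}(y)$, and the definition of $\mathscr{A}_{p(\cdot),\infty}$; yours skips the paper's unnecessary detour through $r$-th powers. For the reverse bound, the paper runs the same Jensen / Lemma \ref{est fQ} / Lemma \ref{con f} chain on the matrix norm, using $\|A_Q^{-1}W(y)\|=\|W(y)A_Q^{-1}\|$ and $\|A_QA_Q^{-1}\|=1$; your duality step instead works with single vectors and \eqref{eq redu}, which is only a cosmetic difference.
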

\begin{proof}
Letting $r := \min\{1,p_-\}$, by the assumption $p(\cdot) \in LH$,
we find that $\frac{p(\cdot)}{r} \in LH$ and $\frac{p(\cdot)}{r} \in \mathcal{P}$.
Then, from Jensen's inequality and Lemmas \ref{est fQ} and \ref{con f},
we infer that
\begin{align*}
\left| A_Q^{-1} \vec{z} \right|
& \leq \exp\left( \fint_{Q} \log \left(\left\| A_Q^{-1}W(y) \right\| \left| W^{-1}(y) \vec{z} \right|\right) \,dy \right)\\
& = \left[\exp\left( \fint_{Q} \log \left\| A_Q^{-1}W(y) \right\|^r  \,dy \right)\right]^{\frac{1}{r}}
\exp\left( \fint_{Q} \log  \left| W^{-1}(y) \vec{z} \right| \,dy \right)\\
& \leq \left[\fint_{Q}  \left\| A_Q^{-1}W(y) \right\|^r  \,dy \right]^{\frac{1}{r}}
\exp\left( \fint_{Q} \log  \left| W^{-1}(y) \vec{z} \right| \,dy \right)\\
&\lesssim \left[ \frac{1}{\|\mathbf{1}_{Q}\|_{L^{\frac{p(\cdot)}{r}}}} \left\|\, \left\| A_Q^{-1}W(y) \right\|^r\mathbf{1}_{Q} \right\|_{L^{\frac{p(\cdot)}{r}}} \right]^{\frac{1}{r}}
\exp\left( \fint_{Q} \log  \left| W^{-1}(y) \vec{z} \right| \,dy \right)\\
& = \frac{1}{\|\mathbf{1}_{Q}\|_{L^{p(\cdot)}}} \left\|\, \left\| A_Q^{-1}W(y) \right\| \mathbf{1}_{Q} \right\|_{L^{p(\cdot)}}
\exp\left( \fint_{Q} \log  \left| W^{-1}(y) \vec{z} \right| \,dy \right)\\
& \sim \exp\left( \fint_{Q} \log  \left| W^{-1}(y) \vec{z} \right| \,dy \right).
\end{align*}

Next, by the fact $\frac{p(\cdot)}{r} \in \mathcal{P}$, Jensen's inequality,
Lemmas \ref{est fQ} and \ref{con f}, and the definition of $\mathscr{A}_{p(\cdot),\infty}$,
we conclude that
\begin{align*}
&\exp\left( \fint_{Q} \log  \left| W^{-1}(y) \vec{z} \right| \,dy \right)\\
&\quad = \left[\exp\left( \fint_{Q} \log  \left| W^{-1}(y) \vec{z} \right|^r \,dy \right)\right]^\frac1r \\
&\quad \leq \left[\exp\left( \fint_{Q} \log  \left\| W^{-1}(y) A_Q \right\|^r \,dy\right) \right]^\frac1r
\exp\left( \fint_{Q} \log  \left| A_Q^{-1} \vec{z} \right|^r \,dy \right) \\
&\quad \lesssim \left| A_Q^{-1} \vec{z} \right|
\left[\exp\left( \fint_{Q} \log\left( \frac{1}{\|\mathbf{1}_Q\|_{L^{\frac{p(\cdot)}{r}}}} \left\|\, \left\| W(\cdot)W^{-1}(y) \right\|^r \mathbf{1}_Q\right\|_{L^{\frac{p(\cdot)}{r}}} \right) \,dy \right)\right]^\frac1r\\
&\quad = \left| A_Q^{-1} \vec{z} \right| \exp\left( \fint_{Q} \log\left( \frac{1}{\|\mathbf{1}_Q\|_{L^{p(\cdot)}}} \left\|\, \left\| W(\cdot)W^{-1}(y) \right\| \mathbf{1}_Q\right\|_{L^{p(\cdot)}} \right) \,dy \right)
\lesssim \left| A_Q^{-1} \vec{z} \right|,
\end{align*}
where the implicit constants depend only on $n$, $m$, $p(\cdot)$, and $[W]_{\mathscr{A}_{p(\cdot),\infty}}$.
This finishes the proof of Proposition \ref{AA-1}.
\end{proof}
Using Proposition \ref{AA-1},
we immediately obtain the following result.
Since its proof is similar to that of Lemma \ref{eq reduc M},
we omit the details here.
\begin{lemma}\label{AA-1 M}
Let $p(\cdot) \in \mathcal{P}_0$ with $p(\cdot) \in LH$, $W\in \mathscr{A}_{p(\cdot),\infty}$,
$Q$ be any cube in $\mathbb{R}^n$,
and $A_Q$ the reducing operator of order $p(\cdot)$ for $W$.
Then, for any $M\in M_m$,
$$ \left\| A_Q^{-1} M \right\|
\sim \exp\left( \fint_{Q} \log \left\| W^{-1}(y) M \right\| \,dy \right), $$
where the positive equivalence constants depend only on $n$, $m$, $p(\cdot)$,
and $[W]_{\mathscr{A}_{p(\cdot)},\infty}$.
\end{lemma}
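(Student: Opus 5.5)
The plan is to reduce the matrix statement to the vector statement in Proposition~\ref{AA-1} through an orthonormal basis decomposition, exactly as Lemma~\ref{eq reduc M} was derived from \eqref{eq redu}.

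Fix an orthonormal basis $\{\vec{e}_i\}_{i=1}^m$ of $\mathbb{C}^m$. By \cite[Lemma 3.2]{r03}, for any $B\in M_m$ we have $\|B\|\sim\sum_{i=1}^m|B\vec{e}_i|$, with constants depending only on $m$.

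For the left-hand side, apply this to $B:=A_Q^{-1}M$ and then use Proposition~\ref{AA-1} with $\vec{z}:=M\vec{e}_i$. This gives
$$\|A_Q^{-1}M\|\sim\sum_{i=1}^m\exp\left(\fint_Q\log|W^{-1}(y)M\vec{e}_i|\,dy\right).$$

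It then remains to show that this sum is comparable to $\exp(\fint_Q\log\|W^{-1}(y)M\|\,dy)$.

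The upper bound is immediate. Since $|W^{-1}(y)M\vec{e}_i|\le\|W^{-1}(y)M\|$ pointwise, each summand is at most $\exp(\fint_Q\log\|W^{-1}(y)M\|\,dy)$. Hence the sum is at most $m$ times that quantity.

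The lower bound is the one step that is not immediate, because the geometric mean $\exp\fint_Q\log(\cdot)$ is not additive. I would handle it as follows.

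First, $\|W^{-1}(y)M\|\le C_m\sum_i|W^{-1}(y)M\vec{e}_i|\le C_m m\max_i|W^{-1}(y)M\vec{e}_i|$. Second, the maximum is at most the product $\prod_i\max\{\cdots\}$, but that route loses homogeneity. A cleaner route is to avoid the geometric mean altogether and compare both sides with the reducing operator.

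By the first half of the proof of Proposition~\ref{AA-1} with $\vec{z}$ replaced by matrices (Jensen's inequality with $\|A_Q^{-1}M\|\le\|A_Q^{-1}W(y)\|\,\|W^{-1}(y)M\|$, followed by Lemmas~\ref{est fQ} and \ref{con f} and then Lemma~\ref{eq reduc M} applied to $\|A_Q^{-1}W(\cdot)\|$), we get directly
$$\|A_Q^{-1}M\|\lesssim\exp\left(\fint_Q\log\|W^{-1}(y)M\|\,dy\right).$$
Here Lemma~\ref{eq reduc M} together with Lemma~\ref{norm matrix} shows that $\|\,\|A_Q^{-1}W\|\mathbf 1_Q\|_{L^{p(\cdot)}}/\|\mathbf 1_Q\|_{L^{p(\cdot)}}\sim\|A_Q^{-1}A_Q\|=1$.

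The converse inequality is also done by repeating the second half of that proof at the matrix level. Use
$$\|W^{-1}(y)M\|\le\|W^{-1}(y)A_Q\|\,\|A_Q^{-1}M\|,$$
take $\exp\fint_Q\log$ of both sides, and bound
$$\exp\left(\fint_Q\log\|W^{-1}(y)A_Q\|\,dy\right)=\exp\left(\fint_Q\log\|A_QW^{-1}(y)\|\,dy\right),$$
using Lemma~\ref{norm matrix}, since $A_Q$ and $W^{-1}(y)$ are self-adjoint and nonnegative. By Lemma~\ref{eq reduc M} with $M:=W^{-1}(y)$, this quantity is comparable to the defining expression in \eqref{def matrix Apinfty}, and hence is at most a constant multiple of $[W]_{\mathscr{A}_{p(\cdot),\infty}}$.

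Thus the whole argument is really a direct matrix version of the proof of Proposition~\ref{AA-1}. The basis decomposition is needed only to justify the use of Lemma~\ref{eq reduc M}, so the main care lies in checking that step.
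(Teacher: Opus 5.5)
Your proof is correct, but it is not the route the paper indicates. The paper obtains the lemma from Proposition~\ref{AA-1} by an orthonormal-basis decomposition, saying the proof is ``similar to that of Lemma~\ref{eq reduc M}'', and omits the details. You instead rerun the proof of Proposition~\ref{AA-1} directly at the matrix level:
\begin{itemize}
\item For one direction you use $\|A_Q^{-1}M\|\le\|A_Q^{-1}W(y)\|\,\|W^{-1}(y)M\|$, together with Jensen's inequality and Lemmas~\ref{est fQ}, \ref{con f}, \ref{norm matrix} and \ref{eq reduc M}.
\item For the other direction you use $\|W^{-1}(y)M\|\le\|A_QW^{-1}(y)\|\,\|A_Q^{-1}M\|$, together with Lemmas~\ref{norm matrix} and \ref{eq reduc M} and the definition of $[W]_{\mathscr{A}_{p(\cdot),\infty}}$.
\end{itemize}

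What your route buys is exactly the point you flagged. The basis reduction by itself yields only $\|A_Q^{-1}M\|\lesssim\exp(\fint_Q\log\|W^{-1}(y)M\|\,dy)$. Mimicking the last step of Lemma~\ref{eq reduc M} for the reverse bound would require a quasi-triangle inequality for the geometric mean $f\mapsto\exp(\fint_Q\log f)$, and that inequality fails. For example, take $E\subset Q$ with $|E|=|Q|/2$, $f_1=\mathbf{1}_E+\epsilon\mathbf{1}_{Q\setminus E}$ and $f_2=\epsilon\mathbf{1}_E+\mathbf{1}_{Q\setminus E}$. Each has geometric mean $\epsilon^{1/2}$, while $f_1+f_2\ge 1$. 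So the reverse bound must come from the $\mathscr{A}_{p(\cdot),\infty}$ condition itself, which your second estimate supplies; the paper's one-line justification passes over this.

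Your argument also makes the constants explicit: $C_{n,m,p(\cdot)}$ in one direction and $C_{m,p(\cdot)}[W]_{\mathscr{A}_{p(\cdot),\infty}}$ in the other. Your opening basis-decomposition paragraph is then redundant, since the direct argument proves both inequalities on its own.
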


\subsection{Properties of Matrix $\mathscr{A}_{p(\cdot),\infty}$ Weights}\label{sec proApinfty}

In this subsection, we prove that  some results for scalar-valued weights in
Section \ref{sec weight} can be established also in  the matrix-valued  case.
The following is the relationship among $\mathscr{A}_{p(\cdot)}$, $\mathscr{A}_{p(\cdot),\infty}$,
and $A_{1,\infty}$,
which is an  matrix-valued analogue of Proposition \ref{Ap Apinfty 2}.

\begin{proposition}\label{Ap Apinfty}
Let $p(\cdot) \in \mathcal{P}$ with $p(\cdot)\in LH$.
Then $\mathscr{A}_{p(\cdot)}\subset \mathscr{A}_{p(\cdot),\infty} \subset A_{1,\infty}$.
Moreover, if $p_- > 1$,
then $\mathscr{A}_{p(\cdot)} \subsetneqq \mathscr{A}_{p(\cdot),\infty} \subsetneqq A_{1,\infty}$.
\end{proposition}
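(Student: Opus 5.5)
The proof splits into the two inclusions, followed by counterexamples for strictness; the inclusions are pointwise estimates of the type used in Proposition \ref{Ap Apinfty 2}.

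\emph{First inclusion, $\mathscr{A}_{p(\cdot)}\subset\mathscr{A}_{p(\cdot),\infty}$.} Fix a cube $Q$ and $y\in Q$. The starting point is the pointwise bound $\|W(x)W^{-1}(y)\| \le \|W(x)W^{-1}(\cdot)\|$ evaluated at $y$. Write $G(y):=\|\,\|W(\cdot)W^{-1}(y)\|\mathbf{1}_Q\|_{L^{p(\cdot)}}$. Jensen's inequality gives
$\exp(\fint_Q \log G(y)\,dy)\le \fint_Q G(y)\,dy$.
Lemma \ref{Holder} (H\"older in $y$) together with Lemma \ref{est Q} then gives $\fint_Q G \lesssim |Q|^{-1}\|\mathbf{1}_Q\|_{L^{p(\cdot)}}\|G\mathbf{1}_Q\|_{L^{p'(\cdot)}}$. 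The remaining task is to swap the order of the mixed norms, i.e.\ to bound $\|G\mathbf{1}_Q\|_{L^{p'(\cdot)}_y}$ by $\|\,\|\,\|W(x)W^{-1}(y)\|\mathbf{1}_Q(y)\|_{L^{p'(\cdot)}_y}\mathbf{1}_Q(x)\|_{L^{p(\cdot)}_x}$. Mixed variable norms do not commute directly, so I would avoid the swap by using reducing operators instead. Let $A_Q$ and $\widetilde A_Q$ be the reducing operators of order $p(\cdot)$ for $W$ and of order $p'(\cdot)$ for $W^{-1}$, respectively (Proposition \ref{ext redu}). By Lemma \ref{eq reduc M}, $G(y)\sim\|\mathbf{1}_Q\|_{L^{p(\cdot)}}\|A_QW^{-1}(y)\|$. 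Applying Lemma \ref{eq reduc M} once more, now to $W^{-1}$ in $y$, yields $\|G\mathbf{1}_Q\|_{L^{p'(\cdot)}}\sim\|\mathbf{1}_Q\|_{L^{p(\cdot)}}\|\mathbf{1}_Q\|_{L^{p'(\cdot)}}\|A_Q\widetilde A_Q\|$. The $\mathscr{A}_{p(\cdot)}$ quantity is similarly comparable to $|Q|^{-1}\|\mathbf{1}_Q\|_{L^{p(\cdot)}}\|\mathbf{1}_Q\|_{L^{p'(\cdot)}}\|A_Q\widetilde A_Q\|\sim\|A_Q\widetilde A_Q\|$, using Lemma \ref{norm matrix} and Lemma \ref{est Q} (this is the known characterization from \cite{cp23}). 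Combining these gives $[W]_{\mathscr{A}_{p(\cdot),\infty}}\lesssim[W]_{\mathscr{A}_{p(\cdot)}}$.

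\emph{Second inclusion, $\mathscr{A}_{p(\cdot),\infty}\subset A_{1,\infty}$.} Apply Lemma \ref{est fQ} in the variable $x$ to $\|W(x)W^{-1}(y)\|$:
$\fint_Q\|W(x)W^{-1}(y)\|dx\le C_{p(\cdot),n}\|\mathbf{1}_Q\|_{L^{p(\cdot)}}^{-1}\|\,\|W(\cdot)W^{-1}(y)\|\mathbf{1}_Q\|_{L^{p(\cdot)}}$.
Taking $\log$, averaging in $y$ and exponentiating then gives $[W]_{A_{1,\infty}}\le C_{p(\cdot),n}[W]_{\mathscr{A}_{p(\cdot),\infty}}$.

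\emph{Strictness when $p_->1$.} Take scalar multiples $W=w I_m$, i.e.\ a scalar weight times the identity matrix, for which $\mathscr{A}_{p(\cdot)}$, $\mathscr{A}_{p(\cdot),\infty}$ and $A_{1,\infty}$ reduce to $\mathcal{A}_{p(\cdot)}$, $\mathcal{A}_{p(\cdot),\infty}$ and $A_\infty$, respectively. Proposition \ref{Ap Apinfty 2} then supplies a $w\in\mathcal{A}_{p(\cdot),\infty}\setminus\mathcal{A}_{p(\cdot)}$ and the power weight $w=|x|^{a_0}\in A_\infty\setminus\mathcal{A}_{p(\cdot),\infty}$; for the latter, $\mathbb{W}=w^{p(\cdot)}$ is not locally integrable, so $wI_m\notin\mathscr{A}_{p(\cdot),\infty}$.

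The main obstacle is the first inclusion, specifically justifying the reducing-operator equivalence of the $\mathscr{A}_{p(\cdot)}$ constant with $\|A_Q\widetilde A_Q\|$. If that equivalence is not available for general $p(\cdot)\in\mathcal P$, I would instead prove the needed direction directly: apply Lemma \ref{eq reduc M} pointwise in $x$ to $W^{-1}$ with $M=W(x)$ and use Lemma \ref{norm matrix} to move between $\|W(x)W^{-1}(y)\|$ and $\|W^{-1}(y)W(x)\|$.
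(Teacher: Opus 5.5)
Your proposal is correct and follows essentially the paper's proof. The first inclusion uses Jensen together with the reducing operators $A_Q$ (of order $p(\cdot)$ for $W$) and $\widetilde{A}_Q$ (of order $p'(\cdot)$ for $W^{-1}$) and Lemma \ref{norm matrix}, reducing everything to $\|A_Q\widetilde{A}_Q\|$; the second uses Lemma \ref{est fQ} in $x$; strictness comes from $W=wI_m$ and Proposition \ref{Ap Apinfty 2}. The only differences are cosmetic: you apply H\"older in $y$ before inserting $A_Q$, whereas the paper inserts $A_Q$ first and then applies Lemma \ref{est fQ} with $p'(\cdot)$, and the equivalence of the $\mathscr{A}_{p(\cdot)}$ quantity with $\|A_Q\widetilde{A}_Q\|$ that you flagged is exactly your fallback computation, valid for all $p(\cdot)\in\mathcal{P}$ with $p(\cdot)\in LH$.
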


To prove Proposition \ref{Ap Apinfty},
we need several elementary results.
Notice that, for any identity matrix $I_m$, we have $\|I_m\| = 1$.
Thus, by this and the definitions of $\mathscr{A}_{p(\cdot)}$ and $\mathscr{A}_{p(\cdot),\infty}$,
we obtain the following {\rm(i)} and {\rm (ii)} of Lemma \ref{W w 1},
while Lemma \ref{W w 1}{\rm(iii)} is precisely \cite[Lemma 3.2(ii)]{bhyy-a} with $p = 1$.
We omit the details here.

\begin{lemma}\label{W w 1}
Let $p(\cdot) \in \mathcal{P}$.
Then, for any matrix $W := wI_m$, where $w$ is a scalar-valued weight,
the following statements hold:
\begin{itemize}
\item[{\rm (i)}] $W \in \mathscr{A}_{p(\cdot)}$ if and only if $w \in \mathcal{A}_{p(\cdot)}$.
\item[{\rm (ii)}] $W \in \mathscr{A}_{p(\cdot),\infty}$ if and only if $w \in \mathcal{A}_{p(\cdot),\infty}$.
\item[{\rm (iii)}] $W \in A_{1,\infty}$ if and only if $w \in A_\infty$.
\end{itemize}
\end{lemma}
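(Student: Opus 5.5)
The plan is to reduce each of the three statements to the corresponding scalar quantity, using two facts. First, $W(x)=w(x)I_m$ is a scalar multiple of the identity. Second, $\|I_m\|=1$.

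For {\rm (i)}, I will compute $\|W(x)W^{-1}(y)\| = w(x)w^{-1}(y)\|I_m\| = w(x)w^{-1}(y)$. Substituting this into Definition \ref{def Ap}, the inner norm becomes
$$\left\|\,\|W(x)W^{-1}(\cdot)\|\mathbf{1}_Q\right\|_{L^{p'(\cdot)}} = w(x)\left\|w^{-1}\mathbf{1}_Q\right\|_{L^{p'(\cdot)}},$$
because the factor $w(x)$ is constant in the integration variable. Taking the $L^{p(\cdot)}_x$ norm over $Q$ then factors as $\|w\mathbf{1}_Q\|_{L^{p(\cdot)}}\|w^{-1}\mathbf{1}_Q\|_{L^{p'(\cdot)}}$. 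Hence $[W]_{\mathscr{A}_{p(\cdot)}}=[w]_{\mathcal{A}_{p(\cdot)}}$ exactly, which gives {\rm (i)}.

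For {\rm (ii)}, the same identity gives, for each fixed $y$,
$$\left\|\,\|W(\cdot)W^{-1}(y)\|\mathbf{1}_Q\right\|_{L^{p(\cdot)}}=w^{-1}(y)\|w\mathbf{1}_Q\|_{L^{p(\cdot)}}.$$
Taking logarithms turns this into $\log\bigl(\|\mathbf{1}_Q\|^{-1}_{L^{p(\cdot)}}\|w\mathbf{1}_Q\|_{L^{p(\cdot)}}\bigr)+\log w^{-1}(y)$. The first term is constant in $y$, so averaging over $Q$ and exponentiating reproduces the $\mathcal{A}_{p(\cdot),\infty}$ functional in \eqref{def Apinfty}. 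Therefore $[W]_{\mathscr{A}_{p(\cdot),\infty}}=[w]_{\mathcal{A}_{p(\cdot),\infty}}$. One point needs checking: the logarithm splitting must be legitimate, i.e.\ $\log w^{-1}$ must be locally integrable. I expect this is only a bookkeeping issue, handled by interpreting both sides as $+\infty$ simultaneously.

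For {\rm (iii)}, I will apply the definition of $A_{1,\infty}$ with $p=1$. Here $\|W(x)W^{-1}(y)\|=w(x)/w(y)$, so the inner average is $w^{-1}(y)\fint_Q w$. The outer log-average then yields exactly $\fint_Q w\,\exp(\fint_Q\log w^{-1})$, so $[W]_{A_{1,\infty}}=[w]_{A_\infty}$. This agrees with the cited result \cite[Lemma 3.2(ii)]{bhyy-a}.

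None of the three steps is a genuine obstacle. The only care needed is the integrability issue noted in {\rm (ii)}.
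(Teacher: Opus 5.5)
Your proposal is correct and is essentially the paper's argument. The paper also obtains (i) and (ii) directly from the definitions using $\|I_m\|=1$, that is, from $\|W(x)W^{-1}(y)\|=w(x)w^{-1}(y)$, which is exactly your factorization; for (iii) the paper cites \cite[Lemma 3.2(ii)]{bhyy-a} with $p=1$, while you carry out the same one-line computation yourself, which is equally valid.
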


Now, we prove Proposition \ref{Ap Apinfty}.
\begin{proof}[Proof of Proposition \ref{Ap Apinfty}]
Let $W \in \mathscr{A}_{p(\cdot)}$.
Then, by Remark \ref{rem cap}{\rm (iii)} with $W\in\mathscr{A}_{p(\cdot)}$,
we find that $W^{-1} \in \mathscr{A}_{p'(\cdot)}$.
From this and Proposition \ref{ext redu},
we deduce that, for any cube $Q$ in $\mathbb{R}^n$,
there exists a reducing operator $\widetilde{A_Q}$ of order $p'(\cdot)$ for $W^{-1}$.
Using Jensen's inequality and Lemmas \ref{eq reduc M},
\ref{est fQ}, and \ref{norm matrix},
we find that, for any cube $Q$ in $\mathbb{R}^n$,
\begin{align*}
&\exp\left( \fint_{Q} \log\left( \frac{1}{\|\mathbf{1}_Q\|_{L^{p(\cdot)}}} \left\|\, \left\| W(\cdot)W^{-1}(y) \right\| \mathbf{1}_Q \right\|_{L^{p(\cdot)}}
 \right)\,dy \right)\\
&\quad \leq \fint_{Q}  \frac{1}{\|\mathbf{1}_Q\|_{L^{p(\cdot)}}} \left\|\, \left\| W(\cdot)W^{-1}(y) \right\| \mathbf{1}_Q \right\|_{L^{p(\cdot)}} \,dy\\
&\quad \sim  \fint_{Q}  \left\| A_Q W^{-1} \right\| \,dy
\lesssim \frac{1}{\|\mathbf{1}_Q\|_{L^{p(\cdot)}}} \left\|\, \left\| A_Q W^{-1}(y) \right\| \mathbf{1}_Q\right\|_{L^{p(\cdot)}}
\sim  \left\|\widetilde{A_Q} A_Q  \right\| = \left\| A_Q \widetilde{A_Q} \right\|\\
&\quad \sim |Q|^{-1} \bigg\|\,\left\|\, \left\| W(x)W^{-1}(\cdot) \right\|\mathbf{1}_Q \right\|_{L^{p'(\cdot)}} \mathbf{1}_{Q}(x)\bigg\|_{L^{p(\cdot)}_x},
\end{align*}
where $L^{p(\cdot)}_x$ indicates to take the norm with respect to the variable $x$.
This, together with the definitions of $\mathscr{A}_{p(\cdot),\infty}$ and $\mathscr{A}_{p(\cdot)}$,
further implies that $[W]_{\mathscr{A}_{p(\cdot),\infty}} \lesssim [W]_{\mathscr{A}_{p(\cdot)}}$
and hence $\mathscr{A}_{p(\cdot)} \subset \mathscr{A}_{p(\cdot),\infty}$.

Next, letting $W \in \mathscr{A}_{p(\cdot),\infty}$,
by Lemma \ref{est fQ},
we find that, for any cube $Q$ in $\mathbb{R}^n$,
\begin{align*}
&\exp\left( \fint_{Q} \log\left( \fint_Q \left\| W(\cdot)W^{-1}(y)\right\|\,dx  \right)\,dy \right)\\
&\quad \leq C_{p(\cdot),n} \exp\left( \fint_{Q} \log\left( \frac{1}{\|\mathbf{1}_Q\|_{L^{p(\cdot)}}} \left\|\, \left\| W(\cdot)W^{-1}(y) \right\| \mathbf{1}_Q \right\|_{L^{p(\cdot)}}\right)\,dy \right).
\end{align*}
Combining this with the definitions of $\mathscr{A}_{p(\cdot),\infty}$ and $A_{1,\infty}$,
we conclude that
$$[W]_{A_{1,\infty}} \leq C_{p(\cdot),n} [W]_{\mathscr{A}_{p(\cdot),\infty}}$$
and hence $\mathscr{A}_{p(\cdot),\infty} \subset A_{1,\infty}$.

Finally, if $p_- > 1$, then, by Proposition \ref{Ap Apinfty 2},
we find that there exist $w_1 \in \mathcal{A}_{p(\cdot),\infty} \setminus \mathcal{A}_{p(\cdot)}$
and $w_2 \in A_{\infty} \setminus \mathcal{A}_{p(\cdot),\infty}$.
Thus, using this and Lemma \ref{W w 1},
we obtain $W_1 := w_1I_{m} \in \mathscr{A}_{p(\cdot),\infty} \setminus \mathscr{A}_{p(\cdot)}$
and $W_2 := w_2I_{m} \in A_{1,\infty}\setminus \mathscr{A}_{p(\cdot),\infty}$.
This finishes the proof of Proposition \ref{Ap Apinfty}.
\end{proof}

As an application of Theorem \ref{reverse Holder Apinfty},
we  establish  the following reverse H\"older's inequality
for matrix-valued weights.

\begin{theorem}\label{reverse Holder M}
Let $p(\cdot)\in\mathcal{P}_0$ with $p(\cdot) \in LH$.
Then, for any $W \in \mathscr{A}_{p(\cdot),\infty}$,
there exist positive constants $C$, $C_1$, $C_2$,
$A$, and $A_1$,
depending only on $p(\cdot)$ and $n$,
such that, for any $r \in (1,r_w]$ with
$$ r_w := 1+ \frac{1}{C_1 [W]_{\mathscr{A}_{p(\cdot),\infty}}^{A_1} 2^{C_2[W]_{\mathscr{A}_{p(\cdot),\infty}}}}, $$
any cube $Q$ in $\mathbb{R}^n$, and any matrix $M \in M_{m}$,
\begin{align*}
\frac{1}{\|\mathbf{1}_{Q}\|_{L^{rp(\cdot)}}} \left\|\left\| W(\cdot)M \right\|\mathbf{1}_Q\right\|_{L^{rp(\cdot)}}
\leq C [W]_{\mathscr{A}_{p(\cdot),\infty}}^{A}
\frac{1}{\|\mathbf{1}_{Q}\|_{L^{p(\cdot)}}} \left\| \left\| W(\cdot)M \right\|\mathbf{1}_Q\right\|_{L^{p(\cdot)}}.
\end{align*}
\end{theorem}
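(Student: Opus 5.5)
The plan is to reduce the matrix statement to the scalar Theorem \ref{reverse Holder Apinfty}. Fix $M\in M_m$ and put $w_M(x):=\|W(x)M\|$. The two sides of the desired inequality are exactly the two sides of \eqref{reverse Holder Apinfty 1} for $w_M$. So it suffices to show two things. First, $w_M\in\mathcal{A}_{p(\cdot),\infty}$ with $[w_M]_{\mathcal{A}_{p(\cdot),\infty}}\lesssim [W]_{\mathscr{A}_{p(\cdot),\infty}}$, where the implicit constant depends only on $p(\cdot)$, $n$ (and $m$), and not on $M$. Second, the constants of Theorem \ref{reverse Holder Apinfty} depend monotonically on the weight constant. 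The second point is clear from the explicit forms $C[w]^A$ and $r_w=1+1/(C_1[w]^{A_1}2^{C_2[w]})$.

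Degenerate matrices need a remark first. If $M$ is singular, then $w_M$ may vanish only on a null set or may vanish identically. If $M=0$ the inequality is trivial. Otherwise I would apply the argument to $M_\varepsilon:=M+\varepsilon I_m$, which is invertible for all small $\varepsilon$, and pass to the limit $\varepsilon\to0$. Both norms converge by monotone or dominated convergence, since $\|W(\cdot)M_\varepsilon\|\le\|W(\cdot)M\|+\varepsilon\|W(\cdot)\|$. Hence I may assume $M$ is invertible, and then $w_M>0$ a.e.

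The core step is the scalar comparison, which uses Proposition \ref{eq def infty lem}. For a cube $Q$:
\begin{itemize}
\item By submultiplicativity, $\|W(x)W^{-1}(y)\|\ge \|W(x)M\|/\|M^{-1}W(y)\|$. Since $w_M^{-1}(y)=\|W(y)M\|^{-1}$, I would instead bound $\log w_M^{-1}(y)$ directly.
\item Set $H(y):=M\,\|W(y)M\|^{-1}$ in \eqref{eq def infty}. Then $\|W(\cdot)H(\cdot)\|\mathbf{1}_Q=\mathbf{1}_Q$, so the prefactor is $1$.
\item The exponential term becomes $\exp\bigl(\fint_Q\log w_M^{-1}(y)\,dy\bigr)\cdot\frac{1}{\|\mathbf{1}_Q\|_{L^{p(\cdot)}}}\|w_M\mathbf{1}_Q\|_{L^{p(\cdot)}}$, which is precisely the $\mathcal{A}_{p(\cdot),\infty}$ quantity for $w_M$ on $Q$.
\end{itemize}
Proposition \ref{eq def infty lem} therefore yields $[w_M]_{\mathcal{A}_{p(\cdot),\infty}}\lesssim[W]_{\mathscr{A}_{p(\cdot),\infty}}$, uniformly in $M$.

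This requires $H\in\mathcal{F}_{Q,W}$, i.e.\ $\log w_M\in L^1(Q)$, and the proposition's hypothesis on $\log_+$. These local integrability issues are the main obstacle. I would handle them as follows.
\begin{itemize}
\item The upper bound on $\log w_M$ comes from local integrability of $W$.
\item For the lower bound, use Lemma \ref{AA-1 M}-type reasoning, or Jensen together with $W\in\mathscr{A}_{p(\cdot),\infty}$. Finiteness of $[W]_{\mathscr{A}_{p(\cdot),\infty}}$ forces $\fint_Q\log\bigl(\frac{1}{\|\mathbf{1}_Q\|_{L^{p(\cdot)}}}\|\|W(\cdot)W^{-1}(y)\|\mathbf{1}_Q\|_{L^{p(\cdot)}}\bigr)dy<\infty$. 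Combined with $\|W(x)W^{-1}(y)\|\ge\|W(x)M\|\,\|W(y)M\|^{-1}$, this gives $\fint_Q\log w_M^{-1}<\infty$.
\end{itemize}
The displayed lower bound also gives the comparison directly, without the proposition. Integrating $\log$ of $\frac{1}{\|\mathbf{1}_Q\|_{L^{p(\cdot)}}}\|\|W(\cdot)W^{-1}(y)\|\mathbf{1}_Q\|_{L^{p(\cdot)}}\ge \frac{1}{\|\mathbf{1}_Q\|_{L^{p(\cdot)}}}\|w_M\mathbf{1}_Q\|_{L^{p(\cdot)}}\,w_M^{-1}(y)$ over $y\in Q$ and exponentiating yields $[w_M]_{\mathcal{A}_{p(\cdot),\infty}}\le[W]_{\mathscr{A}_{p(\cdot),\infty}}$ with constant $1$. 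This is the route I would actually write.

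Finally, I would apply Theorem \ref{reverse Holder Apinfty} to $w_M$. Its constants are increasing in the weight constant, so they can be replaced by the same expressions in $[W]_{\mathscr{A}_{p(\cdot),\infty}}$, giving the statement for every $M$ and every $r\in(1,r_w]$.
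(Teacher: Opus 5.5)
Your proposal is correct, and the route you say you would actually write differs in a useful way from the paper's.

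\textbf{The paper's route.} It fixes an orthonormal basis $\{\vec e_j\}_{j=1}^m$ and sets $w_j:=|W M\vec e_j|$. It applies Lemma \ref{wz 1} (the vector version) to get $[w_j]_{\mathcal{A}_{p(\cdot),\infty}}\le[W]_{\mathscr{A}_{p(\cdot),\infty}}$. It then invokes Theorem \ref{reverse Holder Apinfty} for each $w_j$, using $r_W\le r_j$ by monotonicity exactly as you do. Finally it reassembles $\|W(\cdot)M\|$ via $\|A\|\sim\sum_{j=1}^m|A\vec e_j|$ from \cite[Lemma 3.2]{r03} and the quasi-triangle inequality in $L^{rp(\cdot)}$ and $L^{p(\cdot)}$.

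\textbf{Your route.} You apply the scalar theorem once, to the single weight $w_M=\|W(\cdot)M\|$. The pointwise bound $\|W(x)W^{-1}(y)\|\ge\|W(x)M\|\,\|W(y)M\|^{-1}$ gives $[w_M]_{\mathcal{A}_{p(\cdot),\infty}}\le[W]_{\mathscr{A}_{p(\cdot),\infty}}$ with constant $1$. This is precisely the matrix analogue, Lemma \ref{wM 1}, which the paper states without proof a little later for Proposition \ref{WM reverse 1}.

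\textbf{What each buys.} Skipping the basis decomposition means your final constants genuinely depend only on $p(\cdot)$ and $n$, as the statement claims. The paper's reassembly step introduces constants depending on $m$, and on $p_-$ through the quasi-triangle inequality when $rp_-<1$. The paper's route, in exchange, only needs the vector lemma, which it proves in detail.

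\textbf{Side remarks.}
\begin{itemize}
\item The perturbation $M_\varepsilon=M+\varepsilon I_m$ is unnecessary. Since $W(x)$ is invertible for almost every $x$, $W(x)M\neq0$ whenever $M\neq0$. Hence $w_M>0$ almost everywhere for every nonzero $M$, and your direct route never uses $M^{-1}$.
\item If you kept the perturbation, the dominated-convergence step on the left side would need $\|W(\cdot)\|\mathbf{1}_Q\in L^{rp(\cdot)}$, which is not available a priori. Fatou's lemma for the variable norm would be the right tool there.
\item The inequality in your first bullet, $\|W(x)W^{-1}(y)\|\ge\|W(x)M\|/\|M^{-1}W(y)\|$, is false in general. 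Take $m=1$ and $|M|>1$: it becomes $w(x)/w(y)\ge|M|^2w(x)/w(y)$. The correct version, with $\|W(y)M\|$ in the denominator, is the one you use afterwards, so nothing downstream is affected.
\end{itemize}
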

Before giving the proof of Theorem \ref{reverse Holder M},
we present a key lemma, which connects matrix $\mathscr{A}_{p(\cdot),\infty}$ weights
with scalar $\mathcal{A}_{p(\cdot),\infty}$ weights.
\begin{lemma}\label{wz 1}
Let $p(\cdot) \in \mathcal{P}_0$ with $p(\cdot) \in LH$
and let $W\in \mathscr{A}_{p(\cdot),\infty}$.
Then, for any nonzero vector $\vec{z}\in \mathbb{C}^m$,
one has $w_{\vec{z}} := |W\vec{z}| \in \mathcal{A}_{p(\cdot),\infty}$ and
$[w_{\vec{z}}]_{\mathcal{A}_{p(\cdot),\infty}}
\leq [W]_{\mathscr{A}_{p(\cdot),\infty}}. $
\end{lemma}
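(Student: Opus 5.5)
The plan is to compare, cube by cube, the quantity defining $[w_{\vec z}]_{\mathcal{A}_{p(\cdot),\infty}}$ with the one defining $[W]_{\mathscr{A}_{p(\cdot),\infty}}$. The comparison rests on one pointwise inequality: for almost every $x,y$,
\[
w_{\vec z}(x)=|W(x)\vec z| = |W(x)W^{-1}(y)W(y)\vec z| \le \|W(x)W^{-1}(y)\|\,|W(y)\vec z| = \|W(x)W^{-1}(y)\|\, w_{\vec z}(y).
\]
Note that $w_{\vec z}$ is a genuine scalar weight. It is measurable. It is positive almost everywhere because $W(x)$ is invertible almost everywhere and $\vec z\neq\mathbf{0}$.

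First step: fix a cube $Q$ and a point $y\in Q$ with $w_{\vec z}(y)\in(0,\infty)$. Taking the $L^{p(\cdot)}$ quasi-norm in $x$ over $Q$ in the pointwise inequality, and using only monotonicity and homogeneity of $\|\cdot\|_{L^{p(\cdot)}}$ (valid for $p(\cdot)\in\mathcal{P}_0$), gives
\[
\frac{\|w_{\vec z}\mathbf{1}_Q\|_{L^{p(\cdot)}}}{\|\mathbf{1}_Q\|_{L^{p(\cdot)}}}\, w_{\vec z}^{-1}(y)\le \frac{1}{\|\mathbf{1}_Q\|_{L^{p(\cdot)}}}\left\|\,\|W(\cdot)W^{-1}(y)\|\mathbf{1}_Q\right\|_{L^{p(\cdot)}}.
\]

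Second step: take logarithms and average in $y$ over $Q$, then exponentiate. The left-hand side becomes
\[
\frac{1}{\|\mathbf{1}_Q\|_{L^{p(\cdot)}}}\|w_{\vec z}\mathbf{1}_Q\|_{L^{p(\cdot)}}\exp\left(\fint_Q\log w_{\vec z}^{-1}(y)\,dy\right),
\]
and the right-hand side becomes exactly the term inside the supremum in \eqref{def matrix Apinfty}. Taking the supremum over $Q$ then yields $[w_{\vec z}]_{\mathcal{A}_{p(\cdot),\infty}}\le [W]_{\mathscr{A}_{p(\cdot),\infty}}<\infty$, with constant exactly $1$ as claimed.

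The only delicate point, and the expected main obstacle, is justifying the averaging of logarithms. One must make sure the integrals $\fint_Q\log(\cdots)$ make sense, so that the inequality survives integration without an $\infty-\infty$ ambiguity. The plan for this is as follows.

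\emph{Positive parts.} The right-hand log average is finite from above because $W\in\mathscr{A}_{p(\cdot),\infty}$. Moreover, $\|w_{\vec z}\mathbf{1}_Q\|_{L^{p(\cdot)}}$ is finite: it is at most $\|\,\|W(\cdot)W^{-1}(y)\|\mathbf{1}_Q\|_{L^{p(\cdot)}}\,w_{\vec z}(y)$ for some admissible $y$, and such a $y$ exists because the right-hand log average is finite.

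\emph{Normalization.} Write the left-hand integrand as $\log C_Q+\log w_{\vec z}^{-1}(y)$, where $C_Q$ is the constant normalized norm, and integrate the pointwise inequality. The positive part of $\log w_{\vec z}^{-1}$ is then dominated by an integrable function minus a constant, so its integral is well defined in $[-\infty,\infty)$. If it equals $-\infty$, the left-hand side is $0$ and there is nothing to prove.

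Hence the inequality between the exponentials of the averages holds in all cases, which completes the plan.
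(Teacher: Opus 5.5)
Your proposal is correct and is essentially the paper's proof: the same pointwise bound $w_{\vec z}(x)\le \|W(x)W^{-1}(y)\|\,w_{\vec z}(y)$, then the $L^{p(\cdot)}$ quasi-norm in $x$ over $Q$, then logarithms, averaging in $y$, exponentiating and taking the supremum. Your extra checks are details the paper leaves implicit, and you handle them correctly: that $w_{\vec z}$ is a weight, that $0<\|w_{\vec z}\mathbf{1}_Q\|_{L^{p(\cdot)}}<\infty$, and that $(\log w_{\vec z}^{-1})^+$ is integrable on $Q$, which rules out any $\infty-\infty$ issue.
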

\begin{proof}
By the definition of $w_{\vec{z}}$,
we obtain, for any cube $Q$ and any $x,y\in Q$,
\begin{align*}
w_{\vec{z}}(x) \leq \left\|W(x) W^{-1}(y)\right\| \left|W(y) \vec{z} \right|.
\end{align*}
Using this, we find that, for any $y\in Q$,
\begin{align*}
\frac{1}{\|\mathbf{1}_Q\|_{L^{p(\cdot)}}} \left\| w_{\vec{z}} \mathbf{1}_Q \right\|_{L^{p(\cdot)}}
\leq \frac{1}{\|\mathbf{1}_Q\|_{L^{p(\cdot)}}} \left\| \left\|W(x) W^{-1}(y)\right\| \mathbf{1}_Q \right\|_{L^{p(\cdot)}}w_{\vec{z}}(y),
\end{align*}
which further implies that
\begin{align*}
\log\left( \frac{1}{\|\mathbf{1}_Q\|_{L^{p(\cdot)}}} \left\| w_{\vec{z}} \mathbf{1}_Q \right\|_{L^{p(\cdot)}} \right) + \log\left(w_{\vec{z}}^{-1}(y)\right)
&\leq \log\left(\frac{1}{\|\mathbf{1}_Q\|_{L^{p(\cdot)}}} \left\| \left\|W(x) W^{-1}(y)\right\| \mathbf{1}_Q \right\|_{L^{p(\cdot)}} \right).
\end{align*}
From this, we infer that
\begin{align*}
&\frac{1}{\|\mathbf{1}_Q\|_{L^{p(\cdot)}}} \left\| w_{\vec{z}} \mathbf{1}_Q \right\|_{L^{p(\cdot)}} \exp\left(\fint_Q \log(w_{\vec{z}}^{-1}(y))\,dy\right)\\
&\quad \leq \exp\left( \fint_Q \log\left(\frac{1}{\|\mathbf{1}_Q\|_{L^{p(\cdot)}}}\left\| \, \left\|W(\cdot) W^{-1}(y)\right\|\mathbf{1}_Q \right\|_{L^{p(\cdot)}}\,dx\right) \, dy\right).
\end{align*}
Thus, using this with the definitions of $\mathcal{A}_{p(\cdot),\infty}$ and $\mathscr{A}_{p(\cdot),\infty}$,
we conclude that
$[w_{\vec{z}}]_{\mathcal{A}_{p(\cdot),\infty}} \leq [W]_{\mathscr{A}_{p(\cdot),\infty}}.$
This finishes the proof of Lemma \ref{wM 3}.
\end{proof}
Now, we begin to prove Theorem \ref{reverse Holder M}.
\begin{proof}[Proof of Theorem \ref{reverse Holder M}]
Let $\{\vec{e_j}\}_{j = 1}^m$ be an orthonormal basis of $\mathbb{C}^m$.
Next, let $w_j := |WM\vec{e_j}|$ for any $j\in \{1,\dots,m\}$.
Then, by Lemma \ref{wz 1},
we find that, for any $ j\in\{1,\dots,m\} $,
\begin{align}\label{reverse j 1}
[w_j]_{\mathcal{A}_{p(\cdot),\infty}} \leq [W]_{\mathscr{A}_{p(\cdot),\infty}} .
\end{align}
From this and Theorem \ref{reverse Holder Apinfty},
it follows that there exist positive constants $C_1$, $C_2$,
$A$, and $A_1$, depending only on $p(\cdot)$ and $n$,
such that, for any $ j\in\{1,\dots,m\} $ and $r \in [1,r_j ]$ with
$$ r_j := 1+ \frac{1}{C_1 [w_j]_{\mathcal{A}_{p(\cdot),\infty}}^{A_1} 2^{C_2[w_j]_{\mathcal{A}_{p(\cdot),\infty}}}} $$
and for any cube $Q$,
\begin{align}\label{reverse j}
\frac{1}{\|\mathbf{1}_{Q}\|_{L^{rp(\cdot)}}} \left\| w_j \mathbf{1}_Q\right\|_{L^{rp(\cdot)}}
\lesssim [w_j]_{\mathcal{A}_{p(\cdot),\infty}}^{A}
\frac{1}{\|\mathbf{1}_{Q}\|_{L^{p(\cdot)}}} \left\| w_j \mathbf{1}_Q\right\|_{L^{p(\cdot)}}.
\end{align}
Now, let
$$ r_W := 1+ \frac{1}{C_1 [W]_{\mathscr{A}_{p(\cdot),\infty}}^{A_1} 2^{C_2[W]_{\mathscr{A}_{p(\cdot),\infty}}}}, $$
which, combined with \eqref{reverse j 1}, is less than $ r_j $ for any $j\in\{1,\dots,m\}$.
Using this, \cite[Lemma 3.2]{r03}, \eqref{reverse j}, and \eqref{reverse j 1},
we conclude that,
for any $r \in [1,r_W]$ and any cube $Q$ in $\mathbb{R}^n$,
\begin{align*}
&\frac{1}{\|\mathbf{1}_{Q}\|_{L^{rp(\cdot)}}} \left\|\left\| W(\cdot) M \right\|\mathbf{1}_Q\right\|_{L^{rp(\cdot)}}\\
&\quad \sim \sum_{j = 1}^m \frac{1}{\|\mathbf{1}_{Q}\|_{L^{rp(\cdot)}}} \left\|\left\| W(\cdot) M \vec{e_j} \right\|\mathbf{1}_Q\right\|_{L^{rp(\cdot)}}
= \sum_{j = 1}^m \frac{1}{\|\mathbf{1}_{Q}\|_{L^{rp(\cdot)}}} \left\| w_j \mathbf{1}_Q\right\|_{L^{rp(\cdot)}} \\
&\quad\lesssim \sum_{j = 1}^m [w_j]_{\mathcal{A}_{p(\cdot),\infty}}^{A}
\frac{1}{\|\mathbf{1}_{Q}\|_{L^{p(\cdot)}}} \left\| w_j \mathbf{1}_Q\right\|_{L^{p(\cdot)}}
\lesssim [W]_{\mathscr{A}_{p(\cdot),\infty}}^{A}
\frac{1}{\|\mathbf{1}_{Q}\|_{L^{p(\cdot)}}} \left\| \left\| W(\cdot)M \right\|\mathbf{1}_Q\right\|_{L^{p(\cdot)}}.
\end{align*}
This finishes the proof of Theorem \ref{reverse Holder M}.
\end{proof}

The following result is an extension of \eqref{wM 2}
from scalar-valued weights to matrix-valued weights.

\begin{proposition}\label{WM reverse 1}
Let $p(\cdot) \in \mathcal{P}$ with $p_- > 1$ and $p(\cdot) \in LH$.
Then there exists a positive constant $C$ such that,
for any $W \in \mathscr{A}_{p(\cdot),\infty}$, any matrix $M \in M_m$, and any cube $Q$ in $\mathbb{R}^n$
\begin{align}\label{WM reverse eq}
\frac{1}{\|\mathbf{1}_Q\|_{L^{p(\cdot)}}} \left\| \,\left\|W(\cdot)M\right\| \mathbf{1}_Q \right\|_{L^{p(\cdot)}}
\leq C [W]_{\mathscr{A}_{p(\cdot),\infty}} \fint_Q \left\|W(x)M\right\|\,dx.
\end{align}
\end{proposition}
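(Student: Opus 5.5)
The plan is to reduce \eqref{WM reverse eq} to the scalar inequality \eqref{wM 2}, using the orthonormal-basis decomposition of the matrix norm together with Lemma \ref{wz 1}.

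Fix $W \in \mathscr{A}_{p(\cdot),\infty}$, a matrix $M \in M_m$ and a cube $Q$. Let $\{\vec{e}_j\}_{j=1}^m$ be an orthonormal basis of $\mathbb{C}^m$. For each $j$ put $w_j := |W(\cdot)M\vec{e}_j|$; we may discard any $j$ with $M\vec{e}_j = \vec{0}$, since then $w_j\equiv 0$. For the remaining $j$, Lemma \ref{wz 1} applied with $\vec{z} := M\vec{e}_j$ gives $w_j \in \mathcal{A}_{p(\cdot),\infty}$ and $[w_j]_{\mathcal{A}_{p(\cdot),\infty}} \le [W]_{\mathscr{A}_{p(\cdot),\infty}}$. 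Inequality \eqref{wM 2} needs only Jensen's inequality and the definition of $\mathcal{A}_{p(\cdot),\infty}$, so applying it to each $w_j$ yields
\begin{align*}
\frac{1}{\|\mathbf{1}_Q\|_{L^{p(\cdot)}}}\left\|w_j\mathbf{1}_Q\right\|_{L^{p(\cdot)}}
\le [W]_{\mathscr{A}_{p(\cdot),\infty}} \fint_Q \left|W(x)M\vec{e}_j\right|\,dx .
\end{align*}

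Next sum over $j$ and use \cite[Lemma 3.2]{r03}, which gives $\|W(x)M\| \sim \sum_{j=1}^m |W(x)M\vec{e}_j|$ pointwise with constants depending only on $m$. On the left, the triangle inequality in $L^{p(\cdot)}$ (valid because $p(\cdot)\in\mathcal{P}$) gives
\begin{align*}
\left\|\,\|W(\cdot)M\|\mathbf{1}_Q\right\|_{L^{p(\cdot)}} \lesssim \left\|\sum_{j=1}^m w_j\mathbf{1}_Q\right\|_{L^{p(\cdot)}} \le \sum_{j=1}^m \|w_j\mathbf{1}_Q\|_{L^{p(\cdot)}} .
\end{align*}
On the right, since $|W(x)M\vec{e}_j| \le \|W(x)M\|$, we have $\sum_{j=1}^m \fint_Q |W(x)M\vec{e}_j|\,dx \le m\fint_Q \|W(x)M\|\,dx$. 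Combining these three displays gives \eqref{WM reverse eq} with a constant $C$ depending only on $m$.

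There is no serious obstacle; the only care needed is the degenerate case $M\vec{e}_j=\vec{0}$, which is trivial. The hypotheses $p_->1$ and $p(\cdot)\in LH$ are not used in this argument. They are presumably there for the converse statement, Proposition \ref{WM reverse 2}, in the same way as in Theorem \ref{w reverse}.
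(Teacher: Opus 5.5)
Your proof is correct and follows essentially the paper's route. The paper applies Lemma \ref{wM 1} (the matrix analogue of Lemma \ref{wz 1}) directly to the scalar weight $w_M=\|W(\cdot)M\|$, then uses \eqref{wM 2} (cited there through Theorem \ref{w reverse}), which gives \eqref{WM reverse eq} with $C=1$ in one line; your column-wise detour via \cite[Lemma 3.2]{r03} and the triangle inequality in $L^{p(\cdot)}$ costs only a factor $m$, relies on the proved Lemma \ref{wz 1} rather than the omitted Lemma \ref{wM 1}, and your observation that $p_->1$ is not really needed applies equally to the paper's proof.
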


To prove Proposition \ref{WM reverse 1},
we need the following result,
which is exactly Lemma \ref{wz 1} with vector $\vec{z}$ replaced by matrix $M$.
We omit the details here.

\begin{lemma}\label{wM 1}
Let $p(\cdot) \in \mathcal{P}_0$ with $p(\cdot) \in LH$
and let $W\in \mathscr{A}_{p(\cdot),\infty}$.
Then, for any nonzero matrix $M\in M_m$,
one has $w_M := \|WM\| \in A_\infty$ and
$[w_M]_{\mathcal{A}_{p(\cdot),\infty}}
\leq [W]_{\mathscr{A}_{p(\cdot),\infty}}. $
\end{lemma}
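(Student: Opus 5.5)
The plan is to mimic the proof of Lemma \ref{wz 1}, replacing the vector $\vec{z}$ by the matrix $M$. Fix a nonzero $M\in M_m$ and set $w_M(x):=\|W(x)M\|$. For almost every $x$, $W(x)$ is invertible, so $W(x)M\neq 0$ and $w_M(x)>0$ a.e. Local integrability of $w_M$ follows from condition (iii) of Definition \ref{def matrix}. Hence $w_M$ is a scalar weight.

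The key pointwise estimate is submultiplicativity of the operator norm. For any cube $Q$ and any $x,y\in Q$, write
$$W(x)M=W(x)W^{-1}(y)\,W(y)M,$$
which gives
$$w_M(x)\le \|W(x)W^{-1}(y)\|\,w_M(y).$$
Multiply by $\mathbf{1}_Q(x)$ and take the $L^{p(\cdot)}$ norm in $x$, using the lattice property and homogeneity of $\|\cdot\|_{L^{p(\cdot)}}$, which hold also for $p(\cdot)\in\mathcal{P}_0$. Dividing by $\|\mathbf{1}_Q\|_{L^{p(\cdot)}}$, this yields, for a.e. $y\in Q$,
$$\frac{\|w_M\mathbf{1}_Q\|_{L^{p(\cdot)}}}{\|\mathbf{1}_Q\|_{L^{p(\cdot)}}}\,w_M^{-1}(y)\le \frac{1}{\|\mathbf{1}_Q\|_{L^{p(\cdot)}}}\big\|\,\|W(\cdot)W^{-1}(y)\|\mathbf{1}_Q\big\|_{L^{p(\cdot)}}.$$
Now take logarithms, average over $y\in Q$, and exponentiate. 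The left side becomes the quantity inside the supremum in \eqref{def Apinfty} for $w_M$, and the right side is the quantity inside the supremum in \eqref{def matrix Apinfty}. Taking the supremum over $Q$ gives $[w_M]_{\mathcal{A}_{p(\cdot),\infty}}\le[W]_{\mathscr{A}_{p(\cdot),\infty}}<\infty$.

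The stated conclusion $w_M\in A_\infty$ then follows by reducing the exponent. If $p(\cdot)\in\mathcal{P}$, apply Lemma \ref{Ap Apinfty 1} directly. If only $p(\cdot)\in\mathcal{P}_0$, first use Lemma \ref{wr} with $r=p_-$: this gives $w_M^{p_-}\in\mathcal{A}_{p(\cdot)/p_-,\infty}$, with $p(\cdot)/p_-\in\mathcal{P}\cap LH$. Then Theorem \ref{Apinfty A} gives $w_M^{p(\cdot)}\in A_\infty$. Finally, apply Jensen's inequality as in Lemma \ref{est fQ} with the convexified exponent, so that $\fint_Q w_M$ is controlled by $\|w_M\mathbf{1}_Q\|_{L^{p(\cdot)}}/\|\mathbf{1}_Q\|_{L^{p(\cdot)}}$ whenever $p_-\ge1$.

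The main obstacle is measure-theoretic bookkeeping around the logarithms. One needs $\log w_M^{-1}\in L^1(Q)$ so that the averaged inequality is meaningful and not of the form $\infty-\infty$. This follows from the finiteness of the right side, together with the positivity and local integrability of $w_M$. The $A_\infty$ membership in the case $p_-<1$ is the weaker point; I would state it primarily under $p(\cdot)\in\mathcal{P}$, where Lemma \ref{Ap Apinfty 1} applies directly, and otherwise read the conclusion as $w_M\in\mathcal{A}_{p(\cdot),\infty}$.
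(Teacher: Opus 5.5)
Your proof of the substantive claim, $[w_M]_{\mathcal{A}_{p(\cdot),\infty}}\le[W]_{\mathscr{A}_{p(\cdot),\infty}}$, is correct and is exactly the paper's argument. You use the pointwise bound $w_M(x)\le\|W(x)W^{-1}(y)\|\,w_M(y)$, take the $L^{p(\cdot)}$ norm in $x$, and then the logarithmic average in $y$; the paper just says it is Lemma \ref{wz 1} with $\vec{z}$ replaced by $M$. Your integrability bookkeeping for $\log w_M^{-1}$ is a welcome addition.

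On the clause ``$w_M\in A_\infty$'': it is a misprint for $w_M\in\mathcal{A}_{p(\cdot),\infty}$. That is how the lemma is used in the proof of Proposition \ref{WM reverse 1}, and the genuine $A_\infty$ statement is Lemma \ref{wM 3}, for $w_M^{\min\{1,p_-\}}$. So your fallback reading is the right one.

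However, drop your chain for $p_-<1$. Its final Jensen step needs $p_-\ge1$, so it proves nothing in the case it is meant for. It cannot be repaired, because the literal claim is false there. Take $m=1$, $p\equiv\frac12$, $M=1$ and $W(x):=|x|^{-n}[\log(e+|x|^{-1})]^{-2}$. This $W$ is locally integrable, and $W^{1/2}\in A_1\subset A_\infty$, so $W\in\mathscr{A}_{1/2,\infty}$ by Lemma \ref{wr}. Yet for each fixed $\epsilon\in(0,1)$ we have $W(Q(\mathbf{0},r)\setminus Q(\mathbf{0},\epsilon r))/W(Q(\mathbf{0},r))\lesssim\log(1/\epsilon)/\log(1/r)\to0$ as $r\to0^+$. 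That set has relative Lebesgue measure $1-\epsilon^n$, so $W\notin A_\infty$.

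What Lemmas \ref{wr} and \ref{Ap Apinfty 1} do give for $p_-<1$ is $w_M^{p_-}\in A_\infty$, with $[w_M^{p_-}]_{A_\infty}\lesssim[W]^{p_-}_{\mathscr{A}_{p(\cdot),\infty}}$.
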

Next, we give the proof of Proposition \ref{WM reverse 1}.
\begin{proof}[Proof of Proposition \ref{WM reverse 1}]
By Lemma \ref{wM 1}, we find that,
for any $M\in M_m$ with $w_M := \|WM\|$,
$ w_M \in \mathcal{A}_{p(\cdot),\infty} $ and $[w_M]_{\mathcal{A}_{p(\cdot),\infty}} \leq [W]_{\mathscr{A}_{p(\cdot),\infty}}$.
Using this and Theorem \ref{w reverse},
we obtain, for any cube $Q$ in $\mathbb{R}^n$,
\begin{align*}
\frac{1}{\|\mathbf{1}_Q\|_{L^{p(\cdot)}}} \left\|\,\left\| W(\cdot)M \right\| \mathbf{1}_Q \right\|_{L^{p(\cdot)}}
& = \frac{1}{\|\mathbf{1}_Q\|_{L^{p(\cdot)}}} \left\| w_M \mathbf{1}_Q \right\|_{L^{p(\cdot)}}
\leq [w_M]_{\mathcal{A}_{p(\cdot),\infty}} \fint_{Q} w_M(x)\,dx\\
&\leq [W]_{\mathscr{A}_{p(\cdot),\infty}} \fint_{Q} w_M(x)\,dx.
\end{align*}
This finishes the proof of Proposition \ref{WM reverse 1}.
\end{proof}
Using this, we obtain the following characterization of $\mathscr{A}_{p(\cdot),\infty}$.
\begin{proposition}\label{WM reverse 2}
Let $p(\cdot) \in \mathcal{P}$ with $p_- > 1$ and $p(\cdot) \in LH$.
Then, for any matrix weight $W$,
$W \in \mathscr{A}_{p(\cdot),\infty}$ if and only if
$W\in \mathscr{A}_{1,\infty}$ and \eqref{WM reverse eq} holds.
\end{proposition}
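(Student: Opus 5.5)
Necessity is immediate from what precedes: if $W\in\mathscr{A}_{p(\cdot),\infty}$, then Proposition \ref{Ap Apinfty} gives $W\in A_{1,\infty}=\mathscr{A}_{1,\infty}$ (Remark \ref{rem Apinfty}(ii)), and Proposition \ref{WM reverse 1} gives \eqref{WM reverse eq}. So the work is in the sufficiency.

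For sufficiency, assume $W\in\mathscr{A}_{1,\infty}$ and that \eqref{WM reverse eq} holds with some constant $C_W$ in place of $C[W]_{\mathscr{A}_{p(\cdot),\infty}}$. The plan is to fix a cube $Q$ and, for each $y\in Q$, apply \eqref{WM reverse eq} with the matrix $M:=W^{-1}(y)$. This gives
\begin{align*}
\frac{1}{\|\mathbf{1}_Q\|_{L^{p(\cdot)}}}\left\|\,\left\|W(\cdot)W^{-1}(y)\right\|\mathbf{1}_Q\right\|_{L^{p(\cdot)}}\leq C_W\fint_Q\left\|W(x)W^{-1}(y)\right\|\,dx .
\end{align*}
Taking logarithms, averaging in $y$ over $Q$, and exponentiating (the logarithm is monotone, so the inequality survives the averaging), we obtain
\begin{align*}
\exp\left(\fint_Q\log\left(\frac{1}{\|\mathbf{1}_Q\|_{L^{p(\cdot)}}}\left\|\,\left\|W(\cdot)W^{-1}(y)\right\|\mathbf{1}_Q\right\|_{L^{p(\cdot)}}\right)dy\right)\leq C_W\exp\left(\fint_Q\log\left(\fint_Q\left\|W(x)W^{-1}(y)\right\|dx\right)dy\right).
\end{align*}
The right-hand side is at most $C_W[W]_{A_{1,\infty}}$. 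Taking the supremum over $Q$ then yields $[W]_{\mathscr{A}_{p(\cdot),\infty}}\le C_W[W]_{\mathscr{A}_{1,\infty}}<\infty$.

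The only delicate point is measurability and integrability in $y$, needed so that the $\log$-averages are meaningful. The inner function $\fint_Q\|W(x)W^{-1}(y)\|\,dx$ is measurable in $y$ and has finite $\exp$-$\log$ average by the $A_{1,\infty}$ assumption. Its logarithm is also bounded below, since $\|W(x)W^{-1}(y)\|\ge |W(x)W^{-1}(y)\vec e|$ for a unit vector $\vec e$, and the resulting bound is locally integrable. The same pointwise domination then controls the positive part of the logarithm on the left-hand side. So no genuine obstacle is expected: the argument is the matrix analogue of Theorem \ref{w reverse}, with the $A_{1,\infty}$ hypothesis playing the role that $A_\infty$ (obtained there from $RH_{p_-}$) played in the scalar case.
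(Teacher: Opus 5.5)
Your proposal is correct and follows the paper's proof essentially verbatim. Sufficiency applies \eqref{WM reverse eq} with $M:=W^{-1}(y)$, takes logarithms, averages over $y\in Q$, and bounds the result by the $\mathscr{A}_{1,\infty}$ constant. Necessity combines $\mathscr{A}_{p(\cdot),\infty}\subset\mathscr{A}_{1,\infty}$ with Proposition \ref{WM reverse 1}; the paper derives that inclusion directly from Lemma \ref{est fQ}, which is the same computation behind Proposition \ref{Ap Apinfty}. Reading the constant in \eqref{WM reverse eq} as an unspecified $C_W$ avoids the circularity in the statement as written, and your integrability remarks spell out points the paper leaves implicit.
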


\begin{remark}
Proposition \ref{WM reverse 2}
implies
$\mathscr{A}_{1,\infty} = \bigcup_{p\in(1,\infty)} \mathscr{A}_{p,\infty}.$
\end{remark}

Now, we give the proof of Proposition \ref{WM reverse 2}.
\begin{proof}[Proof of Proposition \ref{WM reverse 2}]
We first show the necessity.
Indeed, by Lemma \ref{est fQ},
we find that, for any $W \in \mathscr{A}_{p(\cdot),\infty}$ and any cube $Q$ in $\mathbb{R}^n$,
\begin{align*}
\exp\left( \fint_Q \log\left( \fint_Q \left\| W(x)W^{-1}(y) \right\|\,dx \right)\,dy \right)
\lesssim \exp\left( \fint_Q \log\left( \frac{1}{\|\mathbf{1}_Q\|_{L^{p(\cdot)}}} \left\|\,\left\| W(\cdot)W^{-1}(y) \right\|\mathbf{1}_Q\right\|_{L^{p(\cdot)}} \right)\,dy \right),
\end{align*}
which further implies that
$ [W]_{\mathscr{A}_{1,\infty}} \lesssim [W]_{\mathscr{A}_{p(\cdot),\infty}} $.
Thus, using this and Proposition \ref{WM reverse 1},
we conclude that, for any $W \in \mathscr{A}_{p(\cdot),\infty}$,
$W\in \mathscr{A}_{1,\infty}$ and \eqref{WM reverse eq} holds.
This finishes the proof of the necessity.

Next, we consider the sufficiency.
From Proposition \ref{WM reverse 1},
it follows immediately that, for any cube $Q$ in $\mathbb{R}^n$,
\begin{align*}
&\exp\left( \fint_Q \log\left( \frac{1}{\|\mathbf{1}_Q\|_{L^{p(\cdot)}}} \left\|\,\left\| W(\cdot)W^{-1}(y) \right\|\mathbf{1}_Q\right\|_{L^{p(\cdot)}} \right)\,dy \right) \\
&\quad \lesssim \exp\left( \fint_Q \log\left( \fint_Q \left\| W(x)W^{-1}(y) \right\|\,dx \right)\,dy \right) < \infty,
\end{align*}
which further implies that $ W \in \mathscr{A}_{p(\cdot),\infty} $.
This finishes the proof of the sufficiency
and hence Proposition \ref{WM reverse 2}.
\end{proof}

\subsection{Upper and Lower $\mathscr{A}_{p(\cdot),\infty}$ Dimensions}\label{sec Apdimesnion}
In this subsection, we introduce and study  the upper and the lower weight dimensions of
$\mathscr{A}_{p(\cdot),\infty}$ weights
(see \cite{bhyy23} for weight  dimensions of matrix $A_p$ weights
and \cite{bhyy-a} for weight  dimensions of matrix $A_{p,\infty}$ weights).
Recall that the concept of weight dimensions of matrix $A_p$ weights
was first introduced by Bu et al. in \cite{bhyy23} and has proved to be a key tool
in the  study of  matrix-weighted Besov-type and Triebel--Lizorkin-type spaces.
Thus, weight dimensions of
$\mathscr{A}_{p(\cdot),\infty}$ weights play a key role in the
study of function spaces with matrix $\mathscr{A}_{p(\cdot),\infty}$ weights (see \cite{yyz25}).

\begin{definition}
Let $p(\cdot) \in \mathcal{P}_0$ and $d\in\mathbb{R}$.
A matrix weight $W$ is said to have \emph{$\mathscr{A}_{p(\cdot),\infty}$-lower dimension} $d$,
denoted by $W \in \mathbb{D}^{\rm lower}_{p(\cdot),\infty,d}$,
if there exists a positive constant $C$ such that,
for any $\lambda \in [1,\infty)$ and any cube $Q \subset \mathbb{R}^n$,
\begin{align*}
\exp\left( \fint_{\lambda Q} \log\left( \frac{1}{\|\mathbf{1}_Q\|_{L^{p(\cdot)}}} \left\|\, \left\| W(\cdot)W^{-1}(y) \right\| \mathbf{1}_Q \right\|_{L^{p(\cdot)}}
 \right)\,dy \right) \leq C \lambda^d.
\end{align*}
A matrix weight $W$ is said to have \emph{$\mathscr{A}_{p(\cdot),\infty}$-upper dimension} $d$,
denoted by $W \in \mathbb{D}^{\rm upper}_{p(\cdot),\infty,d}$,
if there exists a positive constant $C$ such that,
for any $\lambda \in [1,\infty)$ and any cube $Q \subset \mathbb{R}^n$,
\begin{align*}
\exp\left( \fint_{Q} \log\left( \frac{1}{\|\mathbf{1}_{\lambda Q}\|_{L^{p(\cdot)}}} \left\|\, \left\| W(\cdot)W^{-1}(y) \right\| \mathbf{1}_{\lambda Q} \right\|_{L^{p(\cdot)}}
 \right)\,dy \right) \leq C \lambda^d.
\end{align*}
\end{definition}

We have the following basic properties
(see \cite{bhyy-a} for the corresponding ones of $A_{p,\infty}$ weights).
\begin{proposition}\label{dim ext}
Let $p(\cdot) \in \mathcal{P}_0$ with $p(\cdot) \in LH$.
Then the following statements hold:
\begin{itemize}
\item[{\rm (i)}] For any $d \in (-\infty,0)$, $\mathbb{D}^{\rm lower}_{p(\cdot),\infty,d} = \emptyset$
and $\mathbb{D}^{\rm upper}_{p(\cdot),\infty,d} = \emptyset$.
\item[{\rm (ii)}] For any $W \in \mathscr{A}_{p(\cdot),\infty}$,
there exists $d_1 \in [0,\frac{n}{p_-})$
such that $W \in \mathbb{D}^{\rm lower}_{p(\cdot),\infty,d_1}$.
\item[{\rm (iii)}] For any $W \in \mathscr{A}_{p(\cdot),\infty}$,
there exists $d_2 \in [0,\infty)$
such that $W \in \mathbb{D}^{\rm upper}_{p(\cdot),\infty,d_2}$.
\end{itemize}
\end{proposition}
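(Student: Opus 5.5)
The plan is to first rewrite both dimension conditions in terms of reducing operators, so that (ii) and (iii) become comparisons of $A_Q$ with $A_{\lambda Q}$. For (i) I would instead use a pointwise reduction to the constant exponent $r:=\min\{1,p_-\}$ and quote \cite{bhyy-a}.

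\emph{Reduction to reducing operators (for (ii) and (iii)).} Let $W\in\mathscr{A}_{p(\cdot),\infty}$ and let $A_Q$ be as in Proposition \ref{ext redu}. By Lemma \ref{eq reduc M} with $M:=W^{-1}(y)$, the inner quantity $\frac{1}{\|\mathbf{1}_Q\|_{L^{p(\cdot)}}}\|\,\|W(\cdot)W^{-1}(y)\|\mathbf{1}_Q\|_{L^{p(\cdot)}}$ is comparable to $\|A_QW^{-1}(y)\|=\|W^{-1}(y)A_Q\|$; the equality uses Lemma \ref{norm matrix}, since both matrices are nonnegative definite. Lemma \ref{AA-1 M} applied on the cube $\lambda Q$ with $M:=A_Q$ then gives
$$\exp\Big(\fint_{\lambda Q}\log\|W^{-1}(y)A_Q\|\,dy\Big)\sim\|A_{\lambda Q}^{-1}A_Q\|=\|A_QA_{\lambda Q}^{-1}\|.$$
Hence $W\in\mathbb{D}^{\rm lower}_{p(\cdot),\infty,d}$ if and only if $\|A_QA_{\lambda Q}^{-1}\|\lesssim\lambda^d$. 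In the same way, $W\in\mathbb{D}^{\rm upper}_{p(\cdot),\infty,d}$ if and only if $\|A_{\lambda Q}A_Q^{-1}\|\lesssim\lambda^d$. In both cases the constants depend on $[W]_{\mathscr{A}_{p(\cdot),\infty}}$, which is harmless here.

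\emph{Part (i).} This part must cover arbitrary matrix weights, so I would not use reducing operators. Set $r:=\min\{1,p_-\}$. Lemmas \ref{est fQ} and \ref{con f}, applied with the exponent $p(\cdot)/r\in\mathcal{P}\cap LH$, bound the inner quantity from below by $c\,[\fint_Q\|W(x)W^{-1}(y)\|^r\,dx]^{1/r}$. Put $V:=W^r$. Then $\|V^{1/r}(x)V^{-1/r}(y)\|^r=\|W(x)W^{-1}(y)\|^r$, so both left-hand sides dominate $c$ times the $r$-th root of the corresponding $A_{r,\infty}$ lower or upper dimension expression for $V$. If $W\in\mathbb{D}^{\rm lower}_{p(\cdot),\infty,d}$ with $d<0$, then $V$ would have $A_{r,\infty}$-lower dimension $rd<0$, and such classes are empty by \cite{bhyy-a}. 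The upper case is identical.

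\emph{Part (iii).} Fix a unit vector $\vec u$ and set $\vec z:=A_Q^{-1}\vec u$ and $w_{\vec z}:=|W\vec z|$. By Lemma \ref{wz 1}, $w_{\vec z}\in\mathcal{A}_{p(\cdot),\infty}$ with $[w_{\vec z}]_{\mathcal{A}_{p(\cdot),\infty}}\le[W]_{\mathscr{A}_{p(\cdot),\infty}}$. Lemma \ref{wr} then gives $w_{\vec z}^{r}\in\mathcal{A}_{p(\cdot)/r,\infty}$ with $p(\cdot)/r\in\mathcal{P}$. Lemma \ref{EB 1}, applied to this convexified weight and combined with Lemma \ref{con f}, yields $\|w_{\vec z}\mathbf{1}_{\lambda Q}\|_{L^{p(\cdot)}}\lesssim\lambda^{d_2}\|w_{\vec z}\mathbf{1}_Q\|_{L^{p(\cdot)}}$, where $d_2$ depends only on $n$, $p(\cdot)$ and $[W]_{\mathscr{A}_{p(\cdot),\infty}}$. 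Since $\|\mathbf{1}_{\lambda Q}\|_{L^{p(\cdot)}}\ge\|\mathbf{1}_Q\|_{L^{p(\cdot)}}$, we get
$$|A_{\lambda Q}A_Q^{-1}\vec u|\sim\frac{\|w_{\vec z}\mathbf{1}_{\lambda Q}\|_{L^{p(\cdot)}}}{\|\mathbf{1}_{\lambda Q}\|_{L^{p(\cdot)}}}\lesssim\lambda^{d_2}\frac{\|w_{\vec z}\mathbf{1}_Q\|_{L^{p(\cdot)}}}{\|\mathbf{1}_Q\|_{L^{p(\cdot)}}}\sim\lambda^{d_2}|\vec u|.$$
Taking the supremum over unit $\vec u$ gives $\|A_{\lambda Q}A_Q^{-1}\|\lesssim\lambda^{d_2}$.

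\emph{Part (ii).} Monotonicity alone gives $|A_Q\vec z|\lesssim\frac{\|\mathbf{1}_{\lambda Q}\|_{L^{p(\cdot)}}}{\|\mathbf{1}_Q\|_{L^{p(\cdot)}}}|A_{\lambda Q}\vec z|$, which only yields the exponent $n/p_-$. To get strictly below $n/p_-$, I would use Theorem \ref{reverse Holder M} with some fixed $s\in(1,r_w]$. Apply Lemma \ref{Holder qr} on $Q$ with exponents $sp(\cdot)$ and $s'p(\cdot)$, enlarge the domain from $Q$ to $\lambda Q$ in the $L^{sp(\cdot)}$ norm, and then apply the reverse H\"older inequality on $\lambda Q$. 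This gives
$$|A_Q\vec z|\lesssim\frac{\|\mathbf{1}_Q\|_{L^{s'p(\cdot)}}\,\|\mathbf{1}_{\lambda Q}\|_{L^{sp(\cdot)}}}{\|\mathbf{1}_Q\|_{L^{p(\cdot)}}\,\|\mathbf{1}_{\lambda Q}\|_{L^{p(\cdot)}}}\cdot\frac{\|\,|W\vec z|\mathbf{1}_{\lambda Q}\|_{L^{p(\cdot)}}}{1}\cdot\frac{1}{\|\mathbf{1}_{\lambda Q}\|_{L^{sp(\cdot)}}}\,\|\mathbf{1}_{\lambda Q}\|_{L^{sp(\cdot)}}\,\frac{\|\mathbf{1}_{\lambda Q}\|_{L^{p(\cdot)}}}{\|\mathbf{1}_{\lambda Q}\|_{L^{p(\cdot)}}}.$$
By Lemma \ref{est Q} (after reducing to $p_-\ge1$ via Lemma \ref{wr}), the indicator norms are $|Q|^{1/(s'p_Q)}$, $|\lambda Q|^{1/(sp_{\lambda Q})}$, and so on. The prefactor should therefore reduce to roughly $|\lambda Q|^{1/(sp_{\lambda Q})}/|Q|^{1/(sp_Q)}$, which I expect to be $\lesssim\lambda^{n/(sp_-)}$. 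This would give $d_1:=n/(sp_-)<n/p_-$.

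The main obstacle is this last exponent bookkeeping. The averages $p_Q$ and $p_{\lambda Q}$ differ, so I must show that $|Q|^{1/p_{\lambda Q}-1/p_Q}$ stays bounded uniformly. For small cubes this should follow from Lemma \ref{weight bound}(i), and for large cubes from Lemma \ref{weight bound}(ii) together with the $LH_\infty$ estimates, in the same way as in the proof of Lemma \ref{ww BB}. If a direct bound fails, I would split into the cases $|\lambda Q|\le1$, $|Q|\ge1$, and $|Q|<1<|\lambda Q|$, and handle each with the corresponding part of Lemma \ref{weight bound}.
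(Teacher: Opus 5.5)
Your proposal is correct. Part (ii) is essentially the paper's argument, phrased through reducing operators: H\"older with $sp(\cdot)$ and $s'p(\cdot)$ on $Q$, enlarging to $\lambda Q$, then Theorem~\ref{reverse Holder M} on $\lambda Q$.

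However, the obstacle you isolate is mis-stated, because the factor $|Q|^{1/p_{\lambda Q}-1/p_Q}$ is \emph{not} uniformly bounded. Take $n=1$, $p:=2$ off $[0,2]$, $p(x):=2+x$ on $[0,1]$, $p(x):=4-x$ on $[1,2]$, $Q:=[-\epsilon,\epsilon]$ and $\lambda:=1/\epsilon$. Then $1/p_{\lambda Q}-1/p_Q\to\frac12\log\frac32-\frac14<0$, so this factor blows up as $\epsilon\to0$. You should not split it off. Since $(sp)_Q=sp_Q$, Lemma~\ref{est Q} gives $\|\mathbf{1}_Q\|_{L^{s'p(\cdot)}}\|\mathbf{1}_Q\|_{L^{sp(\cdot)}}\sim\|\mathbf{1}_Q\|_{L^{p(\cdot)}}$. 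So your prefactor is comparable to $\|\mathbf{1}_{\lambda Q}\|_{L^{sp(\cdot)}}/\|\mathbf{1}_Q\|_{L^{sp(\cdot)}}$, and Lemma~\ref{QP1} bounds this by $\lambda^{n/(sp_-)}$ at once; this is exactly how the paper concludes. Your fallback case split also works, provided that in the local case you compare both norms with $|Q|^{1/(sp(x))}$ and $|\lambda Q|^{1/(sp(x))}$ at a common point $x\in Q$ (Lemma~\ref{weight bound}(i)), rather than with $p_Q$ and $p_{\lambda Q}$ separately.

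Parts (i) and (iii) follow genuinely different routes. For (iii), the paper stays at the matrix level. It inserts the test function $H=h_QA_Q^{-1}$ into Proposition~\ref{eq def infty lem} on $2Q$, tunes the parameter $M$ to extract $\|A_{2Q}A_Q^{-1}\|\lesssim[W]_{\mathscr{A}_{p(\cdot),\infty}}^{c}$, and iterates over dyadic dilates. You instead reduce to scalars:
\begin{itemize}
\item Lemma~\ref{wz 1} puts $|W\vec z|$ in $\mathcal{A}_{p(\cdot),\infty}$ uniformly in $\vec z$;
\item Lemma~\ref{wr} convexifies;
\item Lemma~\ref{EB 1} supplies polynomial doubling of $Q\mapsto\|\,|W\vec z|\mathbf{1}_Q\|_{L^{p(\cdot)}}$;
\item the reducing operators transfer this doubling to $\|A_{\lambda Q}A_Q^{-1}\|$.
\end{itemize}
Your route is shorter, reuses the scalar $A_\infty$ doubling, and avoids the tuning of $M$. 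The paper's route is self-contained at the matrix level. Both give an upper dimension logarithmic in $[W]_{\mathscr{A}_{p(\cdot),\infty}}$.

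For (i), the paper argues directly by Lebesgue differentiation, shrinking cubes around a point where $W$ is invertible. You instead dominate both quantities by the $A_{r,\infty}$ quantities of $V:=W^r$ via Lemmas~\ref{est fQ} and~\ref{con f}, and cite \cite{bhyy-a}. This is valid, since $V$ is a matrix weight ($\|W^r\|=\|W\|^r\le 1+\|W\|$). It does, however, require the cited emptiness result in its form for arbitrary matrix weights, not just $A_{r,\infty}$ weights. That form holds by the same differentiation argument, so your route gains brevity at the cost of an external dependence.
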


\begin{remark}\label{rem dim ext}
If $p(\cdot)=p$ is a constant exponent,
then Proposition \ref{dim ext}{\rm (ii)}
shows that, for any $W\in\mathscr{A}_{p,\infty}$,
$ d^{\rm lower}_{p(\cdot),\infty}(W) \in [0,\frac{n}{p}) $.
Notice that, from Remark \ref{rem Apinfty}{\rm (i)},
it follows that $W \in \mathscr{A}_{p,\infty}$ if and only if $\widetilde{W} := W^p \in A_{p,\infty}$.
Hence, by this and Proposition \ref{dim ext}{\rm (ii)},
we find that, if $\widetilde{W} \in A_{p,\infty}$ and $d \in \mathbb{R}$ satisfies
\begin{align*}
\exp\left( \fint_{\lambda Q} \log\left( \fint_Q \left\| \widetilde{W}^{\frac1p}(x)\widetilde{W}^{-\frac1p}(y) \right\|^p \,dx \right)\,dy \right) \lesssim \lambda^d,
\end{align*}
then $d \in (0,n)$,
which is precisely \cite[Proposition 6.3{\rm (ii)}]{bhyy23}.
\end{remark}
For the proof of Proposition \ref{dim ext},
we first show several technical lemmas, which give some necessary tools.
The following lemma is an extension of \cite[Lemma 2.11(ii)]{d15}
with the condition $p(\cdot) \in \mathcal{P}$ replaced by $p(\cdot)\in \mathcal{P}_0$,
which is immediately obtained by using Lemma \ref{con f};
we omit the details here.

\begin{lemma}\label{QP1}
Let $p(\cdot)\in \mathcal{P}_0$ with $p(\cdot) \in LH$.
Then  there exist positive constants $C$ and $\widetilde{C}$,
depending only on $p(\cdot)$ and $n$,
such that, for any cubes $Q$ and $R$ with $R\subset Q$,
$$ C \left( \frac{|Q|}{|R|} \right)^\frac{1}{p_+} \leq \frac{\|\mathbf{1}_{Q}\|_{L^{p(\cdot)}}}{\|\mathbf{1}_{R}\|_{L^{p(\cdot)}}}
\leq \widetilde{C} \left( \frac{|Q|}{|R|} \right)^\frac{1}{p_-}. $$
\end{lemma}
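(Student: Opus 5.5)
The plan is to reduce to the case $p(\cdot)\in\mathcal{P}$ through the convexification identity of Lemma \ref{con f}, and then use the known estimate \cite[Lemma 2.11(ii)]{d15}. Put $r:=\min\{1,p_-\}$ and $s(\cdot):=p(\cdot)/r$. Then $s(\cdot)\in\mathcal{P}$ with $s_-\ge 1$, and $s(\cdot)\in LH$, because the log-H\"older constants simply scale by $1/r$.

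For $s(\cdot)$, \cite[Lemma 2.11(ii)]{d15} gives constants $C,\widetilde{C}$, depending only on $s(\cdot)$ and $n$, such that for all cubes $R\subset Q$
$$C\left(\frac{|Q|}{|R|}\right)^{\frac{1}{s_+}}\le \frac{\|\mathbf{1}_Q\|_{L^{s(\cdot)}}}{\|\mathbf{1}_R\|_{L^{s(\cdot)}}}\le \widetilde{C}\left(\frac{|Q|}{|R|}\right)^{\frac{1}{s_-}}.$$
If one prefers not to cite that lemma, the same bound follows from Lemma \ref{est Q}. Indeed, $\|\mathbf{1}_Q\|_{L^{s(\cdot)}}\sim|Q|^{1/s_Q}$, so the ratio is comparable to $|Q|^{1/s_Q}|R|^{-1/s_R}$. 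The exponent mismatch $|R|^{1/s_Q-1/s_R}$ is controlled by the Diening condition (Lemma \ref{weight bound}(i)) when $|Q|\le1$, and by the $LH_\infty$ estimates with Lemma \ref{weight bound}(ii) when $|R|\ge1$. The mixed case $|R|<1\le|Q|$ can be split at an intermediate cube of measure $1$. This bookkeeping is the only genuinely delicate step, so I would cite \cite{d15} rather than redo it.

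Next apply Lemma \ref{con f} with $f=\mathbf{1}_E$. Since $\mathbf{1}_E^r=\mathbf{1}_E$, we get $\|\mathbf{1}_E\|_{L^{p(\cdot)}}=\|\mathbf{1}_E\|_{L^{s(\cdot)}}^{1/r}$ for $E=Q,R$. Raising the displayed inequality to the power $1/r$ therefore gives
$$C^{\frac1r}\left(\frac{|Q|}{|R|}\right)^{\frac{1}{rs_+}}\le\frac{\|\mathbf{1}_Q\|_{L^{p(\cdot)}}}{\|\mathbf{1}_R\|_{L^{p(\cdot)}}}\le\widetilde{C}^{\frac1r}\left(\frac{|Q|}{|R|}\right)^{\frac{1}{rs_-}}.$$
Because $rs_+=p_+$ and $rs_-=p_-$, this is exactly the claim. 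The constants depend only on $p(\cdot)$ and $n$, since $r$ and $s(\cdot)$ are determined by $p(\cdot)$.
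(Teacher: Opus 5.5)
Your proposal is correct and follows the paper's route: the paper obtains this lemma from \cite[Lemma 2.11(ii)]{d15} for exponents in $\mathcal{P}$ and extends it to $\mathcal{P}_0$ through the convexification identity of Lemma \ref{con f}, which applies because $p(\cdot)\in LH$ forces $p_+<\infty$, exactly as you do with $r=\min\{1,p_-\}$. One caution on your optional self-contained sketch: when $|R|\le|Q|\le 1$ the factor $|R|^{1/s_Q-1/s_R}$ is not bounded in general, so instead compare $\|\mathbf{1}_Q\|_{L^{s(\cdot)}}$ with $|Q|^{1/s(x)}$ and $\|\mathbf{1}_R\|_{L^{s(\cdot)}}$ with $|R|^{1/s(x)}$ at a common point $x\in R$, using Lemmas \ref{est Q} and \ref{weight bound}(i).
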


The following lemma gives an estimate of $A_QW^{-1}$ used later;
see \cite[Corollary 3.9]{bhyy23} for the corresponding result for $A_{p,\infty}$ matrix weights.

\begin{lemma}\label{Wpinfty}
Let $p(\cdot) \in \mathcal{P}_0$ with $p(\cdot)\in LH$.
Then there exists a positive constant $C$
such that, for any $W\in\mathscr{A}_{p(\cdot),\infty}$, any cube $Q$ in $\mathbb{R}^n$, and any $M\in (0,\infty)$,
\begin{align}\label{Wpinfty eq}
\left|\left\{ y\in Q:  \left\|A_QW^{-1}(y)\right\| \geq e^M \right\}\right| \leq \frac{\log(C[W]_{\mathscr{A}_{p(\cdot),\infty}})}{M}|Q|,
\end{align}
where $A_Q$ is the reducing operator of order $p(\cdot)$ for $W$ on $Q$.
\end{lemma}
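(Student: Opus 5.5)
We will prove \eqref{Wpinfty eq} by a Chebyshev-type inequality for the logarithm, combined with the definition of $[W]_{\mathscr{A}_{p(\cdot),\infty}}$ and Lemma \ref{eq reduc M}. The key observation is that, by Lemma \ref{eq reduc M} applied with the matrix $M:=W^{-1}(y)$ for each fixed $y\in Q$,
$$\left\|A_QW^{-1}(y)\right\| \le C_0\,\frac{1}{\|\mathbf{1}_Q\|_{L^{p(\cdot)}}}\left\|\,\left\|W(\cdot)W^{-1}(y)\right\|\mathbf{1}_Q\right\|_{L^{p(\cdot)}}=:C_0\,F_Q(y),$$
where $C_0$ depends only on $m$ and $p(\cdot)$. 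Hence the integrand of the $\mathscr{A}_{p(\cdot),\infty}$ condition dominates $\log\|A_QW^{-1}(y)\|$ up to an additive constant $\log C_0$.

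Next we need a lower bound ensuring that the logarithm is essentially nonnegative, so that Chebyshev can be applied. We will show $\|A_QW^{-1}(y)\|\ge c_0>0$ for every $y$, or at least that $\log(C_0F_Q)$ has a controlled negative part. The cleanest route is to note that $\|A_Q W^{-1}(y)\|\,\|W(y)A_Q^{-1}\|\ge \|I_m\|=1$. We then bound $\exp(\fint_Q\log\|W(y)A_Q^{-1}\|\,dy)\lesssim 1$ by Jensen, Lemma \ref{est fQ}, Lemma \ref{con f} (with $r=\min\{1,p_-\}$, exactly as in the proof of Proposition \ref{AA-1}), and Lemma \ref{eq reduc M} with $M:=A_Q^{-1}$, which gives $\fint_Q \|W(y)A_Q^{-1}\|^r dy\lesssim \|A_QA_Q^{-1}\|^r=1$.

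A simpler way to avoid handling negative parts is to work with $g(y):=\log_+(\|A_QW^{-1}(y)\|)$ directly:
$$\fint_Q g\,dy\le \fint_Q \log_+(C_0F_Q(y))\,dy .$$
This reduces the task to bounding the average of the positive part of $\log(C_0F_Q)$. The negative part is controlled because $F_Q(y)\gtrsim \|W(y)A_Q^{-1}\|^{-1}$ (by the previous paragraph) and $\fint_Q\log_+\|W(y)A_Q^{-1}\|\,dy\le \frac1r\log(1+\fint_Q\|WA_Q^{-1}\|^r)\lesssim 1$. Therefore $\fint_Q\log_-(C_0F_Q)\lesssim 1$, and
$$\fint_Q\log_+(C_0F_Q)=\fint_Q\log(C_0F_Q)+\fint_Q\log_-(C_0F_Q)\le \log\!\left(C_0[W]_{\mathscr{A}_{p(\cdot),\infty}}\right)+C_1 .$$

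Finally, Chebyshev's inequality gives
$$\left|\{y\in Q: \|A_QW^{-1}(y)\|\ge e^M\}\right|\le \frac{|Q|}{M}\fint_Q g\le \frac{\log(C[W]_{\mathscr{A}_{p(\cdot),\infty}})}{M}|Q|,$$
with $C:=C_0e^{C_1}$. The main obstacle is the control of the negative part of $\log F_Q$, that is, the uniform bound on $\fint_Q\log_+\|W(y)A_Q^{-1}\|\,dy$. This must be done with constants independent of $W$ in the case $p_-<1$, which is why the convexification by $r$ through Lemma \ref{con f} is needed.
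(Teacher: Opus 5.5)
Your proof is correct, but it follows a different route from the paper's. The paper does not apply Chebyshev's inequality. It plugs the test function $H:=W^{-1}\mathbf{1}_{Q\cap E_Q}+A_Q^{-1}\mathbf{1}_{Q\setminus E_Q}$, with $E_Q$ the set on the left of \eqref{Wpinfty eq}, into the $\sup_H$ characterization of Proposition \ref{eq def infty lem}. Lemma \ref{eq reduc M} then shows that the normalizing factor $\frac{1}{\|\mathbf{1}_Q\|_{L^{p(\cdot)}}}\|\,\|W(\cdot)H(\cdot)\|\mathbf{1}_Q\|_{L^{p(\cdot)}}$ is $\lesssim 1$ while the exponential average is $\gtrsim\exp(M|E_Q|/|Q|)$, whence $\exp(M|E_Q|/|Q|)\lesssim[W]_{\mathscr{A}_{p(\cdot),\infty}}$.

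You instead bound $\fint_Q\log_+\|A_QW^{-1}(y)\|\,dy$ directly. You control the negative part of $\log(C_0F_Q)$ through $\|A_QW^{-1}(y)\|\,\|W(y)A_Q^{-1}\|\ge1$ and the convexified estimate $\fint_Q\|W(y)A_Q^{-1}\|^r\,dy\lesssim\|A_QA_Q^{-1}\|^r=1$. This is essentially the Jensen/convexification step inside the proof of Proposition \ref{eq def infty lem}, so the ingredients coincide and only the packaging differs.

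The paper's route is shorter given that proposition, and it is the template reused in Proposition \ref{dim ext}(iii). Yours is self-contained and settles a point the paper passes over: $\log_-F_Q\in L^1(Q)$ with a bound independent of $W$. Together with $[W]_{\mathscr{A}_{p(\cdot),\infty}}<\infty$, this is exactly what is needed to check that the paper's $H$ lies in $\mathcal{F}_{Q,W}$ and that the $\log_+$ hypothesis of Proposition \ref{eq def infty lem} holds. Your argument also gives the slightly stronger averaged bound $\fint_Q\log_+\|A_QW^{-1}(y)\|\,dy\le\log(C[W]_{\mathscr{A}_{p(\cdot),\infty}})$, with $C$ depending only on $n$, $m$, and $p(\cdot)$, from which \eqref{Wpinfty eq} follows by Chebyshev.

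The pointwise lower bound $\|A_QW^{-1}(y)\|\ge c_0$ floated in your second paragraph is not available in general. Your final argument does not use it.
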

\begin{proof}
Let $E_Q := \{y\in Q:  \|A_Q W^{-1}(y)\| \geq e^M\}$.
From Proposition \ref{eq def infty lem} with
taking $H := W^{-1}\mathbf{1}_{Q\cap E_Q} + A_Q^{-1}\mathbf{1}_{Q\setminus E_Q}$,
we deduce that
\begin{align}\label{Wpinfty 0}
[W]_{\mathscr{A}_{p(\cdot),\infty}}
& \gtrsim \left[ \frac{1}{\|\mathbf{1}_Q\|_{L^{p(\cdot)}}} \left\|\,\left\| W(\cdot)H(\cdot) \right\|\mathbf{1}_Q \right\|_{L^{p(\cdot)}} \right]^{-1}\nonumber\\
&\quad \times \exp\left( \fint_{Q} \log\left( \frac{1}{\|\mathbf{1}_Q\|_{L^{p(\cdot)}}} \left\|\, \left\| W(\cdot)H(y)\right\| \mathbf{1}_Q \right\|_{L^{p(\cdot)}}
 \right)\,dy \right)\nonumber\\
& =: {\rm I}(Q)\times {\rm II}(Q),
\end{align}
where
$$ {\rm I}(Q) := \left[ \frac{1}{\|\mathbf{1}_Q\|_{L^{p(\cdot)}}} \left\|\,\left\| W(\cdot)H(\cdot) \right\|\mathbf{1}_Q \right\|_{L^{p(\cdot)}} \right]^{-1} $$
and
$$ {\rm II}(Q) := \exp\left( \fint_{Q} \log\left( \frac{1}{\|\mathbf{1}_Q\|_{L^{p(\cdot)}}} \left\|\, \left\| W(\cdot)H(y)\right\| \mathbf{1}_Q \right\|_{L^{p(\cdot)}} \right)\,dy \right). $$
Now, using Lemma \ref{eq reduc M} and the fact that $\|\cdot\|_{L^{p(\cdot)}}$ is a quasi-norm,
we obtain
\begin{align}\label{Wpinfty 1}
\left[{\rm I}(Q)\right]^{-1}
&\lesssim \frac{1}{\|\mathbf{1}_Q\|_{L^{p(\cdot)}}} \left\|\mathbf{1}_{Q\cap E_Q} \right\|_{L^{p(\cdot)}}
+ \frac{1}{\|\mathbf{1}_Q\|_{L^{p(\cdot)}}} \left\|\,\left\| W(\cdot)A_Q^{-1} \right\|\mathbf{1}_{Q\setminus E_Q} \right\|_{L^{p(\cdot)}}
\lesssim 1 + \left\| A_Q A_Q^{-1} \right\| \lesssim 1.
\end{align}
Then, by the definition of $H$, we find that, for any $y\in Q$,
\begin{align*}
&\log\left( \frac{1}{\|\mathbf{1}_Q\|_{L^{p(\cdot)}}}\left\|\, \left\| W(\cdot)H(y)\right\| \mathbf{1}_Q \right\|_{L^{p(\cdot)}} \right)\mathbf{1}_Q(y)\\
&\quad = \log\left(\frac{1}{\|\mathbf{1}_Q\|_{L^{p(\cdot)}}}\left\|\, \left\| W(\cdot)W^{-1}(y)\right\| \mathbf{1}_Q \right\|_{L^{p(\cdot)}}\right) \mathbf{1}_{Q \cap E_Q}(y)\\
&\quad\quad + \log\left(\frac{1}{\|\mathbf{1}_Q\|_{L^{p(\cdot)}}}\left\|\, \left\| W(\cdot)A_Q^{-1}\right\| \mathbf{1}_Q \right\|_{L^{p(\cdot)}}\right) \mathbf{1}_{Q\setminus E_Q}(y).
\end{align*}
Hence, using this, Lemma \ref{eq reduc M}, and the definition of $E_Q$,
we obtain
\begin{align*}
{\rm II}(Q)
&= \exp\left( |Q|^{-1} \int_{E_Q} \log\left( \frac{1}{\|\mathbf{1}_Q\|_{L^{p(\cdot)}}} \left\|\, \left\| W(\cdot)W^{-1}(y)\right\| \mathbf{1}_Q \right\|_{L^{p(\cdot)}} \right)\,dy \right)\\
&\quad\times \exp\left( |Q|^{-1} \int_{Q\setminus E_Q} \log\left( \frac{1}{\|\mathbf{1}_Q\|_{L^{p(\cdot)}}} \left\|\, \left\| W(\cdot)A_Q^{-1}\right\| \mathbf{1}_Q \right\|_{L^{p(\cdot)}} \right)\,dy\right)\\
&\sim \exp\left( |Q|^{-1} \int_{E_Q} \log\left( \left\| A_Q W^{-1}(y)\right\|
\right)\,dy \right)
\exp\left( |Q|^{-1} \int_{Q\setminus E_Q} \log\left( \left\| A_Q A_Q^{-1}\right\|
\right)\,dy \right)\\
&\geq \exp\left(\frac{M|E_Q|}{|Q|}\right).
\end{align*}
From this, \eqref{Wpinfty 0}, and \eqref{Wpinfty 1},
it follows immediately that
$\exp(\frac{M|E_Q|}{|Q|}) \lesssim [W]_{\mathscr{A}_{p(\cdot),\infty}},$
which further implies that \eqref{Wpinfty eq} holds.
This finishes the proof of Lemma \ref{Wpinfty}.
\end{proof}

The following result is a  matrix-valued analogue of  Lemma \ref{wM 1}
(see \cite[Lemma 5.3]{bhyy23} for a corresponding result of $A_{p,\infty}$ matrix weights).
Notice that, by \cite[Proposition 7.3.2(4)]{g14},
$[w]_{A_\infty} \geq 1$ for any $w\in A_\infty$.
Hence, the remaining proof of this result is similar  to the proof of Lemma \ref{wz 1}
with $\mathcal{A}_{p(\cdot),\infty}$ replaced by $A_\infty$.
We omit the details here.

\begin{lemma}\label{wM 3}
Let $p(\cdot) \in \mathcal{P}_0$ with $p(\cdot) \in LH$
and let $W\in \mathscr{A}_{p(\cdot),\infty}$.
Let $r := \min\{1,p_-\}$.
Then, for any nonzero matrix $M\in M_m$,
one has $w_M^r := \|WM\|^r \in A_\infty$ and
there exists a positive constant $C$, independent of $W$ and $M$,
such that
$1 \leq [w_M^r]^{\frac1r}_{A_\infty} \leq C[W]_{\mathscr{A}_{p(\cdot),\infty}}.$
\end{lemma}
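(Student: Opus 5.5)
The plan is to reduce Lemma \ref{wM 3} to the scalar results already proved, using the convexification trick to reach the range $p_-\ge 1$.

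\emph{Step 1.} Fix a nonzero $M\in M_m$. Lemma \ref{wM 1} (the matrix version of Lemma \ref{wz 1}) gives $w_M:=\|WM\|\in\mathcal{A}_{p(\cdot),\infty}$ with $[w_M]_{\mathcal{A}_{p(\cdot),\infty}}\le[W]_{\mathscr{A}_{p(\cdot),\infty}}$. The pointwise inequality $\|W(x)M\|\le\|W(x)W^{-1}(y)\|\,\|W(y)M\|$ proves this directly, exactly as in Lemma \ref{wz 1}. Note that $w_M>0$ almost everywhere because $W$ is invertible almost everywhere and $M\neq0$, so $w_M$ is a genuine scalar weight.

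\emph{Step 2.} Set $r:=\min\{1,p_-\}$. By Lemma \ref{wr},
\[
w_M^r\in\mathcal{A}_{\frac{p(\cdot)}{r},\infty}\quad\text{and}\quad [w_M^r]_{\mathcal{A}_{\frac{p(\cdot)}{r},\infty}}=[w_M]_{\mathcal{A}_{p(\cdot),\infty}}^{r}.
\]
Now $\frac{p(\cdot)}{r}\in\mathcal{P}$, and it lies in $LH$ because $p(\cdot)$ does. So Lemma \ref{Ap Apinfty 1} applies with exponent $\frac{p(\cdot)}{r}$ and gives
\[
1\le[w_M^r]_{A_\infty}\le C_{\frac{p(\cdot)}{r},n}\,[w_M^r]_{\mathcal{A}_{\frac{p(\cdot)}{r},\infty}}\le C_{\frac{p(\cdot)}{r},n}\,[W]^r_{\mathscr{A}_{p(\cdot),\infty}}.
\]
Taking $1/r$-th powers yields $1\le[w_M^r]_{A_\infty}^{1/r}\le C\,[W]_{\mathscr{A}_{p(\cdot),\infty}}$ with $C:=C_{\frac{p(\cdot)}{r},n}^{1/r}$. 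This constant depends only on $p(\cdot)$ and $n$, so it is independent of $W$ and $M$. The lower bound $1\le[w^r_M]_{A_\infty}$ is the general fact $[v]_{A_\infty}\ge1$ from \cite[Proposition 7.3.2(4)]{g14}, which is already contained in Lemma \ref{Ap Apinfty 1}.

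\emph{Main obstacle.} The only delicate point is that Lemma \ref{Ap Apinfty 1} requires an exponent in $\mathcal{P}$, which is why the power $r$ appears. One must check that Lemma \ref{est fQ}, the ingredient behind Lemma \ref{Ap Apinfty 1}, is applied to $\frac{p(\cdot)}{r}$ and not to $p(\cdot)$. One must also check that the constant produced there depends only on $p(\cdot)$ and $n$. Everything else is a direct chaining of earlier results.
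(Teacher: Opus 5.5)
Your proof is correct and is essentially the paper's argument. The paper only sketches it as the pointwise bound of Lemma \ref{wz 1} run directly against the $A_\infty$ quantity, together with $[w]_{A_\infty}\ge 1$. Your chain Lemma \ref{wM 1} $\to$ Lemma \ref{wr} $\to$ Lemma \ref{Ap Apinfty 1}, applied with the exponent $\frac{p(\cdot)}{r}\in\mathcal{P}\cap LH$, is that same argument factored through the scalar lemmas already proved, and it gives the explicit constant $C=C_{\frac{p(\cdot)}{r},n}^{1/r}$.
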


Next, we begin to prove Proposition \ref{dim ext}.
\begin{proof}[Proof of Proposition \ref{dim ext}]
We begin with the proof of {\rm (i)}.
We first prove that $ \mathbb{D}^{\rm lower}_{p(\cdot),\infty,d} = \emptyset $ for any $d\in(-\infty,0)$.
We do this by contradiction.
Assume that there exists $d \in (-\infty,0)$
such that $ \mathbb{D}^{\rm lower}_{p(\cdot),\infty,d} \neq \emptyset $,
that is, there exists $W \in \mathbb{D}^{\rm lower}_{p(\cdot),\infty,d}$.
Let $Q_{\mathbf{0},0} := Q(\mathbf{0},1)$.
From Definition \ref{def matrix}{\rm (iii)},
we infer that $\|W(\cdot) A_{Q_{\mathbf{0},0}}^{-1}\|^r$ is locally integrable
with $r := \min\{1,p_-\}$,
which, together with the Lebesgue differentiation theorem
(see, for instance, \cite[Corollary 2.1.16]{g14}),
further implies that
there exists an interior point $x_0 \in Q_{\mathbf{0},0}$
such that $W(x_0)$ is invertible
and there exists a sequence of cubes $\{Q_k\}_{k\in\mathbb{Z}_+}$
satisfying that $x_0$ is the center of each $Q_k$,
$\lim_{k\to \infty} l(Q_k) = 0$,
and
$$ \left\|W(x_0) A_{Q_{\mathbf{0},0}}^{-1} \right\|^r
= \lim\limits_{k\to \infty} \fint_{Q_k} \left\|W(x) A_{Q_{\mathbf{0},0}}^{-1} \right\|^r \,dx. $$
Using this, Lemmas \ref{eq reduc M} and \ref{est fQ} with $ \frac{p(\cdot)}{r} \in \mathcal{P} $,
and the assumption $W \in \mathbb{D}^{\rm lower}_{p(\cdot),\infty,d}(\mathbb{R}^n)$,
we conclude that
\begin{align*}
\left\|W(x_0) A_{Q_{\mathbf{0},0}}^{-1} \right\|^r
&= \lim\limits_{k\to \infty} \fint_{Q_k} \left\|W(x) A_{Q_{\mathbf{0},0}}^{-1} \right\|^r\,dx
\lesssim \lim\limits_{k\to \infty} \frac{1}{\|\mathbf{1}_{Q_k}\|_{L^{\frac{p(\cdot)}{r}}}} \left\|\, \left\|W(x) A_{Q_{\mathbf{0},0}}^{-1} \right\|^r\mathbf{1}_{Q_k}\right\|_{L^{\frac{p(\cdot)}{r}}}\\
& = \lim\limits_{k\to \infty} \left[\frac{1}{\|\mathbf{1}_{Q_k}\|_{L^{p(\cdot)}}} \left\|\, \left\|W(x) A_{Q_{\mathbf{0},0}}^{-1} \right\|\mathbf{1}_{Q_k}\right\|_{L^{p(\cdot)}}\right]^r
\sim \lim\limits_{k\to \infty} \left\| A_{Q_k} A_{Q_{\mathbf{0},0}}^{-1} \right\|^r\\
&\sim \lim\limits_{k\to \infty} \left[\exp\left( \fint_{Q_{\mathbf{0},0}} \log\left(\frac{1}{\|\mathbf{1}_{Q_k}\|_{L^{p(\cdot)}}}\left\| \, \left\|W(\cdot) W^{-1}(y)\right\|\mathbf{1}_{Q_k} \right\|_{L^{p(\cdot)}}\,dx\right) \, dy\right)\right]^r\\
&\lesssim \lim\limits_{k\to \infty} \left[ \frac{l(Q_{\mathbf{0},0})}{l(Q_k)} \right]^{rd} = 0.
\end{align*}
Hence, by the fact that $A_{Q_{\mathbf{0},0}}$ is invertible,
we find that $W(x_0) = 0$, which contradicts the invertibility of $W(x_0)$.
Thus, $\mathbb{D}^{\rm lower}_{p(\cdot),\infty,d} = \emptyset$.

Now, we prove that $ \mathbb{D}^{\rm upper}_{p(\cdot),\infty,d} = \emptyset$ for any $d \in (-\infty,0)$.
We do this again by contradiction.
To this end, we assume that there exists $d \in (-\infty,0)$
such that $\mathbb{D}^{\rm upper}_{p(\cdot),\infty,d} \neq \emptyset$,
that is, there exists $W \in \mathbb{D}^{\rm upper}_{p(\cdot),\infty,d}$.
By the definition of $\mathscr{A}_{p(\cdot),\infty}$ and \eqref{eq reduc M},
we find that
\begin{align*}
&\exp\left( \fint_{Q_{\mathbf{0},0}} \log\left( \left\| W^{-1}(y) A_{Q_{\mathbf{0},0}} \right\| \right)\,dy \right)\\
&\quad \sim \exp\left( \fint_{Q_{\mathbf{0},0}} \log\left( \frac{1}{\|\mathbf{1}_{Q_{\mathbf{0},0}}\|_{L^{p(\cdot)}}} \left\|\, \left\| W(\cdot)W^{-1}(y) \right\| \mathbf{1}_{Q_{\mathbf{0},0}} \right\|_{L^{p(\cdot)}} \right)\,dy \right)
 < \infty,
\end{align*}
which further implies that
$ \log( \| W^{-1}(y) A_{Q_{\mathbf{0},0}} \| ) \in L^1(Q_{\mathbf{0},0})$.
Using this and the Lebesgue differentiation theorem,
we conclude that there exists an interior point $x_0 \in Q_{\mathbf{0},0}$
such that $W(x_0)$ is invertible
and there exists a sequence of cubes $\{Q_k\}_{k\in\mathbb{Z}_+}$
such that $x_0$ is the center of every $Q_k$,
$\lim_{k\to \infty} l(Q_k) = 0$,
and
$$ \left\|W^{-1}(x_0) A_{Q_{\mathbf{0},0}} \right\|
= \lim\limits_{k\to \infty} \exp \left[ \fint_{Q_k} \log\left( \left\|W^{-1}(y) A_{Q_{\mathbf{0},0}} \right\|\right) \,dy\right]. $$
Let $Q_{x_0} := Q(x_0, 2)$ be a fixed cube.
Then $Q_{\mathbf{0},0} \subset Q_{x_0}$.
From this, Lemma \ref{QP1}, and the definition of $\mathbb{D}^{\rm upper}_{p(\cdot),\infty,d}$,
it follows that
\begin{align*}
\left\|W^{-1}(x_0) A_{Q_{\mathbf{0},0}} \right\|
&= \lim\limits_{k\to \infty} \exp\left( \fint_{Q_k} \log\left(\left\|W(y)^{-1} A_{Q_{\mathbf{0},0}} \right\|\right) \,dy \right)\\
&\sim \lim\limits_{k\to \infty} \exp\left( \fint_{Q_k} \log\left( \frac{1}{\|\mathbf{1}_{Q_{\mathbf{0},0}}\|_{L^{p(\cdot)}}} \left\|\, \left\| W(\cdot)W^{-1}(y) \right\| \mathbf{1}_{Q_{\mathbf{0},0}} \right\|_{L^{p(\cdot)}} \right)\,dy \right)\\
&\lesssim \lim\limits_{k\to \infty} \exp\left( \fint_{Q_k} \log\left( \frac{1}{\|\mathbf{1}_{Q_{x_0}}\|_{L^{p(\cdot)}}} \left\|\, \left\| W(\cdot)W^{-1}(y) \right\| \mathbf{1}_{Q_{x_0}} \right\|_{L^{p(\cdot)}} \right)\,dy \right)\\
&\lesssim \lim\limits_{k\to \infty} \left[ l(Q_k) \right]^{-d} = 0,
\end{align*}
which further implies that $W^{-1}(x_0) = 0$
and hence contradicts the fact that $ W^{-1}(x_0) $ is invertible.
Thus, $\mathbb{D}^{\rm upper}_{p(\cdot),\infty,d} = \emptyset$.
This finishes the proof of {\rm (i)}.

Secondly, we prove {\rm (ii)}.
Let $\lambda \in [1,\infty)$ and $Q$ be a cube in $\mathbb{R}^n$.
From Theorem \ref{reverse Holder M},
we deduce that there exists $r \in (1, r_W)$,
where $r_W$ is the same as in Theorem \ref{reverse Holder M},
such that, for any matrix $M \in M_m$ and any cube $Q$ in $\mathbb{R}^n$,
we have
\begin{align*}
\frac{1}{\|\mathbf{1}_{Q}\|_{L^{rp(\cdot)}}} \left\|\left\| W(\cdot)M \right\|\mathbf{1}_Q\right\|_{L^{rp(\cdot)}}
\lesssim \frac{1}{\|\mathbf{1}_{Q}\|_{L^{p(\cdot)}}} \left\| \left\| W(\cdot)M \right\|\mathbf{1}_Q\right\|_{L^{p(\cdot)}}.
\end{align*}
Then, by this with $M:= W^{-1}(y)$ and by Lemmas \ref{Holder} and \ref{QP1},
we find that
\begin{align*}
&\exp\left( \fint_{\lambda Q} \log\left( \frac{1}{\|\mathbf{1}_Q\|_{L^{p(\cdot)}}} \left\|\, \left\| W(\cdot)W^{-1}(y) \right\| \mathbf{1}_Q \right\|_{L^{p(\cdot)}} \right)\,dy \right)\\
&\quad \lesssim \exp\left( \fint_{\lambda Q} \log\left( \frac{1}{\|\mathbf{1}_Q\|_{L^{r p(\cdot)}}} \left\|\, \left\| W(\cdot)W^{-1}(y) \right\| \mathbf{1}_Q \right\|_{L^{r p(\cdot)}} \right)\,dy \right)\\
&\quad = \frac{\|\mathbf{1}_{\lambda Q}\|_{L^{r p(\cdot)}}}{\|\mathbf{1}_Q\|_{L^{r p(\cdot)}}} \exp\left( \fint_{\lambda Q} \log\left( \frac{1}{\|\mathbf{1}_{\lambda Q}\|_{L^{r p(\cdot)}}} \left\|\, \left\| W(\cdot)W^{-1}(y) \right\| \mathbf{1}_{\lambda Q} \right\|_{L^{r p(\cdot)}} \right)\,dy \right)\\
&\quad \lesssim \left(\frac{|\lambda Q|}{|Q|}\right)^{\frac{1}{r p_-}}
\exp\left( \fint_{\lambda Q} \log\left( \frac{1}{\|\mathbf{1}_{\lambda Q}\|_{L^{p(\cdot)}}} \left\|\, \left\| W(\cdot)W^{-1}(y) \right\| \mathbf{1}_{\lambda Q} \right\|_{L^{p(\cdot)}} \right)\,dy \right)
\lesssim  \lambda^{\frac{n}{r p_-}},
\end{align*}
which, combined with the definitions of $\mathscr{A}_{p(\cdot),\infty}$-lower dimensions,
further implies that $W$ has $\frac{n}{r p_-}$-$\mathscr{A}_{p(\cdot),\infty}$-lower dimension.
Since $r > 1$, we infer that $ \frac{n}{r p_-} < \frac{n}{p_-} $.
This finishes the proof of {\rm (ii)}.

Finally, we prove {\rm (iii)}.
Let $M \in (0,\infty)$ be a constant determined later
and, for any cube $Q$ in $\mathbb{R}^n$ and any $x\in\mathbb{R}^n$,
let
\begin{align*}
h_Q(x) :=
\begin{cases}
\displaystyle M &\text{if}\ x\in Q,\\
\displaystyle 1 &\text{if}\ x\in\mathbb{R}^n\setminus Q.
\end{cases}
\end{align*}
From Proposition \ref{eq def infty lem} with
replacing $Q$ and $H$, respectively, by $2Q$ and $h_QA_Q^{-1}$,
we deduce that
\begin{align}\label{Wpinfty 3}
[W]_{\mathscr{A}_{p(\cdot),\infty}}
& \gtrsim \left[ \frac{1}{\|\mathbf{1}_{2Q}\|_{L^{p(\cdot)}}} \left\|\,\left\| W(\cdot)A_Q^{-1} \right\|h_Q(\cdot)\mathbf{1}_{2Q} \right\|_{L^{p(\cdot)}} \right]^{-1} \nonumber\\
&\quad \times \exp\left( \fint_{2Q} \log\left( \frac{1}{\|\mathbf{1}_{2Q}\|_{L^{p(\cdot)}}} \left\|\, \left\| W(\cdot)A_Q^{-1}\right\| \mathbf{1}_{2Q} \right\|_{L^{p(\cdot)}} h_Q(y) \right)\,dy \right)\nonumber\\
&=: {\rm I}(Q)\times {\rm II}(Q),
\end{align}
where
$$ {\rm I}(Q) := \left[ \frac{1}{\|\mathbf{1}_{2Q}\|_{L^{p(\cdot)}}} \left\|\,\left\| W(\cdot)A_Q^{-1} \right\|h_Q(\cdot)\mathbf{1}_{2Q} \right\|_{L^{p(\cdot)}} \right]^{-1} $$
and
$$ {\rm II}(Q) := \exp\left( \fint_{2Q} \log\left( \frac{1}{\|\mathbf{1}_{2Q}\|_{L^{p(\cdot)}}} \left\|\, \left\| W(\cdot)A_Q^{-1}\right\| \mathbf{1}_{2Q} \right\|_{L^{p(\cdot)}} h_Q(y) \right)\,dy \right). $$
Next, by Lemmas \ref{eq reduc M} and \ref{QP1},
we obtain
\begin{align}\label{Wpinfty 4}
\left[{\rm I}(Q)\right]^{-1}
&\lesssim \frac{1}{\|\mathbf{1}_{2Q}\|_{L^{p(\cdot)}}} \left\| \,\left\| W(\cdot)A_Q^{-1} \right\|\mathbf{1}_{2Q} \right\|_{L^{p(\cdot)}}
+ \frac{|M-1|}{\|\mathbf{1}_{2Q}\|_{L^{p(\cdot)}}} \left\|\,\left\| W(\cdot)A_Q^{-1} \right\|\mathbf{1}_{Q} \right\|_{L^{p(\cdot)}}\nonumber\\
&\sim \left\| A_{2Q}A_{Q}^{-1} \right\| + \left| M-1 \right| \frac{\|\mathbf{1}_{Q}\|_{L^{p(\cdot)}}}{\|\mathbf{1}_{2Q}\|_{L^{p(\cdot)}}}
 \lesssim \left\| A_{2Q}A_{Q}^{-1} \right\| + 2^{-\frac{n}{p_+}}\left| M-1 \right|,
\end{align}
where the implicit constant is independent of $W$.
Meanwhile,
\begin{align*}
{\rm II}(Q)
&=  \frac{1}{\|\mathbf{1}_{2Q}\|_{L^{p(\cdot)}}} \left\|\, \left\| W(\cdot)A_Q^{-1}\right\| \mathbf{1}_{2Q} \right\|_{L^{p(\cdot)}}
\exp\left(\fint_{2Q} \log\left( h_Q(y) \right)\,dy \right)
 \sim \left\| A_{2Q} A_Q^{-1} \right\| \exp\left( \frac{ \log M }{2^n} \right).
\end{align*}
Thus, combining this, \eqref{Wpinfty 3}, and \eqref{Wpinfty 4},
we conclude that
there exists a positive constant $C$, independent of $W$,
such that
\begin{align*}
[W]_{\mathscr{A}_{p(\cdot),\infty}} \geq C \frac{\| A_{2Q} A_Q^{-1} \|}{\| A_{2Q}A_{Q}^{-1} \| + 2^{-\frac{n}{p_+}}| M-1 | } \exp\left( \frac{ \log M }{2^n} \right).
\end{align*}
Now, fixing $M$ satisfying $CM^{\frac{1}{2^n}} = 2[W]_{\mathscr{A}_{p(\cdot),\infty}}$,
we then have
\begin{align}\label{Wpinfty 7}
\left\| A_{2Q} A_Q^{-1} \right\| \leq  2^{-\frac{n}{p_+}}\left| M-1 \right| .
\end{align}
Thus, by this,
if $M \in [1,\infty)$, then $|M-1| = M-1 $
and hence, for any cube $Q$ in $\mathbb{R}^n$,
one has
\begin{align*}
\left\|A_{2Q}A_Q^{-1}\right\| \leq (M-1) 2^{-\frac{n}{p_+}} < M2^{-\frac{n}{p_+}} = 2^{-\frac{n}{p_+}} \left\{ \frac{2[W]_{\mathscr{A}_{p(\cdot),\infty}}}{C} \right\}^{2^n}.
\end{align*}
Conversely, if $M \in (0,1)$, then, from Lemma \ref{wM 3},
it follows that there exists a positive constant $\widetilde{C}$
such that $1\leq \widetilde{C}[W]^{2^n}_{\mathscr{A}_{p(\cdot),\infty}}$.
Notice that, in this case, $|M-1| \leq 1 \leq \widetilde{C}[W]_{\mathscr{A}_{p(\cdot),\infty}}$.
Using this and \eqref{Wpinfty 7}, we conclude that
\begin{align*}
\left\|A_{2Q}A_Q^{-1}\right\| \leq |M-1| 2^{-\frac{n}{p_+}} \leq \widetilde{C}[W]^{2^n}_{\mathscr{A}_{p(\cdot),\infty}}.
\end{align*}
Thus, there exists a positive constant $C$
such that, for any cube $Q$ in $\mathbb{R}^n$,
\begin{align}\label{Wpinfty 6}
\left\|A_{2Q}A_Q^{-1}\right\| \leq \left( C^{-1} 2 [W]^{2^n}_{\mathscr{A}_{p(\cdot),\infty}} \right)^{2^n}.
\end{align}

Next, by using \eqref{Wpinfty 6},
we conclude that, for any $\lambda \in (1,\infty)$,
\begin{align*}
&\exp\left( \fint_{Q} \log\left( \frac{1}{\|\mathbf{1}_{\lambda Q}\|_{L^{p(\cdot)}}} \left\|\, \left\| W(\cdot)W^{-1}(y) \right\| \mathbf{1}_{\lambda Q} \right\|_{L^{p(\cdot)}} \right)\,dy \right)\\
&\quad \lesssim \exp\left( \fint_{Q} \log\left( \frac{1}{\|\mathbf{1}_{ 2^{\lceil \log_2 \lambda \rceil} Q}\|_{L^{p(\cdot)}}} \left\|\, \left\| W(\cdot)W^{-1}(y) \right\| \mathbf{1}_{2^{\lceil \log_2 \lambda \rceil} Q} \right\|_{L^{p(\cdot)}} \right)\,dy \right)\\
&\quad \sim \left\| A_{2^{\lceil \log_2 \lambda \rceil} Q} A_Q^{-1} \right\|
\leq \left( C^{-1} 2 [W]^{2^n}_{\mathscr{A}_{p(\cdot),\infty}} \right)^{2^n \lceil \log_2 \lambda \rceil} \\
&\quad \leq \left( C^{-1} 2 [W]^{2^n}_{\mathscr{A}_{p(\cdot),\infty}} \right)^{2^n(1 + \log_2 \lambda)}
\sim \lambda^{2^n(1-\log_2 C + \log_2 [W]_{\mathscr{A}_{p(\cdot),\infty}})},
\end{align*}
which further implies that $W$ has $\mathscr{A}_{p(\cdot),\infty}$-upper dimension.
This finishes the proof of {\rm (iii)}
and hence Proposition \ref{dim ext}.
\end{proof}

Let $p(\cdot)\in \mathcal{P}_0$ with $p(\cdot) \in LH$.
For any matrix weight $W \in \mathscr{A}_{p(\cdot),\infty}$,
let
\begin{align*}
d^{\rm lower}_{p(\cdot),\infty}(W) := \inf\left\{ d\in \left(0,\frac{n}{p_-}\right):  W\ \text{has}\ \mathscr{A}_{p(\cdot),\infty}\text{-lower dimension}\ d \right\}
\end{align*}
and
\begin{align*}
d^{\rm upper}_{p(\cdot),\infty}(W) := \inf\left\{ d\in (0,\infty):  W\ \text{has}\ \mathscr{A}_{p(\cdot),\infty}\text{-upper dimension}\ d \right\}.
\end{align*}
Let
\begin{align*}
[\![ d^{\rm lower}_{p(\cdot),\infty}(W), \infty) :=
\begin{cases}
\displaystyle \left[d^{\rm lower}_{p(\cdot),\infty}(W), \frac{n}{p_-}\right) &\text{if } d^{\rm lower}_{p(\cdot),\infty}(W)\ \text{is} \ \mathscr{A}_{p(\cdot),\infty}\text{-lower dimension of }W, \\
\displaystyle \left(d^{\rm lower}_{p(\cdot),\infty}(W), \frac{n}{p_-}\right) &\text{otherwise}
\end{cases}
\end{align*}
and
\begin{align*}
[\![ d^{\rm upper}_{p(\cdot),\infty}(W), \infty) :=
\begin{cases}
\displaystyle [d^{\rm upper}_{p(\cdot),\infty}(W), \infty) &\text{if } d^{\rm upper}_{p(\cdot),\infty}(W)\ \text{is} \ \mathscr{A}_{p(\cdot),\infty}\text{-upper dimension of }W, \\
\displaystyle (d^{\rm upper}_{p(\cdot),\infty}(W), \infty) &\text{otherwise}.
\end{cases}
\end{align*}

Using the concept of $\mathscr{A}_{p(\cdot),\infty}$-dimensions,
we obtain the following   estimate. A corresponding estimate for matrix $A_p$ weights
has proved to be vital in the study of matrix-weighted Besov-type and
Triebel--Lizorkin-type spaces; see \cite{bhyy23, bhyy23 2, bhyy23 3}.

\begin{proposition}\label{QP3}
Let $p(\cdot) \in \mathcal{P}_0$ with $p(\cdot) \in LH$, $W \in \mathscr{A}_{p(\cdot),\infty}$,
$d_1 \in [\![ d^{\rm lower}_{p(\cdot),\infty}(W) ,\frac{n}{p_-})$,
$d_2 \in [\![ d^{\rm upper}_{p(\cdot),\infty}(W) ,\infty)$,
and $\{A_Q\}_{Q}$ be a family of reducing operators of order $p(\cdot)$ for $W$.
Then there exists a positive constant $C$ such that,
for any cubes $Q$ and $R$ in $\mathbb{R}^n$,
\begin{align}\label{AQAR-1}
\left\| A_Q A_R^{-1} \right\| \leq C \max\left\{ \left[ \frac{l(R)}{l(Q)} \right]^{d_1},
\left[ \frac{l(Q)}{l(R)} \right]^{d_2} \right\}
\left[ 1+ \frac{|c_Q - c_R|}{l(Q)\vee l(R)} \right]^{\Delta},
\end{align}
where $\Delta := d_1 + d_2$.
\end{proposition}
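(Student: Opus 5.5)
The plan is to reduce everything to two one-sided estimates for nested cubes, obtained from the two dimension conditions, and then join arbitrary pairs of cubes through a common enlarged cube. The translation device is Lemma \ref{eq reduc M} combined with Lemma \ref{AA-1 M}. For any cubes $Q,P$ these give
\begin{align*}
\log\left\|A_Q A_P^{-1}\right\|
\approx \fint_P \log\left\|A_Q W^{-1}(y)\right\|\,dy
\approx \fint_P \log\left( \frac{1}{\|\mathbf{1}_Q\|_{L^{p(\cdot)}}}\left\|\,\left\|W(\cdot)W^{-1}(y)\right\|\mathbf{1}_Q\right\|_{L^{p(\cdot)}}\right)dy ,
\end{align*}
up to additive constants depending on $[W]_{\mathscr{A}_{p(\cdot),\infty}}$. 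To justify the first step, apply Lemma \ref{AA-1 M} on $P$ with $M:=A_Q$ (self-adjointness and Lemma \ref{norm matrix} let us swap the order of the product). The second step is Lemma \ref{eq reduc M} applied pointwise in $y$.

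First, the nested case. Let $Q\subset \lambda Q$ with $\lambda\ge1$.
\begin{itemize}
\item Taking $P=\lambda Q$, the lower-dimension condition with exponent $d_1$ gives $\|A_Q A_{\lambda Q}^{-1}\|\lesssim \lambda^{d_1}$.
\item Taking the outer cube $\lambda Q$ averaged over $P=Q$, the upper-dimension condition with exponent $d_2$ gives $\|A_{\lambda Q}A_Q^{-1}\|\lesssim \lambda^{d_2}$.
\end{itemize}
Since $d_1$ (respectively $d_2$) lies in the stated half-open range, $W$ has lower dimension $d_1$ (respectively upper dimension $d_2$). This holds because the defining inequalities are monotone in $d$ for $\lambda\ge1$.

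For general cubes $Q,R$, let $S$ be the smallest cube concentric with the larger of the two that contains both. Its side satisfies
\[
l(S)\sim (l(Q)\vee l(R))\left[1+\frac{|c_Q-c_R|}{l(Q)\vee l(R)}\right].
\]
Write $\|A_QA_R^{-1}\|\le \|A_QA_S^{-1}\|\,\|A_SA_R^{-1}\|$.

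The nested bounds need concentric dilates. I handle this by comparing a cube with a concentric dilate of the other cube that contains it, at bounded cost. For example, $Q\subset \mu Q$ with $\mu Q\supset S$ and $l(\mu Q)\sim l(S)$. Then $\|A_QA_S^{-1}\|\le\|A_QA_{\mu Q}^{-1}\|\,\|A_{\mu Q}A_S^{-1}\|$. The comparison of $A_{\mu Q}$ and $A_S$ for comparable cubes one inside the other should follow by the same averaging argument, together with the monotonicity of the $L^{p(\cdot)}$ norm and Lemma \ref{QP1}.

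This yields
\[
\|A_QA_S^{-1}\|\lesssim (l(S)/l(Q))^{d_1}
\quad\text{and}\quad
\|A_SA_R^{-1}\|\lesssim (l(S)/l(R))^{d_2}.
\]

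The final step is bookkeeping. Suppose $l(Q)\ge l(R)$. Then $l(S)/l(Q)\sim 1+\frac{|c_Q-c_R|}{l(Q)}$ contributes the factor $[\cdots]^{d_1}$. Also $l(S)/l(R)=\frac{l(Q)}{l(R)}\cdot\frac{l(S)}{l(Q)}$ contributes $(l(Q)/l(R))^{d_2}[\cdots]^{d_2}$. Together this is $\max\{\cdots\}[\cdots]^{\Delta}$. The case $l(R)>l(Q)$ is symmetric: the $d_1$ power carries $l(R)/l(Q)$, and both parts pick up the bracketed factor.

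The main obstacle I expect is making the comparison between comparable, non-concentric nested cubes rigorous. The dimension definitions only control concentric dilates, so this step requires an extra doubling-type estimate. I would derive it from Lemmas \ref{eq reduc M} and \ref{QP1}, via $\|\,\|W(\cdot)M\|\mathbf{1}_{P}\|\le\|\,\|W(\cdot)M\|\mathbf{1}_{P'}\|$ for $P\subset P'$ and $\|\mathbf{1}_{P'}\|\sim\|\mathbf{1}_P\|$ when $|P'|\sim|P|$. This gives $\|A_PM\|\lesssim\|A_{P'}M\|$. For the reverse-direction averages I would use Proposition \ref{AA-1}, which gives $|A_{P'}^{-1}\vec z|\lesssim|A_P^{-1}\vec z|^{|P|/|P'|}$-type control through the logarithmic averages. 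If that proves awkward, I would instead choose $S$ as a concentric dilate of $Q$ and of $R$ simultaneously up to a factor of $2$, and absorb the mismatch using the $\lambda=2$ case of \eqref{Wpinfty 6}.
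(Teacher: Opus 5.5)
Your plan is essentially the paper's proof. It uses the same translation of $\|A_QA_P^{-1}\|$ into a logarithmic average via Lemmas \ref{eq reduc M} and \ref{AA-1 M}, the concentric lower-dimension bound, an upper-dimension bound for a small cube inside a large one, and chaining through an auxiliary cube $S\supset Q\cup R$ with $l(S)\sim l(Q)+l(R)+|c_Q-c_R|$, followed by the same bookkeeping.

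The only divergence is that you center $S$ at the larger cube, whereas the paper always centers it at $Q$. That choice is what creates the reverse-direction comparison you flag as the main obstacle. With the paper's centering it never arises:
\begin{itemize}
\item $\|A_QA_S^{-1}\|\lesssim [l(S)/l(Q)]^{d_1}$ is the concentric case.
\item For $R\subset S$, take $\lambda\sim l(S)/l(R)$ with $S\subset\lambda R$. Replace $\mathbf{1}_S$ by $\mathbf{1}_{\lambda R}$ inside the logarithmic average over $R$; this uses only monotonicity of the norm and Lemma \ref{QP1}, i.e., your ``easy direction''. Then apply the upper-dimension condition.
\end{itemize}

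If you keep your centering, the needed bound $\|A_{\mu Q}A_S^{-1}\|\lesssim 1$ for $S\subset\mu Q$ of comparable size follows the same way. Pick $\nu\lesssim 1$ with $\mu Q\subset\nu S$ and use $\|A_{\mu Q}A_S^{-1}\|\le\|A_{\mu Q}A_{\nu S}^{-1}\|\,\|A_{\nu S}A_S^{-1}\|\lesssim\nu^{d_2}$. The first factor is $\lesssim 1$ by the easy direction, and the second is your concentric upper-dimension bound.

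Discard the Proposition \ref{AA-1} ``power'' idea, for three reasons:
\begin{itemize}
\item The logarithmic average over $P'$ contains a contribution from $P'\setminus P$ that nothing controls.
\item The stated bound is not homogeneous in $\vec z$, so it cannot hold with a uniform constant.
\item An upper bound on $|A_{P'}^{-1}\vec z|$ is the wrong direction for controlling $\|A_{P'}A_P^{-1}\|$, which requires $|A_P^{-1}\vec u|\lesssim|A_{P'}^{-1}\vec u|$.
\end{itemize}
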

\begin{remark}
If $p(\cdot) \equiv p$ is a constant exponent,
then it follows from Remark \ref{rem dim ext} that,
for any $W \in \mathscr{A}_{p,\infty}$, $\widetilde{W} := W^p \in A_{p,\infty}$.
For any cube $Q$ in $\mathbb{R}^n$, let $\widetilde{A}_Q$ be the reducing operator
of $\widetilde{W}$ as in \cite[(3.1)]{v97}.
Using this, \cite[Lemmas 2.9 and 3.5]{bhyy-a},
the definition of reducing operators, and Lemma \ref{AA-1 M},
we find that, for any cubes $Q, R$ in $\mathbb{R}^n$,
\begin{align*}
\left\|\widetilde{A}_Q \widetilde{A}_R^{-1}\right\|
\sim \exp\left( \fint_R \log \left( \fint_Q \left\| \widetilde{W}^{\frac1p}(x) \widetilde{W}^{-\frac1p}(y) \right\|^p\,dx \right) \,dy \right)
\sim \left\| A_Q A_R^{-1} \right\|^p.
\end{align*}
Thus, by this and Proposition \ref{QP3},
we conclude that, for any cubes $Q, R$ in $\mathbb{R}^n$,
\begin{align*}
\left\|\widetilde{A}_Q \widetilde{A}_R^{-1}\right\|
\sim \left\| A_Q A_R^{-1} \right\|^p
\lesssim \max\left\{ \left[ \frac{l(R)}{l(Q)} \right]^{pd_1},
\left[ \frac{l(Q)}{l(R)} \right]^{pd_2} \right\}
\left[ 1+ \frac{|c_Q - c_R|}{l(Q)\vee l(R)} \right]^{p\Delta},
\end{align*}
which, together with Remark \ref{rem dim ext},
is precisely \cite[Proposition 6.5]{bhyy-a}.
Recall that Bu et al. \cite[Section 7]{bhyy-a} has showed
that \cite[Proposition 6.5]{bhyy-a} is sharp.
In this sense, Proposition \ref{QP3} is also sharp.
\end{remark}
\begin{proof}[Proof of Proposition \ref{QP3}]
By the definition of reducing operators and Lemma \ref{AA-1 M},
we find that, for any cubes $Q, R$ in $\mathbb{R}^n$,
\begin{align}\label{QP2}
\left\| A_Q A^{-1}_R \right\|
\lesssim \exp\left( \fint_{R} \log\left( \frac{1}{\|\mathbf{1}_Q\|_{L^{p(\cdot)}}} \left\|\, \left\| W(\cdot)W^{-1}(y) \right\| \mathbf{1}_Q \right\|_{L^{p(\cdot)}}
 \right)\,dy \right).
\end{align}

Now, we first consider the special case $Q\subset R$ with the same center.
In this case, by \eqref{QP2} and the definition of $ \mathbb{D}^{\rm lower}_{p(\cdot),\infty,d} $,
we obtain
\begin{align}\label{QP eq 1}
\left\| A_Q A_R^{-1} \right\| & \lesssim \exp\left( \fint_{R} \log\left( \frac{1}{\|\mathbf{1}_Q\|_{L^{p(\cdot)}}} \left\|\, \left\| W(\cdot)W^{-1}(y) \right\| \mathbf{1}_Q \right\|_{L^{p(\cdot)}} \right)\,dy \right)
\lesssim \left[ \frac{l(R)}{l(Q)} \right]^{d_1}.
\end{align}

Next, we consider another special case $R \subset Q$.
In this case, it is obvious that $l(R) \leq l(Q)$
and hence there exists $\lambda \in [1,\infty)$
such that $\lambda \sim \frac{l(Q)}{l(R)}$
and $Q\subset \lambda R$.
Using this, \eqref{QP2}, and the definition of $ \mathbb{D}^{\rm upper}_{p(\cdot),\infty,d} $,
we find that
\begin{align}\label{QP eq 2}
\left\| A_Q A_R^{-1} \right\|
& \lesssim \exp\left( \fint_{R} \log\left( \frac{1}{\|\mathbf{1}_Q\|_{L^{p(\cdot)}}} \left\|\, \left\| W(\cdot)W^{-1}(y) \right\| \mathbf{1}_Q \right\|_{L^{p(\cdot)}} \right)\,dy \right) \nonumber\\
& \lesssim \exp\left( \fint_{R} \log\left( \frac{1}{\|\mathbf{1}_{\lambda R}\|_{L^{p(\cdot)}}} \left\|\, \left\| W(\cdot)W^{-1}(y) \right\| \mathbf{1}_{\lambda R} \right\|_{L^{p(\cdot)}} \right)\,dy \right)
 \lesssim \lambda^{d_2} \sim \left[ \frac{l(Q)}{l(R)} \right]^{d_2}.
\end{align}

In the general case,
we choose a third cube $S$ in $\mathbb{R}^n$, which has the same center as $Q$,
such that $Q\cup R \subset S$ and
$$l(S) \sim l(Q) + l(R) + |c_Q - c_R|.$$
By the geometrical observation,
this cube obviously exists.
From this, \eqref{QP eq 1}, and \eqref{QP eq 2},
it follows that
\begin{align*}
\left\| A_Q A_R^{-1} \right\| &\leq \left\| A_Q A_S^{-1} \right\|\left\| A_S A_R^{-1} \right\|
 \lesssim \left[ \frac{l(S)}{l(Q)} \right]^{d_1} \left[ \frac{l(S)}{l(R)} \right]^{d_2}\\
&\sim \left[ \frac{l(Q)\vee l(R)}{l(Q)} \right]^{d_1} \left[ \frac{l(Q)\vee l(R)}{l(R)} \right]^{d_2}
\left[ \frac{l(S)}{l(Q)\vee l(R)} \right]^{d_1 + d_2}\\
&\sim \max\left\{ \left[ \frac{l(R)}{l(Q)} \right]^{d_1}, \left[ \frac{l(Q)}{l(R)} \right]^{d_2} \right\}
\left[ 1+ \frac{|c_Q - c_R|}{l(Q)\vee l(R)} \right]^{\Delta}.
\end{align*}
This finishes the proof of Proposition \ref{QP3}.
\end{proof}
The following lemma is exactly \cite[Lemma 2.31]{bhyy23}.
\begin{lemma}\label{QP4}
For any cubes $Q,R\subset \mathbb{R}^n$,
any $x,x' \in Q$, and any $y,y' \in R$,
$$ 1+\frac{|x-y|}{l(Q)\vee l(R)} \sim 1+\frac{|x'-y'|}{l(Q)\vee l(R)}, $$
where the positive equivalence constants depend only on $n$.
\end{lemma}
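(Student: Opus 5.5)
The plan is to reduce everything to a comparison of distances through a common intermediate point. This is the standard argument from \cite[Lemma 2.31]{bhyy23}, and it uses only the triangle inequality together with the diameter bound $\operatorname{diam} Q = \sqrt n\, l(Q)$. Write $L := l(Q)\vee l(R)$. By symmetry between the pairs $(x,y)$ and $(x',y')$, it suffices to prove the one-sided inequality
\[
1+\frac{|x-y|}{L} \le C_n\left(1+\frac{|x'-y'|}{L}\right).
\]

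First apply the triangle inequality:
\[
|x-y| \le |x-x'| + |x'-y'| + |y'-y|.
\]
Since $x,x'\in Q$, we have $|x-x'|\le \sqrt n\, l(Q)\le \sqrt n\, L$. Likewise $y,y'\in R$ gives $|y-y'|\le \sqrt n\, l(R)\le \sqrt n\, L$. Dividing by $L$ then yields
\[
1+\frac{|x-y|}{L}\le 1+2\sqrt n+\frac{|x'-y'|}{L}\le (1+2\sqrt n)\left(1+\frac{|x'-y'|}{L}\right).
\]
This holds because $1+2\sqrt n\ge 1$.

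Interchanging the roles of $(x,y)$ and $(x',y')$ gives the reverse inequality with the same constant $1+2\sqrt n$. That constant depends only on $n$, which proves the claimed equivalence.

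There is no real obstacle here. The only point needing care is that we normalize by $l(Q)\vee l(R)$ and not by either side length alone. This is exactly what lets both diameter terms be absorbed simultaneously. If the cubes are not closed, the diameter bound still holds, since the distance between any two points of $Q$ is at most $\sqrt n\, l(Q)$.
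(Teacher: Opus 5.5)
Your proof is correct. The paper gives no argument of its own and simply cites \cite[Lemma 2.31]{bhyy23}; your proof, which uses the triangle inequality through $x'$ and $y'$ together with the diameter bound $\sqrt n\, l(Q)\le\sqrt n\,(l(Q)\vee l(R))$ and yields the constant $1+2\sqrt n$, is the standard proof of that fact.
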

Combining Proposition \ref{QP3} and Lemma \ref{QP4},
we immediately obtain the following corollary.
\begin{corollary}
Let $p(\cdot) \in \mathcal{P}_0$ with $p(\cdot) \in LH$, $W \in \mathscr{A}_{p(\cdot),\infty}$,
$d_1 \in [\![ d^{\rm lower}_{p(\cdot),\infty}(W) ,\frac{n}{p_-})$,
$d_2 \in [\![ d^{\rm upper}_{p(\cdot),\infty}(W) ,\infty)$,
and $\{A_Q\}_Q$ be a family of reducing operators of order $p(\cdot)$ for $W$.
Then there exists a positive constant $C$ such that,
for any cubes $Q$ and $R$ in $\mathbb{R}^n$,
$$ \left\| A_Q A_R^{-1} \right\| \leq C \max\left\{ \left[ \frac{l(R)}{l(Q)} \right]^{d_1},
\left[ \frac{l(Q)}{l(R)} \right]^{d_2} \right\}\left[ 1+ \frac{|x_Q - x_R|}{l(Q)\vee l(R)} \right]^{\Delta}, $$
where $x_Q$ and $x_R$ are respectively any points of $Q$ and $R$ and $\Delta$ is the same as in \eqref{AQAR-1}.
\end{corollary}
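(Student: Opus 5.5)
The plan is to deduce the corollary directly from Proposition \ref{QP3} by replacing the centers $c_Q$ and $c_R$ with arbitrary points $x_Q\in Q$ and $x_R\in R$. Fix cubes $Q$, $R$ and any points $x_Q\in Q$, $x_R\in R$.

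First, apply Proposition \ref{QP3} with the given $d_1$, $d_2$, and $\Delta=d_1+d_2$. This yields
\begin{align*}
\left\| A_Q A_R^{-1}\right\| \leq C \max\left\{ \left[\frac{l(R)}{l(Q)}\right]^{d_1}, \left[\frac{l(Q)}{l(R)}\right]^{d_2}\right\} \left[1+\frac{|c_Q-c_R|}{l(Q)\vee l(R)}\right]^{\Delta}.
\end{align*}

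Next, use Lemma \ref{QP4} with $x:=c_Q$, $x':=x_Q$ (both lie in $Q$; if $Q$ is not closed, $c_Q$ is still an interior point) and $y:=c_R$, $y':=x_R$. This gives
\begin{align*}
1+\frac{|c_Q-c_R|}{l(Q)\vee l(R)} \sim 1+\frac{|x_Q-x_R|}{l(Q)\vee l(R)},
\end{align*}
with constants depending only on $n$.

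Since $\Delta\geq 0$ by Proposition \ref{dim ext}(i), raising this equivalence to the power $\Delta$ preserves the inequality, at the cost of a factor $C_n^{\Delta}$. That factor depends only on $n$, $d_1$, and $d_2$, so it is absorbed into $C$.

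No step is genuinely difficult. The only points to check are that the centers belong to the cubes, so Lemma \ref{QP4} applies, and that $\Delta$ is nonnegative, so the equivalence can be raised to the power $\Delta$ in the correct direction.
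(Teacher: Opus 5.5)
Your proposal is correct and matches the paper, which obtains the corollary by combining Proposition~\ref{QP3} with Lemma~\ref{QP4}. Your two checks are exactly the details the paper leaves implicit: the centers lie in the cubes, and $\Delta=d_1+d_2\ge 0$, so the equivalence can be raised to the power $\Delta$ with the constant absorbed into $C$.
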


%

\bigskip

\noindent\textbf{Author Contributions}\quad All authors shared equally
the writing and the reviewing of the main manuscript text.

\bigskip

\noindent\textbf{Funding}\quad This project is partially supported by
the National Natural Science Foundation of China
(Grant Nos. 12431006 and 12371093),
the Beijing Natural Science Foundation
(Grant No. 1262011),
and the Fundamental Research Funds for the Central Universities
(Grant No. 2253200028).

\bigskip

\noindent\textbf{Data Availability}\quad No datasets were generated
or analysed during the current study.

\section*{Declarations}

\noindent\textbf{Competing interests}\quad The authors declare no competing interests.

\bigskip

\noindent\textbf{Ethical Approval and Consent to participate}\quad
The authors declare ethical approval and consent to participate.

\bigskip

\noindent\textbf{Consent for Publication}\quad The authors declare consent for publication.

\bigskip

\noindent\textbf{Human and Animal Ethics}\quad Not applicable.

\bigskip

\noindent Dachun Yang (Corresponding author), Wen Yuan and Zongze Zeng

\medskip

\noindent  Laboratory of Mathematics and Complex Systems
(Ministry of Education of China),
School of Mathematical Sciences, Institute for Advanced Study,
Beijing Normal University,
Beijing 100875, The People's Republic of China

\smallskip

\noindent{\it E-mails}:
\texttt{dcyang@bnu.edu.cn} (D. Yang)

\noindent\phantom{{\it E-mails:}}
\texttt{wenyuan@bnu.edu.cn} (W. Yuan)

\noindent \phantom{{\it E-mails:}}
\texttt{zzzeng@mail.bnu.edu.cn} (Z. Zeng)

\begin{thebibliography}{99}

\bibitem{am02}
E. Acerbi and G. Mingione,
Regularity results for electrorheological fluids: the stationary case,
C. R. Math. Acad. Sci. Paris 334 (2002), 817--822.

\vspace{-0.3cm}

\bibitem{am02 1}
E. Acerbi and G. Mingione,
Regularity results for stationary electro-rheological fluids,
Arch. Ration. Mech. Anal. 164 (2022), 213--259.

\vspace{-0.3cm}

\bibitem{am05}
E. Acerbi and G. Mingione,
Gradient estimates for the $p(x)$-Laplacean system,
J. Reine Angew. Math. 584 (2005), 117--148.

\vspace{-0.3cm}

\bibitem{bgx25} T. Bai, P. Guo and J. Xu, On matrix weighted
Bourgain--Morrey Triebel--Lizorkin spaces, arXiv: 2508.11981.

\vspace{-0.3cm}

\bibitem{bx24a}	
T. Bai and J. Xu,
Pseudo-differential operators on matrix weighted
Besov--Triebel--Lizorkin spaces,
Bull. Iranian Math. Soc. 50 (2024), Paper No. 31, 26 pp.

\vspace{-0.3cm}

\bibitem{bx24b}	
T. Bai and J. Xu,
Non-regular pseudo-differential operators on
matrix weighted Besov--Triebel--Lizorkin spaces,
J. Math. Study 57 (2024), 84--100.

\vspace{-0.3cm}

\bibitem{bx24c}	
T. Bai and J. Xu,
Precompactness in matrix weighted Bourgain-Morrey spaces,
Filomat 39 (2025), 6261--6280.

\vspace{-0.3cm}

\bibitem{bhs11}
B. Bongioanni, E. Harboure and O. Salinas,
Classes of weights related to Schr\"odinger operators,
J. Math. Anal. Appl. 373 (2011), 563--579.

\vspace{-0.3cm}

\bibitem{b01}
M. Bownik,
Inverse volume inequalities for matrix weights,
Indiana Univ. Math. J. 50 (2001), 383--410.

\vspace{-0.3cm}

\bibitem{bc22}
M. Bownik and D. Cruz-Uribe,
Extrapolation and factorization of matrix weights,
Math. Ann. 395 (2026), Page No. 55, 84 pp.

\vspace{-0.3cm}

\bibitem{bcyy24}
F. Bu, Y. Chen, D. Yang and W. Yuan,
Maximal function and atomic characterizations of matrix-weighted Hardy spaces
with their applications to boundedness of Calder\'on--Zygmund operators,
arXiv: 2501.18800.

\vspace{-0.3cm}

\bibitem{bhyy23}
F. Bu, T. Hyt\"onen, D. Yang and W. Yuan,
Matrix-weighted Besov-type and Triebel--Lizorkin-type spaces I:
$A_p$-dimensions of matrix weights and $\psi$-transform characterizations,
Math. Ann. 391 (2025), 6105--6185.

\vspace{-0.3cm}

\bibitem{bhyy23 2}
F. Bu, T. Hyt\"onen, D. Yang and W. Yuan,
Matrix-weighted Besov-type and Triebel--Lizorkin-type spaces II:
Sharp boundedness of almost diagonal operators,
J. Lond. Math. Soc. (2) 111 (2025), Paper No. e70094, 59 pp.

\vspace{-0.3cm}

\bibitem{bhyy23 3}
F. Bu, T. Hyt\"onen, D. Yang and W. Yuan,
Matrix-weighted Besov-type and Triebel--Lizorkin-type spaces III:
Characterizations of molecules and wavelets, trace theorems,
and boundedness of pseudo-differential operators and Calder\'on--Zygmund operators,
Math. Z. 308 (2024), Paper No. 32, 67 pp.

\vspace{-0.3cm}

\bibitem{bhyy-a}
F. Bu, T. Hyt\"{o}nen, D. Yang and W. Yuan,
New characterizations and properties of matrix $A_\infty$ weights,
Acta Math. Sin. (Engl. Ser.) 42 (2026), 687--722.

\vspace{-0.3cm}

\bibitem{bhyy24}
F. Bu, T. Hyt\"onen, D. Yang and W. Yuan,
Besov--Triebel--Lizorkin-type spaces with matrix $A_\infty$ weights,
Sci. China Math. 69 (2026), 383--460.

\vspace{-0.3cm}

\bibitem{byyz25}
F. Bu, D. Yang, W. Yuan and M. Zhang,
Matrix-weighted Besov--Triebel--Lizorkin spaces of optimal scale:
Real-variable characterizations,
invariance on integrable index, and Sobolev-type embedding,
J. Differential Equations (2026), 463 (2026), Paper No. 114140, 101 pp.

\vspace{-0.3cm}

\bibitem{byyyz25}
F. Bu, D. Yang, W. Yuan and Y. Zhao,
Matrix weights, maximal operators, Calder\'on--Zygmund operators,
and Besov--Triebel--Lizorkin-type spaces --- A survey,
Anal. Theory Appl. 41 (2025), 371--468.

\vspace{-0.3cm}

\bibitem{c07}
C. Carath\'eodory,
\"Uber den Variabilit\"atsbereich der Koeffizienten von Potenzreihen,
die gegebene Werte nicht annehmen,
Math. Ann. 64 (1907), 95--115.

\vspace{-0.3cm}

\bibitem{cyy25} Y. Chen, D. Yang and W. Yuan,
Matrix-weighted Campanato spaces:
Duality and Calder\'on--Zygmund operators,
Acta Math. Sci. Ser. B (Engl. Ed.) (to appear).

\vspace{-0.3cm}

\bibitem{cg01}
M. Christ and M. Goldberg,
Vector $A_2$ weights and a Hardy--Littlewood maximal function,
Trans. Amer. Math. Soc. 353 (2001), 1995--2002.

\vspace{-0.3cm}

\bibitem{cf74}
R. R. Coifman and C. Fefferman,
Weighted norm inequalities for maximal functions and singular integrals,
Studia Math. 51 (1974), 241--250.

\vspace{-0.3cm}

\bibitem{c25} D. Cruz-Uribe, Recent results on matrix weighted
norm inequalities, arXiv: 2508.13352.

\vspace{-0.3cm}

\bibitem{cc22}
D. Cruz-Uribe and J. Cummings,
Weighted norm inequalities for the maximal operator on $L^{p(\cdot)}$ over spaces of homogeneous type,
Ann. Fenn. Math. 47 (2022), 457--488.


\vspace{-0.3cm}

\bibitem{cdh11}
D. Cruz-Uribe, L. Diening and P. H\"ast\"o,
The maximal operator on weighted variable Lebesgue spaces,
Fract. Calc. Appl. Anal. 14 (2011), 361--374.

\vspace{-0.3cm}

\bibitem{cf13}
D. Cruz-Uribe and A. Fiorenza,
Variable Lebesgue Space. Foundations and Harmonic Analysis,
Appl. Number. Harmon. Anal., Birkh\"auser/Springer, Heidelberg, 2013.

\vspace{-0.3cm}

\bibitem{cfn12}
D. Cruz-Uribe, A. Fiorenza and C. J. Neugebauer,
Weighted norm inequalities for the maximal operator on variable Lebesgue spaces,
J. Math. Anal. Appl. 394 (2012), 744--760.

\vspace{-0.3cm}

\bibitem{cn95}
D. Cruz-Uribe and C. J. Neugebauer,
The structure of the reverse H\"older classes,
Trans. Amer. Math. Soc. 347 (1995), 2941--2960.


\vspace{-0.3cm}

\bibitem{cp23}
D. Cruz-Uribe and M. Penrod,
Convolution operators in matrix weighted, variable Lebesgue spaces,
Anal. Appl. (Singap.) 22 (2024), 1133--1157.

\vspace{-0.3cm}

\bibitem{cp24}
D. Cruz-Uribe and M. Penrod,
The reverse H\"older inequality for $\mathcal{A}_{p(\cdot)}$
weights with applications to matrix weights,
arXiv: 2411.12849.

\vspace{-0.3cm}

\bibitem{cr24}
D. Cruz-Uribe and T. Roberts,
Necessary conditions for the boundedness of fractional operators on variable Lebesgue spaces,
arXiv: 2408.12745.

\vspace{-0.3cm}

\bibitem{cs25}
D. Cruz-Uribe and F. \c{S}irin,
Off-diagonal matrix extrapolation for Muckenhoupt bases,
Ann. Fenn. Math. 50 (2025), 577--609.

\vspace{-0.3cm}

\bibitem{cs23}
D. Cruz-Uribe and B. Sweeting,
Weighted weak-type inequalities for maximal operators and singular integrals,
Rev. Mat. Complut. 38 (2025), 183--205.

\vspace{-0.3cm}

\bibitem{cw17}
D. Cruz-Uribe and L. D. Wang,
Extrapolation and weighted norm inequalities in the variable Lebesgue spaces,
Trans. Amer. Math. Soc. 369 (2017), 1205--1235.

\vspace{-0.3cm}

\bibitem{dhl20}
F. Di Plinio, T. Hyt\"{o}nen and K. Li,
Sparse bounds for maximal rough singular
integrals via the Fourier transform,
Ann. Inst. Fourier (Grenoble) 70 (2020), 1871--1902.

\vspace{-0.3cm}

\bibitem{dhr17}
L. Diening, P. Harjulehto, P. H\"ast\"o and M. R\r{u}{\v z}i{\v c}ka,
Lebesgue and Sobolev Spaces with Variable Exponents,
Lecture Notes in Mathematics 2017, Springer, Heidelberg, 2011.

\vspace{-0.3cm}

\bibitem{dh08}
L. Diening and P. H\"ast\"o,
Muckenhoupt weights in variable exponent spaces,
https://www. researchgate.net/publication/228779582, 2008.

\vspace{-0.3cm}

\bibitem{dptv24}
K. Domelevo, S. Petermichl, S. Treil and A. Volberg,
The matrix $A_2$ conjecture fails, i.e. $3/2>1$,
arXiv: 2402.06961.

\vspace{-0.3cm}

\bibitem{dk18}
H. Dong and D. Kim,
On $L_p$-estimates for elliptic and parabolic equations with $A_p$ weights,
Trans. Amer. Math. Soc. 370 (2018), 5081--5130.

\vspace{-0.3cm}

\bibitem{d15}
D. Drihem,
Some properties of variable Besov-type spaces,
Funct. Approx. Comment. Math. 52 (2015), 193--221.

\vspace{-0.3cm}

\bibitem{dly21} X. T. Duong, J. Li and D. Yang, Variation of
Calder\'on--Zygmund operators with matrix weight, Commun. Contemp.
Math. 23 (2021), Paper No. 2050062, 30 pp.

\vspace{-0.3cm}

\bibitem{fr04}
M. Frazier and S. Roudenko,
Matrix-weighted Besov spaces and conditions of $A_p$ type for
$0<p\leq 1$,
Indiana Univ. Math. J. 53 (2004), 1225--1254.

\vspace{-0.3cm}

\bibitem{fr21}
M. Frazier and S. Roudenko,
Littlewood--Paley theory for matrix-weighted function spaces,
Math. Ann. 380 (2021), 487--537.

\vspace{-0.3cm}

\bibitem{gl86}
N. Garofalo and F.-H. Lin,
Monotonicity properties of variational integrals, $A_p$ weights and unique continuation,
Indiana Univ. Math. J. 35 (1986), 245--268.

\vspace{-0.3cm}

\bibitem{g03}
M. Goldberg, Matrix $A_p$ weights via maximal functions,
Pacific J. Math. 211 (2003), 201--220.

\vspace{-0.3cm}

\bibitem{g14}
L. Grafakos,
Classical Fourier Analysis,
Third edition, Grad. Texts in Math. 249,
Springer, New York, 2014.

\vspace{-0.3cm}

\bibitem{g14 1}
L. Grafakos,
Modern Fourier Analysis,
Third edition, Grad. Texts in Math. 250,
Springer, New York, 2014.

\vspace{-0.3cm}

\bibitem{hmw73}
R. Hunt, B. Muckenhoupt and R. Wheeden,
Weighted norm inequalities for the conjugate function and Hilbert transform,
Trans. Amer. Math. Soc. 176 (1973), 227--251.

\vspace{-0.3cm}

\bibitem{h12}
T. Hyt\"onen,
The sharp weighted bound for general Calder\'on--Zygmund operators,
Ann. of Math. (2) 175 (2012), 1473--1506.

\vspace{-0.3cm}

\bibitem{hp13}
T. Hyt\"onen and C. P\'erez,
Sharp weighted bounds involving $A_\infty$,
Anal. PDE 6 (2013), 777--818.

\vspace{-0.3cm}

\bibitem{kn24}
S. Kakaroumpas and Z. Nieraeth,
Multilinear matrix weights,
Adv. Math. 486 (2026), Paper No. 110744, 85 pp.

\vspace{-0.3cm}

\bibitem{llor23}
A. Lerner, K. Li, S. Ombrosi and I. Rivera-R\'{\i}os,
On the sharpness of some quantitative
Muckenhoupt--Wheeden inequalities,
C. R. Math. Acad. Sci. Paris 362 (2024), 1253--1261.

\vspace{-0.3cm}

\bibitem{llor24}
A. Lerner, K. Li, S. Ombrosi and I. Rivera-R\'{\i}os,
On some improved weighted weak type inequalities,
Ann. Sc. Norm. Super. Pisa Cl. Sci. (5) (2024),
https://doi.org/10.2422/2036-2145.202407\_012.

\vspace{-0.3cm}

\bibitem{lyy24a}
Z. Li, D. Yang and W. Yuan,
Matrix-weighted Besov--Triebel--Lizorkin spaces
with logarithmic smoothness,
Bull. Sci. Math. 193 (2024), Paper No. 103445, 54 pp.

\vspace{-0.3cm}

\bibitem{lyy24b}
Z. Li, D. Yang and W. Yuan,
Matrix-weighted Poincar\'{e}-type inequalities
with applications to logarithmic
Haj\l asz--Besov spaces on spaces of homogeneous type,
arXiv:2602.14361.

\vspace{-0.3cm}

\bibitem{n12} M. Nielsen, On transference of multipliers on
matrix weighted $L^p$-spaces, J. Geom. Anal. 22 (2012),
12--22.

\vspace{-0.3cm}

\bibitem{n25} M. Nielsen, Matrix weighted $\alpha$-modulation spaces,
Monatsh. Math. 206 (2025), 419--448.

\vspace{-0.3cm}

\bibitem{n25-2} M. Nielsen, Bandlimited multipliers on
matrix-weighted $L^p$-spaces, J. Fourier Anal. Appl. 31 (2025),
Paper No. 3, 10 pp.

\vspace{-0.3cm}

\bibitem{nr18} M. Nielsen and M. G. Rasmussen, Projection operators on matrix
weighted $L^p$ and a simple sufficient Muckenhoupt condition,
Math. Scand. 123 (2018), 72--84.


\vspace{-0.3cm}

\bibitem{m72}
B. Muckenhoupt,
Weighted norm inequalities for the Hardy maximal function,
Trans. Amer. Math. Soc. 165 (1972), 207--226.

\vspace{-0.3cm}

\bibitem{nptv17}
F. Nazarov, S. Petermichl, S. Treil and A. Volberg,
Convex body domination and weighted estimates with matrix weights,
Adv. Math. 318 (2017), 279--306.

\vspace{-0.3cm}

\bibitem{nt96}
F. L. Nazarov and S. R. Treil,
The hunt for a Bellman function: applications to estimates for singular integral operators and to other classical problems of harmonic analysis, (Russian),
translated from Algebra i Analiz 8 (1996), 32--162, St. Petersburg Math. J. 8 (1997), 721--824.

\vspace{-0.3cm}

\bibitem{n24}
Z. Nieraeth,
A lattice approach to matrix weights,
Math. Ann. 393 (2025), 993--1072.

\vspace{-0.3cm}

\bibitem{np25}
Z. Nieraeth and M. Penrod,
Matrix-weighted bounds in variable Lebesgue spaces,
Ann. Fenn. Math. 50 (2025), 519--548.

\vspace{-0.3cm}

\bibitem{o31}
W. Orlicz, {\"U}ber konjugierte Exponentenfolgen,
Studia Math. 3 (1931) 200--212.

\vspace{-0.3cm}

\bibitem{r03}
S. Roudenko,
Matrix-weighted Besov spaces,
Trans. Amer. Math. Soc. 355 (2003), 273--314.

\vspace{-0.3cm}

\bibitem{rou04}
S. Roudenko,
Duality of matrix-weighted Besov spaces,
Studia Math. 160 (2004), 129--156.

\vspace{-0.3cm}

\bibitem{r00}
M. R\r{u}{\v z}i{\v c}ka,
Electrorheological Fluids: Modeling and Mathematical Theory,
Lecture Notes in Mathematics 1748, Springer-Verlag, Berlin, 2000.

\vspace{-0.3cm}

\bibitem{r04}
M. R\r{u}{\v z}i{\v c}ka,
Modeling, mathematical and numerical analysis of electrorheological fluids,
Appl. Math. 49 (2004), 565--609.

\vspace{-0.3cm}

\bibitem{tv97}
S. Treil and A. Volberg,
Wavelets and the angle between past and future, J. Funct. Anal.
143 (1997), 269--308.

\vspace{-0.3cm}

\bibitem{v97}
A. Volberg,
Matrix $A_p$ weights via $\mathcal S$-functions,
J. Amer. Math. Soc. 10 (1997), 445--466.

\vspace{-0.3cm}

\bibitem{wyy23}
Q. Wang, D. Yang and Y. Zhang,
Real-variable characterizations and their
applications of matrix-weighted
Triebel--Lizorkin spaces,
J. Math. Anal. Appl. 529 (2024),
Paper No. 127629, 37 pp.

\vspace{-0.3cm}

\bibitem{wgx24}
S. Wang, P. Guo and J. Xu,
Characterizations of weighted Besov spaces with variable exponents,
Acta Math. Sin. (Engl. Ser.) 40 (2024), 2855--2878.

\vspace{-0.3cm}

\bibitem{wgx25a}
S. Wang, P. Guo and J. Xu,
Embedding and duality of matrix-weighted
modulation spaces,
Taiwanese J. Math. 29 (2025), 171--187.

\vspace{-0.3cm}

\bibitem{wgx25b}
S. Wang, P. Guo and J. Xu,
Precompact sets in matrix weighted Lebesgue spaces with variable exponent,
Georgian Math. J. 32 (2025), 1071--1083.

\vspace{-0.3cm}

\bibitem{wx22}
S. Wang and J. Xu,
Weighted Besov spaces with variable exponents,
J. Math. Anal. Appl. 505 (2022), Paper No. 125478, 27 pp.

\vspace{-0.3cm}

\bibitem{w94}
R. Webster, Convexity,
Oxford Science Publications, The Clarendon Press, Oxford University Press, New York, 1994.

\vspace{-0.3cm}

\bibitem{wm58}
N. Wiener and P. Masani,
The prediction theory of multivariate stochastic processes. II. The
linear predictor, Acta Math. 99 (1958), 93--137.

\vspace{-0.3cm}

\bibitem{yyz25} D. Yang, W. Yuan and Z. Zeng, Variable matrix-weighted
Besov spaces, Submitted or arXiv:2509.07786.

\vspace{-0.3cm}

\bibitem{yymz25}
D. Yang, W. Yuan and M. Zhang,
Matrix-weighted Besov--Triebel--Lizorkin
spaces of optimal scale:
boundedness of pseudo-differential, trace, and Calder\'on--Zygmund
operators, Proc. Steklov Inst. Math. 331 (2025), 297--333.

\end{thebibliography}
\end{document}